\newcommand\polymake{\texttt{polymake}\xspace}
\definecolor{skin}{HTML}{FFECC9}
\definecolor{pumpkin}{HTML}{FEDFA9}
\definecolor{piggy}{HTML}{FFB99D}
\definecolor{fiolet}{HTML}{CD8F9C}
\definecolor{granat}{HTML}{677081}
\definecolor{ciemnyblekit}{HTML}{91A1B8}
\definecolor{oliwkowy}{HTML}{627037}
\definecolor{ciemnazielen}{HTML}{394D2E}
\definecolor{ciemnyfiolet}{HTML}{424444}
\definecolor{mocnyfiolet}{HTML}{717299}
\definecolor{jasnyfiolet}{HTML}{B0ABCC}
\definecolor{bladyfiolet}{HTML}{C9C7DB}
\definecolor{quec}{rgb}{0.8,.000,.278}      
\definecolor{ansc}{rgb}{.200,0.7,.400}     
\definecolor{idec}{rgb}{.400,.000,.700}     
\definecolor{todo}{rgb}{.800,.100,.300}
\newtheoremstyle{question}
  {2pt}   
  {2pt}   
  {\color{quec}\normalfont}  
  {0pt}       
  {\bfseries} 
  {.}         
  {5pt plus 1pt minus 1pt} 
  {}          
\newtheoremstyle{idea}
  {2pt}   
  {2pt}   
  {\color{idec}\normalfont}  
  {0pt}       
  {\bfseries} 
  {.}         
  {5pt plus 1pt minus 1pt} 
  {}          
\newtheoremstyle{answer} 
  {2pt}   
  {2pt}   
  {\color{ansc}\normalfont}  
  {0pt}       
  {\bfseries} 
  {.}         
  {5pt plus 1pt minus 1pt} 
  {}          
\newtheoremstyle{todo}
  {2pt}   
  {2pt}   
  {\color{todo}\normalfont}  
  {0pt}       
  {\bfseries} 
  {.}         
  {5pt plus 1pt minus 1pt} 
  {}
\numberwithin{equation}{section}
\newtheorem{theorem}[equation]{Theorem}
\newtheorem{lemma}[equation]{Lemma}
\newtheorem{proposition}[equation]{Proposition}
\newtheorem{corollary}[equation]{Corollary}
\theoremstyle{idea}
\theoremstyle{question}
\theoremstyle{answer}
\theoremstyle{todo}
\theoremstyle{remark}
\newtheorem{remark}[equation]{Remark}
\theoremstyle{definition}
\newtheorem{definition}[equation]{Definition}
\newtheorem{example}[equation]{Example}
\definecolor{ctDD}{rgb}{0.00,0.10,0.00}
\definecolor{cDtD}{rgb}{0.00,0.00,0.20}
\definecolor{cGitter}{rgb}{0.60,0.60,0.60}
\definecolor{cKlaus}{rgb}{0.15,0.40,0.03}
\definecolor{cAnnaLena}{rgb}{0.70,0.20,0.23}
\definecolor{cJarek}{rgb}{0.10,0.70,0.43}
\definecolor{loesung}{rgb}{0.6,0.10,0.33}
\definecolor{cKlausOK}{rgb}{0.3,0.40,0.33}
\definecolor{intOrange}{rgb}{1.0,.310,.0} 
\newcommand{\threedim}[3]{(#1,\; #2,\; #3)}
\newcommand{\twodim}[2]{(#1,\; #2)}
\definecolor{intOrange}{rgb}{1.0,.310,.0}
\newcommand{\cR}{\mathcal{R}}
\newcommand{\cD}{\mathcal{D}}
\newcommand{\cL}{\mathcal{L}}
\newcommand{\cO}{\mathcal{O}}
\newcommand{\cE}{\mathcal{E}}
\newcommand{\cF}{\mathcal{F}}
\newcommand{\bZ}{\mathbb{Z}}
\newcommand{\bF}{\mathbb{F}}
\newcommand{\kk}{\Bbbk}
\newcommand{\PP}{\mathbb P}
\newcommand{\F}{\mathbb F}
\newcommand{\R}{\mathbb R}
\newcommand{\T}{\mathbb T}
\newcommand{\V}{\mathbb V}
\newcommand{\Z}{\mathbb Z}
\newcommand{\Q}{\mathbb Q}
\newcommand{\QQ}{\mathbb Q}
\newcommand{\CC}{{\mathcal C}}
\newcommand{\CF}{{\mathcal F}}
\newcommand{\CE}{{\mathcal E}}
\newcommand{\CG}{{\mathcal G}}
\newcommand{\CM}{{\mathcal M}}
\newcommand{\CL}{{\mathcal L}}
\newcommand{\CO}{{\mathcal O}}
\newcommand{\CP}{{\mathcal P}}
\newcommand{\CR}{{\mathcal R}}
\newcommand{\CV}{{\mathcal V}}
\newcommand{\matc}[2]{\left(\begin{array}{@{}*{#1}{c@{}}} #2
\end{array}\right)}
\newcommand{\Matc}[2]{\left(\begin{array}{@{}*{#1}{c}@{}} #2
\end{array}\right)}
\newcommand{\Matrc}[3]{\left(\begin{array}{@{}*{#1}{r}|*{#2}{c}@{}} #3
\end{array}\right)}
\newcommand{\Matrr}[3]{\left(\begin{array}{@{}*{#1}{r}|*{#2}{r}@{}} #3
\end{array}\right)}
\DeclareMathOperator{\SL}{SL}
\DeclareMathOperator{\rank}{rk}
\DeclareMathOperator{\innt}{int}
\DeclareMathOperator{\coker}{coker}
\DeclareMathOperator{\Hom}{Hom}
\DeclareMathOperator{\Div}{Div}
\DeclareMathOperator{\Pic}{Pic}
\DeclareMathOperator{\Cl}{Cl}
\DeclareMathOperator{\orb}{orb}
\DeclareMathOperator{\supp}{supp}
\DeclareMathOperator{\id}{id}
\DeclareMathOperator{\relint}{relint}
\DeclareMathOperator{\Trunc}{Trunc}
\DeclareMathOperator{\conv}{conv}
\DeclareMathOperator{\cone}{cone}
\newcommand{\kG}{\Gamma}
\newcommand{\wt}{\widetilde}
\newcommand{\ifff}{\ensuremath{\,\Longleftrightarrow\,}}
\newcommand{\then}{\ensuremath{\implies}}
\newcommand{\gH}{\operatorname{H}}
\newcommand{\tH}{\wt{\operatorname{H}}}
\newcommand{\gExt}{\mbox{\textrm{Ext}}}
\newcommand{\gHom}{\mbox{\textrm{Hom}}}
\renewcommand{\div}{\operatorname{div}}
\newcommand{\disjcup}{\sqcup} 
\newcommand{\Sl}{\operatorname{SL}}
\newcommand{\spann}{\operatorname{span}}
\newcommand{\kst}{\,|\;}
\newcommand{\kSt}{\,\big|\;}
\newcommand{\ku}{\underline}
\newcommand{\surj}{\rightarrow\hspace{-0.8em}\rightarrow}
\newcommand{\kss}{\scriptscriptstyle}
\newcommand{\kbb}{{\kss \bullet}}
\newcommand{\ko}{\overline}
\newcommand{\charact}{{\operatorname{char}}\,}
\newcommand{\dual}{^{\scriptscriptstyle\vee}}
\DeclareMathOperator{\toric}{\T\V}  
\DeclareMathOperator{\Eff}{Eff}  
\DeclareMathOperator{\Nef}{Nef}  
\DeclareMathOperator{\Imm}{Imm}  
\DeclareMathOperator{\Nuns}{Seed}
\newcommand{\Ccap}{\CC}
\newcommand{\relP}{P^{\circ}}
\newcommand{\tail}{\operatorname{tail}}
\newcommand{\suppSig}{\supp\Sigma}
\newcommand{\WQC}{$\QQ$-Cartier Weil\ }
\newcommand{\sDelta}{\Delta}  
\newcommand{\dm}{m}  
\newcommand{\rps}{u^+_\sigma} 
\newcommand{\rms}{u^-_\sigma} 
\newcommand{\rpms}{u^{\pm_\sigma}} 
\newcommand{\preRmacSet}{\Z^{\Sigma(1)\setminus\CR}_{\geq 0} 
   \times \Z^{\CR}_{\leq -1}} 
\newcommand{\preRmacRegion}{\R^{\Sigma(1)\setminus\CR}_{\geq 0} 
   \times \R^{\CR}_{\leq -1}} 
\newcommand{\pms}{\preRmacSet}  
\newcommand{\pmr}{\preRmacRegion}  
\newcommand{\kuc}{c} 
\newcommand{\kue}{\ku{1}} 
\newcommand{\kun}{\ku{0}} 
\newcommand{\koc}{\ko{c}} 
\newcommand{\kul}{\ell} 
\newcommand{\prb}{b} 
\newcommand{\pob}{\ko{b}} 
\newcommand{\prc}{c} 
\newcommand{\poc}{\ko{c}} 
\newcommand{\pre}{\ku{1}} 
\newcommand{\prn}{\ku{0}} 
\newcommand{\fes}{FES\ }
\newcommand{\set}[1]{\left\{#1\right\}}
\newcommand{\fromto}[2]{#1, \dotsc, #2}
\newcommand{\setfromto}[2]{\set{\fromto{#1}{#2}}}
\newcommand{\RR}{\mathbb{R}}
\newcommand{\pd}{\partial}
\DeclareMathOperator{\bl}{bl}
\title
{Immaculate line bundles on toric varieties} 
\author{
Klaus Altmann
\and
Jaros{\l}aw Buczy\'{n}ski
\and
Lars Kastner
\and
Anna-Lena Winz
}
\newcommand{\hexlinediag}{\vcenter{\hbox{\begin{tikzpicture}[scale=.5]   
\draw[color=white, thick] (-0.2,-0.2) grid (1.2,1.2);
\draw[thick] (0,0) -- (1,1); 
\end{tikzpicture}}}}    
\newcommand{\hexlinehor}{\vcenter{\hbox{\begin{tikzpicture}[scale=.5]
\draw[color=white, thick] (-0.2,-0.2) grid (1.2,1.2);
\draw[thick] (0,0) -- (1,0);
\end{tikzpicture}}}}    
\newcommand{\hexlinevert}{ \ \vcenter{\hbox{\begin{tikzpicture}[scale=.5]
\draw[color=white, thick] (-0.2,-0.2) grid (1.2,1.2);
\draw[thick] (0,0) -- (0,1);
\end{tikzpicture}}}}    
\newcommand{\hextriangleupleft}{ \ \vcenter{\hbox{\begin{tikzpicture}[scale=.5]
\draw[color=white, thick] (-0.2,-0.2) grid (1.2,1.2);
\draw[thick] (0,0) -- (1,1) -- (0,1) -- cycle;
\end{tikzpicture}}}}    
\newcommand{\hextriangledownright}{\vcenter{\hbox{\begin{tikzpicture}[scale=.5]
\draw[color=white, thick] (-0.2,-0.2) grid (1.2,1.2);
\draw[thick] (0,0) -- (1,1) -- (1,0) -- cycle;
\end{tikzpicture}}}}
\begin{document}

\maketitle

\begin{abstract}
We call a sheaf on an algebraic variety immaculate if it lacks
any cohomology including the zero-th one, that is, if the derived
version of the global section functor vanishes. Such sheaves
are the basic tools when building exceptional sequences, 
investigating the diagonal property, or the toric Frobenius morphism.

In the present paper we focus on line bundles on toric varieties. 
First, we present a possibility of understanding their cohomology
in terms of their (generalized) momentum polytopes. Then we present
a method to exhibit the entire locus of immaculate divisors
within the class group. This will be applied to the cases of
smooth toric varieties of Picard rank two and three
and to those being given by splitting fans.

The locus of immaculate line bundles contains several linear strata of
varying dimensions. We introduce a notion of relative immaculacy
with respect to certain contraction morphisms. This 
notion will be stronger than plain immaculacy and provides an
explanation of some of these linear strata.
\end{abstract}

{\footnotesize
\noindent\textbf{Addresses:\\}
K.~Altmann, \nolinkurl{altmann@math.fu-berlin.de}, 
Institut f\"ur Mathematik, FU Berlin, Arnimalle 3, 14195 Berlin, Germany\\
J.~Buczy\'nski, \nolinkurl{jabu@mimuw.edu.pl}, 
   Institute of Mathematics of the Polish Academy of Sciences, ul. \'Sniadeckich 8, 00-656 Warsaw, Poland,
   and Faculty of Mathematics, Computer Science and Mechanics, University of Warsaw, ul.~Banacha 2, 02-097 Warszawa, Poland\\
L.~Kastner \nolinkurl{kastner@math.fu-berlin.de}, 
Institut f\"ur Mathematik, TU Berlin, Geb\"aude MA, Stra\ss e des 17.~Juni 136, 10623 Berlin, Germany\\
A.-L.~Winz, \nolinkurl{anna-lena.winz@fu-berlin.de}, Institut f\"ur Mathematik, FU Berlin, Arnimalle 3, 14195 Berlin, Germany.

\noindent\textbf{Keywords:}
toric variety, immaculate line bundle, splitting fan, toric varieties of Picard rank~3, primitive collections.

\noindent\textbf{AMS Mathematical Subject Classification 2010:}
Primary:~14M25; Secondary:~14F05, 14F17, 52B20, 14L32.}

\tableofcontents

\section{Introduction}
\label{intro}

We work over an algebraically closed field $\kk$ of any characteristic.

\subsection{Exceptional sequences ask for immaculacy}
A major tool for the process of understanding derived categories 
$\cD(X)$ on an algebraic variety $X$ is 
full exceptional sequences ``\fes''
$(\CF_1,\ldots,\CF_k)$ of sheaves or complexes. 
That is, its members are supposed to generate $\cD(X)$ and,
up to $\gHom_{\cD(X)}(\CF_i,\CF_i)=\kk$,
one asks for $\gHom_{\cD(X)}(\CF_i,\CF_j[p])=0$
for all shifts $p\in\Z$ and pairs $i\geq j$.
These conditions call to mind the shape of unitary upper triangular matrices.
If \fes exist, then they provide a semi-orthogonal 
decomposition of $\cD(X)$ into the simplest summands possible.

Whenever the $\cF_i$ are sheaves, 
then $\gHom_{\cD(X)}(\CF_i,\CF_j[p])$
can alternatively be written as the classical 
group $\gExt^p_{\CO_X}(\CF_i,\CF_j)$.
If, moreover, $\cF_i$ are locally free, e.g.\ invertible sheaves,
then this equals $\gH^p(X,\CF_i^{-1}\otimes\CF_j)$. Thus, we require 
certain sheaves $\CG=\CF_i^{-1}\otimes\CF_j$ to lack any cohomology,
including the seemingly innocent $0$-th one:
$$
\R\kG(X,\CG)=0.
$$ 
We will call this property of a sheaf $\CG$ immaculate, 
see Definition~\ref{def-immaculate} in Subsection~\ref{imaculateCheck}.

We are going to focus on invertible sheaves on smooth, projective 
varieties $X$ with $\R\Gamma(\CO_X)=\kk$. 
So, when looking for exceptional sequences of line bundles,
the case $i=j$ yielding $\CG=\CO_X$ is already taken care of.
That is, whenever we have sufficiently good knowledge of 
the locus of immaculate sheaves
within the Picard or class group $\Cl(X)$, then we can freely use its 
elements $\CG_\nu=\CO_X(D_\nu)$
as building blocks to mount exceptional sequences
via $\CF_i:=\CO_X(\sum_{\nu=2}^i D_\nu)$. The defining property 
of the vanishing $\gExt$ groups can then be understood
as asking consecutive sums of the $D_\nu$
to be immaculate, too.

The comparison of the shape of several \fes
can shed light on several features of the given variety $X$. 
Thus, the shape of the
tool box of immaculate line bundles should serve as a rich invariant.
In addition, immaculate line bundles appear in different contexts.
In \cite{achinger_toric_var_char_p} they are exploited to show a characterisation of toric varieties in terms of Frobenius splitting property.
In \cite{pragacz_srinivas_pati_diagonal_property} they are used to study the diagonal property of smooth projective varieties 
   (see for instance \cite[Thm~4]{pragacz_srinivas_pati_diagonal_property}).
For a surface of general type, the property of immaculacy of line bundles is relevant  to the spectral theory 
\cite{kulikov_zheglov_spectral_surfaces}.

\subsection{The situation on toric varieties}
\label{sitToric}
Suppose that $X$ is a smooth, projective toric variety.
The main result in this context is Kawamata's proof of the
existence of \fes of sheaves on smooth, projective
toric Deligne-Mumford stacks, see \cite{kaw1,kaw2}.
An earlier conjecture of King about the existence of 
full, strongly exceptional ($\gExt^{\geq 1}(\CF_i,\CF_j)=0$ for all $i,j$) 
sequences of line bundles was disproved in 
\cite{hillePerlingCounterKing}, \cite{michalek_Kings_conjecture}.
But, when abstaining from the additional property ``strong'',
it is still an open question whether smooth, projective toric varieties
admit \fes of line bundles, 
let alone provide an understanding of which equivariant divisors 
represented by which abstract polyhedra
will form those sequences. The only rather general, positive result
is that of \cite[Theorem~4.12]{CostaMiroRoig} where the existence of those
sequences was established for splitting fans, 
see Subsection~\ref{immSplit}. From a different viewpoint, this
was reproven for a special case in \cite{crawQuiverFlag}.

Another remarkable result can be found in \cite{HilleRatSurf}. 
There, the authors start
with an arbitrary, that is, not necessarily toric, smooth complete
rational surface and show that \fes of line bundles do always exist.
But the interesting point
is that these sequences can easily be transformed into
a cycle of divisors imitating the toric situation, that is,
to each \fes one can associate a toric surface
materialising this sequence.

\subsection{Visualizing the cohomology of toric line bundles}
In the present paper, we keep the notion of exceptional sequences in the
background. Instead, for a given projective (often smooth)
toric variety we are 
just interested in the immaculacy property of divisor classes.
Classically, the cohomology of a reflexive rank one sheaf, that is, of a Weil
divisor on a toric variety $X$ 
can be expressed in terms of special polyhedral complexes 
whose vertices are some rays of the fan $\Sigma$ of $X$. 
In particular, the complexes live in $N_\R=N\otimes \R$, where $N$ is the 
lattice of one parameter subgroups of the torus acting on $X$.

We propose a different point of view on the cohomology
of toric \WQC divisors. We will make it literally visible 
in terms of polytopes in the dual space $M_\R$. 
As usual, one writes
$M_\R=M\otimes R$ with $M = \Hom(N,\Z)$ being the monomial lattice 
of the acting torus $T$.
Since each \WQC divisor can be decomposed into a difference $D=D^+-D^-$
of nef (or even $\Q$-ample) ones, this means that the $T$-invariant 
among them can be encoded by a pair of polytopes $(\Delta^+,\Delta^-)$,
see Subsection~\ref{cohomPolyhedra} for more details
and the more general situation of semi-projective varieties.

Polytopes form a cancellative semigroup under Minkowski addition.
In this context,
the pair $(\Delta^+,\Delta^-)$ represents the formal difference 
$$
D=\Delta^+-\Delta^-
$$ 
within the Grothendieck group of generalized polytopes.
On the other hand, each $T$-invariant Weil divisor $D$ leads to a 
(possibly empty) polytope of sections $\sDelta(D)\subseteq M_\R$. 
Its lattice points parametrize the monomial basis of $\kG(X,\CO_X(D))$.
If $D$ is nef, then the pair 
consisting of $\Delta^+:=\sDelta(D)$ and $\Delta^-:=0$
can be used to represent $D$.
For general $D$ being represented by some $(\Delta^+,\Delta^-)$, one 
can still recover the polytope of sections as
$$
\sDelta(D)=\{r\in M_\R\kst \Delta^-+r\subseteq\Delta^+\},
$$
cf.~Remark~\ref{rem-recoverPolSect}.
This can be visualized as a kind of a
materialized shadow of the abstract difference $\Delta^+-\Delta^-$.

So it is quite a surprising fact that,
after using the formal difference $\Delta^+-\Delta^-$ and its shadow
$\sDelta(D)$,
the cohomology of $D=\Delta^+-\Delta^-$ can be understood by a third 
flavour, namely by the naive 
and original meaning of the set theoretic differences of these polytopes.

\begin{theorem}
On a projective toric variety $X$ the cohomology groups $\gH^i(X,\CO_X(D))$ are $M$-graded,
and for each $\dm\in M$, the homogeneous component of degree $\dm$ equals
$\tH^{i-1}(\Delta^- \setminus(\Delta^+ -\dm),\kk)$.
Here $\Delta^+ - \dm$ means the shift by $\dm$ of $\Delta^+$ in $M \otimes \R$.
\end{theorem}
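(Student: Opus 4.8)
The plan is to fix the $M$-degree and compute the graded pieces one at a time. Since $X$ carries a $T$-action compatible with the sheaf $\CO_X(D)$ of a $T$-invariant divisor $D=\sum_\rho a_\rho D_\rho$, every cohomology group inherits an $M$-grading, so it suffices to identify, for each $\dm\in M$, the homogeneous piece $\gH^i(X,\CO_X(D))_{\dm}$. Writing $D=D^+-D^-$ with $D^{\pm}$ nef and $\Delta^{\pm}=\sDelta(D^{\pm})$, I first record the facet presentations $\Delta^+-\dm=\{y:\mul{y+\dm}{u_\rho}\ge -a^+_\rho\ \forall\rho\}$ and $\Delta^-=\{y:\mul{y}{u_\rho}\ge -a^-_\rho\ \forall\rho\}$ in terms of the primitive ray generators $u_\rho$, where $a_\rho=a^+_\rho-a^-_\rho$.

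For the computation itself I would use the \v{C}ech complex of the affine cover $\{U_\sigma\}_{\sigma\in\Sigma_{\max}}$; as $X$ is separated and each $U_\sigma$ is affine, this computes sheaf cohomology. In degree $\dm$ the local contribution $\gH^0(U_\tau,\CO_X(D))_{\dm}$ is $\kk\cdot\chi^{\dm}$ if $\mul{\dm}{u_\rho}\ge -a_\rho$ for all $\rho\in\tau(1)$ and $0$ otherwise, so the degree-$\dm$ complex is a purely combinatorial object built from $\Sigma$, the $a_\rho$ and $\dm$.

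Next I would produce the polytopal model. A short computation shows $\Delta^-\setminus(\Delta^+-\dm)=\bigcup_{\rho\in\Sigma(1)}C_\rho$ with $C_\rho:=\{y\in\Delta^-:\mul{y+\dm}{u_\rho}<-a^+_\rho\}$, each $C_\rho$ being the intersection of the convex set $\Delta^-$ with an open half-space, hence convex; all finite intersections $\bigcap_{\rho\in S}C_\rho$ are convex as well. Thus $\{C_\rho\}$ is a good cover and the Nerve Lemma yields $\tH^{i-1}(\Delta^-\setminus(\Delta^+-\dm),\kk)\simeq\tH^{i-1}(\cN,\kk)$, where $\cN$ is the nerve recording which families of facets of $\Delta^+-\dm$ cut a common piece off $\Delta^-$. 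The behaviour at the bottom is governed by reduced cohomology: $\Delta^-\setminus(\Delta^+-\dm)=\emptyset$ exactly when $\Delta^-+\dm\subseteq\Delta^+$, i.e. (by the recovery formula $\sDelta(D)=\{r:\Delta^-+r\subseteq\Delta^+\}$) exactly when $\dm\in\sDelta(D)$, and there $\tH^{-1}(\emptyset,\kk)=\kk$ reproduces the degree-$\dm$ part of $\gH^0$; this pins down the index shift $i\mapsto i-1$.

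The remaining, and hardest, step is to reconcile the two combinatorial objects: the \v{C}ech complex is indexed by the maximal cones of $\Sigma$ (equivalently the vertices of $\Delta^-$), whereas $\cN$ is indexed by the rays of $\Sigma$ (the facets of $\Delta^+$), so the two are genuinely \emph{dual} rather than isomorphic. Indeed a vertex $v_\sigma$ of $\Delta^-$ may fail to lie in $\Delta^+-\dm$ because of a facet not adjacent to $\sigma$, so one cannot match ``cone $\sigma$ fails'' with ``vertex $v_\sigma$ is cut off'' naively. The plan is to bridge the gap by deformation-retracting $\Delta^-\setminus(\Delta^+-\dm)$ onto the subcomplex of $\partial\Delta^-$ formed by the faces it meets, and then to invoke the duality between the boundary complex of $\Delta^-$ and the cone poset of $\Sigma$, so that the reduced cochain complex of this subcomplex becomes the degree-$\dm$ \v{C}ech complex up to the shift already fixed above. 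Executing this duality cleanly---and in particular treating the case where $D^+$ or $D^-$ is merely nef, so that the normal fan of $\Delta^{\pm}$ only coarsens $\Sigma$ and some vertices $v_\sigma$ coincide---is where the real work lies; once it is in place, the theorem follows by assembling the graded pieces over all $\dm\in M$.
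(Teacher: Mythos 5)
Your executed steps are sound as far as they go: the $M$-grading and the degree-$\dm$ \v{C}ech reduction are correct (though note they amount to re-deriving the classical formula $\gH^i(X,\CO_X(D))_\dm=\tH^{i-1}(V_{D,\dm},\kk)$ of \cite[Thm~9.1.3]{CoxBook}, which the paper simply cites); the sets $C_\rho$ are relatively open convex subsets of $\Delta^-$ with convex intersections, so the Nerve Lemma does give $\tH^{i-1}(\cN,\kk)\cong\tH^{i-1}(\Delta^-\setminus(\Delta^+-\dm),\kk)$, and by nefness of $D^-$ one even checks $C_\rho\neq\emptyset$ exactly when $\rho$ is a ``bad'' ray, so the vertex sets of $\cN$ and of $V_{D,\dm}$ agree; the $\tH^{-1}$ bookkeeping via $\sDelta(D)=\{r\kst\Delta^-+r\subseteq\Delta^+\}$ is also right. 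But the step you defer is the entire content of the theorem, and the bridge you sketch would fail as stated, because the two combinatorial objects are genuinely not dual \emph{as complexes}, only homotopy equivalent. Retracting onto $\supp\CC(\Delta^-,\Delta^+-\dm)$ is legitimate (this is Lemma~\ref{lem_difference_of_convex_polytope_and_closed_convex_set} of the paper), but whether the face of $\Delta^-$ dual to $\tau\in\Sigma$ survives is a \emph{global} condition involving all facet inequalities of $\Delta^+-\dm$, while the degree-$\dm$ \v{C}ech entry at $\tau$ sees only the rays of $\tau$ — the very phenomenon you flag for vertices persists for every face and is not cured by first retracting to $\partial\Delta^-$. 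Symmetrically, $\cN$ contains a simplex on $\{\rho_0,\dots,\rho_k\}$ as soon as the corresponding open half-spaces jointly cut into $\Delta^-$, which happens for rays spanning no cone of $\Sigma$ (any two non-opposite bad rays do this once $\Delta^-$ is large enough, e.g.\ two non-adjacent rays of the hexagon fan), whereas $V^{\ge}_{D,\dm}$ has no such simplex. So no face-by-face poset duality can identify the \v{C}ech complex with cochains on $\cN$ or on $\CC(\Delta^-,\Delta^+-\dm)$; the ample-versus-merely-nef degeneracy you worry about is a second-order issue by comparison.

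What is missing is a homotopy-level comparison between the $N$-side and $M$-side models, and this is precisely what the paper's proof supplies. It quotes the $N$-side formula and reduces everything to Lemma~\ref{lem_nef_difference_and_cohomology_complex}: the incidence set $W=\{(a,r)\in V^{\supp}_{D,0}\times(\Delta^-\setminus\Delta^+)\kst \langle a,r\rangle<u^+(a)\}$ projects onto both $V^{\supp}_{D,0}$ and $\Delta^-\setminus\Delta^+$ with nonempty convex fibres; after compactifying all three spaces (a norm ball in $N_\R$, truncation of tails as in Proposition~\ref{prop_truncate_infinite_part}, and the $\epsilon$-widening of $\Delta^+$ from Subsection~\ref{compApprox}) so that the projections become proper, Smale's strengthening of the Vietoris mapping theorem together with Whitehead's theorem yields the homotopy equivalence, and general degrees follow by the twist $D(\dm)=D+\div(x^\dm)$, i.e.\ replacing $(\Delta^+,\Delta^-)$ by $(\Delta^+-\dm,\Delta^-)$ — the same reduction you set up. To complete your route you would have to replace the duality step by an argument of this kind (for instance, fibering your nerve and the \v{C}ech nerve over a common incidence space with contractible fibres), at which point you would essentially be reproving the paper's lemma rather than bypassing it.
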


See Example~\ref{ex_hexagon-someCohom} for an illustration of this claim.
The theorem is stated more generally 
as Theorem~\ref{thm_toric_cohomology_in_terms_of_nef_polytopes}
in the context of semi-projective toric varieties.
It implies that the immaculacy of $D=(\Delta^+,\Delta^-)$ can be measured by the
fact whether $\Delta^- \setminus (\Delta^+ - \dm)$ is $\kk$-acyclic
for all shifts $\dm\in M$. See Subsection~\ref{imaculateCheck} for a
discussion of the notion of being $\kk$-acyclic.

Besides its elementary geometric nature, the description of sheaf cohomology
via the defining polyhedra in the vector space $M_\R$ also has another
advantage. It allows one to think about a generalization to the more general 
setup of Okounkov bodies, as introduced in \cite{okounkovLazarsfeld}: 
   after fixing a complete flag of subspaces in an arbitrary 
   (not necessarily toric) smooth projective variety $X$, 
   convex polytopes of sections $\sDelta(\CL)$
   are assigned to each invertible sheaf $\CL$.
Thus, a description of Cartier divisors $D$ via pairs of polytopes 
$(\Delta^+,\Delta^-)$ is possible, and one can ask for the relation
between $\gH^i(X,\CO_X(D))$, 
and the cohomology of the set theoretic differences 
$\Delta^- \setminus(\Delta^+ -\dm)$.
Since in especially nice situations the Okounkov bodies induce a toric
degeneration of $X$, see \cite{anderson}, semi continuity suggests
that the latter might serve as an upper bound for the first.

\subsection{Immaculate loci for toric varieties}
The ultimate goal of this project
is to understand the structure of the set of all immaculate line bundles on
a fixed toric variety $X=\toric(\Sigma)$
as a subset of the class group of $X$.
Although some of our statements are more general, throughout this introduction
we will assume $X$ is in addition smooth and projective.

We show that in sufficiently nice situations the immaculacy is preserved under
pullback, see Proposition~\ref{prop_pullback_of_immaculate} and
Corollary~\ref{cor_pullback_of_immaculate_on_toric_varieties}).
Moreover, in Definition~\ref{def-pImmac} we introduce a relative version of 
immaculacy, and we show how this stronger version is responsible
for the presence of certain linear strata within the immaculacy locus,
see Theorems~\ref{thm_p_immaculate_and_stable_immaculate}
and~\ref{thm_line_of_immaculates_on_toric_varieties}.
However, the example of the flag variety $\F(1,2,3)$ depicted in 
Figure~\ref{fig_Pic_of_flag} shows that not all of them (here it is affine lines)
can be explained by this notion. The diagonal immaculate line 
is not induced from any map giving rise to relative immaculacy.
Some features of Corollary~\ref{cor_pullback_of_immaculate_on_toric_varieties} and Theorem~\ref{thm_line_of_immaculates_on_toric_varieties}
 are summarised as the following statement.
 
\begin{theorem}\label{thm_intro_pullbacks_and_p_immaculate}
  Suppose $X$ and $Y$ are projective toric varieties and $p \colon X \to Y$ is a surjective toric morphism with connected fibres. 
  Let $\cL$ be a line bundle on $Y$, and let $D^-$ be a nef line bundle on~$X$. 
  \begin{enumerate}
   \item  
        $\cL$ is immaculate if and only if $p^*\cL$ is immaculate.
   \item \label{item_intro_p_immaculate}
        If $\cL$ is ample on $Y$, 
        then the following conditions are equivalent:
         \begin{itemize}
          \item for infinitely many integers $a$ the divisor $a\cdot  p^*\cL - D^-$ is immaculate,
          \item $p^*\cL' - D^-$ is immaculate for any line bundle $\cL'$ on $Y$,
          \item the image of the polytope $\Delta^-$ (of sections of $D^-$)
under the quotient map $M_X \mapsto M_X/M_Y$ has no internal lattice points.  
         \end{itemize}
  \end{enumerate}
\end{theorem}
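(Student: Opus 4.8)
The plan is to read everything off the cohomology theorem, which reduces immaculacy of a divisor $D=\Delta^+-\Delta^-$ to the requirement that $\Delta^-\setminus(\Delta^+-\dm)$ be $\kk$-acyclic for every $\dm$. The morphism $p$ is induced by a surjection of cocharacter lattices $N_X\to N_Y$ with saturated kernel, so dually $p^*\colon M_Y\hookrightarrow M_X$ is a saturated inclusion; write $V:=p^*((M_Y)_\R)\subseteq (M_X)_\R$ for the resulting subspace and $\pi\colon (M_X)_\R\to (M_X/M_Y)_\R=:W$ for the quotient, which is the real version of the map in item~(iii). For part~(1) I would invoke Corollary~\ref{cor_pullback_of_immaculate_on_toric_varieties}; concretely, connectedness of the fibres gives $Rp_*\CO_X=\CO_Y$, so the projection formula yields $\R\Gamma(X,p^*\cL)\cong\R\Gamma(Y,\cL)$ and the two immaculacies coincide. (Alternatively, writing $\cL=\Delta^+_Y-\Delta^-_Y$ one checks directly that the graded pieces of $\gH^i(X,p^*\cL)$ in degrees $\dm\in p^*M_Y$ agree with those of $\gH^i(Y,\cL)$, while those in degrees $\dm\notin V$ vanish because the relevant set is the contractible polytope $p^*\Delta^-_Y$.)

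For part~(2) the implication ``$p^*\cL'-D^-$ immaculate for all $\cL'$'' $\Rightarrow$ ``immaculate for infinitely many $a$'' is trivial, taking $\cL'=a\cL$. Everything else rests on one observation: since $\cL$ is ample, $a\cdot p^*\cL$ is represented by $\Delta^+_a=a\cdot p^*\Delta_{\cL}$, a polytope that is \emph{full-dimensional inside $V$}, and $\Delta^+_a-\dm$ lies in the single affine fibre $\pi^{-1}(-\pi(\dm))$. Thus for $\bar w_0:=-\pi(\dm)\in M_X/M_Y$ the removed set meets $\Delta^-$ only in the slice $F_0:=\Delta^-\cap\pi^{-1}(\bar w_0)$, and a supporting-hyperplane argument relates the position of $\bar w_0$ relative to $Q:=\pi(\Delta^-)$ (the image polytope of item~(iii)) to whether $F_0$ lies in a proper face of $\Delta^-$.

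I would prove (iii)$\Rightarrow$(ii) as follows. Decomposing a general $\cL'=A^+-A^-$ into nef summands, $p^*\cL'-D^-$ is represented by $\Delta^+=p^*A^+\subseteq V$ and $\Delta^-_{\mathrm{new}}=\Delta^-+p^*A^-$; since $p^*A^-\subseteq V=\ker\pi$ we still have $\pi(\Delta^-_{\mathrm{new}})=Q$, so I may assume $A^-=0$. Now fix $\dm$. If $\bar w_0\notin Q$ the removed set misses $\Delta^-$ and the leftover is the contractible polytope $\Delta^-$. If $\bar w_0\in Q$ it is a lattice point, hence by~(iii) not internal, so there is a linear functional $\ell$ on $W$ supporting $Q$ at $\bar w_0$; then $\tilde\ell:=\ell\circ\pi$ is constant $=c_{\max}$ on the whole fibre $\pi^{-1}(\bar w_0)$ and attains its maximum on $\Delta^-$ along a proper face, so $C:=(\Delta^+-\dm)\cap\Delta^-\subseteq\{\tilde\ell=c_{\max}\}$. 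Choosing a point $p_*\in\Delta^-$ with $\tilde\ell(p_*)<c_{\max}$, the straight-line homotopy toward $p_*$ stays in $\Delta^-$, strictly decreases $\tilde\ell$ for $t>0$, and therefore never re-enters $C$; hence $\Delta^-\setminus C$ is contractible. In every case the graded piece is $\kk$-acyclic, so $p^*\cL'-D^-$ is immaculate.

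Finally, for (i)$\Rightarrow$(iii) I argue by contraposition: suppose $Q$ has an internal lattice point $\bar w_0$. Using ampleness, for all large $a$ the full-dimensional polytope $\Delta^+_a$ can be lattice-translated within $V$ to engulf the bounded slice $F_0$, i.e.\ one can pick $\dm$ with $-\pi(\dm)=\bar w_0$ and $\Delta^+_a-\dm\supseteq F_0$; then $C=F_0$ and the leftover is $\Delta^-\setminus F_0$. Projecting along $\pi$, this space maps properly onto $Q\setminus\{\bar w_0\}$ with contractible fibres, so by a Vietoris--Begle argument $\tH^*(\Delta^-\setminus F_0)\cong\tH^*(Q\setminus\{\bar w_0\})$, and $Q\setminus\{\bar w_0\}\simeq S^{\dim Q-1}$ is \emph{not} $\kk$-acyclic because $\bar w_0$ is internal. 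Hence $a\cdot p^*\cL-D^-$ is non-immaculate for all large $a$, contradicting~(i). The main obstacles are the two convex-geometric lemmas — the face-retraction giving contractibility of $\Delta^-\setminus C$, and the fibrewise identification $\Delta^-\setminus F_0\simeq Q\setminus\{\bar w_0\}$ — together with the bookkeeping needed when $Q$ is not full-dimensional in $W$ (where ``internal'' must be read as relative interior) and the $\tH^{-1}(\emptyset)$ convention governing the degenerate slices.
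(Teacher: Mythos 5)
Your overall route is essentially the paper's: part (1) is exactly Corollary~\ref{cor_pullback_of_immaculate_on_toric_varieties}, and part (2) re-proves Theorem~\ref{thm_line_of_immaculates_on_toric_varieties} in the special case $D'=p^*\cL$, $M'=p^*M_Y$. Two of your steps check out and are even slightly cleaner than the paper's: for (iii)$\Rightarrow$(ii) your straight-line contraction of $\Delta^-\setminus C$ toward a point $p_*$ with $\tilde\ell(p_*)<c_{\max}$ replaces the paper's detour through $\partial\Delta^-$ and Corollary~\ref{cor_difference_of_closed_polytopes_and_boundary}, and it uniformly covers the ``anti-nef'' decompositions $\cL'=A^+-A^-$ (where your reduction $\pi(\Delta^-+p^*\Delta_{A^-})=Q$ is correct); your Vietoris--Begle identification $\Delta^-\setminus F_0\simeq Q\setminus\{\bar w_0\}$ is the same use of Smale's theorem as in the paper. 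You also correctly avoid the $p$-immaculacy machinery of Theorem~\ref{thm_p_immaculate_and_stable_immaculate}, which the statement as phrased does not require.

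There is, however, one genuine gap: in (i)$\Rightarrow$(iii) you only establish non-immaculacy of $a\cdot p^*\cL-D^-$ for all \emph{large positive} $a$, and then claim this ``contradicts (i)''. It does not: condition (i) quantifies over integers $a$, so the infinitely many immaculate values could tend to $-\infty$, where your engulfing argument is unavailable --- for $a<0$ the nef decomposition is $\Delta^+=\{0\}$ and $\Delta^-_{\mathrm{new}}=\Delta^-+|a|\,p^*\Delta_{\cL}$, and there is no full-dimensional $\Delta^+_a$ in $V$ to translate over the slice $F_0$. The paper's proof of Theorem~\ref{thm_line_of_immaculates_on_toric_varieties}, (2)$\Rightarrow$(3), explicitly splits into a subsequence $a_i\to+\infty$ (your case) and one with $a_i\to-\infty$, treating the latter via Example~\ref{ex-intLattA}: immaculacy of $-(D^-+|a_i|\,p^*\cL)$ forces $\Delta^-+|a_i|\,p^*\Delta_{\cL}$ to have no relative-interior lattice points, and letting $|a_i|\to\infty$ forces the same for $\pi(\Delta^-)=Q$. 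To repair your contraposition you would need the corresponding statement: if $\bar w_0\in\relint Q$ is a lattice point, then for $|a|\gg 0$ the fibre slice $F_0+|a|\,p^*\Delta_{\cL}$ is full-dimensional in $\pi^{-1}(\bar w_0)$ (which carries the full-rank affine lattice $x_L+p^*M_Y$, by saturation of $p^*M_Y$), hence contains a lattice point deep in its relative interior, and such a point lies in $\relint\bigl(\Delta^-+|a|\,p^*\Delta_{\cL}\bigr)$ because $\bar w_0\in\relint Q$. This is true but is a substantive missing step; without it the cycle (iii)$\Rightarrow$(ii)$\Rightarrow$(i)$\Rightarrow$(iii) is broken.
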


In Section~\ref{avoidTemptations} we demonstrate our principal approach
to obtain the immaculacy locus. It uses the natural map
$\pi:\Z^{\Sigma(1)}\to\Cl(X)$ assigning to each $T$-invariant divisor its
class. All non-immaculate classes,
that is, those carrying some cohomology, must be contained in some 
of the so-called $\CR$-maculate images 
$$
\CM_\Z(\CR)=\pi(\pms)
$$
for certain ``tempting'' subsets $\CR\subseteq\Sigma(1)$.
The notion of temptation is introduced in
Definition~\ref{def-tempting}; it selects those subsets such that
the induced subcomplexes of $\Sigma$ in $N_\R$ have some cohomology
after being intersected with the unit sphere.

Thus, to recognise the immaculacy locus in the Picard group involves two different problems.
First, one has to find an efficient method to identify the tempting subsets 
$\CR\subseteq\Sigma(1)$. In Subsection~\ref{sect_three_temptations}
we have collected some standard situations implying or avoiding
immediate temptation. In small examples they already suffice to check the
status of most subsets of $\Sigma(1)$.
The second problem is to keep control over the interrelation of the different
maculate sets or of their convex counterparts, the so-called maculate regions.
While a divisor class cannot be immaculate if it is touched by one single
maculate set, one has to check all of these regions for checking the opposite.
This behavior is much better around the vertices of the maculate regions
-- and this is the content of Theorem~\ref{cor-injMacCube}.

\subsection{Special situations}
After these general investigations, we turn to very concrete situations.
In Section~\ref{immSplit} we look at the situation of splitting fans,
that is, of those fans where all primitive collections 
(see Subsection~\ref{3rd_temptation} for a definition) are mutually disjoint.
While we have already remarked in Subsection~\ref{sitToric} that the
existence of \fes is known for this class, we understand this situation from a
different viewpoint -- namely by describing the entire locus of immaculate
line bundles. The main result is contained in 
Theorem~\ref{thm_immaculates_for_splitting_fan},
A special case of this class is the smooth, projective toric varieties of
Picard rank 2. We have decided to treat these varieties in a separate 
section. On the one hand, the result can be described in a very
clear manner -- we did this in 
Theorem~\ref{thm_immaculates_for_Pic_rank_2} -- and serves as a
concrete example to illustrate the more general situation of
Section~\ref{immSplit}. On the other, it is a good starting point for
the much tougher situation of Picard rank 3 coming in Section~\ref{immPicThree}.

Without going into details of the notation, the highlights of the results in Sections~\ref{PicTwo}--\ref{immPicThree} 
   can be summarised in the following theorem:
\begin{theorem}
   Suppose $X$ is a smooth projective toric variety. 
   \begin{itemize}
      \item 
         If the Picard rank of $X$ is $2$ and $X$ is not a product of projective spaces, 
         then the set of immaculate line bundles in the Picard group forms a 
union of finitely many parallel (infinite) lines 
 {\em (arising as in
Theorem~\ref{thm_intro_pullbacks_and_p_immaculate}\ref{item_intro_p_immaculate}
from a projection $p \colon X \to \PP^{\ell_1-1}$)}
         and two bounded triangles.
      \item 
         If the fan of $X$ is a splitting fan, in particular $X =X_{k}= \PP (\cL_1 \oplus \dotsb \oplus \cL_{\ell_k})$ 
            for line bundles $\cL_i$ on a smaller splitting fan variety $X_{k-1}$,
            then set of immaculate line bundles contains the pullbacks of immaculate line bundles from $X_{k-1}$, their Serre duals,
            and a family of $\ell_k -1$  hyperplanes arising as
               in Theorem~\ref{thm_intro_pullbacks_and_p_immaculate}\ref{item_intro_p_immaculate}
               from the projection $p \colon X_k \to X_{k-1}$.
         Moreover, for sufficiently ``general'' choices of  $\cL_i$, these are all immaculate line bundles on $X$ 
{\em (see Theorem~\ref{thm_immaculates_for_splitting_fan} for the exact
phrasing of the sufficiently ``general'' condition)}.  
      \item 
        If the Picard rank of $X$ is $3$ and $X$ does not have a splitting fan, 
            then the set of immaculate line bundles contains a collection of parallel lines 
            (parametrised by lattice points in the union of two parallelograms), 
            and a finite collection of bounded line segments.
        For sufficiently general 
{\em (see Proposition~\ref{prop-mainPicThree})} choices of such $X$, these are all immaculate line bundles. 
   \end{itemize}
\end{theorem}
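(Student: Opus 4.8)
The three bullet points correspond to the three classes of varieties treated in Sections~\ref{PicTwo}, \ref{immSplit} and~\ref{immPicThree}, and the plan is to deduce each one from the detailed statement proved there---namely Theorem~\ref{thm_immaculates_for_Pic_rank_2}, Theorem~\ref{thm_immaculates_for_splitting_fan}, and Proposition~\ref{prop-mainPicThree}, respectively. The common strategy is uniform: first pin down the combinatorial type of $X$ via the relevant structural classification, and then run the machinery of Section~\ref{avoidTemptations}, that is, enumerate the tempting subsets $\CR\subseteq\Sigma(1)$, compute the associated maculate regions $\CM_\Z(\CR)=\pi(\pms)$, and describe the immaculate locus as the complement of their union inside $\Cl(X)$. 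That the unbounded strata turn out to be linear is no accident: they are precisely the relative-immaculate loci supplied by Theorem~\ref{thm_intro_pullbacks_and_p_immaculate}\ref{item_intro_p_immaculate}, and matching these up with the complement of the maculate regions is what links the explicit picture to the conceptual one.

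For Picard rank~$2$ I would begin from Kleinschmidt's classification, which realises a smooth projective $X$ of Picard rank~$2$ as a projective space bundle $\PP(\cL_1\oplus\dotsb\oplus\cL_{\ell_2})\to\PP^{\ell_1-1}$. Since $\Cl(X)\cong\Z^2$, there are only finitely many tempting subsets and they can be listed by hand; their maculate regions are half-planes and strips, and a direct computation shows that the complement of their union consists of the parallel lines coming from the projection $p\colon X\to\PP^{\ell_1-1}$ together with two bounded triangles. Excluding products of projective spaces is what prevents these two regions from degenerating into the unbounded strip configuration of $\PP^a\times\PP^b$. For splitting fans I would induct on the height of the projective-bundle tower $X_k=\PP(\cL_1\oplus\dotsb\oplus\cL_{\ell_k})\to X_{k-1}$: the pullback part of Theorem~\ref{thm_intro_pullbacks_and_p_immaculate} lifts immaculate bundles from $X_{k-1}$ to $X_k$ and, through Serre duality, yields their dual family, while the relative-immaculacy part contributes exactly $\ell_k-1$ new hyperplanes at each step. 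The sufficiently general hypothesis on the $\cL_i$ is precisely what rules out further tempting subsets, so that these strata exhaust the immaculate locus.

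The Picard rank~$3$ case is where the real work lies. Here I would invoke Batyrev's classification of smooth projective toric varieties of Picard rank~$3$ and restrict to the families whose fan is not splitting; these are governed by a short, explicit list of configurations of primitive collections. For each such family one enumerates the tempting subsets---now genuinely controlled by the overlaps of the primitive collections and the cohomology of the induced subcomplexes on the unit sphere---and computes the corresponding maculate regions in $\Cl(X)\cong\Z^3$. The assertion is then that the complement decomposes into a family of parallel lines, arising from a contraction of $X$ and parametrised by lattice points in a union of two parallelograms, together with finitely many bounded line segments. The main obstacle is the genericity clause: to show that for generic $X$ in each Batyrev family no immaculate classes survive beyond the listed strata, one must control how the maculate regions attached to distinct tempting subsets intersect. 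This is exactly the regime handled by the injectivity-at-vertices statement Theorem~\ref{cor-injMacCube}, and the technical core of the proof is a careful case analysis, family by family, checking that the maculate regions cover everything outside the claimed lines and segments while leaving those strata uncovered.
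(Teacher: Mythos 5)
Your high-level route is the paper's own: the theorem is proved there simply by assembling Theorem~\ref{thm_immaculates_for_Pic_rank_2}, Theorem~\ref{thm_immaculates_for_splitting_fan} and Proposition~\ref{prop-mainPicThree}, and those are established essentially as you outline --- Kleinschmidt's classification and the enumeration of the four tempting subsets $\emptyset$, $U$, $V$, $\Sigma(1)$ in rank two (a small slip: the maculate regions there are four shifted two-dimensional cones, ``leaning quadrants'', not half-planes and strips, though the complement computation is unaffected); induction along the tower $X_k=\PP(\cL_1\oplus\dotsb\oplus\cL_{\ell_k})\to X_{k-1}$ for splitting fans, with Lemma~\ref{lem-temptingSplit} identifying the tempting sets as unions of primitive collections and the generating seeds plus $\kuc$-hull organising the answer; and Batyrev's five-primitive-collection classification in rank three.

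Two points in your sketch, however, need repair. First, you silently identify ``complement of the union of the real maculate regions $\CM_{\R}(\CR)$'' with ``the set of all immaculate line bundles''. The former is only $\Imm_{\R}(X)$, and in general $\Imm_{\R}(X)\subsetneq\Imm_{\Z}(X)$: Example~\ref{ex_immaculate_but_not_really_immaculate} exhibits a smooth splitting-fan variety carrying an immaculate line bundle that is not really immaculate. To conclude that the listed strata are \emph{all} immaculate bundles, the paper must prove $\CM_{\Z}(\CR)=\CM_{\R}(\CR)\cap\Cl(X)$ in each case --- via smooth tail cones whose ray generators lie in the image of $\pms$ in rank two, via the Hilbert-basis verification of Remark~\ref{rem_everyone_is_really_immaculate_Pic_3} in rank three, and for splitting fans as part of the content of Theorem~\ref{thm_immaculates_for_splitting_fan}, where equality of the two loci holds only for general $\kuc$. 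Second, Theorem~\ref{cor-injMacCube} cannot carry the genericity clause in Picard rank three: it concerns only the classes $[D_{\CR}]=\pi(v(\CR))$ at vertices of the cube, asserting that such a class is maculate only through the temptation of $\CR$ itself, and it gives no control over how the twelve regions $\CM_{\R}(\CR)$ cover arbitrary points of $\Cl(X)\otimes\R\cong\R^3$, which is what the genericity statement requires. The actual mechanism in the sketch of Proposition~\ref{prop-mainPicThree} is the projection of all twelve maculate regions to the $(y,z)$-plane, elimination of candidate tempting sets by comparing projections, and casework along the $x$-direction, with the largeness hypotheses on $b_{p_3}$ and $c_{p_2}$ forcing the coverage --- the rank-three analogue of the condition on $\kuc_{j,j+1}$ in Theorem~\ref{thm_immaculates_for_splitting_fan}. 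With those two substitutions your outline matches the paper's proof.
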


The article concludes with Section~\ref{sec:computational}, which briefly treats the computational aspects of the approach. 

Throughout the paper the theory will be illustrated by
one running example. We call it the hexagon example since
$\Sigma$ equals the normal fan of a lattice hexagon in $\R^2$.
The associated toric variety is the del Pezzo surface of degree 6,
which equals the blowing up of $\PP^2$ in three points. In particular, it has
Picard rank 4 which makes it possible to demonstrate many possible features
explicitly. The example is spread under the names
Example~\ref{ex_hexagon_introduce_coordinates},
 \ref{ex_hexagon-someCohom},
 \ref{ex_hexagon-pullBack},
 \ref{ex_hexagon-lines},
 \ref{ex_hexagon_tempting_subsets},
 \ref{ex_hexagon_all_really_immaculate},
 \ref{ex_hexagon_first_temptation},
 \ref{ex_hexagon_second_temptation},
 \ref{ex_hexagon_third_temptation},
 and~\ref{ex_hexagon_cube}.
In addition, its immaculate locus and exceptional sequences can be completely recovered 
  from computer calculations, which are summarised in Section~\ref{sec:computational}.

\subsection*{Acknowledgements}
The project was initiated by the work within the DerivedTV
group during the special semester at the Fields Institute in 2016.
In particular, we would like to thank 
Jenia Tevelev and Barbara Bolognese for interesting discussions
and the Fields Institute for hosting us.
Many thanks also to Piotr Achinger, Weronika Buczy{\'n}ska, Oskar K{\k e}dzierski, 
   and Mateusz Micha{\l}ek for discussing several issues 
concerning exceptional sequences and immaculate line bundles.
Buczy{\'n}ski is partially supported by Polish National Science Center (NCN), project 2013/11/D/ST1/02580 
   and by a scholarship of Polish Ministry of Science.
Kastner is supported by the Collaborative Research Centre TRR195 of the German research foundation (DFG SFB/Transregio 195).

\section{Differences of polytopes}
\label{pairPol}

In Section~\ref{cohomPolyhedra} we will encode invertible sheaves on
projective toric varieties by pairs of polytopes. Then, the cohomology 
of these sheaves will
be expressed by the differences of shifts of the polytopes. Hence, we will
start with gathering some general remarks about this construction.

\subsection{Removing open subsets}
\label{sect_homotopy_of_polytopal_complexes}
Fix a real vector space, e.g.\ $\RR^d$ with the Euclidean topology. 
In this subsection we will show that certain subsets of $\RR^d$ are 
homotopy equivalent. 
In fact, in most statements below, for  $A\subset B \subset \RR^d$ we will show 
that $A$ is a \emph{strong deformation retract}  of $B$.
Recall, that a \emph{retract} is a continuous map $r\colon B \to A$, such that $r|_{A} = \id_A$, and a \emph{strong deformation retract}
  is a retract which is homotopic to the identity $\id_B$ in a way that preserves $A$, 
  that is there exists continuous $H\colon B\times [0,1] \to B$, 
such that $H|_{A\times [0,1]}(a,t) = a$, $\,H(\cdot , 0) = \id_B$, 
  and $\,H(\cdot , 1)\colon B \to A$ is the retract of $B$ to $A$.
We will mostly use the \emph{standard} ``strong deformation'', that is, once 
we have defined $r$, the standard definition of $H$ is $H(b,t)= tb + (1-t)r(b)$.
Note that this requires that the interval between $b$ and $r(b)$ is 
contained in $B$, which will often be guaranteed by some sort of convexity.
This standard way of defining $H$ will allow us to glue together several such homotopies.

For a convex subset $P \subset \RR^d$, by its \emph{span} we mean the 
smallest affine subspace containing $P$.
The \emph{relative interior} $\relP$ of $P$ is its interior as a subset 
of its span. 
Analogously, the \emph{relative boundary} $\partial P$ is the boundary of 
$P$ within $\spann P$.
Note that every convex subset of $\RR^d$ contains an open subset of its 
span, so the relative interior of non-empty
$P$ is never empty either. 

\begin{lemma}
\label{lem_difference_of_convex_polytope_and_open_convex_set}
   Let $P\subset \RR^d$ be a compact convex subset and let $Q\subset \RR^d$ be an open convex subset.
   If $P \cap Q \ne \emptyset$, 
   then $(\partial P) \setminus Q$ is a strong deformation retract of $P \setminus Q$.
\end{lemma}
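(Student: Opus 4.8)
The plan is to realise the deformation retraction by radial projection from a single interior basepoint, pushing every point of $P\setminus Q$ outward along a ray until it reaches $\partial P$. First I would reduce to the full-dimensional case: since $P\setminus Q$, $(\partial P)\setminus Q$ and $P\cap Q$ are all unchanged when $Q$ is replaced by $Q\cap\spann P$, I may assume $\spann P=\RR^d$, so that $\partial P$ is the ordinary topological boundary and $\innt P\neq\emptyset$. Next I would fix a basepoint $x_0\in(\innt P)\cap Q$. Such a point exists: starting from any $x\in P\cap Q$, openness of $Q$ yields a ball around $x$ contained in $Q$, and this ball meets $\innt P$ because every point of a convex body lies in the closure of its interior; any point of that intersection serves as $x_0$.

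With $x_0$ fixed, I would use the Minkowski gauge $g$ of the convex body $P-x_0$ (which contains $0$ in its interior), so that $g$ is continuous and positively homogeneous with $\{g\le 1\}=P-x_0$, $\{g<1\}=(\innt P)-x_0$, and $\{g=1\}=(\partial P)-x_0$. For $y\in P\setminus Q$ we have $y\neq x_0$ (since $x_0\in Q$), and I would define the retraction
\[
  r(y)=x_0+\tfrac{1}{g(y-x_0)}\,(y-x_0),
\]
the point where the ray from $x_0$ through $y$ exits $P$. As $g\bigl(r(y)-x_0\bigr)=1$, the image lies in $\partial P$, and $r$ is continuous because $g$ is continuous and strictly positive off $x_0$. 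If $y\in\partial P$ then $g(y-x_0)=1$ and $r(y)=y$, so $r$ fixes $(\partial P)\setminus Q$ pointwise.

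The heart of the argument, and the step where convexity of $Q$ is essential, is to verify that the radial motion never re-enters $Q$. Fixing $y\in P\setminus Q$ and writing $\gamma(t)=x_0+t(y-x_0)$, the set $\{t\ge 0:\gamma(t)\in Q\}$ is an interval that is relatively open in $[0,\infty)$ and contains $0$ (because $Q$ is open and convex and contains $x_0=\gamma(0)$); hence it equals $[0,s)$ with $s\le 1$, the bound $s\le 1$ coming from $\gamma(1)=y\notin Q$. Since $r(y)=\gamma(t_{\max})$ with $t_{\max}=1/g(y-x_0)\ge 1\ge s$, the whole sub-ray $\gamma([1,t_{\max}])$ avoids $[0,s)$ and therefore avoids $Q$; it also lies in $P$ because $g(\gamma(t)-x_0)=t\,g(y-x_0)\le 1$ there. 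In particular $r(y)\in(\partial P)\setminus Q$.

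Finally I would take the standard straight-line homotopy $H(y,\tau)=(1-\tau)\,y+\tau\,r(y)$ between $\id_{P\setminus Q}$ and $r$. For each $y$ the path $\tau\mapsto H(y,\tau)$ traces exactly the segment $\gamma([1,t_{\max}])$ analysed above, so it stays inside $P\setminus Q$; and on $(\partial P)\setminus Q$ it is constant since there $r(y)=y$. This exhibits $H$ as a strong deformation retraction of $P\setminus Q$ onto $(\partial P)\setminus Q$, as claimed. The only genuine subtlety is the containment established in the third paragraph, which fails without convexity of $Q$ and $x_0\in Q$; the continuity of $r$ and of $H$ is just the routine continuity of the gauge function.
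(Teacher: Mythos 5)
Your proof is correct and follows essentially the same route as the paper: both pick a basepoint in $(\relint P)\cap Q$ and radially project onto the boundary with the standard straight-line homotopy. Your write-up merely fills in details the paper leaves implicit (the reduction to $\spann P$, continuity via the Minkowski gauge, and the interval argument showing the outward ray never re-enters the open convex set $Q$).
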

\begin{proof}
   Since $Q$ is open and $P \cap Q \ne \emptyset$, 
      there exists a point $p_0 \in \relP \cap Q$.
   Define the retract $r\colon P\setminus \set{p_0} \to \partial P$ by $r(p)$ to be the unique point on the boundary $\partial P$ 
      that is contained in the semiline originating at $p_0$ and passing through $p$. 
   Since $P$ and $Q$ are convex, the standard strong deformation map $H$ is well defined, showing the claim.
\end{proof}

We adapt the convention that polyhedra are intersections of finitely many
closed halfspaces, polytopes denote bounded hence compact polyhedra,
that the empty set is a $(-1)$-dimensional face of every convex polytope, 
and that each $P$ is a face of itself.
In particular, polytopes and polyhedra are always convex.
A proper face is any face that is not $\emptyset$ or $P$.
By a \emph{(finite) polytopal complex} we mean a finite collection $\Xi$ of compact convex polytopes in $\RR^d$ satisfying the usual conditions:
\begin{itemize}
   \item if $P\in \Xi$, then every face of $P$ is in $\Xi$, and
   \item if $P_1, P_2 \in \Xi$, then $P_1 \cap P_2$ is a face of both $P_1$ and $P_2$.
\end{itemize}
Note that the \emph{support} of a polytopal complex $\Xi$, $\supp \Xi := \bigcup \set{P : P \in \Xi} \subset \RR^d$ is compact. 
A convex polytope $P$ gives rise to a natural polytopal complex $\set{F: \text{$F$ is a face of $P$}}$, whose support is $P$.

For a polytopal complex $\Xi \subset \RR^d$, and a convex subset $Q\subset \RR^d$
    we denote by $\CC(\Xi,Q)$ the polytopal complex 
    \[
      \CC(\Xi,Q)= \set{F \in \Xi \mid F\cap Q = \emptyset}. 
    \]
If $P\subset \RR^d$ is a convex polytope, then this gives rise to the
special case
    \[
      \CC(P,Q)= \set{F  \mid \text{$F$ is its face, and } F\cap Q = \emptyset}. 
    \]
For example, for
$
\begin{tikzpicture}[baseline=.4cm,scale=0.5]
\draw[step=1, ciemnyblekit, thin] (-.5,-.5) grid (2.5, 2.5);
\fill[pattern color=fiolet!20, pattern=north east lines] (0,0) -- (2,0) -- (0,2)
-- cycle;
\draw[thick, fiolet] (0,0) node[anchor=north west]{$P$} -- (2,0) -- (0,2) --
cycle;
\fill[thick, granat] (0,1) node[anchor=north west]{$Q$} circle (4pt);
\end{tikzpicture}
$
we get 
$
\begin{tikzpicture}[baseline=.4cm,scale=0.5]
\draw[step=1, ciemnyblekit, thin] (-.5,-.5) grid (2.5, 2.5);
\draw[thick, fiolet] (0,0) -- (2,0) -- (0,2);
\end{tikzpicture}
$
as $\Ccap(P,Q)$.

This leads to an analogue of 
Lemma~\ref{lem_difference_of_convex_polytope_and_open_convex_set} 
for $P$ replaced with a polytopal complex.    
\begin{proposition}\label{prop_retract_on_the_boundary_complex}
      Let $\Xi$ be a polytopal complex and $Q$ an open convex set. 
      Then $\supp \CC(\Xi,Q)$ is a strong deformation retract of $(\supp \Xi) \setminus Q$.
\end{proposition}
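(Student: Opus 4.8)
The plan is to reduce the statement to the single-polytope case of Lemma~\ref{lem_difference_of_convex_polytope_and_open_convex_set} and to peel off the faces meeting $Q$ one at a time, by induction on the number of such faces. The whole point is that the radial retraction supplied by that lemma is the identity on the boundary of its polytope, which is precisely where the faces of a complex are glued together; this is what will let the local retractions assemble into a global one.

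First I would record the combinatorial observation that the set $\Xi_Q := \{F \in \Xi \mid F \cap Q \neq \emptyset\}$ of faces meeting $Q$ is upward closed in the face poset: if $F \in \Xi_Q$ is a face of $G \in \Xi$, then $G \cap Q \supseteq F \cap Q \neq \emptyset$. Hence $\CC(\Xi,Q) = \Xi \setminus \Xi_Q$ is indeed a (downward closed) subcomplex, and, more importantly, any $F$ that is maximal under inclusion among the elements of $\Xi_Q$ is automatically a maximal face of the whole complex $\Xi$. Deleting such a maximal face yields a polytopal complex $\Xi' := \Xi \setminus \{F\}$. The induction is on $|\Xi_Q|$; the base case $\Xi_Q = \emptyset$ is trivial, since then $\supp\Xi \cap Q = \emptyset$, the two spaces coincide, and the identity is the required retraction.

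For the inductive step I would fix such a maximal $F \in \Xi_Q$. Since every proper face of $F$ lies in $\Xi'$, while no larger face contains $F$, one checks that $\supp\Xi = (\supp\Xi') \cup F$ with $(\supp\Xi') \cap F = \partial F$. Removing $Q$ then exhibits $\supp\Xi \setminus Q$ as the union of the two closed pieces $\supp\Xi' \setminus Q$ and $F \setminus Q$, overlapping exactly in $(\partial F)\setminus Q$. Applying Lemma~\ref{lem_difference_of_convex_polytope_and_open_convex_set} to the compact convex $F$ and the open convex $Q$ (note $F \cap Q \neq \emptyset$) produces the radial retraction $r_F \colon F \setminus Q \to (\partial F)\setminus Q$ with its standard straight-line deformation, where $r_F$ fixes $(\partial F)\setminus Q$ pointwise. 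Defining a global map to be $r_F$ on $F \setminus Q$ and the identity on $\supp\Xi' \setminus Q$, the two prescriptions agree on the overlap $(\partial F)\setminus Q$ (both are the identity), so the pasting lemma yields a continuous retraction; the same recipe glues the straight-line homotopy on $F \setminus Q$ to the constant homotopy on $\supp\Xi' \setminus Q$. This shows $\supp\Xi' \setminus Q$ is a strong deformation retract of $\supp\Xi \setminus Q$. Since $F$ meets $Q$ it never belonged to $\CC$, so $\CC(\Xi',Q) = \CC(\Xi,Q)$ while $|\Xi'_Q| = |\Xi_Q| - 1$; the inductive hypothesis then makes $\supp\CC(\Xi,Q) = \supp\CC(\Xi',Q)$ a strong deformation retract of $\supp\Xi' \setminus Q$, and concatenating the two deformations gives the claim.

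The step requiring genuine care — and the reason the \emph{standard} straight-line homotopies were singled out earlier — is the gluing. I must ensure that the local deformation of $F$ leaves $\partial F$ fixed (so that it is compatible with keeping the rest of the complex stationary) and that the straight segments it sweeps remain inside $F \setminus Q$ instead of re-entering $Q$. Both facts are already contained in the proof of Lemma~\ref{lem_difference_of_convex_polytope_and_open_convex_set}: a segment along a ray from the centre $p_0 \in \relP \cap Q$ lying beyond the exit point of the convex set $Q$ stays outside $Q$. Thus the only real work is organisational rather than geometric, and the transitivity of strong deformation retracts closes the induction.
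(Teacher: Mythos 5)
Your proof is correct and follows essentially the same route as the paper: induct by deleting one inclusion-maximal face meeting $Q$ (the paper picks one of maximal dimension, which amounts to the same thing), apply Lemma~\ref{lem_difference_of_convex_polytope_and_open_convex_set} to that face, and glue its radial strong deformation retraction with the identity on the rest of the complex along the relative boundary. The only difference is that you spell out the gluing via the pasting lemma and verify $\CC(\Xi',Q)=\CC(\Xi,Q)$, details the paper compresses into ``this follows directly by applying the Lemma.''
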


\begin{proof}
    We argue by induction on the number of elements (faces) of $\Xi$.
    If  $Q \cap \supp \Xi = \emptyset$, or equivalently, $\CC(\Xi,Q) = \Xi$, then there is nothing to prove.
    So suppose $P\in \Xi$ is such that $P \cap Q\ne \emptyset$ and assume that $P$ has maximal possible dimension among such faces.
    Then there is no other face $F\in \Xi$ that intersects the 
relative interior $\relP$.
    In particular, $\Xi' := \Xi \setminus \set{P}$ is a polytopal complex, such that $\supp \Xi' \cap P = \partial P$.
    By the inductive assumption,
       $\supp \CC(\Xi,Q) = \supp \CC(\Xi',Q)$ is a strong deformation retract of $(\supp \Xi') \setminus Q$.

    It remains to show, that  $(\supp \Xi') \setminus Q$ is a strong deformation retract of $(\supp \Xi) \setminus Q$.
But this follows directly by applying
Lemma~\ref{lem_difference_of_convex_polytope_and_open_convex_set}.
\end{proof}

\subsection{Compact approximation of open semialgebraic sets}
\label{compApprox}
Now we will discuss a way of replacing a semialgebraic set in $\RR^d$ 
with a homotopy equivalent subset that is additionally closed in $\RR^d$.

\begin{proposition}\label{prop_retract_on_a_closed_subset}
   Suppose $X\subset \RR^d$ is a compact semialgebraic subset of $\RR^d$.
   Let $\phi\colon \RR^d\to \RR$ be a continuous, piecewise polynomial function.
   Denote by $\phi_{>0} := \phi^{-1}((0, \infty))$ the set of points that are mapped to the positive axis, 
      and for $\epsilon\in \RR$  define $\phi_{\ge \epsilon} := \phi^{-1}([\epsilon, \infty))$.
   Then there exists a real number $c>0$ such that for all $0<\epsilon\le c$ 
      the intersection $X\cap \phi_{\ge \epsilon}$ is a strong deformation retract of $X \cap \phi_{>0}$.
\end{proposition}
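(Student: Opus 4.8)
The plan is to produce a \emph{collar} neighbourhood of the level set $\phi=0$ inside $X\cap \phi_{>0}$ and then to retract along it, pushing everything of small positive $\phi$-value up to the level $\epsilon$. The right technical framework is semialgebraic geometry: since $\phi$ is continuous and piecewise polynomial, its restriction $\phi|_X\colon X\to \RR$ is a semialgebraic map, and $X\cap\phi_{>0}$, $X\cap\phi_{\ge\epsilon}$ are semialgebraic. First I would invoke Hardt's semialgebraic triviality theorem for $\phi|_X$: there is a finite set of ``special'' values $S\subset\RR$ such that over every connected component of $\RR\setminus S$ (an open interval) the map $\phi|_X$ is semialgebraically trivial. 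As $S$ is finite, I may choose $c>0$ so small that the half-open interval $(0,c]$ meets no special value; then $\phi|_X$ is trivial over an open interval $I$ with $(0,c]\subset I$. Concretely this gives a semialgebraic homeomorphism
\[
   h\colon I\times F \xrightarrow{\ \sim\ } X\cap\phi^{-1}(I), \qquad \phi(h(t,x))=t,
\]
where $F = X\cap\phi^{-1}(t_0)$ is the fibre over a fixed $t_0\in I$.

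Now fix any $\epsilon$ with $0<\epsilon\le c$. The set $X\cap\phi_{\ge\epsilon}=X\cap\phi^{-1}([\epsilon,\infty))$ is a closed subset of the compact set $X$, hence compact --- this is the promised compact approximation. I would define the retraction $r\colon X\cap\phi_{>0}\to X\cap\phi_{\ge\epsilon}$ by keeping points with $\phi\ge\epsilon$ fixed and sending a point $y=h(t,x)$ with $0<t=\phi(y)<\epsilon$ to $h(\epsilon,x)$, together with the homotopy
\[
   H(y,s)=\begin{cases} y & \text{if } \phi(y)\ge\epsilon, \\[2pt] h\big((1-s)\,\phi(y)+s\,\epsilon,\ x\big) & \text{if } y=h(t,x),\ 0<\phi(y)<\epsilon, \end{cases}
\]
for $s\in[0,1]$. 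Then $H(\cdot,0)=\id$ and $H(\cdot,1)=r$, and $H$ fixes $X\cap\phi_{\ge\epsilon}$ pointwise for every $s$, so $r$ is a strong deformation retraction. The interpolated level $(1-s)\phi(y)+s\,\epsilon$ lies in $(0,\epsilon]\subset(0,\infty)$, so $H$ stays inside $X\cap\phi_{>0}$; and since $h$ is defined on all of $I\supseteq(0,c]$, the formula makes sense for every $\epsilon\le c$.

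The only points needing care are continuity and the behaviour along the interface $\{\phi=\epsilon\}$. Continuity on each of the two open regions is immediate from continuity of $h$ and of $\phi$. On $\{\phi=\epsilon\}$ a point has the form $y=h(\epsilon,x)$, and the lower branch evaluates to $h((1-s)\epsilon+s\,\epsilon,\,x)=h(\epsilon,x)=y$, matching the upper (identity) branch for all $s$; thus the two definitions glue to a continuous map. This automatic matching is precisely why interpolating linearly in the $\phi$-coordinate is the correct choice, and it also lets one glue this homotopy to the identity beyond level $c$ without reference to any trivialization there.

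I expect the main obstacle to be the clean invocation of semialgebraic triviality --- namely establishing the global semialgebraic homeomorphism $h$ with $\phi\circ h=\pr_1$ over $(0,c]$, together with the finiteness of $S$ that guarantees such a $c$ exists. Once this product structure is in hand, the construction of $r$ and $H$ and the verification that they form a strong deformation retract are routine. Note that, unlike the preceding polytopal statements, this proposition genuinely needs the tameness of the semialgebraic category rather than convexity, since $\phi$ is only piecewise polynomial and a naive gradient flow would be ill-behaved along the loci where the polynomial pieces meet.
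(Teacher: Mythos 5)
Your proof is correct, but it takes a genuinely different route from the paper. The paper proves Proposition~\ref{prop_retract_on_a_closed_subset} by fixing a Whitney stratification of $X$ and inducting on strata: for $c$ small, strata whose closures miss $\phi^{-1}(0)$ lie entirely in $X\cap\phi_{\ge\epsilon}$ and are never moved; one then takes an offending stratum $M$ of maximal dimension, retracts $\overline{M}\cap\phi_{>0}$ off the thin collar $M\cap\phi^{-1}(0,\epsilon)$ in the spirit of Proposition~\ref{prop_retract_on_the_boundary_complex}, replaces $X$ by $\overline{X\setminus(M_{<\epsilon}\cup\phi^{-1}(0))}$, and repeats --- an argument whose stratum-by-stratum gluing the paper leaves rather terse. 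You instead apply Hardt's semialgebraic triviality theorem to $\phi|_X$, use finiteness of the exceptional set $S$ to find $c>0$ with $(0,c]$ inside one trivializing interval, and then write a single global fibre-preserving homotopy by linear interpolation in the $\phi$-coordinate. What each approach buys: Hardt is the heavier theorem, but it produces an explicit collar $I\times F$ making the retraction, its continuity, and its strongness a two-line verification (your gluing check at $\{\phi=\epsilon\}$ is fine; to be fully precise, define the lower branch on the closed set $\{0<\phi\le\epsilon\}$ and invoke the pasting lemma for closed covers), whereas the stratification argument uses more elementary ingredients and matches the inductive template already used in Section~\ref{sect_homotopy_of_polytopal_complexes}. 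Two small points to make explicit: ``piecewise polynomial'' must be read as finitely many polynomial pieces on semialgebraic domains, so that $\phi$, and hence $\phi|_X$, is semialgebraic --- the same tameness the paper's stratification silently uses; and since $(0,c]$ is connected and avoids the finite set $S$, it indeed lies in a single component $I$ of $\RR\setminus S$, which is what your trivialization requires.
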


\begin{proof}
We may and will assume that $X$ is contained in $\phi_{\ge 0}$.
We use the Whitney stratification of $X$, see for example \cite{thom_stratified_sets_and_morphisms} or \cite{kaloshin_Whitney_stratifications}.
We argue by restricting to one stratum of $X$ at a time.
When $c$ is sufficiently small, then the strata whose closures do not intersect 
$\phi_0:=\phi^{-1}(0)$ are contained in $X\cap \phi_{\ge \epsilon}$.
Hence the homotopy does not move these strata.
   The strata that are contained in $\phi_0$ are neither existent in 
     $X\cap \phi_{\ge \epsilon}$ nor $X \cap \phi_{>0}$.
   Hence it is enough to consider the strata whose closures intersect $\phi_0$, but are not contained in $\phi_0$.
   Let $M$ be such a stratum, and suppose that $M$ has a maximal dimension among all such strata.

Define $M_{<\epsilon} \subset M$ to be the intersection 
$M \cap \phi^{-1}(0, \epsilon)$.
Similar to the proof of 
Proposition~\ref{prop_retract_on_the_boundary_complex},
     we can find a strong deformation retract of 
$\overline{M} \cap \phi_{>0}$ onto 
     $(\pd M \cap \phi_{>0}) \cup (M \cap \phi_{\ge \epsilon}) = \overline{M}\setminus M_{< \epsilon}$.
   Then we replace $X$ with $X' = \overline{X \setminus (M_{<\epsilon} \cup \phi_0)}$, and we can argue inductively to show the claim.
\end{proof}

Suppose $Q \subset \RR^d$ is a (compact) polytope 
defined by affine inequalities
$\phi_i(v)\ge 0$ for $i \in \setfromto{1}{k}$.
Let $\epsilon>0$ be a positive real number.
Then the \emph{$\epsilon$-widening of $Q$} (with respect to the collection of inequalities $\set{\phi_i(v)\ge 0 \mid i \in \setfromto{1}{k}}$) is 
the set:
\[
  Q_{> -\epsilon} := \set{v\in \RR^d \mid  \forall_{i} \ \phi_i(v) > - \epsilon}.
\]
Note that $Q_{> -\epsilon}$ is open and contains $Q$. 
The shape of $Q_{> -\epsilon}$ may depend on the choice of the inequalities defining $Q$, but we will ignore this dependence in our notation, 
  as it will be irrelevant to our statements.

\begin{lemma}
\label{lem_difference_of_convex_polytope_and_epsilon_widening}
Suppose $P,Q\subset \RR^d$ are two polytopes.
Then there exists a positive constant $c>0$, such that for all 
$0< \epsilon \le c$, the difference $P \setminus Q_{>-\epsilon}$ is a 
strong deformation retract of $P \setminus Q$.
Similarly, if $\Xi$ is a polytopal complex,
then $\supp \Xi \setminus Q_{>-\epsilon}$ is a strong deformation 
retract of $\supp \Xi \setminus Q$ for sufficiently small $\epsilon$.
\end{lemma}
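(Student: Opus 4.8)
The plan is to deduce the statement directly from Proposition~\ref{prop_retract_on_a_closed_subset} by repackaging the $\epsilon$-widening as a superlevel-set condition for a single piecewise-affine function. Fix the defining inequalities $Q = \set{v \mid \phi_i(v)\ge 0,\ i\in\setfromto{1}{k}}$ and set
\[
   \phi := \max_{1\le i\le k} (-\phi_i).
\]
Since each $\phi_i$ is affine, $\phi$ is continuous and piecewise affine, hence in particular continuous and piecewise polynomial, so it is an admissible input for Proposition~\ref{prop_retract_on_a_closed_subset}.

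First I would record the two set-theoretic identities that make the translation work. A point $v$ lies outside $Q$ exactly when $\phi_i(v)<0$ for some $i$, i.e.\ when $\phi(v)>0$; hence $P\setminus Q = P\cap \phi_{>0}$. Likewise $v\notin Q_{>-\epsilon}$ means $\phi_i(v)\le -\epsilon$ for some $i$, i.e.\ $\phi(v)\ge \epsilon$; hence $P\setminus Q_{>-\epsilon} = P\cap \phi_{\ge\epsilon}$. Both are immediate from the definitions of $Q$, of the widening, and of $\phi$, and involve no estimates.

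With these identities in hand I would apply Proposition~\ref{prop_retract_on_a_closed_subset} to the compact semialgebraic set $X:=P$ together with $\phi$: it yields a constant $c>0$ such that for all $0<\epsilon\le c$ the set $P\cap\phi_{\ge\epsilon}$ is a strong deformation retract of $P\cap\phi_{>0}$. Rewriting both sides via the identities above gives exactly that $P\setminus Q_{>-\epsilon}$ is a strong deformation retract of $P\setminus Q$. The polytopal-complex case is handled identically: one takes $X:=\supp \Xi$, which is again compact semialgebraic as a finite union of polytopes, and the same $\phi$, obtaining the corresponding constant.

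The argument is essentially formal once the encoding is chosen, so I do not expect a deep obstacle; the only points demanding care are verifying that $\phi=\max_i(-\phi_i)$ genuinely meets the continuity and piecewise-polynomiality hypotheses of Proposition~\ref{prop_retract_on_a_closed_subset}, and noting that the proposition does not require $X\subseteq\phi_{\ge 0}$ (its proof reduces to that case by intersecting $X$ with $\phi_{\ge 0}$, which changes neither $X\cap\phi_{>0}$ nor $X\cap\phi_{\ge\epsilon}$). One should also keep in mind that the resulting $c$ depends on the fixed collection $\set{\phi_i}$, which is consistent with the earlier remark that $Q_{>-\epsilon}$ depends on the chosen defining inequalities.
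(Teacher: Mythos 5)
Your proposal is correct, and the encoding is sound: with $\phi=\max_{1\le i\le k}(-\phi_i)$ one indeed has $P\setminus Q = P\cap\phi_{>0}$ and $P\setminus Q_{>-\epsilon} = P\cap\phi_{\ge\epsilon}$, the function $\phi$ is continuous and piecewise affine (hence an admissible input), both $P$ and $\supp\Xi$ are compact semialgebraic, and there is no circularity, since Proposition~\ref{prop_retract_on_a_closed_subset} is proved before and independently of Lemma~\ref{lem_difference_of_convex_polytope_and_epsilon_widening}. However, your route is genuinely different from the paper's. The paper proves the lemma directly and elementarily: after disposing of the trivial case $P\cap Q=\emptyset$, it fixes a point $v\in P\cap Q$ and constructs an explicit radial retraction $x\mapsto v+\lambda_x(x-v)$ along the lines $\ell_x$ through $v$ and $x$, with an explicit threshold $c=\min\set{c_x \mid x\in P\setminus Q}$ computed from the values $\min\set{\phi_i(y)\mid i\in\setfromto{1}{k},\ y\in\ell_x\cap P}$; the payoff of that construction is that the retraction and homotopy visibly preserve the faces of $P$, which is exactly what lets the paper glue the facewise retractions to obtain the polytopal-complex half of the statement. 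Your reduction buys brevity and uniformity --- the polytope and complex cases fall out of one and the same application of Proposition~\ref{prop_retract_on_a_closed_subset}, with no gluing argument needed, and your side remarks (that the normalization $X\subseteq\phi_{\ge 0}$ in that proposition's proof is harmless because intersecting $X$ with $\phi_{\ge 0}$ changes neither $X\cap\phi_{>0}$ nor $X\cap\phi_{\ge\epsilon}$, and that the resulting $c$ depends on the chosen defining inequalities) are exactly the right points of care. What it costs is that the lemma now rests on the Whitney-stratification machinery behind Proposition~\ref{prop_retract_on_a_closed_subset}, whose proof the paper itself only sketches, and the deformation and constant it yields are non-explicit; the paper's argument, by contrast, is self-contained convex geometry and produces an explicit, face-compatible strong deformation retract.
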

\begin{proof}
   Suppose $Q  = \set{\phi_i(v)\ge 0 \mid i \in \setfromto{1}{k}}$.
   If the intersection $Q \cap P$ is empty, then the statement is easy, 
just choose $c$ such that $P \cap Q_{>-c} = \emptyset$.
   So assume otherwise $Q \cap P \ne \emptyset$ and fix a point $v \in  Q \cap P$.
   For any $x \in P \setminus Q$ consider the unique line $\ell_x$ passing through $x$ and $v$.
   Let 
   \[
     c_x = -\frac{1}{\min \set{\phi_i(y) \mid i \in \setfromto{1}{k}, y \in
\ell_x\cap P}}.
   \]
   Note that $c_x>0$ and the set $\set{c_x\mid x \in  P \setminus Q}$ is closed,
      as its values are equal to those on $\supp\CC(P,Q)$, which is compact.
   So let $c=\min \set{c_x \mid x \in  P \setminus Q}$ and choose $0 <\epsilon \le c$.
   Then for every $x\in P \setminus Q$ the line $\ell_x$ has non-empty intersection
      with the compact set $P \setminus Q_{>-\epsilon}$.
   Define the retract as $x \mapsto r(x) =v + \lambda_x(x-v)$
      where $\lambda_x = \min \set{\lambda: \lambda \ge 0, v + \lambda_x(x-v) \in P \setminus Q_{>-\epsilon}}$.
   The standard homotopy $H(x,t) = t x + (1-t) r(x)$ gives the desired strong deformation.
   
   Note that in the above arguments, $r$ and $H$ preserve faces of $P$, in the sense, that if $F$ is a (closed) face of $P$, 
      and $r_{F}$ and $H_F$ are the retract and its deformation as above, but defined for $F$, 
      then $r_F = r_P|_F$ and $H_F = H_P|_{F\times [0,1]}$.  
   Thus, they glue well to define the appropriate retract and its strong deformation of $\supp \Xi \setminus Q_{>-\epsilon}$
      onto  $\supp \Xi \setminus Q$. 
\end{proof}

As a collorary we have an analogue of 
Proposition~\ref{prop_retract_on_the_boundary_complex} 
and Lemma~\ref{lem_difference_of_convex_polytope_and_open_convex_set}
for polytopes $Q$:
\begin{lemma}
\label{lem_difference_of_convex_polytope_and_closed_convex_set}
Let $\Xi\subset \RR^d$ be a polytopal complex, and let 
$Q\subset \RR^d$ be a polytope.
   Then $\supp \CC(\Xi,Q)$ is a strong deformation retract of $ \supp \Xi \setminus Q$.
   In particular, if $P \subset \RR^d$ is a polytope, then  $\supp \CC(P,Q)$ is a strong deformation retract of $P \setminus Q$.
\end{lemma}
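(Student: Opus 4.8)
The plan is to bootstrap from the two results just established. Proposition~\ref{prop_retract_on_the_boundary_complex} already treats the removal of an \emph{open} convex set, while Lemma~\ref{lem_difference_of_convex_polytope_and_epsilon_widening} allows one to replace the closed polytope $Q$ by its open $\epsilon$-widening $Q_{>-\epsilon}$ without altering the homotopy type of the complement. The bridge between the two is the elementary observation that $Q_{>-\epsilon}$, being a finite intersection of open half-spaces $\set{\phi_i > -\epsilon}$, is itself open and convex, and hence an admissible input for Proposition~\ref{prop_retract_on_the_boundary_complex}.

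Concretely, after fixing a presentation $Q = \set{v \mid \phi_i(v) \ge 0, \ i \in \setfromto{1}{k}}$, I would proceed in three steps. First, the polytopal-complex version of Lemma~\ref{lem_difference_of_convex_polytope_and_epsilon_widening} supplies a constant $c>0$ so that for every $0 < \epsilon \le c$ the set $\supp \Xi \setminus Q_{>-\epsilon}$ is a strong deformation retract of $\supp \Xi \setminus Q$. Second, Proposition~\ref{prop_retract_on_the_boundary_complex} applied to the open convex set $Q_{>-\epsilon}$ shows that $\supp \CC(\Xi, Q_{>-\epsilon})$ is a strong deformation retract of $\supp \Xi \setminus Q_{>-\epsilon}$. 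Since strong deformation retracts compose, chaining these two retractions yields the claim, \emph{provided} the two subcomplexes $\CC(\Xi, Q_{>-\epsilon})$ and $\CC(\Xi, Q)$ coincide.

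That identification is the step I expect to carry the real content. One inclusion is automatic: as $Q \subseteq Q_{>-\epsilon}$, any face $F \in \Xi$ disjoint from $Q_{>-\epsilon}$ is disjoint from $Q$. For the converse, suppose $F \cap Q = \emptyset$. Then $F$ and $Q$ are disjoint compact sets, and the closures $\overline{Q_{>-\epsilon}} = \set{v \mid \phi_i(v) \ge -\epsilon}$ form a nested family of polytopes decreasing to $Q$ as $\epsilon \downarrow 0$ (they are bounded because $Q$ is, so the common recession cone is trivial); hence for $\epsilon$ small enough $\overline{Q_{>-\epsilon}}$, and a fortiori $Q_{>-\epsilon}$, is still disjoint from $F$ by a routine compactness argument. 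Because $\Xi$ has only finitely many faces, one can shrink $c$ a single time so that this holds simultaneously for every face disjoint from $Q$. This finiteness, together with the compactness of the faces, is exactly what makes a uniform choice of $\epsilon$ possible and is the only genuinely new ingredient beyond the two cited results.

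Finally, the ``in particular'' clause is obtained by specialising $\Xi$ to the face complex $\set{F \mid F \text{ is a face of } P}$ of a single polytope $P$: its support is $P$ and $\CC(\Xi, Q)$ reduces to $\CC(P, Q)$, so the general statement applies verbatim.
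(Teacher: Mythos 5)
Your proof is correct and follows essentially the same route as the paper, which likewise combines Lemma~\ref{lem_difference_of_convex_polytope_and_epsilon_widening} with Proposition~\ref{prop_retract_on_the_boundary_complex} via the observation that $\CC(\Xi,Q)=\CC(\Xi,Q_{>-\epsilon})$ for sufficiently small $\epsilon>0$. If anything, you give more detail than the paper: the compactness-and-finiteness argument you spell out for the identification $\CC(\Xi,Q)=\CC(\Xi,Q_{>-\epsilon})$ is precisely the ``observation'' the paper's one-line proof leaves implicit.
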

\begin{proof}
   This is a combination of Lemma~\ref{lem_difference_of_convex_polytope_and_epsilon_widening} 
      and Proposition~\ref{prop_retract_on_the_boundary_complex},
      together with an observation that $\CC(\Xi,Q) = \CC(\Xi,Q_{>-\epsilon})$ for sufficiently small $\epsilon>0$.
\end{proof}

\begin{corollary}\label{cor_difference_of_closed_polytopes_and_boundary}
Let $P, Q\subset \RR^d$ be two polytopes and assume their intersection 
is nonempty. Then $\partial P \setminus Q$ is homotopy equivalent 
to $P \setminus Q$.
\end{corollary}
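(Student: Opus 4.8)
The plan is to exhibit a single space onto which both $P\setminus Q$ and $(\partial P)\setminus Q$ strongly deformation retract, and then read off the homotopy equivalence. The natural candidate is $\supp\CC(P,Q)$, the union of those faces of $P$ that are disjoint from $Q$. The whole argument will then be a double application of Lemma~\ref{lem_difference_of_convex_polytope_and_closed_convex_set}, with the hypothesis $P\cap Q\ne\emptyset$ doing the essential work of placing the common retract inside the boundary.

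First I would record the combinatorial consequence of that hypothesis. Since $P$ is itself a face of $P$ and it meets $Q$, we have $P\notin\CC(P,Q)$; hence every face appearing in $\CC(P,Q)$ is a proper face of $P$, and so $\supp\CC(P,Q)\subseteq\partial P$. Writing $\Xi_{\partial}$ for the boundary complex of $P$, that is, the polytopal complex consisting of all proper faces of $P$, whose support is exactly $\partial P$, this shows that $\CC(P,Q)$ and $\CC(\Xi_{\partial},Q)$ consist of precisely the same faces. In other words $\CC(P,Q)=\CC(\Xi_{\partial},Q)$ as polytopal complexes, and in particular they have the same support.

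Then I would invoke Lemma~\ref{lem_difference_of_convex_polytope_and_closed_convex_set} twice. Applied to the polytope $P$ itself, it gives that $\supp\CC(P,Q)$ is a strong deformation retract of $P\setminus Q$. Applied to the polytopal complex $\Xi_{\partial}$, whose support is $\partial P$, it gives that $\supp\CC(\Xi_{\partial},Q)$ is a strong deformation retract of $(\supp\Xi_{\partial})\setminus Q=(\partial P)\setminus Q$. By the previous paragraph these two retracts have the same target $\supp\CC(P,Q)=\supp\CC(\Xi_{\partial},Q)$. Since a strong deformation retract is in particular a homotopy equivalence, both $P\setminus Q$ and $(\partial P)\setminus Q$ are homotopy equivalent to this common space, hence to each other, which is the claim.

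The only point that requires genuine care, and the step I expect to carry the actual content, is the combinatorial observation that $P\cap Q\ne\emptyset$ forces $\supp\CC(P,Q)\subseteq\partial P$; this is exactly what makes the two applications of Lemma~\ref{lem_difference_of_convex_polytope_and_closed_convex_set} land on the same set. Everything beyond it is a formal citation of that lemma, so no $\epsilon$-widening, explicit retract, or homotopy formula needs to be reproduced here.
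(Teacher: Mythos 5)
Your proposal is correct and coincides with the paper's own proof: both arguments note that $P\cap Q\ne\emptyset$ forces $P$ itself out of $\CC(P,Q)$, so that $\CC(P,Q)=\CC(\partial P,Q)$, and then apply Lemma~\ref{lem_difference_of_convex_polytope_and_closed_convex_set} to the polytope $P$ and to its boundary complex to exhibit $\supp\CC(P,Q)$ as a common strong deformation retract of $P\setminus Q$ and $(\partial P)\setminus Q$. Your write-up merely makes the two invocations of the lemma and the combinatorial identification more explicit than the paper's three-line version.
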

\begin{proof}
   The complex of $P$ consists of all faces of $\partial P$ and in addition $P$. 
   Since $Q \cap P \ne \emptyset$, the complexes $\CC(P, Q)$ and $\CC(\partial P, Q)$ are equal.
   Therefore, by Lemma~\ref{lem_difference_of_convex_polytope_and_closed_convex_set}
     both  $P \setminus Q$ and $\partial P \setminus Q$ are homotopy equivalent to $\supp \CC(P, Q)= \supp\CC(\partial P, Q)$.
\end{proof}

\subsection{Allowing common tail cones}
\label{Allowing common tail cones}
Finally, we conclude this section with an argument that reduces 
considerations of homotopy types 
   of differences of (closed) polyhedra to the case of (compact) polytopes.
A simplifying assumption is that the polyhedra have the same tail cone.
Recall that $\tail(P):=\{v\in\R^d\kst P+v\subseteq P\}$
is the polyhedral cone indicating the unbounded directions of 
a polyhedron $P$.

\begin{proposition}\label{prop_truncate_infinite_part}
Suppose $P\subset \RR^d$ is a polyhedron with a pointed tail cone 
and $Q\subset \RR^d$ is a polyhedron or the interior of a polyhedron
with the same tail cone $\tail Q=\tail P$.
Then there exists a sequence of linear forms $\fromto{H_1}{H_k}$ and  
sufficiently large numbers $\fromto{t_1}{t_k}\in \RR$,
such that the truncated difference
      \[
          \Trunc(P\setminus Q):=(P\setminus Q) \cap \bigcap_{i} \set{H_i \le t_i}
      \]
is compact and a strong deformation retract of $P\setminus Q$.
\end{proposition}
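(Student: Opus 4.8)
The plan is to realise $\Trunc(P\setminus Q)$ as the result of successively cutting off, one tail direction at a time, the unbounded ``arms'' of $P\setminus Q$, each cut being a strong deformation retraction along a straight ray. First I would dispose of the shape of $Q$: the only property of $Q$ I use is that $Q+\tail Q\subseteq Q$, and this holds verbatim when $Q$ is replaced by its interior, so the open and closed cases are identical. Record the two consequences of $\tail P=\tail Q=C$: for every $v\in C$ we have $P+v\subseteq P$ and $Q+v\subseteq Q$, and hence for $x\notin Q$ the whole ray $x-\RR_{\ge 0}v$ avoids $Q$. Thus any flow in a direction $-v$ with $v\in C$ stays automatically in the complement of $Q$; the entire difficulty is to keep it inside $P$.

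Next, the single cut. After subdividing $C$ simplicially (this changes neither $P$ nor $Q$), let $\fromto{v_1}{v_k}$ be the resulting ray generators and, for each $i$, pick $H_i\in C\dual$ dual to them, i.e.\ $H_i(v_i)>0$ and $H_i$ vanishing on $\cone(v_j\mid j\neq i)$. I claim that for $t_i$ large, intersecting with $\set{H_i\le t_i}$ is a strong deformation retraction, via the straight-line flow $x\mapsto x-s v_i$ run until $H_i$ has dropped to $t_i$. Writing the facet inequalities of $P$ as $u_F(\cdot)\le a_F$, the flow can leave $P$ only across a facet $F$ with $u_F(v_i)<0$ (for $u_F(v_i)=0$ it slides along $F$, and $u_F(v_i)>0$ is impossible since $v_i\in C$). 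The recession cone of such a ``lower'' facet is $C\cap u_F^{\perp}$, a face of $C$ not containing $v_i$, on which $H_i$ vanishes; hence $H_i$ is bounded above on every lower facet, and choosing $t_i$ above all these heights guarantees the flow reaches the cap $\set{H_i=t_i}$ before meeting any lower facet. Since the flow lines are segments lying in $P\setminus Q$, the standard strong deformation of Subsection~\ref{sect_homotopy_of_polytopal_complexes} applies.

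I would then iterate, cutting with $H_1,\ldots,H_k$ in turn and producing a chain $P\setminus Q=S_0\leadsto S_1\leadsto\dots\leadsto S_k=\Trunc(P\setminus Q)$. Two points make the iteration consistent: the $i$-th flow does not increase any earlier $H_j$ (because $H_j(v_i)\ge 0$), so it preserves the previous cuts $\set{H_j\le t_j}$; and each retraction fixes its target $S_i\supseteq S_k$, so the homotopies glue to a single strong deformation retraction of $P\setminus Q$ onto $S_k$, exactly as anticipated by the gluing remark at the start of the section. Finally $S_k$ is compact: its recession cone is $C\cap\bigcap_i\set{H_i\le 0}$, and for a nonzero $c=\sum_j\lambda_j v_j\in C$ any index $j_0$ with $\lambda_{j_0}>0$ gives $H_{j_0}(c)\ge\lambda_{j_0}H_{j_0}(v_{j_0})>0$, so the recession cone is $\set{0}$.

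The main obstacle is precisely the ``staying inside $P$'' step: a naive flow in a fixed interior tail direction escapes $P$ through the lower facets long before reaching any cap, so the truncating form $H_i$ must be matched to the direction $v_i$ (vanishing on the complementary rays) and $t_i$ chosen above the finite heights of the lower facets. Once this matching is set up, staying out of $Q$ and the gluing are formal. The simplicial subdivision of $C$ is only bookkeeping to make the dual forms $H_i$ available; the argument could instead be phrased directly with the faces of $C$, at the cost of messier notation.
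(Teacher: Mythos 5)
There is a genuine gap at the very first construction: the global ``dual'' linear forms $H_1,\dots,H_k$ with $H_i(v_i)>0$ and $H_i\equiv 0$ on $\cone(v_j \mid j\neq i)$ do not exist unless the tail cone $C$ is simplicial, and no simplicial subdivision can create them. A linear form vanishing on $\cone(v_j\mid j\neq i)$ vanishes on the linear span of the $v_j$, $j\neq i$, so the required $H_i$ exists for \emph{every} $i$ only if $v_1,\dots,v_k$ are linearly independent; subdividing only adds rays (each new ray lies in the cone of the old ones), so it worsens the dependence rather than curing it. Concretely, take $C=\cone\bigl((1,0,1),(0,1,1),(-1,0,1),(0,-1,1)\bigr)\subset\RR^3$, $P=C$ and $Q=C+v$ for $v\in\innt C$: any three of the four rays span $\RR^3$, so every $H_i$ would have to be identically zero. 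Nor is the vanishing an accident of your phrasing --- it is forced by your scheme: preserving the earlier caps requires $H_j(v_i)\ge 0$ for all $i\ne j$, i.e.\ $H_j\in C\dual$, while finiteness of $t_i$ requires $H_i$ to be bounded above, hence $\le 0$, on the recession cones of the lower facets, and with $P=C$ these recession cones realize every facet of $C$ not containing $v_i$. Nonnegative and nonpositive together force $H_i=0$ there. So your argument, as designed, proves the proposition only for simplicial tail cones; in that case it is correct and essentially coincides with the paper's mechanism.

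The paper sidesteps the obstruction by never asking the forms to interact: it peels off one extremal ray at a time, writing $P=P'+\rho$ and $Q=Q'+\rho$ with $\rho\notin\tail P'=\tail Q'$, and chooses a single $H$ with $H(\rho)>0$ that is merely \emph{non-positive} (not zero) on $\tail P'$ --- such $H$ always exists, by separating the ray $\rho$ from the closed cone $\tail P'$. Non-positivity already bounds $H$ above on $P'$ and $Q'$, which yields the cutoff $t$, and the retraction is projection along $\rho$; here ``staying inside $P$'' is automatic from the Minkowski decomposition $P=P'+\RR_{\ge 0}\,\rho$ (the flow line from $x=p'+s\rho$ back toward $p'$ never leaves $P$), so your facet-height bookkeeping is not needed. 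Each subsequent cut is then constructed afresh for the already truncated data, so the compatibility conditions $H_j(v_i)\ge 0$ that forced your vanishing requirement never arise, and the nested strong deformation retractions compose exactly as in your final gluing step. To salvage your write-up, keep the dual-forms argument as the simplicial special case and replace the general case by this one-ray-at-a-time peeling with a re-chosen, merely non-positive form at each stage.
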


\begin{proof}
Proceeding inductively on the number of rays of the common tail cone,
we may assume that 
there are polyhedra $P'$ and $Q'$ with $\tail P'=\tail Q'$ not containing
a certain ray $\rho$ such that $P = P'+\rho$ and $Q = Q'+\rho$.
We choose a linear form $H$ with $H(\rho)>0$ that is non-positive on 
$\tail P'=\tail Q'$.
Then there exists a real number $t$ such that both $P'$ and $Q'$ 
are contained in the halfplane $\set{ H< t}$.
It follows that $(P\setminus Q) \cap \set{H\le t}$ 
is a strong deformation retract of $P\setminus Q$.
Indeed, the map $r_\rho\colon \RR^d \to \set{H\le t}$ projecting 
along $\rho$ 
does the job.
\end{proof}

As a conclusion, we remark that the homotopy equivalences, such as that in  
Lemma~\ref{lem_difference_of_convex_polytope_and_closed_convex_set}, are valid also for polyhedra with common tail cones.

\section{Toric geometry}
The main subject of our paper is to 
investigate a toric variety $X$ and its immaculacy locus within $\Cl(X)$. 
For this we will make use of the classical method
of calculating the cohomology of equivariant line bundles from the fan
in $N_\R$.
However, after introducing
the usual toric notation in Subsection~\ref{sect_toric_notation},
we will  provide an alternative method
using the momentum polyhedra in $M_\R$ in Subsection~\ref{cohomPolyhedra}.
It is appropriate to make the cohomology of equivariant line bundles
or its absence visible.

\subsection{Basic toric notation}
\label{sect_toric_notation}
All our toric varieties are normal.
Our main references for dealing with toric varieties
are \cite{CoxBook, fultonToric, KKMS}.
We denote by $N$ the lattice of one-parameter subgroups of the torus acting on the toric variety, 
  and by $M$ the monomial lattice.
Throughout $\Sigma$ denotes a fan in $N$ and $X=\toric(\Sigma)$ the corresponding toric variety.
Occasionally, if there is more than one toric variety involved, we may add a subscript $N_X$, $M_X$, $\Sigma_X$,\dots.
For a cone $\sigma$ in $N_{\RR} = N \otimes \RR$ or $M_{\RR} = M \otimes \RR$
   we denote the dual cone in $M_{\RR}$ or $N_{\RR}$, respectively,  by $\sigma^{\vee}$ .

The set of all cones of dimension $k$ of a fan $\Sigma$ is denoted $\Sigma(k)$.
Similarly, for a cone $\sigma$, by  $\sigma(k)$ we mean the set of all faces of dimension $k$.
In general, every cone $\sigma$ generates a unique fan consisting of all  
faces of $\sigma$, and the fan will be denoted by the same letter $\sigma$.
In order to reduce the notation, we will follow the standard convention to denote rays (one dimensional strictly convex lattice cones)
  and their primitive lattice generators by the same letter, usually $\rho$.

We will frequently assume that our toric variety $X$ is \emph{semiprojective},
that is that it is projective over an affine (toric) variety.
This means that the fan of $X$ has a convex support $\suppSig\subseteq N_{\RR}$.
Another assumption simplifying the notation in the proofs is that $X$ has no torus factors.
In particular, (with both these assumptions) the fan $\Sigma$ is generated by cones of dimension equal to $\dim X$.

Every Weil divisor on $X$ is linearly equivalent to a 
torus invariant divisor 
$D=\sum_{\rho\in\Sigma(1)} \lambda_{\rho}\cdot D_{\rho}$ with $D_{\rho}:=\ko{\orb(\rho)}$.
If in addition $D$ is $\QQ$-Cartier, 
then there exists a continuous function $u \colon \suppSig\to \RR$, 
which is linear on the cones of $\Sigma$, 
   and such that 
   $u(\rho) = -\lambda_{\rho}$ for every $\rho\in \Sigma(1)$.
In particular, for every maximal cone $\sigma \in \Sigma$
there is a unique $u_{\sigma} \in M_{\QQ}$, such that 
$u|_{\sigma} = \langle \cdot,  u_{\sigma} \rangle$.
We call $u$ the \emph{support funtion} of $D$. 
The divisor $D$ is Cartier if and only if each $u_\sigma$ is contained in 
the lattice $M$.

The polyhedron of sections $\sDelta=\sDelta(D) \subset M_{\RR}$ of an 
equivariant Weil divisor $D$ is defined by its inequalities:
\[
   \sDelta = \set{ r \in M_{\RR} \mid \langle \rho , r  \rangle \ge - \lambda_\rho \text{ for all } \rho \in \Sigma(1)}.
\]
The name was derived from the fact that $\sDelta\cap M$ provides a 
(monomial) basis of the global sections of $\CO_X(D)$.
If $D$ is in addition $\QQ$-Cartier, then we can describe it also as an intersection of shifted cones that depend on the support function $u$:
\[
  \sDelta = \bigcap_{\sigma \text{ maximal cone of }\Sigma}  
\left(u_{\sigma} + \sigma^{\vee}\right).
\]

A \WQC divisor $D$ on a semiprojective toric variety $X$ of dimension $d$ is nef if and only if its support function $u$ is concave, 
   that is, for all $a,b \in \suppSig$ and for all $0 \le t\le 1$, 
   we have $u(ta +(1-t)b) \ge t u(a) + (1-t) u(b)$.
Equivalently, if $a\in \sigma$ for some $\sigma \in \Sigma(d)$, then for every $\sigma' \in \Sigma(d)$ we have:
$
   \langle a, u_{\sigma}\rangle \le  \langle a, u_{\sigma'}\rangle.
$
Another way to understand nefness is that all $u_\sigma$ are contained in
$\sDelta$; in fact, the set of its vertices 
equals the set $\{u_{\sigma}\}$.
Note that some of the $u_\sigma$ might coincide.
Moreover, in contrast to the projective case treated, e.g., in
\cite[(4.2)]{CoxBook}, for semiprojective $X$, the polyhedron $\sDelta$ is no longer compact
but has $(\suppSig)\dual\subseteq M_{\RR}$ as its tail cone.
Nevertheless, one may still recover the support function of a nef divisor 
from its polyhedron $\sDelta$ by 
$$
u(a)=\min\langle a,\sDelta\rangle:=\min\{\langle a,r\rangle\kst r\in \sDelta\}.
$$
Note that the minimum is well-defined for $a\in\suppSig=(\tail\sDelta)\dual$.

A fan $\Sigma$ in $N_\R\cong\R^d$ gives rise to a map 
$\rho:\Z^{\Sigma(1)}\to N$, which takes the basis element
indexed by a ray of $\Sigma$ to the corresponding primitive element on that ray in $N$.
If the underlying toric variety $X=\toric(\Sigma)$ has no torus factors, 
then the cokernel of $\rho$ is finite.
For simplicity, we always assume that this is the case.
If, moreover, $X$ is smooth, then $\rho$ is surjective.
We denote the kernel by $K$, and  we obtain a short exact sequence
$$
\xymatrix@R=1ex@C=2em{
0 \ar[r] &
K \ar[r]^{} &
\Z^{\Sigma(1)} \ar[r]^{\rho} &
N.
}
$$
It is well known that the 
so-called Gale 
dual of this sequence yields
\begin{equation}\label{equ_exact_sequence_with_Cl_Div_T_and_M}
\xymatrix@R=1ex@C=2em{
0 &
\Cl(X) \ar[l] &
\Div_T(X) \ar[l]_-{\pi} &
M \ar[l]_-{\rho^*} &
0, \ar[l]
}
\end{equation}
where $\Div_T(X)=\left(\Z^{\Sigma(1)}\right)^*$ denotes the group of torus
invariant Weil divisors on $X$.
Note that $\Cl(X)$ may have torsion, which corresponds to the torsion of the
cokernel of $\rho$.
The anticanonical class of $X$ is $-K_X=\pi(\ku{1})$.
The set of effective classes is 
$\Eff_{\Z}(X)=\pi\left(\Z^{\Sigma(1)}_{\geq 0}\right)$, 
although often we really consider the effective cone 
$\Eff_{\RR}(X)=\pi\left(\R^{\Sigma(1)}_{\geq 0}\right)$, 
where $\pi$ is now considered as the map 
$\RR^{\Sigma(1)} \to \Cl(X)\otimes \RR$.

\begin{example}\label{ex_hexagon_introduce_coordinates}
Throughout the the text we will regularly come back to the example of del~Pezzo surface of degree $6$,
   which is the blow up of $\PP^2$ in three points, 
   also refered to as a \emph{hexagon} due to the shapes of its fan and the polytopes of sections of ample divisors.
This is also a smooth projective toric variety of Picard rank $4$,
   which illustrates that our methods go beyond the main results presented in this article (splitting fans and Picard rank~$3$ cases).
The exact sequence \eqref{equ_exact_sequence_with_Cl_Div_T_and_M} in this example is given by the matrices
\[
\rho^*
=\begin{psmallmatrix}
1 & 0\\
0 & 1\\
-1 & 1\\
-1 & 0\\
0 & -1\\
1 & -1
\end{psmallmatrix}
\mbox{ and }
\pi=\begin{psmallmatrix}
1 & 0 & 0 & 1 & 0 & 0\\
0 & 1 & 0 & 0 & 1 & 0\\
0 & 0 & 1 & 0 & 0 & 1\\
1 & -1 & 1 & 0 & 0 & 0
\end{psmallmatrix}.
\]
The rows of $\rho^*$ form the rays of our fan $\Sigma$, meaning we work with
the following two-dimensional fan:
\[
\begin{tikzpicture}
\draw[step=1, black!20, thin] (-1.5,-1.5) grid (1.5, 1.5);
\draw[thick,->] (0,0) -- (1,0) node[anchor=west]{$0$};
\draw[thick,->] (0,0) -- (0,1) node[anchor=south]{$1$};
\draw[thick,->] (0,0) -- (-1,1) node[anchor=south east]{$2$};
\draw[thick,->] (0,0) -- (-1,0) node[anchor=east]{$3$};
\draw[thick,->] (0,0) -- (0,-1) node[anchor=north]{$4$};
\draw[thick,->] (0,0) -- (1,-1) node[anchor=north west]{$5$};
\end{tikzpicture}
\]
With this choice of $\rho^*$ and $\pi$ the $\Nef$ cone is generated by the following
$5$ rays, where we write its polytope of sections $\Delta$ next to it:
\[
\begin{array}{cccccc|c}
\toprule
\multicolumn{6}{c}{\Pic(X) \text{ coordinates}} & \Delta \subset M_{\R}\\
\midrule
&1 & 1 & 0 & 0 && \hexlinediag
\\
&1 & 0 & 1 & 1 && \hexlinehor
\\
&0 & 1 & 1 & 0 && \hexlinevert
\\
&1 & 1 & 1 & 0 && \hextriangleupleft
\\
&1 & 1 & 1 & 1 && \hextriangledownright
\\
\bottomrule
\end{array}
\]
\end{example}
   
\subsection{Toric cohomology}
\label{toricCohom}

Let us review the classical method of calculating the cohomology
groups of toric divisors.
Afterwards, in Subsection~\ref{cohomPolyhedra},
we dualize it to obtain another method that exploits the polyhedra
of sections of nef divisors.

If $D=\sum_{\rho\in\Sigma(1)} \lambda_{\rho}\cdot D_{\rho}$ is a Weil divisor on a toric variety $X=\toric(\Sigma)$, 
then for every $\dm\in M$  we define 
\begin{equation}\label{equ_complex_for_cohomologies_Weil}
   V_{D,\dm} := \bigcup_{\sigma\in\Sigma}\conv\{\rho\kst \rho \in\sigma(1),
\langle \rho,\dm\rangle< -\lambda_{\rho}\}\subseteq N_\R.
\end{equation}

It is a classical result \cite[Thm~9.1.3]{CoxBook} 
   (see also \cite[Thm~2.2 and Rem.~2.3]{achinger_toric_var_char_p} for the characteristic free proof),
   that one obtains the $\dm$-th homogeneous piece of the sheaf cohomology of
$\CO_X(D)$ as
$$
\gH^i\big(\toric(\Sigma), \CO_X(D)\big)_\dm = 
\tH^{\,i-1}(V_{D,\dm},\kk) \ 
\text{ for all } \ i\geq 0.
$$
Recall that the $(-1)$-st reduced cohomology of a set $S$ is defined as
$$
\tH^{-1}(S,\kk)=\left\{
\begin{array}{ll} \kk & \mbox{if } S=\emptyset\\
                   0 & \mbox{if } S\ne\emptyset.
\end{array}\right.
$$
Note that, since $0\notin V_{D,\dm}$, one might retract these sets onto the sphere $S^{d-1}\subseteq N_\R$ 
   (where $d$ is the dimension of $X$, and hence also of $N_{\RR}$) 
without changing their cohomology. 
Alternatively, we can replace $V_{D,\dm}$ with 
$V_{D,\dm}^{>}:=\R_{>0}\cdot V_{D,\dm}$. 
If $\Sigma$ is simplicial, then the latter sets are 
(up to $0\notin V_{D,\dm}^{>}$)
the support of ``full'' or ``induced'' subcomplexes 
$V_{D,\dm}^{\geq}$ of $\Sigma$,
that is,  for every $\sigma\in\Sigma$, the intersection
$V_{D,\dm}^{\geq}\cap\sigma$ consists of a single closed face
of $\sigma$.

If $D=\sum_{\rho\in\Sigma(1)} \lambda_{\rho}\cdot D_{\rho}$ is 
at least a $\QQ$-Cartier divisor on $X$,
then one can alternatively use its support function $u$
to calculate the cohomology of $\cO_X(D)$. The subset
\begin{equation}\label{equ_complex_for_cohomologies_Cartier}
  V_{D,\dm}^{\supp} = \set{a\in \suppSig \mid \langle a, \dm \rangle < u(a)}
\subseteq \suppSig 
\end{equation}
contains $V_{D,\dm}^{>}$ as a strong deformation retract. One can easily prove
this using homotopies as in
Subsection~\ref{sect_homotopy_of_polytopal_complexes}.
See \cite[Theorem 9.1.3]{CoxBook} for a slightly weaker claim.
Actually, in the original \cite[p.42]{KKMS}, it was exactly the sets
$V_{D,\dm}^{\supp}$ which were used to describe 
$\gH^i\big(\toric(\Sigma), \CO_X(D)\big)_\dm$.

\subsection{Cohomology using polyhedra}
\label{cohomPolyhedra}
From now on we assume $X$ to be a semiprojective toric variety, 
in particular it is quasiprojective. 
Let $Y$ be a projective toric variety containing $X$ as an open torus invariant subset.
Fix a torus invariant ample Cartier divisor $L$ on $Y$ such that $L + K_Y$ is effective, 
   where $K_Y  = -\sum_{\rho\in\Sigma_{Y}(1)} D_{\rho}$ is the canonical divisor of $Y$.
Then the piecewise linear function $\| \cdot \|:=-u$ 
corresponding to $L$ is a norm on the vector space $N_{\RR}$. 
The closed balls centred at $0$ with respect to this norm are convex polytopes, whose vertices are on rays of $\Sigma_Y$. 

Since $X$ is quasiprojective, every \WQC divisor is a difference of 
nef divisors: $D = D^+ - D^-$, with both $D^+$ and $D^-$ nef \WQC
\cite[Thm~6.3.22(a)]{CoxBook}.
Thus every such Cartier divisor on $X=\toric(\Sigma)$ is (non-uniquely) 
represented by a pair of polyhedra $(\Delta^+, \Delta^-)$
sharing the same tail cone $|\Sigma|\dual\subseteq M_\R$.
Polyhedra form a semigroup under Minkowski addition.
Restricting to polyhedra with a fixed tail cone, one ensures that
this semigroup is cancellative. In this context,
the pair $(\Delta^+,\Delta^-)$ represents the formal difference
$D=\Delta^+-\Delta^-$ within the
Grothendieck group of generalized polyhedra.
 
The goal of this section is to reinterprete the toric cohomology in terms of this pair of polyhedra.

\renewcommand{\theenumi}{(\roman{enumi})}
\renewcommand{\labelenumi}{\theenumi}

\begin{lemma}\label{lem_nef_difference_and_cohomology_complex}
Let $X$ be a semiprojective toric variety with no torus factors and $D= D^+ - D^-$ 
be a \WQC divisor on $X$ with $D^+$ and $D^-$ nef. 
Assume that $\Delta^+$ and $\Delta^-$ are the associated polyhedra of 
$D^+$ and $D^-$ and denote by $u$ the support function of $D$. Then the sets 
$V_{D,0}^{\supp} = \set{a\in \suppSig \mid u(a)>0}
\subseteq \suppSig $
and $\Delta^- \setminus \Delta^+$ are homotopy equivalent.
\end{lemma}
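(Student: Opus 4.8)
The plan is to connect the two sets---which live in the dual spaces $N_\RR$ and $M_\RR$---through a single incidence correspondence, and to show that each of its two projections is a homotopy equivalence because its fibres are nonempty and convex.

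First I would record the dual dictionary. Write $u^+,u^-$ for the support functions of the nef divisors $D^+,D^-$, so that $u=u^+-u^-$. By the recovery formula for nef polyhedra quoted in Subsection~\ref{sect_toric_notation}, we have $u^\pm(a)=\min\{\langle a,r\rangle \kst r\in\Delta^\pm\}$ for $a\in\suppSig$. Expanding an arbitrary $a\in\suppSig$ in the primitive generators of a cone containing it, and using $u^+(\rho)=-\lambda_\rho^+$ together with the defining inequalities of $\Delta^+$, shows that $\Delta^+=\{r\in M_\RR \kst \langle a,r\rangle\ge u^+(a)\text{ for all }a\in\suppSig\}$. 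Equivalently, $r\notin\Delta^+$ holds precisely when there is some $a\in\suppSig$ with $\langle a,r\rangle<u^+(a)$.

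Then I would introduce the incidence set $Z=\{(a,r)\in\suppSig\times\Delta^- \kst \langle a,r\rangle<u^+(a)\}$ and identify its two projections. For fixed $a$, the condition $u(a)>0$, i.e.\ $u^+(a)>u^-(a)=\min_{r\in\Delta^-}\langle a,r\rangle$, is equivalent to the existence of $r\in\Delta^-$ with $(a,r)\in Z$; hence $\pr_N(Z)=V_{D,0}^{\supp}$. Dually, for fixed $r\in\Delta^-$, the characterisation of $\Delta^+$ above says $r\notin\Delta^+$ exactly when some $a$ satisfies $(a,r)\in Z$; hence $\pr_M(Z)=\Delta^-\setminus\Delta^+$. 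The fibre of $\pr_N$ over $a$ is $\Delta^-$ intersected with an open half-space, and the fibre of $\pr_M$ over $r$ is the convex cone $\suppSig$ intersected with $\{a\kst \langle a,r\rangle<\langle a,s\rangle\text{ for all }s\in\Delta^+\}$; both are convex, and nonempty exactly over the respective images.

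Finally I would argue that each projection is a homotopy equivalence onto its image, whence $V_{D,0}^{\supp}\simeq Z\simeq\Delta^-\setminus\Delta^+$. I expect this last step to be the technical heart. The obstacle is that the fibres are unbounded: the fibres of $\pr_N$ have tail cone inside $|\Sigma|^\vee\cap a^\perp$ and those of $\pr_M$ are genuine cones. Since all polyhedra in sight share the tail cone $|\Sigma|^\vee$, I would first truncate by finitely many linear forms, as in Proposition~\ref{prop_truncate_infinite_part} and the concluding remark of Section~\ref{pairPol}, to reduce to \emph{compact} convex fibres without changing homotopy type. For compact nonempty convex fibres varying continuously, a continuous selection $s$ (for instance the barycentre of the truncated fibre) together with the fibrewise straight-line homotopy $H\bigl((a,r),t\bigr)=(a,(1-t)r+t\,s(a))$ stays in $Z$ by convexity and glues exactly as the standard deformations of Subsection~\ref{sect_homotopy_of_polytopal_complexes}, retracting $Z$ onto a copy of its image; the same device applied to $\pr_M$ finishes the proof. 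The routine parts are the dual dictionary of the second paragraph; the care goes into continuity of the selection and the truncation bookkeeping.
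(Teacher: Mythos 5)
Your construction is the same as the paper's up to the last step: the paper also forms exactly this incidence set (your $Z$ coincides with its $W$, since $\langle a,r\rangle<u^+(a)$ automatically forces $a\in V^{\supp}_{D,0}$ and $r\notin\Delta^+$), identifies the two projections with the same convex, nonempty fibres, and reduces to compact models by the truncation and $\epsilon$-widening results of Section~\ref{pairPol}. Where you diverge is in how ``convex fibres $\Rightarrow$ homotopy equivalence'' is discharged: the paper compactifies all three spaces and invokes Smale's strengthening of the Vietoris mapping theorem together with Whitehead's theorem, which requires properness and hence the careful $(R,\epsilon)$-bookkeeping (including choosing the widening inequalities so the second projection stays surjective); you instead build an explicit strong deformation retraction of $Z$ onto the graph of a continuous selection, which is more elementary and self-contained --- no properness, no Vietoris--Smale, no widening of $\Delta^+$ --- but shifts the entire burden onto continuity of the selection. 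That point deserves more care than your sketch gives it: the centroid of the truncated fibre closure is continuous here only because the fibres have constant dimension over the image ($\dim\Delta^-$ for $\pr_N$, $\dim N_\R$ for $\pr_M$, using that $\suppSig$ is full-dimensional) and their closures vary Hausdorff-continuously, which in turn uses that the strict sublevel set of the convex function $a\mapsto\langle a,r\rangle-u^+(a)$ (resp.\ the affine function in $r$) is dense in the closed one whenever nonempty; the Steiner point, being Lipschitz in the Hausdorff metric and landing in the relative interior, would make this step cleaner. One should also note that the centroid lies in the relative interior of the closed fibre, hence in the \emph{open} fibre, so your straight-line homotopy genuinely stays in $Z$. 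With those continuity details filled in, your route is valid and arguably trades the paper's topological black box for convex-geometric verifications of comparable length.
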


\begin{proof}
Let $u^\pm$ be the support functions of the nef divisors $D^\pm$.
For each full-dimensional $\sigma\in \Sigma$
we denote by $\rps\in\Delta^+$ and
$\rms\in\Delta^-$ the unique vertices minimising
$\langle a,\kbb\rangle$ on the respective polytopes
for $a\in\innt\sigma$, hence for all $a\in\sigma$.
Thus, for $a\in\sigma$, we have
$\min \langle a,\Delta^\pm\rangle = \langle a, \rpms\rangle=u^\pm(a)$.
Moreover, we can write
$$
V_{D,0}^{\supp}=\{a\in \suppSig \kst 
            u^-(a)         < u^+(a)\}.
$$
Since $V_{D,0}^{\supp}\subseteq N_\R$ and 
$\Delta^- \setminus \Delta^+\subseteq M_\R$ are contained in mutually dual 
spaces, we are going to compare these two sets via the following incidence set:
$$
W:= \{(a,r)\in V_{D,0}^{\supp}\times (\Delta^-\setminus\Delta^+)\kSt
\,\langle a,r\rangle < u^+(a)\}.
$$
It comes with two natural, surjective projections
$$
\xymatrix@R=2ex@C=2em{
& W \ar@{->>}[ld]_-{p_V} \ar@{->>}[rd]^-{p_\Delta} \\
N_\R\supseteq V_{D,0}^{\supp} && 
\hspace{2em}\Delta^-\setminus\Delta^+ \subseteq M_\R,
}
$$
with contractible fibers: Let us start with checking the map $p_V$.
If $a\in V_{D,0}^{\supp}$, 
then there is a cone $\sigma\in\Sigma$ containing $a$, and we obtain that
$$
p_V^{-1}(a) \cong
\{r\in \Delta^-\setminus\Delta^+\kst
\langle a,r\rangle < \langle a,\rps\rangle\}
=
\{r\in \Delta^-\kst
\langle a,r\rangle < \langle a,\rps\rangle\}.
$$
Obviously, the latter is a convex set. However, it is non-empty, too.
The reason is that the fact $a\in V_{D,0}^{\supp}$ (together with $a\in\sigma$)
implies that
$
\,\min \langle a,\Delta^-\rangle < \langle a,\rps\rangle
$.
We turn to the second map $p_\Delta$. Fixing an element
$r\in\Delta^-\setminus\Delta^+\subseteq\Delta^-$ we have
$$
p_\Delta^{-1}(r)\cong
\{a\in V_{D,0}^{\supp}\kst\langle a,r\rangle < 
u^+(a)\}
= 
\{a\in N_\R\kst \langle a,r\rangle < 
\min \langle a,\Delta^+\rangle\}.
$$
Again, the latter is a convex set and, because  $r\notin\Delta^+$,
it is non-empty, too.

Now, the idea is to apply results around
the Vietoris mapping theorem. We are going to use
the stronger version from \cite{smale_Vietoris_mapping_theorem}.
Taking into account Whitehead's theorem, the criterion of Smale says
   that a proper surjective continuous map $f:X\to Y$ between
two CW complexes $X \subset \R^n$ and $Y \subset \R^m$ is a homotopy
equivalence if its fibers $f^{-1}(y)$ (for all $y\in Y$) are contractible and locally
contractible. 

Since our maps $p_V$ and $p_\Delta$ are not proper yet, we will replace
the three objects in the above diagram with homotopy equivalent gadgets which are all compact.
Recall the notions of sufficiently large truncation $\Trunc(\Delta^-)$ of $\Delta^-$ as in Proposition~\ref{prop_truncate_infinite_part} 
   and the $\epsilon$-widening $(\Delta^+)_{> -\epsilon}$ as in Subsection~\ref{compApprox}.
For any $R>0$ and any sufficiently small $\epsilon>0$ we consider the following three compact sets: 
\begin{align*}
    V_{D,0}^{\supp}({R, \epsilon})&:=\set{a \in \suppSig\mid u(a)\ge \epsilon
\text{ and } \|a\| \le R}, \\
    W({R, \epsilon})&:=\set{(a, r) \in \suppSig\times \Trunc(\Delta^-) \mid
\langle a , r  \rangle \le   u^+(a) - \epsilon,\,
             u(a)\ge \epsilon, \text{ and } \|a\| \le R}, \text{ and}\\
    \Trunc(\Delta^-) &\setminus (\Delta^+)_{> -\epsilon}.           
 \end{align*}
By the results from Section~\ref{sect_homotopy_of_polytopal_complexes} these are homotopy equivalent to $V_{D,0}$, $W$, and $\Delta^- \setminus \Delta^+$, respectively 
   (see Lemma~\ref{lem_difference_of_convex_polytope_and_epsilon_widening}, Propositions~\ref{prop_retract_on_a_closed_subset} and~\ref{prop_truncate_infinite_part}).
We need to carefully choose the inequalities used in the $\epsilon$-widening
so that the projection $ W({R, \epsilon}) \to \Trunc(\Delta^-) \setminus
(\Delta^+)_{> -\epsilon}$ is well defined and surjective.
Then with the same arguments as above we show that the fibres of projections $W({R, \epsilon}) \to V_{D,0}^{\supp}({R, \epsilon})$ and $W({R, \epsilon}) \to\Trunc(\Delta^-) \setminus (\Delta^+)_{> -\epsilon}$
   are non-empty convex polytopes. 
Thus by the criterion of Smale, the projection maps are homotopy equivalences, and consequently, $V_{D,0}$ is homotopy equivalent to $\Delta^- \setminus \Delta^+$.
\end{proof}

\begin{theorem}\label{thm_toric_cohomology_in_terms_of_nef_polytopes}
   Let $X$ be a semiprojective toric variety and $D= D^+ - D^-$ 
be a \WQC divisor on $X$ with $D^+$ and $D^-$ nef.
Denote by $\Delta^+$ and $\Delta^-$ the polyhedra of $D^+$ and $D^-$,
respectively.
   Then $\gH^i(X,\cO(D)) = 
\bigoplus_{\dm \in M} \tH^{i-1}(\Delta^- \setminus(\Delta^+ -\dm), \kk)$.
\end{theorem}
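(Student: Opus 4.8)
The plan is to reduce the statement to Lemma~\ref{lem_nef_difference_and_cohomology_complex} by first invoking the classical description of toric cohomology recalled in Subsection~\ref{toricCohom}, and then absorbing the twist by $\dm$ into a translation of the polyhedron $\Delta^+$.

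First I would recall that, by \cite[Thm~9.1.3]{CoxBook} in the characteristic-free form of \cite{achinger_toric_var_char_p}, the cohomology is naturally $M$-graded with homogeneous component $\gH^i(X,\CO_X(D))_\dm = \tH^{i-1}(V_{D,\dm},\kk)$ for every $\dm \in M$. Retracting $V_{D,\dm}$ radially onto the unit sphere and then replacing it by $V_{D,\dm}^{\supp} = \set{a \in \suppSig : \langle a,\dm\rangle < u(a)}$ as in \eqref{equ_complex_for_cohomologies_Cartier} does not change reduced cohomology, so it is enough to produce, for each fixed $\dm$, a homotopy equivalence between $V_{D,\dm}^{\supp}$ and $\Delta^- \setminus (\Delta^+ - \dm)$; summing the resulting identifications over all $\dm \in M$ then gives the theorem. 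Throughout we use the standing assumption from Subsection~\ref{sect_toric_notation} that $X$ has no torus factors, which is what allows us to invoke the Lemma.

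The decisive step is the degree shift. Writing $u = u^+ - u^-$ for the support functions of the nef pieces, I would introduce for each $\dm \in M$ the twisted divisor $D^+(\dm)$ whose support function is $u^+ - \langle\,\cdot\,,\dm\rangle$. Subtracting a linear form preserves concavity, so $D^+(\dm)$ is again nef and $\QQ$-Cartier; moreover, since a nef polyhedron is recovered from its support function via $u(a) = \min\langle a,\sDelta\rangle$, the polyhedron of sections of $D^+(\dm)$ is exactly the translate $\Delta^+ - \dm$, whose tail cone is still $|\Sigma|\dual$ and hence agrees with that of $\Delta^-$. Setting $D(\dm) := D^+(\dm) - D^-$, its support function is $u - \langle\,\cdot\,,\dm\rangle$, and therefore $V_{D(\dm),0}^{\supp} = \set{a\in\suppSig : u(a) - \langle a,\dm\rangle > 0} = V_{D,\dm}^{\supp}$.

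Finally I would apply Lemma~\ref{lem_nef_difference_and_cohomology_complex} to the nef difference $D(\dm) = D^+(\dm) - D^-$, obtaining a homotopy equivalence $V_{D(\dm),0}^{\supp} \simeq \Delta^- \setminus (\Delta^+ - \dm)$. Combined with the identity $V_{D(\dm),0}^{\supp} = V_{D,\dm}^{\supp}$ from the previous step, this yields $\tH^{i-1}(V_{D,\dm},\kk) = \tH^{i-1}(\Delta^- \setminus (\Delta^+ - \dm),\kk)$, and summing over $\dm \in M$ finishes the argument. The genuinely hard analytic content — the Vietoris–Smale comparison of a subset of $N_\R$ with a subset of the dual space $M_\R$ through the incidence variety $W$ — is already packaged in Lemma~\ref{lem_nef_difference_and_cohomology_complex}, so the only real work remaining is the bookkeeping of the previous paragraph: verifying that shifting the degree by $\dm$ is the same as translating $\Delta^+$ by $\dm$, and that $D^+(\dm)$ stays a legitimate nef $\QQ$-Cartier divisor (with the correct common tail cone) so that the Lemma genuinely applies degree by degree.
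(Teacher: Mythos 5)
Your proposal is correct and follows essentially the same route as the paper's proof: reduce to the degree-zero case handled by Lemma~\ref{lem_nef_difference_and_cohomology_complex}, then absorb the degree $\dm$ into the twist $D(\dm)=D+\div(x^\dm)$, whose pair of polyhedra is $(\Delta^+-\dm,\Delta^-)$ and whose support function is $u-\dm$, so that $V_{D(\dm),0}^{\supp}=V_{D,\dm}^{\supp}$. Your explicit verifications --- that subtracting the linear form $\langle\,\cdot\,,\dm\rangle$ preserves concavity (hence nefness of $D^+(\dm)$), that the translated polyhedron keeps the tail cone $|\Sigma|\dual$, and that the no-torus-factor hypothesis is in force --- merely spell out bookkeeping the paper leaves implicit.
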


\begin{proof}
We will show that
$\,\gH^i(X,\cO(D))_\dm=\tH^{i-1}(\Delta^- \setminus(\Delta^+ -\dm), \kk)$.
From Subsection~\ref{toricCohom} together with
Lemma~\ref{lem_nef_difference_and_cohomology_complex} we obtain this claim for
$\dm=0$.

For general $\dm\in M$ we define $D(\dm):=D+\div(x^\dm)
= D+\sum_{a\in\Sigma(1)}\langle a,\dm\rangle \cdot D_a$.
Compared with $D$, its associated sheaf is twisted with
$\CO_X(\dm):=\CO_X\big(\div(x^\dm)\big)=x^{-\dm}\cdot\CO_X$.
Since the polyhedra $\Delta^{\pm}$ encode, for each affine chart,
the minimal generators of the sheaves $\CO_X(D^\pm)$, this means that
the divisor $D(\dm)$ is represented by the pair $(\Delta^+-\dm,\Delta^-)$ or,
equivalently, by $(\Delta^+,\Delta^-+\dm)$.
In particular, for the support functions we have
$u^{D(\dm)}=u^D-\dm$. Thus, $V_{D(\dm),0}=V_{D,\dm}$.
\end{proof}

\begin{remark}
   Note that the presentation of a toric divisor $D= D^+ - D^-$ as a difference of nef divisors is by far not unique. 
   Thus, one of the consequences of Theorem~\ref{thm_toric_cohomology_in_terms_of_nef_polytopes} is that the 
     reduced cohomology of the difference of polyhedra is independent of the choice of this presentation.
   In particular, choosing a suitable semiprojective toric variety $X$, 
      for any three rational polyhedra $\Delta_0, \Delta_1, \Delta_2$ in $M_{\RR}$ with the same tail cone,
      the differences $\Delta_1\setminus \Delta_2$ and $(\Delta_1+ \Delta_0)\setminus (\Delta_2+\Delta_0)$ are homotopy equivalent.
\end{remark}

\begin{remark}
   Suppose $X$ is a projective toric variety and $D$ is a \WQC divisor on $X$.
   Observe that despite that there are at most finitely many degrees $\dm$ 
for which $\gH^i(\cO(D))_{\dm}\ne 0$,
     in the definitions of $V_{D,\dm}$ and $V_{D,\dm}^{\supp}$ it is not 
immediately clear,
     which $\dm\in M$ can potentially lead to nonzero cohomology.
   Instead, 
     the description in Theorem~\ref{thm_toric_cohomology_in_terms_of_nef_polytopes} provides such a criterion.
   If $\Delta^-$ and  $\Delta^+ + \dm$ are disjoint, then the difference is contractible. 
   We will elaborate more on this criterion in a follow up article about related computational issues.
\end{remark}

\begin{remark}
\label{rem-recoverPolSect}
Let the \WQC divisor $D$ be encoded by the pair of polyhedra
$(\Delta^+,\Delta^-)$. Then, the polyhedron of sections
$\sDelta(D)$ mentioned in Subsection~\ref{sect_toric_notation}
can be recovered as
$$
\sDelta(D)=\bigcap_{r\in\Delta^-}(\Delta^+-r)=\{r\in M_\R\kst
\Delta^-+r\subseteq\Delta^+\}.
$$
\end{remark}

\begin{example}
If $D$ is a nef divisor on a projective toric variety $X$,
 one can choose $\Delta^-=\{0\}$,
and then the formula from Remark~\ref{rem-recoverPolSect} implies
$\sDelta=\sDelta(D)=\Delta^+$. 
   Thus for $r\in \sDelta$ the set 
$\Delta^- \setminus (\Delta^+ - r)$ is empty (thus only has $1$-dimensional $(-1)$st cohomology), 
      or for $r\notin \sDelta$ the set $\Delta^- \setminus (\Delta^+ - r)$ is a single point,
      hence it has no reduced cohomology at all.
   Therefore, $h^0(X,\cO(D)) = \#(\Delta \cap M)$ and $\gH^i(X,\cO(D))=0$ for all $i>0$.  
\end{example}

\begin{example}
   If on the other hand $-D$ is a nef divisor, then $\Delta^+=\{0\}$,
and $\Delta^-$ is the polytope of $-D$.
   Let $i=\dim \Delta^-$.
   Thus for $r\in -\relint \Delta^-$ the set 
$\Delta^- \setminus (\Delta^+ - r)$ is homotopic to a sphere of dimension $i-1$,
     while for $r\notin -\relint\Delta^-$ the set 
$\Delta^- \setminus (\Delta^+ - r)$ is contractible.
   Therefore, $h^{i}(X,\cO(D)) = \#(\relint \Delta^{-} \cap M)$ and $\gH^j(X,\cO(D))=0$ for all $j\ne i$.  
\end{example}

\begin{figure}[hbt]
   \begin{center}
   \begin{tikzpicture}
    \fill[color=bladyfiolet!60] (0,2.3) -- (-2.3,2.3) -- (-2.3,-2.3) -- (0,0) -- cycle;
    \fill[color=jasnyfiolet!60] (2.3,0) -- (2.3,-2.3) -- (-2.3,-2.3) -- (0,0) -- cycle;
    \draw[color=oliwkowy!50] (-2.3,-2.3) grid (2.3,2.3);
    \draw[thick,  color=mocnyfiolet,->] (0,0) -- (2.3,0) node[right, color=ciemnyfiolet] {$-1$};
    \draw[thick,  color=mocnyfiolet,->] (0,0) -- (0,2.3) node[anchor=south, color=ciemnyfiolet] {$-1$};
    \draw[thick,  color=mocnyfiolet,->] (0,0) -- (-2.3,-2.3) node[left, color=ciemnyfiolet] {$1$};
    \fill[thick, ciemnazielen] (0,1) node[anchor=south east]{$V_{D,0}$} node[anchor=south west, color=mocnyfiolet]{$\rho_2$} circle (3pt);
    \fill[thick, ciemnazielen] (1,0) node[anchor=south west, color=mocnyfiolet]{$\rho_1$} circle (3pt);
    \fill[mocnyfiolet] (-1,-1) node[anchor=south east]{$\rho_3$};
   \end{tikzpicture}
   \end{center}
   \caption{The fan of $X=\PP^2\setminus {[0,0,1]}$, the divisor $D = - D_{\rho_1} - D_{\rho_2}+D_{\rho_3} \simeq \cO_X(-1)$, 
            and the complex $V_{D,0}$ consisting of $2$ points responsible for $\gH^1(\cO_X(D))_0 \ne 0$.}
   \label{fig_fan_of_P2_minus_point}
\end{figure}
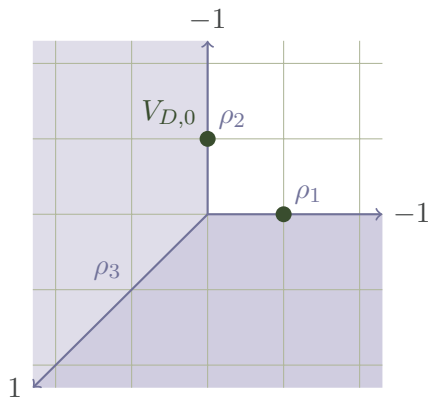

    The claim of Theorem~\ref{thm_toric_cohomology_in_terms_of_nef_polytopes} does not hold for quasiprojective toric varieties,
      that are not semiprojective. 
    To see this, consider $X=\PP^2\setminus\set{[0,0,1]}$ and let $D \simeq \cO_{X}(-1)$ be the negative of the hyperplane divisor.
    Then $D^+ \simeq 0$ and $D^- \simeq \cO_X(1)$, the polytopes are a point
and a basic triangle, respectively.
Thus $\Delta^-\setminus (\Delta^+-\dm)$ is always non-empty and contractible,
      hence the difference never has any reduced cohomologies.
But $\gH^1(\cO_X(-1))\ne 0$ as shown on Figure~\ref{fig_fan_of_P2_minus_point}.

\begin{example}
\label{ex_hexagon-someCohom}
    In the notation and coordinates of the ``hexagon'' example (Example~\ref{ex_hexagon_introduce_coordinates}), consider the divisor $D = (-4,-4,-2,1) \in \Pic(X)$.
   We have $D = D^+ - D^-$ for instance as $D^+ = \hexlinehor + \hexlinevert$ and  $D^- = \hexlinediag + 4 \hextriangleupleft$, where we represent the nef divisors with their polytopes.
    Thus we obtain
    \[
       \Delta^- = 
\vcenter{\hbox{\begin{tikzpicture}[scale=0.5]
\draw[step=1, color=oliwkowy!20, thin] (-0.5,-0.5) grid (5.5, 5.5);
\fill[pattern color=mocnyfiolet!80, pattern=dots] (0,0) -- (0,4)--(1,5)--(5,5)--cycle;
\draw[color=mocnyfiolet, thick] (0,0) -- (0,4)--(1,5)--(5,5)--cycle;
\end{tikzpicture}}}
\text{ and } 
       \Delta^+ = 
\vcenter{\hbox{\begin{tikzpicture}[scale=0.5]
\draw[step=1, color=oliwkowy!20, thin] (-0.5,-0.5) grid (1.5, 1.5);
\fill[pattern color=ciemnazielen!80, pattern=north east lines] (0,0) -- (0,1)--(1,1)--(1,0)--cycle;
\draw[color=ciemnazielen, thick] (0,0) -- (0,1)--(1,1)--(1,0)--cycle;
\end{tikzpicture}}}.
\]
The only non-contractible differences of $\Delta^-$ and a shift $(\Delta^+ +m)$ are:
\[
\vcenter{\hbox{\begin{tikzpicture}[scale=0.5]
\draw[step=1, color=oliwkowy!20, thin] (-0.5,-0.5) grid (5.5, 5.5);
\fill[pattern color=mocnyfiolet!80, pattern=dots] (0,0) -- (0,4)--(1,5)--(5,5)--cycle;
\draw[color=mocnyfiolet, thick] (0,0) -- (0,4)--(1,5)--(5,5)--cycle;
\fill[white] (0,1) -- (0,2)--(1,2)--(1,1)--cycle;
\draw[color=white, thick] (0,1) -- (0,2)--(1,2)--(1,1)--cycle;
\draw[color=mocnyfiolet!50, densely dotted]  (0,2)--(1,2)--(1,1)--(0,1);
\fill[white] (1,1) circle (3pt);
\end{tikzpicture}}},
\vcenter{\hbox{\begin{tikzpicture}[scale=0.5]
\draw[step=1, color=oliwkowy!20, thin] (-0.5,-0.5) grid (5.5, 5.5);
\fill[pattern color=mocnyfiolet!80, pattern=dots] (0,0) -- (0,4)--(1,5)--(5,5)--cycle;
\draw[color=mocnyfiolet, thick] (0,0) -- (0,4)--(1,5)--(5,5)--cycle;
\fill[white] (1,3) -- (1,4)--(2,4)--(2,3)--cycle;
\draw[color=white, thick] (1,3) -- (1,4)--(2,4)--(2,3)--cycle;
\draw[color=mocnyfiolet!50, densely dotted]  (1,3) -- (1,4)--(2,4)--(2,3)--cycle;
\end{tikzpicture}}},
 \text{ and }
\vcenter{\hbox{\begin{tikzpicture}[scale=0.5]
\draw[step=1, color=oliwkowy!20, thin] (-0.5,-0.5) grid (5.5, 5.5);
\fill[pattern color=mocnyfiolet!80, pattern=dots] (0,0) -- (0,4)--(1,5)--(5,5)--cycle;
\draw[color=mocnyfiolet, thick] (0,0) -- (0,4)--(1,5)--(5,5)--cycle;
\fill[white] (3,5) -- (3,4)--(4,4)--(4,5)--cycle;
\draw[color=white, thick] (3,5) -- (3,4)--(4,4)--(4,5)--cycle;
\draw[color=mocnyfiolet!50, densely dotted]  (3,5) -- (3,4)--(4,4)--(4,5);
\fill[white] (4,4) circle (3pt);
\end{tikzpicture}}}.
\]
Therefore $h^0(D) = 0$, $h^1(D) = 2$ and $h^2(D) = 1$.
\end{example}

\section{The immaculacy locus in \texorpdfstring{$\Pic(X)$}{Pic(X)}}
\label{ch-Immaculacy}

In this section we introduce the notion of an immaculate sheaf, 
concentrating on the case of line bundles.
We also study a relative version of this notion, and how 
the immaculacy interacts with morphisms.

\subsection{Immaculate line bundles}
\label{imaculateCheck}

Recall, that a sheaf is called \emph{acyclic}, if it has all higher cohomology groups equal to zero. 
We will also say that for a field $\kk$, a topological space $V$ is 
$\kk$-acyclic, if it is non-empty, arcwise connected, and its singular 
cohomologies $\gH^i(V, \kk) =0$ vanish for all $i>0$.
Note that in such case $\gH^0(V, \kk) =\kk$.
For example, all non-empty contractible spaces are $\kk$-acyclic (for any $\kk$), and the real projective plane $\RR\PP^2$ is  
$\kk$-acyclic (except if $\charact \kk =2$). Spheres $S^k$ are never $\kk$-acyclic.

\begin{definition}
\label{def-immaculate}
We call a sheaf $\cF$ on a variety $X$ {\em immaculate} if all
cohomology groups $\gH^p(X,\CF)$ ($p\in\Z$) vanish. The difference from the 
usual notion of acyclic sheaves is that we ask for the vanishing of $\gH^0$,
too.
\end{definition}

In particular, a toric sheaf of a \WQC divisor $\cO_X(D)$ is immaculate if and only if all
sets $V_{D,\dm}$ are $\kk$-acyclic.
Equivalently, as an immediate consequence of Theorem~\ref{thm_toric_cohomology_in_terms_of_nef_polytopes},
we can identify the immaculate line bundles in terms of properties 
of polyhedra $\Delta^+$ and $\Delta^-$. 

\begin{figure}[hbt]
   \begin{center}
   \includegraphics[width=0.9\textwidth]{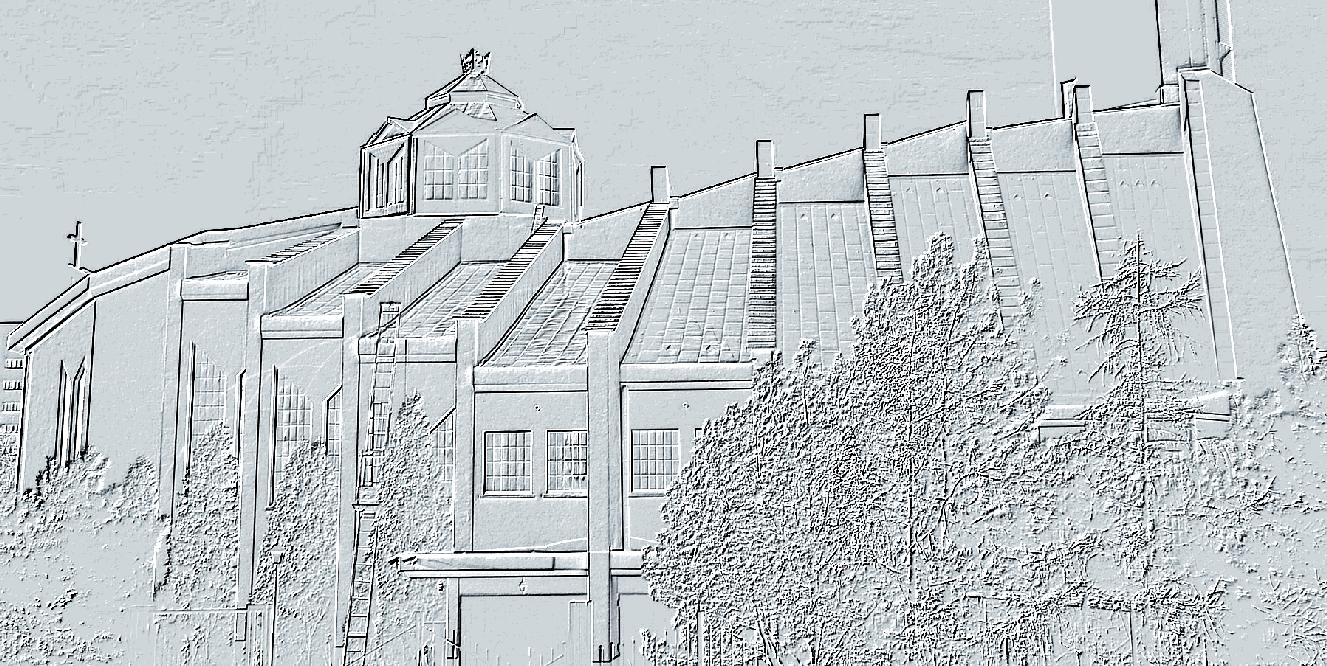}
   \end{center}
   \caption{The church of the Immaculate Conception of Blessed Virgin Mary 
in Warsaw.
   The shape of the roof resembles an illustration of a line bundle.}
\end{figure}

\begin{proposition}
\label{prop-diffPol}
Let $D = D^+ - D^-$ be a \WQC divisor on a semiprojective toric variety $X$ 
  with $D^+$ and $D^-$ nef. 
Denote the polyhedra of $D^+$ and $D^-$ by $\Delta^+$ and $\Delta^-$, 
respectively.
Then $\cO_X(D)$ is immaculate if and only if for all 
$\dm\in M$ the set $\Delta^-\setminus(\Delta^+-\dm)\subseteq M_\R$
is $\kk$-acyclic.
\end{proposition}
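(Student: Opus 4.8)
The plan is to read the statement directly off Theorem~\ref{thm_toric_cohomology_in_terms_of_nef_polytopes} by unwinding the two definitions involved. By Definition~\ref{def-immaculate}, the sheaf $\cO_X(D)$ is immaculate precisely when $\gH^i(X,\cO_X(D))=0$ for every $i$. The theorem expresses each of these cohomology groups as a direct sum
\[
\gH^i(X,\cO_X(D))=\bigoplus_{\dm\in M}\tH^{i-1}\big(\Delta^-\setminus(\Delta^+-\dm),\kk\big).
\]
Since a direct sum vanishes if and only if each summand does, immaculacy is equivalent to the vanishing of $\tH^{i-1}(\Delta^-\setminus(\Delta^+-\dm),\kk)$ for every $i$ and every $\dm\in M$. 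Fixing $\dm$ and writing $S:=\Delta^-\setminus(\Delta^+-\dm)$, the remaining task is to verify that ``$\tH^{j}(S,\kk)=0$ for all $j$'' is the same condition as ``$S$ is $\kk$-acyclic''.

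I would establish this last equivalence by matching the vanishing degree by degree against the three clauses of $\kk$-acyclicity recorded in Subsection~\ref{imaculateCheck}. For $i=0$ the relevant group is $\tH^{-1}(S,\kk)$, which by the convention adopted in Subsection~\ref{toricCohom} equals $\kk$ exactly when $S=\emptyset$; its vanishing is thus equivalent to $S\neq\emptyset$. For $i=1$ the group $\tH^0(S,\kk)$ is the reduced zeroth cohomology, whose dimension is one less than the number of path-components of $S$, so (given $S\neq\emptyset$) it vanishes if and only if $S$ is connected. For $i\geq 2$ one has $\tH^{i-1}(S,\kk)=\gH^{i-1}(S,\kk)$, so these vanish exactly when the ordinary higher cohomology of $S$ is trivial. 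Collecting the three clauses---nonempty, connected, and higher cohomology zero---reproduces verbatim the definition of $\kk$-acyclicity, while degrees $i<0$ contribute nothing, since both $\gH^{i}(X,\cO_X(D))$ and $\tH^{i-1}(S,\kk)$ vanish for $i<0$ by the standard conventions.

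There is no serious obstacle here: essentially all of the content is carried by Theorem~\ref{thm_toric_cohomology_in_terms_of_nef_polytopes}, and what remains is careful bookkeeping of the degree shift $i\mapsto i-1$ together with the nonstandard roles of $\tH^{-1}$ (detecting emptiness) and $\tH^0$ (detecting connectedness). The only point I would flag explicitly is the identification of connectedness with \emph{arcwise} connectedness in the definition of $\kk$-acyclic: the sets $\Delta^-\setminus(\Delta^+-\dm)$ are semialgebraic (differences of polyhedra), so their connected components coincide with their path-components, and $\tH^0$ with $\kk$-coefficients genuinely counts them. With this remark the chain of equivalences closes and the proposition follows.
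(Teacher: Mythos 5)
Your proposal is correct and takes essentially the same route as the paper, which states Proposition~\ref{prop-diffPol} as an immediate consequence of Theorem~\ref{thm_toric_cohomology_in_terms_of_nef_polytopes}, with exactly the bookkeeping you supply: vanishing of the direct sum summand by summand, the convention that $\tH^{-1}$ detects emptiness, reduced $\tH^{0}$ detecting (path-)connectedness, and higher reduced cohomology matching the last clause of $\kk$-acyclicity. Your closing remark on arcwise versus ordinary connectedness is sound but not even needed, since singular $\tH^{0}$ with coefficients in $\kk$ detects path components directly.
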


\begin{example}
\label{ex-intLattA}
With $D = D^+ - D^-$ and $\Delta^+$ and $\Delta^-$ as in Proposition~\ref{prop-diffPol},
if $\Delta^+$ is a (lattice) point, say $\Delta^+=\{0\}$, that is $D^+=0$,
then $D=-D^-$ is immaculate if and only if 
$\Delta^-$ does not contain (relatively) interior lattice points at all.
Note that this is supposed to exclude the case $\dim\Delta^-=0$, too.
See Theorem~\ref{thm_line_of_immaculates_on_toric_varieties} for a
generalization.
\end{example}

Next, we discuss the behaviour of immaculacy with respect to some special morphisms.
For a variety $X$ we denote by $\R\Gamma_X$ the derived global sections 
functor for coherent sheaves on $X$. Thus, a sheaf $\cF$ is immaculate
if and only if $\R\Gamma_X(\CF)=0$ in the derived category $\cD(X)$,
that is, if $\R\Gamma_X(\CF)$ is exact.
Similarly, for a morphism $p \colon X \to Y$ 
we denote by $\R p_*$ the derived push forward functor.

\begin{proposition}\label{prop_pullback_of_immaculate}
   Suppose $X$ and $Y$ are algebraic varieties over $\kk$ with $Y$ normal,
      and $p \colon X \to Y$ is a surjective proper morphism with 
connected fibres 
      such that $R^i p_* \cO_X =0$ for $i>0$,
that is, $\R p_*\CO_X=\CO_Y$.
   Assume $\cE$ is a locally free sheaf on $Y$. 
   Then $\cE$ is immaculate if and only if $p^*\cE$ is immaculate on $X$.
\end{proposition}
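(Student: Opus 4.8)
The plan is to deduce everything from the projection formula together with the hypothesis $\R p_*\CO_X = \CO_Y$. The key observation is that for a locally free sheaf $\cE$ on $Y$, the projection formula gives a natural isomorphism
\[
   \R p_*(p^*\cE) \cong \cE \otimes^{\mathbf L} \R p_* \CO_X \cong \cE \otimes \CO_Y = \cE
\]
in the derived category $\cD(Y)$, where the middle isomorphism uses $\R p_*\CO_X = \CO_Y$ and local freeness of $\cE$ means the derived tensor is an ordinary tensor. Composing with $\R\Gamma_Y$ and using the fact that $\R\Gamma_X = \R\Gamma_Y \circ \R p_*$ (which follows from $p$ being proper, so that $\R\Gamma$ factors through the pushforward), I would obtain
\[
   \R\Gamma_X(p^*\cE) \cong \R\Gamma_Y(\R p_*(p^*\cE)) \cong \R\Gamma_Y(\cE)
\]
as objects of $\cD(\kk)$.

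Once this chain of isomorphisms is established, the proposition is immediate: $\cE$ is immaculate precisely when $\R\Gamma_Y(\cE) = 0$ by Definition~\ref{def-immaculate}, and $p^*\cE$ is immaculate precisely when $\R\Gamma_X(p^*\cE) = 0$; the displayed isomorphism shows these two conditions coincide. Note that no normality, connectedness of fibres, or surjectivity hypothesis is needed \emph{directly} in this argument --- those assumptions enter only insofar as they guarantee the equality $\R p_* \CO_X = \CO_Y$, which is here taken as a standing hypothesis. (Normality of $Y$ and connected fibres are the geometric conditions that typically ensure $p_*\CO_X = \CO_Y$; properness is what makes $\R\Gamma_X = \R\Gamma_Y \circ \R p_*$ hold.)

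I expect the main subtlety to be justifying the projection formula at the level of the derived category with the correct hypotheses on $\cE$. The clean statement $\R p_*(p^*\cE) \cong \cE \otimes^{\mathbf L} \R p_* \CO_X$ requires $\cE$ to be sufficiently well-behaved; for $\cE$ locally free of finite rank this is entirely standard, since locally $p^*\cE$ is a finite direct sum of copies of $\CO_X$ and $\R p_*$ commutes with finite direct sums, so one can even verify the isomorphism by reducing to the rank-one free case $\cE = \CO_Y$, where it is the hypothesis itself. The only point requiring a little care is the compatibility (naturality) of these local isomorphisms so that they glue to a global one on $Y$; this is guaranteed because the projection-formula morphism is defined functorially and is an isomorphism locally. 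I would therefore state the projection formula as a cited or standard fact and devote the bulk of the written proof to assembling the three displayed isomorphisms into the equivalence $\R\Gamma_X(p^*\cE) \cong \R\Gamma_Y(\cE)$, from which the equivalence of immaculacy follows at once.
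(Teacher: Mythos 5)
Your proposal is correct and is essentially the paper's own proof, which consists of the single chain $\R\Gamma_Y\CE=\R\Gamma_Y(\R p_* p^*\cE)=\R\Gamma_X(p^*\cE)$; you have merely made explicit the projection-formula step $\R p_*(p^*\cE)\cong\cE\otimes^{\mathbf L}\R p_*\CO_X\cong\cE$ that the paper leaves implicit. One tiny correction: the identity $\R\Gamma_X=\R\Gamma_Y\circ\R p_*$ holds for any morphism (it is just the Leray composition $\Gamma_X=\Gamma_Y\circ p_*$ at the derived level), so properness, like normality and connected fibres, matters only through guaranteeing $\R p_*\CO_X=\CO_Y$.
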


\begin{proof}
This follows from
$\R\Gamma_Y\CE=\R\Gamma_Y (\R p_* p^* \cE)=\R\Gamma_X(p^* \cE)$.
\end{proof}

The assumptions of Proposition~\ref{prop_pullback_of_immaculate} are satisfied
in the typical setting of the morphisms arising from the Minimal Model Program.
\begin{corollary}\label{cor_pullback_of_immaculate_on_toric_varieties}
   Suppose $X$ and $Y$ are toric varieties,
      and $p \colon X \to Y$ is a surjective toric projective morphism with connected fibres, 
      that is a toric projective morphism corresponding to a surjective map of one-parameter subgroups lattices $N_X \to N_Y$.
   Assume $\cE$ is a locally free sheaf on $Y$. 
   Then $\cE$ is immaculate if and only if $p^*\cE$ is immaculate.
\end{corollary}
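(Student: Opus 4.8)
The plan is to deduce the statement from Proposition~\ref{prop_pullback_of_immaculate} by verifying its hypotheses for a toric projective morphism $p\colon X\to Y$ coming from a surjective lattice map $\phi\colon N_X\to N_Y$. Four things are needed: that $Y$ is normal, that $p$ is proper and surjective, that the fibres of $p$ are connected, and that $\R p_*\cO_X=\cO_Y$. Normality is automatic, since all our toric varieties are normal, and properness holds because a projective morphism is proper; with these and the remaining two points in hand, the corollary is an application of Proposition~\ref{prop_pullback_of_immaculate} to the locally free sheaf $\cE$.

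For surjectivity, I would observe that tensoring the surjection $\phi$ with the divisible group $\kk^*$ yields a surjection of tori $T_X\twoheadrightarrow T_Y$, so the image of $p$ contains the dense torus $T_Y$; since $p$ is proper this image is also closed, whence $p$ is surjective.

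The core of the argument is the identity $\R p_*\cO_X=\cO_Y$, from which the connectedness of all fibres then follows by Stein factorization together with the normality of $Y$. I would work affine-locally on the base: cover $Y$ by its affine toric charts $U_\tau$, $\tau\in\Sigma_Y$, so that $p^{-1}(U_\tau)$ is the semiprojective toric variety whose fan $\Sigma_X^\tau$ consists of those cones of $\Sigma_X$ mapping into $\tau$; by properness $\supp\Sigma_X^\tau=\phi_\R^{-1}(\tau)$. Since $U_\tau$ is affine, the two assertions $p_*\cO_X=\cO_Y$ and $R^i p_*\cO_X=0$ for $i>0$ amount to computing $\R\Gamma\big(p^{-1}(U_\tau),\cO\big)$ and comparing it with $\Gamma(U_\tau,\cO_Y)=\kk[\tau^\vee\cap M_Y]$. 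Using the $M_X$-graded cohomology recalled in Subsection~\ref{toricCohom} for the divisor $D=0$ on $\Sigma_X^\tau$, the degree-$\dm$ piece of $\gH^i$ is $\tH^{i-1}(V_{0,\dm},\kk)$, where $V_{0,\dm}=\bigcup_{\sigma}\conv\{\rho\in\sigma(1)\kst\langle\rho,\dm\rangle<0\}$. The dual inclusion $M_Y\hookrightarrow M_X$ identifies $M_Y$ with $N_0^\perp$ for $N_0:=\ker\phi$, and a short computation with $\supp\Sigma_X^\tau=\phi_\R^{-1}(\tau)$ shows that $V_{0,\dm}=\emptyset$ exactly for $\dm\in\tau^\vee\cap M_Y$; this already gives $p_*\cO_X=\cO_Y$ in cohomological degree $0$.

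The main obstacle is the vanishing of the \emph{higher} direct images, i.e.\ showing $\tH^{\,i}(V_{0,\dm},\kk)=0$ for all $i\ge 1$ and every $\dm$ with $V_{0,\dm}\neq\emptyset$. This is the combinatorial shadow of the completeness of the fibres of $p$: as $p$ is projective with $\phi$ surjective, its fibres are projective toric varieties with one-parameter lattice $N_0$, hence complete and $\cO$-acyclic with $\R\Gamma(\cO)=\kk$. Because $p$ need not be flat, I would avoid cohomology and base change and instead establish the acyclicity of the $V_{0,\dm}$ directly from the structure of the subfan $\Sigma_X^\tau$ over $\tau$ (equivalently, from the $\cO$-acyclicity of complete toric varieties, which is the $D=0$ case of the computation behind Theorem~\ref{thm_toric_cohomology_in_terms_of_nef_polytopes}). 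Once $\R p_*\cO_X=\cO_Y$ is secured, Proposition~\ref{prop_pullback_of_immaculate} applies and finishes the proof.
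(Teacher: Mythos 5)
Your overall strategy is the paper's: reduce to the affine charts $U_\tau$ of $Y$ and verify $\R p_*\cO_X=\cO_Y$ through the $M_X$-graded description of $\gH^i\big(p^{-1}(U_\tau),\cO\big)$ from Subsection~\ref{toricCohom}, then invoke Proposition~\ref{prop_pullback_of_immaculate}. The pieces you carry out are correct: surjectivity via the dense torus plus properness, the identification $\supp\Sigma_X^\tau=\phi_\R^{-1}(\tau)$, and the degree-zero computation that $V_{0,\dm}=\emptyset$ exactly for $\dm\in\tau^\vee\cap M_Y$. But at the step you yourself flag as ``the main obstacle'' --- acyclicity of the nonempty $V_{0,\dm}$ --- the proposal contains no argument. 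You promise to establish it ``directly from the structure of the subfan'', and the fallback you gesture at (completeness and $\cO$-acyclicity of the fibres) is, as you concede in the same sentence, unusable here: without flatness, fibrewise vanishing does not transport to $R^ip_*\cO_X=0$ by cohomology and base change, so the parenthetical does not close the gap, and nothing else in the proposal does.

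The missing idea is a one-liner, and it uses precisely the convexity you already put on the table: for the zero divisor, the support-function complex of Subsection~\ref{toricCohom} is $V^{\supp}_{0,\dm}=\set{a\in\supp\Sigma_X^\tau \mid \langle a,\dm\rangle<0}=\phi_\R^{-1}(\tau)\cap\set{\dm<0}$, an intersection of two convex sets, hence convex, hence empty or contractible; since $V^{>}_{0,\dm}$ sits inside $V^{\supp}_{0,\dm}$ as a strong deformation retract, all reduced cohomology of $V_{0,\dm}$ vanishes in every degree $\dm$ where it is nonempty, and emptiness occurs exactly for $\dm\in\tau^\vee\cap M_Y$ --- so this single observation yields $\R p_*\cO_X=\cO_Y$ in one stroke. (This is the paper's entire proof.) Note also a substantive index slip in your stated target: killing $R^ip_*\cO_X$ for all $i>0$ requires $\tH^{i-1}(V_{0,\dm},\kk)=0$ for all $i\ge 1$, that is $\tH^{j}=0$ for all $j\ge 0$ \emph{including} $j=0$; your target ``$\tH^{i}(V_{0,\dm},\kk)=0$ for $i\ge 1$'' omits connectedness of $V_{0,\dm}$ and would therefore leave the vanishing of $R^1p_*\cO_X$ unaddressed --- whereas the convexity argument gives connectedness for free.
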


\begin{proof}
To apply Proposition~\ref{prop_pullback_of_immaculate} we must 
ensure that $R^i p_* \cO_X =0$ for $i>0$.
For this, we may assume that $Y$ is affine and have to check that 
$\gH^i(\cO_X)=0$.
Since $p$ is projective,
the support of the fan $\Sigma_X$ of $X$ is a convex cone.
Thus for $\dm\in M_X$ the $\dm$-th grading of $\gH^i(\cO_X)$ is calculated by  
$V_{0, \dm}^{\supp} = \set{a \in \suppSig_X \mid  \langle a, \dm \rangle <0}$ 
which is convex, hence either contractible or empty.
\end{proof}

\begin{example}
\label{ex_hexagon-pullBack}
   In the notation of Example~\ref{ex_hexagon_introduce_coordinates} (``hexagon''), consider the divisor 
   \[
      D = (-2,-2,-2,-2) = -2 \hextriangledownright.
   \]
   It is a pullback of the immmaculate line bundle $\cO_{\PP^2}(-2)$ under the blow-down map to $\PP^2$ (contracting three disjoint exceptional divisors),
     thus $D$ is also immaculate.
\end{example}

\subsection{Relative immaculacy and affine spaces of immaculate line bundles}

The main goal of this subsection is to explain the 
occurrence of some infinite families of immaculate line bundles.
For this we present a more restrictive notion
than plain immaculacy, which leads to a construction of such families. 

\begin{definition}
\label{def-pImmac}
   Suppose $p\colon X \to Y$ is a morphism of algebraic varieties. 
   We say that a sheaf $\cF$ on $X$ is \emph{$p$-immaculate} 
   if the direct image sheaves $R^{i} p_* \cF$ vanish in all 
cohomological degrees $i \in \Z$, that is,
if $\R p_* \cF=0$ is exact.
\end{definition}
Clearly, a sheaf on $X$ is immaculate if and only if it is $p$-immaculate for 
the map \mbox{$p\colon X\to \set{*}$.}
Moreover, for any map $p\colon X \to Y$, 
the equality $\R\Gamma_X=\R\Gamma_Y\circ\R p_*$ implies that
each $p$-immaculate sheaf is automatically immaculate.
And, finally, it is a consequence of cohomology and base change that for a 
flat morphism the relative immaculacy of locally free sheaves can be 
checked fiberwise:

\begin{proposition}\label{prop_p_immaculate_for_flat_is_fibrewise}
   Suppose that $p\colon X \to Y$ is a flat proper morphism of algebraic 
varieties.
   Let $\cE$ be a locally free sheaf on $X$
and for $y\in Y$ denote by $X_y:=f^{-1}(y)$ the fiber of $y$.
   Then $\cE$ is $p$-immaculate if and only if 
$\cE_y:=\cE|_{X_y}$ is immaculate for every closed point $y$.
\end{proposition}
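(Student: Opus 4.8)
The plan is to reduce the statement to a purely homological fact via the theorem on cohomology and base change. Since $p$ is proper and flat and $\cE$ is locally free on $X$, the sheaf $\cE$ is flat over $Y$ (it is flat over $X$, and $X$ is flat over $Y$); as $Y$ is Noetherian, there is a bounded \emph{base-change complex} $K^\bullet$ of finite free $\cO_Y$-modules (Mumford, \emph{Abelian Varieties}; Grothendieck's EGA III; or Hartshorne, \emph{Algebraic Geometry}, III.12) together with a functorial isomorphism $R^i p_*(\cE \otimes p^*\cF)\cong \gH^i(K^\bullet \otimes \cF)$ for every quasicoherent $\cO_Y$-module $\cF$. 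Taking $\cF=\cO_Y$ identifies $p$-immaculacy of $\cE$, that is $R^i p_*\cE=0$ for all $i$, with the exactness of $K^\bullet$. Taking $\cF=\kappa(y)$, the skyscraper residue field at a closed point $y$, and using $\cE\otimes p^*\kappa(y)=\cE_y$ together with $R^i p_*\cE_y=\gH^i(X_y,\cE_y)$, identifies immaculacy of the fibre $\cE_y$ with the exactness of the complex of $\kappa(y)$-vector spaces $K^\bullet\otimes\kappa(y)$. Both sides of the equivalence thus become statements about the single complex $K^\bullet$, and it remains to prove: a bounded complex of finite free modules over a Noetherian ring is exact if and only if it stays exact after tensoring with the residue field at every point.

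For the forward direction I would argue that if $K^\bullet$ is exact, then it is in fact \emph{split exact}: each short exact sequence $0\to \ker d^i \to K^i \to \im d^i\to 0$ splits, because $\im d^i$ is a submodule of the free module $K^{i+1}$ that is itself a direct summand, hence projective, and one peels these pieces off inductively using boundedness. A split exact complex remains exact after any additive functor, in particular after $\otimes_{\cO_Y}\kappa(y)$. Hence $K^\bullet\otimes\kappa(y)$ is exact for every $y$, so every fibre $\cE_y$ is immaculate. This implication is automatic and uses no further geometry.

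The reverse direction is the crux. Assume $K^\bullet\otimes\kappa(y)$ is exact for every closed point $y$, and work at a stalk $R=\cO_{Y,y}$, a local ring with maximal ideal $\idm$ and residue field $\kappa(y)$. Letting $d^{n-1}\colon K^{n-1}\to K^n$ be the top nonzero differential, exactness of $K^\bullet\otimes\kappa(y)$ forces $d^{n-1}\otimes\kappa(y)$ to be surjective, hence $d^{n-1}$ is surjective by Nakayama; since $K^n$ is free this surjection splits, peeling off a trivial summand $K^n\xrightarrow{\ \id\ }K^n$ and leaving a strictly shorter complex whose fibre is again exact. Induction on the length shows the stalk $K^\bullet_y$ is split exact, so $\gH^i(K^\bullet)_y=0$. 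To pass from closed points to all points I would note that $y\mapsto \dim_{\kappa(y)}\gH^i(K^\bullet\otimes\kappa(y))$ is upper semicontinuous, so its non-vanishing locus is closed; since $Y$ is a variety over the algebraically closed field $\kk$ it is Jacobson, whence a nonempty closed set contains a closed point, which the hypothesis excludes. Thus the fibre cohomology vanishes at every point, the stalks are split exact everywhere, $\gH^i(K^\bullet)=0$, and $\cE$ is $p$-immaculate.

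The main obstacle is exactly this reverse implication: it is where flatness of $\cE$ over $Y$ (guaranteeing the base-change complex exists and is built from \emph{locally free} sheaves) and the combined Nakayama--Jacobson argument are indispensable, in sharp contrast with the forward direction, which is formal. If one wished to sidestep the explicit complex, the reverse direction could alternatively be obtained from Grauert's theorem, since the fibre dimensions are identically zero hence constant, forcing each $R^i p_*\cE$ to be locally free of rank zero; but the base-change-complex formulation handles both directions uniformly over an arbitrary Noetherian base and avoids any reducedness hypothesis on $Y$.
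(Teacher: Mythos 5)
Your proposal is correct, but it takes a genuinely different route from the paper's proof. The paper treats Mumford's two corollaries in \cite[Sect.~5]{mumford_abelian_varieties} as black boxes: for the direction ``fibrewise immaculate $\Rightarrow$ $p$-immaculate'' it combines semicontinuity (Cor.~1) with the implication (i)$\Rightarrow$(ii) of Cor.~2 to conclude that each $R^ip_*\cE$ is locally free of rank zero, and for the converse it runs a \emph{descending} induction on $i$, using the last part of Cor.~2 to see that once $R^ip_*\cE=0$ the base-change map $R^{i-1}p_*(\cE)\otimes\kappa(y)\to \gH^{i-1}(X_y,\cE_y)$ is an isomorphism, then stepping down. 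You instead unpack the common engine behind both corollaries --- the base-change complex $K^\bullet$ --- and reduce the whole statement to a self-contained piece of linear algebra: a bounded complex of finite free modules is exact iff its residue-field specialisations are exact, proved by split-exactness in one direction and Nakayama plus inductive peeling of the top free summand in the other, with semicontinuity passing from closed points to all points. What your approach buys is transparency (one sees exactly where flatness and properness enter, namely in the existence of $K^\bullet$ with locally free terms) and uniformity, since one argument covers both implications; what the paper's approach buys is brevity, as each direction is a two-line citation. Two small points to tidy: the complex $K^\bullet$ exists only locally on $Y$ (over affine opens), not as a global complex of finite free $\cO_Y$-modules, which is harmless since both $p$-immaculacy and fibrewise immaculacy are local on $Y$; and your justification that an exact bounded complex of frees is split exact reads circularly as stated (``$\im d^i$ \dots is itself a direct summand''), though the inductive peeling from the top degree, where $\im d^{n-1}=K^n$ is free, repairs it. Incidentally, the Jacobson property is not really needed for the closed-to-all-points step: any nonempty closed subset of a Noetherian scheme contains a point closed in it, hence closed in $Y$, so your algebraic lemma does hold over an arbitrary Noetherian base as you claim.
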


\begin{proof}
   If $\cE|_{X_y}$ is immaculate for every closed point $y$, then the 
functions $y \mapsto \dim \gH^i(X_y, \cE|_{X_y})$ are constantly equal to $0$,
     on closed points. Hence, by semicontinuity 
\cite[Cor.~1 in Sect.~5, p.~50]{mumford_abelian_varieties}, 
they are also zero on non-closed points.
  Thus by the implication (i)\then(ii) in \cite[Cor.~2 in Sect.~5, pp.~50--51]{mumford_abelian_varieties}, the sheaf $R^ip_*(\cE)$ is locally free
     and zero at every point of $Y$, that is $R^ip_*(\cE)=0$.
    
  If $\cE$ is $p$-immaculate, then for sufficiently large $i$ the equivalent conditions of 
     \cite[Cor.~2 in Sect.~5, pp.~50--51]{mumford_abelian_varieties} are satisfied, 
     hence by the last paragraph of that corollary the map $R^{i-1}p_*(\cE)
\otimes \kappa(y) \to \gH^{i-1}(X_y, \cE|_{X_y})$ is an isomorphism.
  Moreover, $R^{i-1}p_*(\cE) =0$, hence the condition (ii) is satisfied for a smaller value of $i$ and hence also condition (i) is satisfied.
  Going down with $i$, we eventually get the claim.
\end{proof}

\begin{proposition}\label{prop_p_immaculate_implies_twists_are_immaculate}
   Suppose that $X$ and $Y$ are varieties and $p\colon X \to Y$ is a morphism.
   Assume that $\cF$ is a $p$-immaculate coherent sheaf on $X$.
   Then, for any locally free sheaf $\cE$ on $Y$,
     the sheaf $\cF \otimes p^*\cE$ is $p$-immaculate, hence immaculate.
\end{proposition}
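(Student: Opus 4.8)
The plan is to reduce everything to the projection formula together with the two formal facts recorded just after Definition~\ref{def-pImmac}: that $p$-immaculacy of $\cF$ means exactly $\R p_*\cF = 0$, and that any $p$-immaculate sheaf is automatically immaculate because $\R\Gamma_X = \R\Gamma_Y\circ \R p_*$. Thus it suffices to show that $\R p_*(\cF\otimes p^*\cE)=0$; immaculacy will then follow simply by applying $\R\Gamma_Y$ to this vanishing.

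The one substantial ingredient is the projection formula. Since $\cE$ is locally free on $Y$, the pullback $p^*\cE$ is locally free on $X$, so tensoring by $p^*\cE$ on $X$ and by $\cE$ on $Y$ are exact functors; no derived-tensor corrections are needed. Consequently the projection formula takes the clean form
$$
\R p_*(\cF\otimes p^*\cE)\;\cong\;(\R p_*\cF)\otimes\cE
$$
in $\cD(Y)$. If one prefers to avoid derived categories entirely, the same identity can be verified one cohomology sheaf at a time, since local freeness of $\cE$ gives $R^i p_*(\cF\otimes p^*\cE)\cong R^i p_*(\cF)\otimes\cE$ for every $i$ by the ordinary sheaf-level projection formula.

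Now I would substitute the hypothesis $\R p_*\cF=0$: the right-hand side becomes $0\otimes\cE=0$, so $\R p_*(\cF\otimes p^*\cE)=0$, i.e.\ $\cF\otimes p^*\cE$ is $p$-immaculate. Applying $\R\Gamma_Y$ then yields $\R\Gamma_X(\cF\otimes p^*\cE)=\R\Gamma_Y\big(\R p_*(\cF\otimes p^*\cE)\big)=0$, so the sheaf is immaculate as claimed. Because the argument is purely formal, there is no genuine obstacle; the only step meriting attention is the validity of the projection formula in the derived setting, and this is guaranteed here precisely by the local freeness of $\cE$ (which is exactly why the hypothesis asks for $\cE$ locally free rather than merely coherent).
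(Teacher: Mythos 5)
Your proof is correct and matches the paper's own argument: both hinge on the projection formula $R^i p_*(\cF\otimes p^*\cE)\cong R^i p_*\cF\otimes\cE$ (valid because $\cE$ is locally free), followed by the observation that $p$-immaculacy implies immaculacy via $\R\Gamma_X=\R\Gamma_Y\circ\R p_*$. Your derived-category phrasing and the sheaf-level alternative are just two presentations of the same one-line argument the paper gives.
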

\begin{proof}
   The projection formula implies $R^i p_*(\cF \otimes p^*\cE) = R^i p_*\cF \otimes \cE$
     and the latter is zero by the definition of a $p$-immaculate sheaf.
\end{proof}

While the previous claims followed from rather standard arguments, 
it is quite nice that,
in the projective setting, also the converse of the above statement holds
true:

\renewcommand{\theenumi}{(\alph{enumi})}
\renewcommand{\labelenumi}{\theenumi}

\begin{theorem}\label{thm_p_immaculate_and_stable_immaculate}
      Suppose $X$ and $Y$ are varieties, $p\colon X \to Y$ is a morphism, and $Y$ is projective.
      Assume $\cF$ is a coherent sheaf on $X$, and $L$ is an ample line bundle on $Y$. 
      Then the following conditions are equivalent.
      \begin{enumerate}
       \item \label{item_F_p_immaculate}
             $\cF$ is $p$-immaculate,
       \item \label{item_F_twisted_by_vector_bundle}
             for any locally free sheaf $\cE$ on $Y$ the sheaf $\cF \otimes p^*\cE$ is immaculate,
       \item \label{item_F_twisted_by_line_bundle}
             for any Cartier divisor $D$ on $Y$ the sheaf $\cF \otimes \cO_X(p^*D)$ is immaculate,
       \item \label{item_F_twisted_by_ample}
             for any integer $k>0$ the sheaf $\cF \otimes p^*L^{\otimes k}$ is immaculate.
      \end{enumerate}
\end{theorem}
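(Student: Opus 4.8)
The plan is to prove the cyclic chain of implications
\ref{item_F_p_immaculate} $\Rightarrow$ \ref{item_F_twisted_by_vector_bundle} $\Rightarrow$ \ref{item_F_twisted_by_line_bundle} $\Rightarrow$ \ref{item_F_twisted_by_ample} $\Rightarrow$ \ref{item_F_p_immaculate}. Three of the four steps are essentially formal. The implication \ref{item_F_p_immaculate} $\Rightarrow$ \ref{item_F_twisted_by_vector_bundle} is exactly Proposition~\ref{prop_p_immaculate_implies_twists_are_immaculate}. For \ref{item_F_twisted_by_vector_bundle} $\Rightarrow$ \ref{item_F_twisted_by_line_bundle} one simply specialises $\cE$ to the line bundle $\cO_Y(D)$, using $p^*\cO_Y(D)=\cO_X(p^*D)$. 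For \ref{item_F_twisted_by_line_bundle} $\Rightarrow$ \ref{item_F_twisted_by_ample} one writes $L=\cO_Y(H)$ for a Cartier divisor $H$ and takes $D=kH$, so that $\cF\otimes\cO_X(p^*(kH))=\cF\otimes p^*L^{\otimes k}$. Hence all the substance lies in the remaining implication \ref{item_F_twisted_by_ample} $\Rightarrow$ \ref{item_F_p_immaculate}.

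So suppose $\cF\otimes p^*L^{\otimes k}$ is immaculate, i.e.\ $\R\Gamma_X(\cF\otimes p^*L^{\otimes k})=0$, for every $k>0$; the aim is to deduce $R^ip_*\cF=0$ for all $i$. The first step is to combine the identity $\R\Gamma_X=\R\Gamma_Y\circ\R p_*$ with the projection formula, which together yield an isomorphism in $\cD(Y)$
$$
\R\Gamma_X(\cF\otimes p^*L^{\otimes k})\cong\R\Gamma_Y\big((\R p_*\cF)\otimes L^{\otimes k}\big),
$$
the point being that $L^{\otimes k}$ is locally free, so tensoring with it is exact and commutes past $\R p_*$. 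Writing $\CH^j:=R^jp_*\cF$ for the cohomology sheaves of $\R p_*\cF$, the hypercohomology spectral sequence reads
$$
E_2^{i,j}=\gH^i\big(Y,\CH^j\otimes L^{\otimes k}\big)\ \Longrightarrow\ \gH^{i+j}\big(X,\cF\otimes p^*L^{\otimes k}\big).
$$
Applying Serre's vanishing theorem simultaneously to the finitely many nonzero coherent sheaves $\CH^j$, we find a $k_0$ so that $E_2^{i,j}=0$ for all $i>0$ whenever $k\ge k_0$. For such $k$ the spectral sequence degenerates, giving $\gH^n\big(X,\cF\otimes p^*L^{\otimes k}\big)\cong\gH^0\big(Y,\CH^n\otimes L^{\otimes k}\big)$ for every $n$.

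By hypothesis \ref{item_F_twisted_by_ample} the left-hand side vanishes, so $\gH^0\big(Y,\CH^n\otimes L^{\otimes k}\big)=0$ for all $n$ and all $k\ge k_0$. Enlarging $k_0$ if necessary, Serre's theorem on ample bundles also guarantees that each $\CH^n\otimes L^{\otimes k}$ is globally generated for $k\ge k_0$; a globally generated coherent sheaf with no global sections is zero, so $\CH^n\otimes L^{\otimes k}=0$ and therefore $\CH^n=R^np_*\cF=0$ for every $n$. This is precisely the $p$-immaculacy of $\cF$, closing the cycle. I expect the main obstacle to be exactly this last implication, and the genuinely geometric input it requires is the \emph{coherence} of the sheaves $R^jp_*\cF$, without which neither Serre vanishing nor global generation is available — this is where properness of $p$ must be invoked (it is automatic in the projective toric setting where the theorem is applied). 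The only other point to watch is that a single $k$ be chosen that simultaneously kills all higher cohomology and globally generates all the finitely many direct images, which is harmless since \ref{item_F_twisted_by_ample} is assumed for every $k>0$.
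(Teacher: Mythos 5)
Your proof is correct and follows essentially the same route as the paper: the same cyclic chain of implications with the substance in \ref{item_F_twisted_by_ample}\then\ref{item_F_p_immaculate}, handled identically via the derived projection formula, the hypercohomology spectral sequence for $\R\Gamma_Y(\R p_*\cF\otimes L^{\otimes k})$, and Serre vanishing to kill the columns $i>0$ so that the abutment $0$ forces $\gH^0(Y,R^jp_*\cF\otimes L^{\otimes k})=0$ for large $k$. The only (cosmetic) divergence is the final step, where you conclude $R^jp_*\cF=0$ from global generation of $R^jp_*\cF\otimes L^{\otimes k}$, while the paper instead invokes the fact that a coherent sheaf on a projective variety whose graded module vanishes in all large degrees is zero (\cite[Exercise~II.5.9(c)]{hartshorne}) --- equivalent standard consequences of Serre's theorems --- and your observation that coherence of the $R^jp_*\cF$ (hence properness of $p$, left implicit in the paper's statement but automatic in its toric applications) is the genuinely needed hypothesis is accurate.
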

\begin{proof}
   The implication~\ref{item_F_p_immaculate}\then\ref{item_F_twisted_by_vector_bundle}
     is shown in Proposition~\ref{prop_p_immaculate_implies_twists_are_immaculate}.
   The implications~\ref{item_F_twisted_by_vector_bundle}\then\ref{item_F_twisted_by_line_bundle}\then\ref{item_F_twisted_by_ample} 
     are clear.
   Thus we only have to show  
\ref{item_F_twisted_by_ample}\then\ref{item_F_p_immaculate}.
   
   By the derived projection formula $(\R p_* \cF )\otimes L^{\otimes k}
\simeq \R p_*(\cF\otimes p^*L^{\otimes k})$ in $\cD(Y)$.
   Applying the derived global sections functor  $\R\Gamma_Y$ we obtain that
   \[
      \R\Gamma_Y (\R p_* \cF \otimes L^{\otimes k})) \stackrel{\text{q.is.}}{\simeq} (\R\Gamma_Y \circ \R p_*)(\cF\otimes p^*L^{\otimes k})
= \R\Gamma_X(\cF\otimes L^{\otimes k})=0
   \]
by our assumption in~\ref{item_F_twisted_by_ample}.
The entries in the second table,
that is, in the $E_2$ layer of the spectral sequence 
for $\R\Gamma_Y (\R p_* \cF \otimes L^{\otimes k}))$ are 
$\gH^i(Y, \,R^j p_* \cF \otimes L^{\otimes k})$ for varying $i,j$.
   
   By Serre vanishing, for sufficiently large $k$, 
we  have $\gH^i( R^j p_* \cF \otimes L^{\otimes k}) =0$ for all $i>0$
and all $j$.
   Hence for such $k$ the spectral sequence stabilises immediately 
and thus (since it converges to $0$) also the $\gH^0$ row is identically zero.
   That is $\gH^0( R^j p_* \cF \otimes L^{\otimes k}) =0$ for all sufficiently large $k$.
   Hence $R^j p_* \cF$ is a coherent sheaf on a projective variety $Y$, whose 
corresponding graded module is zero for all sufficiently large degrees. 
   Therefore, still for all $j$, the sheaves $R^j p_* \cF$ are identically zero by \cite[Exercise~II.5.9(c)]{hartshorne},
     which is the content of~\ref{item_F_p_immaculate}.
\end{proof}

We now switch our attention back to toric varieties.
Our goal is to reinterprete $p$-immaculacy and apply Theorem~\ref{thm_p_immaculate_and_stable_immaculate} in terms of toric geometry.
The following statement captures our main reason to study the cohomology of divisors on semiprojective varieties,
   despite that we are principally interested in projective varieties.
For a projective toric morphism $X\to Y$, we can restrict to an open affine subset of $Y$, and our theory still works,
   despite that we no longer live in the projective world.
Technically, the following characterization of $p$-immaculacy
differs from the characterisation of plain immaculacy in
Proposition~\ref{prop-diffPol} just by enlarging the tail cones.

\begin{proposition}
   Suppose $p\colon X\to Y$ is a toric map of semiprojective toric varieties,
      and let $p^*\colon M_Y \to M_X$ be the corresponding map of monomial lattices.
   Let $D$ be a \WQC divisor on $X$ and write $D=D^+ - D^-$ as a difference of nef divisors, as usual.
   Then $\cO_X(D)$ is $p$-immaculate if and only if 
      for all maximal cones $\sigma$ in the fan of $Y$ and for all 
$\dm\in M_X$ the difference 
      $(\Delta^- +p^*(\sigma^{\vee})) \setminus 
(\Delta^+ + p^*(\sigma^{\vee})- \dm)$ is $\kk$-acyclic.
\end{proposition}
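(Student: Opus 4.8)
The plan is to reduce $p$-immaculacy of $\cO_X(D)$, i.e.\ the vanishing of all $R^ip_*\cO_X(D)$, to the \emph{plain} immaculacy of the restriction of $\cO_X(D)$ to the preimage of each affine chart of $Y$, and then to apply the polytopal criterion of Proposition~\ref{prop-diffPol} on each such preimage; the only genuine work is the identification of the relevant section polyhedra. Let $p_N\colon N_X\to N_Y$ be the lattice map underlying $p$, so that $p^*\colon M_Y\to M_X$ is its transpose.

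First I would exploit the affine-local nature of higher direct images. For a maximal cone $\sigma$ of the fan of $Y$ let $U_\sigma=\Spec\kk[\sigma^\vee\cap M_Y]$ be the associated affine chart; these cover $Y$. Since $\big(R^ip_*\cO_X(D)\big)|_{U_\sigma}$ is the quasicoherent sheaf associated to the $\kk[\sigma^\vee\cap M_Y]$-module $\gH^i\big(p^{-1}(U_\sigma),\cO_X(D)\big)$, Definition~\ref{def-pImmac} shows that $\cO_X(D)$ is $p$-immaculate if and only if $\gH^i\big(p^{-1}(U_\sigma),\cO_X(D)\big)=0$ for every maximal cone $\sigma$ of $\Sigma_Y$ and every $i$.

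Next I would study $X_\sigma:=p^{-1}(U_\sigma)$, which is the toric variety of the subfan $\{\tau\in\Sigma_X\mid p_N(\tau)\subseteq\sigma\}$, with support $\supp\Sigma_X\cap p_N^{-1}(\sigma)$. Being an intersection of two convex cones, this support is convex, so $X_\sigma$ is again semiprojective; moreover the concave support functions $u^\pm$ of the nef divisors $D^\pm$ remain concave on the smaller support, so $D^\pm$ restrict to nef divisors on $X_\sigma$. Applying Proposition~\ref{prop-diffPol} to $X_\sigma$ and the decomposition $D=D^+-D^-$, the bundle $\cO_{X_\sigma}(D)$ is immaculate exactly when $\Delta^-_\sigma\setminus(\Delta^+_\sigma-\dm)$ is $\kk$-acyclic for all $\dm\in M_X$, where $\Delta^\pm_\sigma$ denote the section polyhedra of $D^\pm$ on $X_\sigma$. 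Together with the first step, this reduces the whole proposition to the single identity
\[
  \Delta^\pm_\sigma \;=\; \Delta^\pm + p^*(\sigma^\vee).
\]

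Establishing this identity is the heart of the argument, and the hard part. Writing $\Delta^+_\sigma=\{r\in M_\R\mid \langle a,r\rangle\ge u^+(a)\text{ for all }a\in \supp\Sigma_X\cap p_N^{-1}(\sigma)\}$ (which amounts to dropping exactly the ray-inequalities of those $\rho$ with $p_N(\rho)\notin\sigma$), the tail cones match at once: $\tail\Delta^+_\sigma=\big(\supp\Sigma_X\cap p_N^{-1}(\sigma)\big)^\vee=(\supp\Sigma_X)^\vee+\big(p_N^{-1}(\sigma)\big)^\vee=\tail\Delta^+ + p^*(\sigma^\vee)$, using the dual-of-intersection formula and the identity $\big(p_N^{-1}(\sigma)\big)^\vee=p^*(\sigma^\vee)$, valid because $p_N$ is surjective. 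The inclusion $\Delta^+ + p^*(\sigma^\vee)\subseteq\Delta^+_\sigma$ is immediate, since for $r\in\Delta^+$ and $c'\in\sigma^\vee$ one has $\langle a,\,r+p^*c'\rangle=\langle a,r\rangle+\langle p_N a,c'\rangle\ge u^+(a)$ whenever $p_N a\in\sigma$. The reverse inclusion is the real obstacle: given $r\in\Delta^+_\sigma$ I must produce $c'\in\sigma^\vee$ with $\langle a,r\rangle-u^+(a)\ge\langle p_N a,c'\rangle$ for all $a\in\supp\Sigma_X$, i.e.\ separate the nonnegative convex piecewise-linear function $a\mapsto\langle a,r\rangle-u^+(a)$ from below on the entire support by a linear form pulled back from $\sigma^\vee$. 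Here the concavity of $u^+$ (nefness of $D^+$) together with the surjectivity of $p_N$ is exactly what guarantees such a form exists; equivalently, one verifies by a vertex comparison that enlarging the tail cone of the nef polyhedron $\Delta^+$ by $p^*(\sigma^\vee)$ preserves precisely the vertices $u^+_\tau$ with $\tau\in\Sigma_{X_\sigma}$. Substituting the identity into the criterion of the third step yields the proposition; the remaining routine points are the tail-cone bookkeeping and the observation that, although some $X_\sigma$ may acquire a torus factor, Theorem~\ref{thm_toric_cohomology_in_terms_of_nef_polytopes} and hence Proposition~\ref{prop-diffPol} are stated for arbitrary semiprojective targets and so still apply.
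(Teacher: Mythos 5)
Your proposal is correct and takes essentially the same route as the paper: reduce $p$-immaculacy to the vanishing of $\gH^i\big(p^{-1}(U_\sigma),\cO_X(D)\big)$ over the affine charts $U_\sigma$ of $Y$ (the paper quotes \cite[Prop.~III.8.5]{hartshorne} here), observe that $D^{\pm}$ restrict to nef divisors on $p^{-1}(U_\sigma)$ with section polyhedra $\Delta^{\pm}+p^*(\sigma^{\vee})$, and conclude chartwise by Proposition~\ref{prop-diffPol}. The only divergence is one of emphasis: the paper asserts the identity $\Delta^{\pm}_{\sigma}=\Delta^{\pm}+p^*(\sigma^{\vee})$ without proof, whereas you rightly single it out as the substantive step, and your sketch of it (matching tail cones, the easy inclusion, and the vertex comparison for the reverse one) is sound and completable --- most quickly by comparing support functions, since both sides are cut out by the inequalities $\langle a,\cdot\rangle\ge u^{\pm}(a)$ with $a$ ranging over $\supp\Sigma_X\cap p_N^{-1}(\sigma)$, where $p_N\colon N_X\to N_Y$ denotes the lattice map underlying $p$.
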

\begin{proof}
   Let $\Sigma_Y$ be the fan of $Y$ and for a maximal cone $\sigma \in \Sigma(\dim Y)$ denote by $U_{\sigma}$ the open affine subset of $Y$ corresponding to $\sigma$. 
   By \cite[Prop.~III.8.5]{hartshorne} the sheaf $\cO_X(D)$ is $p$-immaculate 
      if and only if $\gH^i(\cO_{p^{-1}(U_{\sigma})}(D))=0$ 
      for all $i$ and for all $\sigma \in \Sigma_Y(\dim Y)$.
   Equivalently, for all $\sigma$ the restriction of $D$ to $p^{-1}(U_{\sigma})$ is immaculate. 
   The restriction of $D^+$ to $p^{-1}(U_{\sigma})$ is still nef 
      and the polyhedron of the restriction is equal to $\Delta^+ +p^*(\sigma^{\vee})$.
   Anologous statements hold for $D^-$ and $\Delta^-$.
   Therefore, the claim follows from Proposition~\ref{prop-diffPol} applied to each $p^{-1}(U_{\sigma})$ separately.
\end{proof}

A sublattice $M'\subset M$ is \emph{saturated} if $M \cap M'_{\RR} = M'$ (the intersection is taken in $M_{\RR}$).
For a rational
polyhedron $\Delta \subset M_{\RR}$ define its \emph{linear sublattice span} to be the smallest saturated sublattice $M'\subset M$
  containing a translate of $\Delta$.
Therefore $\Delta \subset m+ M'_{\RR}$ for any $m\in \Delta$, and $\dim \Delta = \dim M'$.
  
\newcommand{\quot}[2]{{^{\displaystyle #1}\!\! \left/ \!\! _{\displaystyle #2}\right.}}
\newcommand{\scalp}[1]{\langle {#1} \rangle}
\renewcommand{\theenumi}{(\arabic{enumi})}
\renewcommand{\labelenumi}{\theenumi}

The following theorem can be interpreted as a relative version of Example~\ref{ex-intLattA}.
\begin{theorem}\label{thm_line_of_immaculates_on_toric_varieties}
  Assume $X$ is a projective toric variety, $D^-$ is a nef 
  \WQC divisor, and $D'$ is a nef Cartier divisor on $X$.
  Suppose $\Delta^-$ and $\Delta'$ are their respective polytopes, and let $M'\subset M$ 
     be the linear sublattice span of $\Delta'$.
  Let $Y$ be the projective toric variety corresponding to $\Delta'$ and $p\colon X\to Y$ be the natural map of toric varieties.
  Then the following conditions are equivalent.
  \begin{enumerate}
     \item \label{item_all_shifts_immaculate}
          For all integers $a$ the divisors $aD' - D^-$ are immaculate on $X$.
     \item \label{item_infinitely_many_shifts_immaculate}
          For infintely many integers $a$ the divisors $aD' - D^-$ are immaculate.
     \item \label{item_projection_has_no_interior_points}
          The image of $\Delta^-$ under the projection $\varphi\colon M_{\RR} \to \quot{M_{\RR}}{M'_{\RR}} = \left(\quot{M}{M'}\right)\otimes \RR$
             has no lattice points in the relative interior.
     \item \label{item_anti_nef_is_p_immaculate}
          The divisor $- D^-$ is $p$-immaculate.
  \end{enumerate}
\end{theorem}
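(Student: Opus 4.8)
\emph{Strategy.} The plan is to establish the cycle
\ref{item_projection_has_no_interior_points}$\Rightarrow$\ref{item_all_shifts_immaculate}$\Rightarrow$\ref{item_infinitely_many_shifts_immaculate}$\Rightarrow$\ref{item_projection_has_no_interior_points}
and to attach the fourth condition by
\ref{item_projection_has_no_interior_points}$\Rightarrow$\ref{item_anti_nef_is_p_immaculate}$\Rightarrow$\ref{item_infinitely_many_shifts_immaculate}.
First a normalisation: since $D'$ is nef and Cartier, every vertex of $\Delta'$ lies in $M$, so after subtracting $\divisor(\chi^{v})$ for a lattice vertex $v$ (which changes neither the class $aD'$ nor any cohomology) we may assume $\Delta'\subseteq M'_{\RR}=\kernel\varphi$. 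The geometric heart of the statement is then the following observation: for every integer $a$ and every $m\in M$ the set $a\Delta'-m$ projects under $\varphi$ to the single \emph{lattice} point $-\varphi(m)\in M/M'$, and likewise the cones $C_\sigma:=p^*(\sigma^\vee)\subseteq M'_{\RR}$ attached to the maximal cones $\sigma$ of the fan of $Y$ satisfy $\varphi(\Delta^-+C_\sigma)=\varphi(\Delta^-)$. Thus removing a shift of $a\Delta'$, or of $C_\sigma$, only ever deletes material lying over one lattice fibre of $\varphi$.

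Everything rests on an \emph{acyclicity dichotomy}. Let $Q\subseteq M_{\RR}$ be convex with $\varphi(Q)=\{\bar{b}\}$ a single point, and let $P$ be either $\Delta^-$ or a polyhedron $\Delta^-+C_\sigma$, so that $\varphi(P)=\varphi(\Delta^-)$. Because $\varphi$ carries relative interiors onto relative interiors, $Q\cap\relint P=\emptyset$ whenever $\bar{b}\notin\relint\varphi(\Delta^-)$; in that case any segment from a point $p_0\in\relint P$ to a point of $P\setminus Q$ has its relative interior inside $\relint P$ and hence misses $Q$, so $P\setminus Q$ is star-shaped about $p_0$ and therefore $\kk$-acyclic (this is precisely the kind of straight-line retraction justified by Lemmas~\ref{lem_difference_of_convex_polytope_and_open_convex_set}--\ref{lem_difference_of_convex_polytope_and_closed_convex_set}, in the common-tail-cone form of Proposition~\ref{prop_truncate_infinite_part} when $P$ is unbounded). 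If instead $\bar{b}\in\relint\varphi(\Delta^-)$ and $Q$ is large enough that $P\cap Q$ is the entire fibre $P\cap\varphi^{-1}(\bar{b})$, then $P\setminus Q$ fibres over $\varphi(\Delta^-)\setminus\{\bar{b}\}$ with convex fibres, hence is homotopy equivalent to a sphere of dimension $\dim(M/M')-1$ and is \emph{not} acyclic.

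Given the dichotomy the implications are short. For \ref{item_projection_has_no_interior_points}$\Rightarrow$\ref{item_all_shifts_immaculate} I treat all $a$ uniformly through $\varphi$. For $a>0$, Proposition~\ref{prop-diffPol} with $\Delta^+=a\Delta'$ reduces immaculacy to the acyclicity of $\Delta^-\setminus(a\Delta'-m)$ for all $m$; since $-\varphi(m)$ is a lattice point it avoids $\relint\varphi(\Delta^-)$ by \ref{item_projection_has_no_interior_points}, so the acyclic half applies. For $a=0$ this is exactly Example~\ref{ex-intLattA}: a relative interior lattice point of $\Delta^-$ would project to one of $\varphi(\Delta^-)$. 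For $a<0$ the bundle is anti-nef with polytope $|a|\Delta'+\Delta^-$, and by the anti-nef computation following Proposition~\ref{prop-diffPol} (or directly Theorem~\ref{thm_toric_cohomology_in_terms_of_nef_polytopes}) immaculacy is the absence of relative interior lattice points there, which again project into the empty $\relint\varphi(\Delta^-)\cap(M/M')$. The step \ref{item_all_shifts_immaculate}$\Rightarrow$\ref{item_infinitely_many_shifts_immaculate} is trivial. For \ref{item_infinitely_many_shifts_immaculate}$\Rightarrow$\ref{item_projection_has_no_interior_points} I argue by contraposition: given a lattice point $\bar{b}\in\relint\varphi(\Delta^-)$, for large $a>0$ I choose $m$ with $-\varphi(m)=\bar{b}$ and lattice part so that $a\Delta'-m$ engulfs the fixed compact fibre $\Delta^-\cap\varphi^{-1}(\bar{b})$, producing a sphere by the non-acyclic half; for large $a<0$ the fibre of $|a|\Delta'+\Delta^-$ over $\bar{b}$ grows full-dimensionally in $M'_{\RR}$ and swallows an interior lattice point, so the divisor is non-immaculate. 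Hence only finitely many $a$ survive, contradicting \ref{item_infinitely_many_shifts_immaculate}. Finally \ref{item_projection_has_no_interior_points}$\Rightarrow$\ref{item_anti_nef_is_p_immaculate} is the ``$a=\infty$'' version: the Proposition characterising $p$-immaculacy through the enlarged polyhedra $\Delta^{\pm}+p^*(\sigma^\vee)$ demands exactly that $(\Delta^-+C_\sigma)\setminus(C_\sigma-m)$ be acyclic for all $\sigma,m$, which is once more the acyclic half. The reverse \ref{item_anti_nef_is_p_immaculate}$\Rightarrow$\ref{item_infinitely_many_shifts_immaculate} is free from Theorem~\ref{thm_p_immaculate_and_stable_immaculate}: choosing an ample line bundle $L$ on the projective variety $Y$ with $p^*L=rD'$, the equivalence \ref{item_F_p_immaculate}$\Leftrightarrow$\ref{item_F_twisted_by_ample} shows all $\cO_X(krD'-D^-)$, $k>0$, are immaculate.

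\emph{Main obstacle.} The only genuinely geometric work is the dichotomy. On the acyclic side one must run the star-shaped retraction for the \emph{unbounded} polyhedron $\Delta^-+C_\sigma$, which shares its tail cone $C_\sigma$ with the removed set $C_\sigma-m$, so the truncation of Proposition~\ref{prop_truncate_infinite_part} has to precede the retraction arguments of Section~\ref{pairPol}. On the non-acyclic side the delicate point is uniformity: for each large $a$ one must exhibit an honest lattice shift $m$ whose translate of $a\Delta'$ (or of the cone) covers the \emph{entire} interior fibre, and then identify the complement up to homotopy with a sphere. Verifying that the cohomology is genuinely non-zero there, and matching it with the relative-interior lattice-point count in the anti-nef range, is where the care lies; the remainder is bookkeeping with $\varphi$ and citations of Section~\ref{pairPol} and Theorem~\ref{thm_p_immaculate_and_stable_immaculate}.
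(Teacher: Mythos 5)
Your proposal is correct, and for the two substantive implications it follows the paper's strategy: the same normalisation $\Delta'\subseteq M'_{\RR}$, the same lattice-point argument via Example~\ref{ex-intLattA} for the anti-nef range, and the same Smale/Vietoris fibration argument identifying $\Delta^-\setminus\varphi^{-1}(\bar b)$ with $\varphi(\Delta^-)\setminus\{\bar b\}$ for the non-acyclicity in \ref{item_infinitely_many_shifts_immaculate}$\Rightarrow$\ref{item_projection_has_no_interior_points} (run contrapositively rather than by contradiction, which is immaterial). Where you genuinely diverge is in two places, both to your advantage. First, on the acyclic side of your dichotomy, the star-shapedness argument (the half-open segment $[p_0,x)$ lies in $\relint P$, which $Q$ misses since $\varphi(Q)$ is a single lattice point avoiding $\relint\varphi(P)$) replaces the paper's case split between $\varphi(-\dm)\notin\varphi(\Delta^-)$ and $\varphi(-\dm)\in\partial\varphi(\Delta^-)$ together with its appeal to Corollary~\ref{cor_difference_of_closed_polytopes_and_boundary}; your version is simpler, needs no convexity of $Q$, and works verbatim for the unbounded polyhedra $\Delta^-+p^*(\sigma^\vee)$ without truncation. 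Second, you attach condition \ref{item_anti_nef_is_p_immaculate} differently: the paper proves \ref{item_all_shifts_immaculate}$\Rightarrow$\ref{item_anti_nef_is_p_immaculate} via the hard, Serre-vanishing/spectral-sequence direction \ref{item_F_twisted_by_ample}$\Rightarrow$\ref{item_F_p_immaculate} of Theorem~\ref{thm_p_immaculate_and_stable_immaculate}, whereas you prove \ref{item_projection_has_no_interior_points}$\Rightarrow$\ref{item_anti_nef_is_p_immaculate} combinatorially through the affine-chart characterisation of $p$-immaculacy and close the cycle using only the easy projection-formula direction \ref{item_F_p_immaculate}$\Rightarrow$\ref{item_F_twisted_by_ample}; this buys a purely toric proof of the whole theorem, independent of the cohomology-and-base-change machinery. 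Two cosmetic slips, neither a gap: in the non-acyclic half, $\varphi(\Delta^-)\setminus\{\bar b\}$ is a sphere of dimension $\dim\varphi(\Delta^-)-1$, not $\dim(M/M')-1$, and it may be empty when $\varphi(\Delta^-)=\{\bar b\}$ --- but spheres and the empty set are never $\kk$-acyclic (the empty set has $\tH^{-1}\ne 0$), so your conclusion stands exactly as in the paper's parenthetical treatment of this degenerate case.
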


\begin{proof}
   The implication \ref{item_all_shifts_immaculate}\then\ref{item_infinitely_many_shifts_immaculate} is clear.
   
   To show \ref{item_infinitely_many_shifts_immaculate}\then\ref{item_projection_has_no_interior_points}
      we consider two cases,
      \emph{positive} or \emph{negative}.
   That is, among the integers $a$ such that $D_a:=aD' - D^-$ is immaculate,
      there exists a subsequence either of positive $a_i$ converging to $+\infty$ or 
   of negative $a_i$ converging to $-\infty$.

   In the positive case,
      suppose by contradiction, that there exist an interior lattice point of $\varphi(\Delta^-)$.
   Replacing $\Delta^-$ with its translate (and $D^-$ with a linearly equivalent divisor) if necessary, 
      we may assume that, say, $0\in \relint \varphi(\Delta^-)$.
   Choosing a subsequence if necessary, assume that every $|a_i|\Delta'$
      has a lattice point $\dm_i\in M$ in the relative interior such that the distance (with respect to any fixed norm on $M_\RR$) 
      of $\dm_i$ to the boundary $\partial(|a_i|\Delta')$ converges to 
$+\infty$.
   A nef decomposition of $D_{a_i}= a_i D' - D^-$ is exactly $D_{a_i}^+ = a_i D'$ and $D_{a_i}^-=D^-$.
   By Proposition~\ref{prop-diffPol} for any $i$ the difference 
$\Delta^{-}\setminus  (a_i \Delta'- \dm_i)$ is $\kk$-acyclic.
   Since $\Delta^{-}$ is compact, taking $a_i$ very large we have 
$\Delta^{-}\setminus  (a_i \Delta'- \dm_i) = \Delta^{-}\setminus  M'_{\R}$.
   By the criterion of~\cite{smale_Vietoris_mapping_theorem}, the restricted projection map
      $\varphi \colon \Delta^{-}\setminus  M'_{\R} \to \varphi(\Delta') \setminus \set{0}$ is a homotopy equivalence,
      a contradiction, since the first one $\Delta^{-}\setminus  M'_{\R}$ is $\kk$-acyclic,
      and the latter one $\varphi(\Delta^-) \setminus \set{0}$ is either homeomorphic to a sphere (if $\dim \varphi(\Delta^-)>0$) or empty (if $\varphi(\Delta^-)=\set{0}$).

   In the negative case, 
      the nef decomposition of $D_{a_i}$ is $D_{a_i}^+ = 0$ and $D_{a_i}^-=D^- - a_i D'$.
   By Example~\ref{ex-intLattA} for any $a_i$ the Minkowski sum $\Delta^{-} + |a_i| \Delta'$ has no lattice points in the relative interior.
   Taking $|a_i|$ very large, we see that there are no lattice points in the relative interior of $\Delta^{-}+ M'_{\R}$. 
   Equivalently, there is no (relative) interior lattice point in $\varphi(\Delta^-)$.
   This concludes the proof of \ref{item_infinitely_many_shifts_immaculate}\then\ref{item_projection_has_no_interior_points}.
   
   Next we prove \ref{item_projection_has_no_interior_points}\then\ref{item_all_shifts_immaculate}.
Assume (by shifting $\Delta'$ if necessary) 
that $\Delta'\subseteq M'_\R$, that is,
that $\varphi(\Delta')$ equals $0\in \quot{M}{M'}$. 
Assume $a$ is a nonnegative integer.
   We must show that
   \begin{itemize}
     \item the Minkowski sum $\Delta^- + a\Delta'$ has no interior lattice 
points (hence $-D^- - a D'$ is immaculate), and 
     \item $\Delta^-\setminus (a\Delta' -\dm)$ is contractible and non-empty
for all $\dm\in M$
(hence $-D^- + a D'$ has no cohomology in degree $\dm$). 
   \end{itemize}
   The first claim is straightforward: Such an interior lattice point would be mapped to an interior lattice point of $\varphi(\Delta^-)$,
      which is impossible by the  assumptions of \ref{item_projection_has_no_interior_points}.
   Also the second claim is easy.
   Let $P:=(a\Delta' - \dm) \cap \Delta^-$, which is a convex set contained 
in $\Delta^-$ such that $\Delta^-\setminus (a\Delta' -\dm)= 
\Delta^-\setminus P$.
Since $\Delta'$ is contained in $M'_{\R}$, also $a\Delta'\subset M'_{\R}$, and consequently $P \subset M'_{\R} - m$.
If $\varphi(-\dm) = \varphi(P) \notin \varphi(\Delta^-)$, then $P$ is disjoint with $\Delta^-$ and 
$\Delta^-\setminus P = \Delta^-$, which is contractible and non-empty as 
claimed.
Since $\varphi(\Delta^-)$ has no interior lattice points, 
   it remains to consider 
   $\varphi(-\dm)\in \partial (\varphi(\Delta^-))$ and,  
   consequently, $P\subset \partial \Delta^-$.
So the difference is nonempty and by Corollary~\ref{cor_difference_of_closed_polytopes_and_boundary}
      it is homotopic to $\partial \Delta^- \setminus P$, which is contractible (a sphere with a convex disc taken out).

To show \ref{item_all_shifts_immaculate}\ifff\ref{item_anti_nef_is_p_immaculate} note that $D'= p^*L$ for an ample line bundle $L$ on $Y$.
   We apply the implications 
\ref{item_F_twisted_by_ample}\then\ref{item_F_p_immaculate}\then \ref{item_F_twisted_by_line_bundle}
of Theorem~\ref{thm_p_immaculate_and_stable_immaculate}. 

\end{proof}

The following examples obey the notation of Theorem~\ref{thm_line_of_immaculates_on_toric_varieties}. 

\begin{example}
If \(\Delta'\) is full dimensional, that is if $D'$ is big, then
there is no antinef divisor which is $p$-immaculate, as in this case, 
$M'$ is the whole lattice $M$, and $\varphi(\Delta^-)$ 
is a point, thus having an interior lattice point by definition.
\end{example}

\begin{example}
If $\Delta'$ is just a point, then $M'=0$ and the question becomes whether
$\Delta^-$ contains any interior lattice points, as already discussed in Example~\ref{ex-intLattA}.
\end{example}

\begin{example}
If \(\Delta'\) has codimension one, then $M'$ is a hyperplane. 
The divisor $-D^-$ is $p$-immaculate if \(\Delta^-\) cannot be divided by integral shifts of \(M'\).
In case $D^-$ is in addition Cartier, this is equivalent to 
\[
   \max \scalp{\Delta^-,M'} - \min \scalp{\Delta^-,M'} \le 1,
\]
where we think of the hyperplane $M' \subset M$ as a primitive element of $N$ dual to the hyperplane. 
\end{example}

\begin{example}
\label{ex_hexagon-lines}
   In the hexagon case (Example~\ref{ex_hexagon_introduce_coordinates}), let $D' = (1,1,0,0)$ so that $\Delta' = \hexlinediag$, 
      and let $D^- = (1,1,1,0)$ so that $\Delta^- = \hextriangleupleft$.
   Then all combinations $aD'- D^- = (a-1,a-1,-1,0)$ are immaculate.
   Other lines of immaculate divisors on this surface are listed in Table~\ref{table:hexagon_lines} in Section~\ref{sec:computational}.
\end{example}

\section{Immaculacy by avoiding temptations}
\label{avoidTemptations}

Let us compare three examples of smooth projective varieties with Picard rank $2$: the product projective space $\PP^1\times \PP^1$, 
the Hirzebruch surface $\bF_1$ and the flag variety 
$\F(1,2;3):=\{(p,\ell)\in\PP^2\times(\PP^{2})\dual\kst p\in L\}$.
Note that the first one is simultanously a toric variety and a homogeneous space (for the semisimple group $\SL_2 \times \Sl_2$),
  the second is a toric variety, while the third one is a homogeneous space for the simple group $\SL_3$.
Figures~\ref{fig_Pic_of_P1xP1}, \ref{fig_Pic_of_F1}, \ref{fig_Pic_of_flag} illustrate the Picard lattices of these examples, indicating the regions of line bundles with nontrivial cohomologies.

\begin{figure}[hbt]
   \begin{center}
   \begin{tikzpicture}[scale=0.80]
     \draw[color=oliwkowy!40] (-5.3,-5.3) grid (5.3,5.3); 
     \fill[pattern color=black!80, pattern=north west lines] (0,0) -- (5.3,0) -- (5.3,5.3) -- (0,5.3) -- cycle;              
     \fill[pattern color=mocnyfiolet, pattern=dots] (0,0) -- (5.3,0) -- (5.3,5.3) -- (0,5.3)-- cycle;                   
     \fill[pattern color=ciemnazielen!80, pattern=north east lines] (-2,0) -- (-2,5.3) -- (-5.3,5.3) --(-5.3,0) -- cycle;   
     \fill[pattern color=ciemnazielen!80, pattern=north east lines] (0,-2) -- (5.3,-2) -- (5.3,-5.3) --(0,-5.3) -- cycle;   
     \fill[pattern color=jasnyfiolet, pattern=north west lines] (-2,-2) -- (-2,-5.3) -- (-5.3,-5.3) --(-5.3,-2) -- cycle;   
     
     \draw[thick,  color=black] (5.3,0)--(0,0) -- (0,5.3) (5.6,5.3) node[anchor=west, rotate=270] {\scriptsize{$\Eff$-cone$=\Nef$-cone$=\CM_{\R}(\emptyset)$ }} (-0.3,-0.3) node{\scriptsize{$0$}};     
     \draw[thick,  color=ciemnazielen] (-5.3,0) -- (-2,0) -- (-2,5.3) (-5.6,5.3) node[anchor=east, rotate=90] {\scriptsize{$\gH^1$-cone$=\CM_{\R} \left(\set{(1,0),(-1,0)}\right)$}};  
     \draw[thick,  color=ciemnazielen] (0,-5.3) -- (0,-2) -- (5.3,-2)  (2.7,-5.2) node[anchor=north] 
     {\scriptsize{$\gH^1$-cone$=\CM_{\R}\left(\set{(0,1),(0,-1)}\right)$}} ;					 
     \draw[thick,  color=jasnyfiolet!150] (-5.3,-2) -- (-2,-2) -- (-2,-5.3) (-5.6,-2.1) node[anchor=east, rotate=90] {\scriptsize{$\gH^2$-cone$=\CM_{\R}(\Sigma(1))$}} (-1.6,-1.7) node{\scriptsize{$K_X$}}; 
     
     \draw[thick,  color=oliwkowy] (-5.3,-1) --  (5.3,-1) (3,-0.9) node[anchor=north]{\scriptsize{immaculate locus}}  (-1, -5.3)--(-1, 5.3); 					 
   \end{tikzpicture}
   \end{center}
   \caption{The Picard lattice of the surface $\PP^1\times \PP^1$. 
     The effective cone $\Eff$ is the cone of divisors with nonzero 
$\gH^0$ and it coincides with the $\Nef$-cone.
    There are two cones of divisors with nonzero $\gH^1$, and one cone with nonzero $\gH^2$.
    The remaining line bundles are immaculate, and the immaculate locus 
consist of two lines parallel to the common facets of the 
$\Nef$- and $\Eff$-cones. These two lines correspond to the two projections 
to $\PP^1$.
    The notation $\CM_{\R}(\bullet)$ is explained in Section~\ref{RmacPic}.}
    \label{fig_Pic_of_P1xP1}
\end{figure}
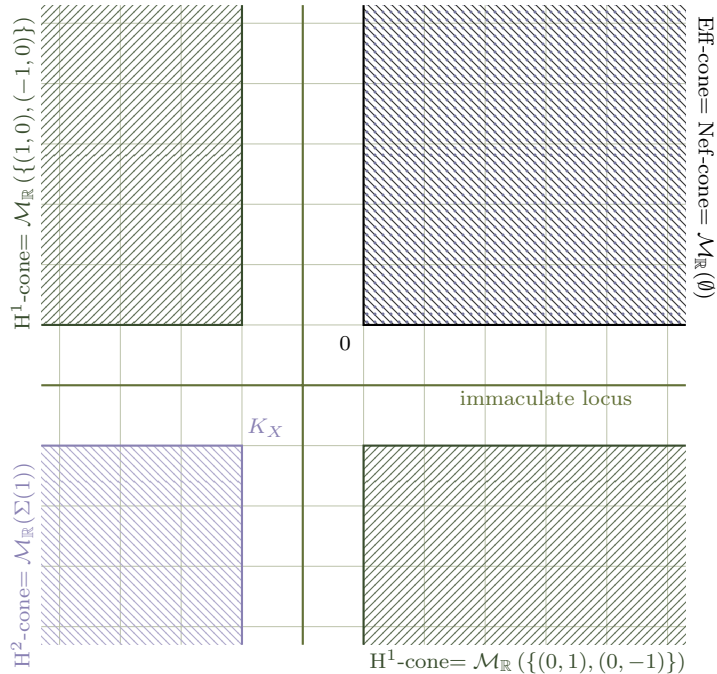

\begin{figure}[hbt]
   \begin{center}
   \begin{tikzpicture}[scale=0.8]
     \fill[color=oliwkowy!35] (-1,0) -- (-1,-1) -- (0,-2)--(0,-1) --cycle; 		 
     \draw[color=oliwkowy!40] (-5.3,-5.3) grid (5.3,5.3); 
     \fill[pattern color=black!80, pattern=north west lines] (0,0) -- (5.3,0) -- (5.3,5.3) -- (-5.3,5.3) -- cycle;              
     \fill[pattern color=red, pattern=dots] (0,0) -- (5.3,0) -- (5.3,5.3) -- (0,5.3)-- cycle;                   
     \fill[pattern color=ciemnazielen!80, pattern=north east lines] (-2,0) -- (-2,5.3) -- (-5.3,5.3) --(-5.3,0) -- cycle;   
     \fill[pattern color=ciemnazielen!80, pattern=north east lines] (1,-2) -- (5.3,-2) -- (5.3,-5.3) --(1,-5.3) -- cycle;   
     \fill[pattern color=jasnyfiolet, pattern=north west lines] (-1,-2) -- (2.3,-5.3) -- (-5.3,-5.3) --(-5.3,-2) -- cycle;   
     
     \draw[thick,  color=black] (5.3,0)-- (0,0) -- (-5.3,5.3) (-0.7,5.15) node[anchor=south] {\scriptsize{$\Eff$-cone$=\CM_{\R}(\emptyset)$}}(0.1,-0.2) node{\scriptsize{$0$}} ;     
     \draw[color=red] (5.3,0)--(0,0) -- (0,5.3) (5.5,3.5) node[rotate=270, anchor=west]  {\scriptsize{$\Nef$-cone}};   
     \draw[thick,  color=ciemnazielen] (-5.3,0) -- (-2,0) -- (-2,5.3) (-5.5,5.7) node[anchor=east,  rotate=90] {\scriptsize{an $\gH^1$-cone$=\CM_{\R}(\set{(0,1),(-1,-1)})$}};  
     \draw[thick,  color=ciemnazielen] (1,-5.3) -- (1,-2) -- (5.3,-2)  (4,-5.2) node[anchor=north] {\scriptsize{an $\gH^1$-cone$=\CM_{\R}(\set{(1,0),(-1,0)})$}} ;					 
     \draw[thick,  color=jasnyfiolet!150] (-5.3,-2) -- (-1,-2) -- (2.3,-5.3) (-5.5,-1.9) node[anchor=east, rotate=90] {\scriptsize{the $\gH^2$-cone$=\CM_{\R}(\Sigma(1))$}} (-1.1,-1.8) node{\scriptsize{$K_X$}}; 					 
     
     \draw[thick,  color=oliwkowy] (-5.3,-1) -- (-1,-1) (0,-1) -- (5.3,-1) 
        (-3.5,-0.9) node[anchor=north]{\scriptsize{immaculate locus}};  
     \draw[thick,  color=oliwkowy] (-1,0)  -- (-1,-1) -- (0,-2)--(0,-1) --cycle
              (-0.9,0.1) node{\scriptsize$\cO_{\PP^1}(-1)$}
              (-0.05,-0.45) node[anchor=north west]{\scriptsize$\cO_{\PP^2}(-1)$}
              (-0.05,-1.45) node[anchor=north west]{\scriptsize$\cO_{\PP^2}(-2)$}
              ;
   \end{tikzpicture}
   \end{center}
   \caption{The Picard lattice of the Hirzebruch surface $\F_1 = \toric(\Sigma)$, where $\Sigma$ has rays 
      $\Sigma(1)= \set{(0,1),(-1,-1),(1,0),(-1,0)}$.
    The effective cone $\Eff$ is the cone of divisors with nonzero $\gH^0$.
    There are two cones of divisors with nonzero $\gH^1$, and one cone with nonzero $\gH^2$.
    In addition the $\Nef$-cone is marked; it is a proper subset of 
the $\Eff$-cone.
    The remaining line bundles are immaculate, and the immaculate locus
consist of a bounded polytope and a line parallel to the 
unique common facet of 
the $\Nef$- and $\Eff$-cones. 
    This common facet $\R_{\geq 0}\cdot(1,0)$ corresponds to the fibration  $p\colon\F_1 \to \PP^1$.
    The other face $\R_{\geq 0}\cdot(0,1)$ of the $\Nef$-cone corresponds to
the blow-down map $\bl\colon \F_1\to \PP^2$.
    The line bundle $\bl^*\cO_{\PP^2}(-1)$ is $p$-immaculate, hence Theorem~\ref{thm_line_of_immaculates_on_toric_varieties} 
       explains the line of immaculate divisors. 
    The line bundles  $\bl^*\cO_{\PP^2}(-2)$ and $p^*\cO_{\PP^1}(-1)$ 
        are immaculate by Corollary~\ref{cor_pullback_of_immaculate_on_toric_varieties}.
    For clarity the figure ommits $p^*$ and $\bl^*$ in the names of line bundles.}
    \label{fig_Pic_of_F1}
\end{figure}
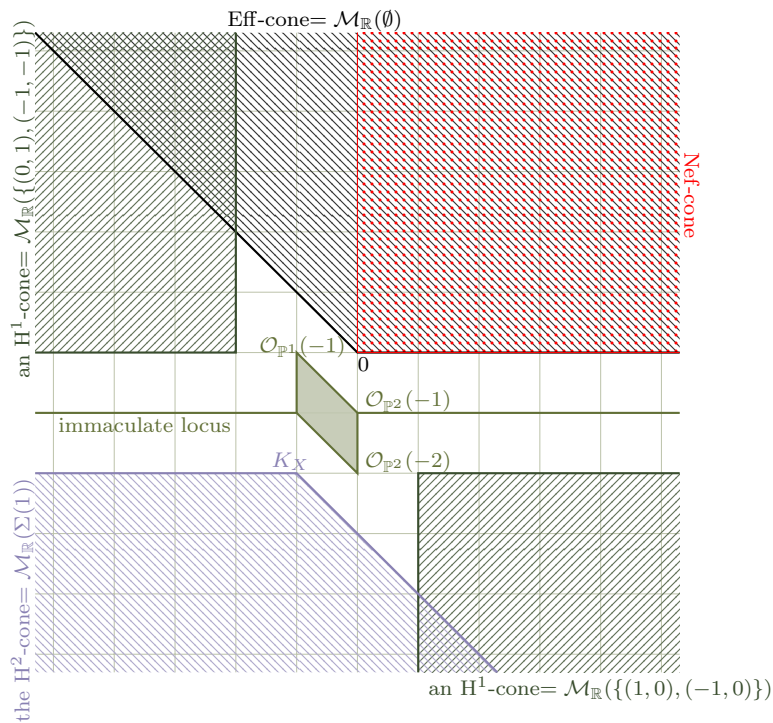

\begin{figure}[hbt]
   \begin{center}
   \begin{tikzpicture}[scale=0.8]
     \draw[color=oliwkowy!40] (-5.3,-5.3) grid (5.3,5.3); 
     \fill[pattern color=black!80, pattern=north west lines]  (5.3,0) -- (5.3,5.3) -- (0,5.3) -- (0,0)-- cycle (-0.3,-0.3) node[color=black]{\scriptsize{$0$}} ;             
     \fill[pattern color=mocnyfiolet, pattern=dots] (0,0) -- (5.3,0) -- (5.3,5.3) -- (0,5.3)-- cycle;                       
     \fill[pattern color=ciemnazielen!80, pattern=north east lines] (-2,1) -- (-2,5.3) -- (-5.3,5.3) --(-5.3,4.3)-- cycle;   
     \fill[pattern color=ciemnazielen!80, pattern=north east lines] (1,-2) -- (5.3,-2) -- (5.3,-5.3) --(4.3,-5.3) -- cycle;   
     \fill[pattern color=jasnyfiolet, pattern=north east lines] (-3,0) -- (-5.3,0) -- (-5.3,2.3)  -- cycle;   
     \fill[pattern color=jasnyfiolet, pattern=north east lines] (0,-3) -- (0,-5.3) -- (2.3,-5.3)  -- cycle;   
     \fill[pattern color=bladyfiolet, pattern=north west lines] (-2,-2) -- (-2,-5.3) -- (-5.3,-5.3)--(-5.3,-2)  -- cycle ;   
     
     \draw[thick,  color=black] (5.3,0)--(0,0) -- (0,5.3) (5.6,4.5) node[anchor=west, rotate=270] {\scriptsize{$\Eff$-cone$=\Nef$-cone}};     
     \draw[thick,  color=ciemnazielen] (-2,5.3) -- (-2,1) -- (-5.3, 4.3) (-3.5,5.3) node[anchor=south] {\scriptsize{$\gH^1$-cone}};  
     \draw[thick,  color=ciemnazielen] (5.3,-2) -- (1,-2) -- (4.3,-5.3)  (5.6,-2.5) node[anchor=west, rotate=270] {\scriptsize{$\gH^1$-cone}} ;	
     \draw[thick,  color=jasnyfiolet!150] (-5.3,2.3) -- (-3,0) -- (-5.3,0) (-5.6, 1.9) node[anchor=east, rotate=90] {\scriptsize{$\gH^2$-cone}}; 
     \draw[thick,  color=jasnyfiolet!150] (2.3,-5.3) -- (0,-3) -- (0,-5.3) (1.15, -5.3) node[anchor=north] {\scriptsize{$\gH^2$-cone}}; 
     \draw[thick,  color=bladyfiolet!150] (-2,-5.3) -- (-2,-2) -- (-5.3,-2) (-5.6, -3) node[anchor=east, rotate=90] {\scriptsize{$\gH^3$-cone}}
     (-1.6,-1.7) node{\scriptsize{$K_X$}}; 
     
     \draw[thick,  color=oliwkowy] (-5.3,-1) --  (5.3,-1) (3.5,-0.9)node[anchor=north]{\scriptsize{immaculate locus}}  (-1, -5.3)--(-1, 5.3)  (-5.3,3.3)--(3.3,-5.3);  
   \end{tikzpicture}
   \end{center}
   \caption{The Picard lattice of the threefold flag variety $\F(1,2,3)$. 
    The effective cone $\Eff$ is the cone of divisors with nonzero $\gH^0$ and it coincides with $\Nef$-cone.
    There are two cones of divisors with nonzero $\gH^1$, two cones with nonzero $\gH^2$, and one cone with nonzero $\gH^3$.
    The remaining line bundles are immaculate, and the immaculate locus
consists of three lines. The diagonal is not parallel to joined faces of
the $\Nef$- and $\Eff$-cones, that is, it is not predicted by a contraction.
    }
    \label{fig_Pic_of_flag}
\end{figure}
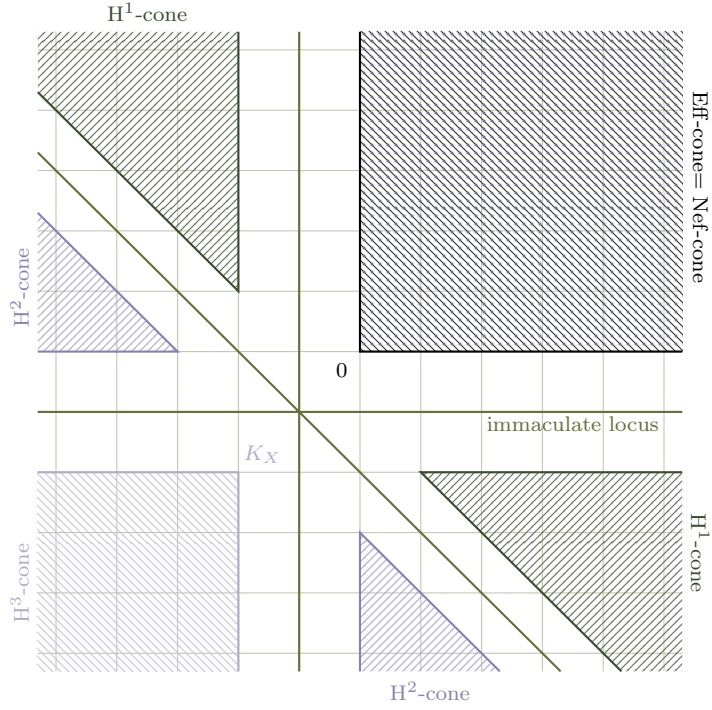

For homogeneous spaces, the regions for various $\gH^i$ are disjoint, that is, for every line bundle $L$ there is at most one value of $i$, 
   such that $\gH^i(L) \ne 0$, see for instance \cite[Thm 5.14]{kostant_Borel_Weil}.
For toric varieties this is not necessarily the case. As illustrated by 
the $\bF_1$ example, the regions may intersect. 
The goal of this section is to show how to obtain these regions of line bundles with various cohomologies for any toric variety.

Our treatment of immaculate divisors in this section is analogous to the treatment of acylic line bundles 
   in \cite[Sect.~4]{borisov_hua_conjecture_of_King_for_DM_stacks}.

\subsection{Temptations}
\label{RmacPic}

Let $X = \toric(\Sigma)$ be a toric variety with no torus factors. 
For any subset $\CR\subseteq\Sigma(1)$ we define  $V^{>}(\CR)\subset M_{\R}$,
   similar to $V^{>}_{D,0}$ as in Section~\ref{toricCohom}:
\[
   V^{>}(\CR):= \R_{>0} \cdot \left(\bigcup_{\sigma \in \Sigma} \conv (\CR \cap\sigma(1)) \right).
\]
Moreover define $V^{\ge}(\CR)$ as the complex of cones $\set{\cone (\CR \cap\sigma(1)) \mid \sigma \in \Sigma}$ in $M_{\R} $, so that 
\[
  \supp V^{\ge}(\CR) = V^> (\CR)  \cup \set{0}.
\]
In fact, $V^{>}(\CR) = V^{>}_{- \sum_{\rho \in \CR} D_{\rho}, 0}$ and analogously for $V^{\ge}$.
Thus, as in  Section~\ref{toricCohom}, if $\Sigma$ is in addition simplicial, 
   then $V^{\ge}(\CR)$ is a full (``induced'') subcomplex of $\Sigma$ generated by $\CR$.

\begin{definition}
\label{def-tempting}
We call $\CR\subseteq\Sigma(1)$ \emph{tempting} if the geometric realization 
$V^{>}(\CR)$ of $V^{\ge}(\CR)\setminus\{0\}$ admits some reduced cohomology,
   that is if it is not $\kk$-acyclic.
\end{definition}

\begin{example}\label{ex_hexagon_tempting_subsets}
   Following with our ``hexagon'' example (see notation in Example~\ref{ex_hexagon_introduce_coordinates}),
     the fan $\Sigma$ of this surface has the following $34$ tempting subsets $\CR\subseteq\Sigma(1)$:
\begin{gather*}
\emptyset, \{0,2\},\{0,3\},\{0,4\},\{1,3\},\{1,4\},\{1,5\},\{2,4\},\{2,5\},\{3,5\},\{0,1,3\},\{0,1,4\},\\
\{0,2,3\},\{0,2,4\},\{0,2,5\},\{0,3,4\},\{0,3,5\},\{1,2,4\},\{1,2,5\},\{1,3,4\},\\
\{1,3,5\},\{1,4,5\},\{2,3,5\},\{2,4,5\},\{0,1,2,4\},\{0,1,3,4\},\{0,1,3,5\},\{0,2,3,4\},\\
\{0,2,3,5\},\{0,2,4,5\},\{1,2,3,5\},\{1,2,4,5\},\{1,3,4,5\},\{0,1,2,3,4,5\}.
\end{gather*}
\end{example}

As in Section~\ref{sect_toric_notation} we denote both natural maps 
$\Z^{\Sigma(1)} \to \Cl(X)$ and $\R^{\Sigma(1)} \to \Cl(X) \otimes \RR$ 
by $\pi$.

\begin{definition}
\label{def-RmacPic}
Let $\CR\subseteq\Sigma(1)$ be a subset. 
Then, we denote the images
\begin{align*}
   \CM_\Z(\CR)&:= \pi\left(\pms\right),\\ 
   \CM_{\R}(\CR)&:= \pi\left(\pmr\right). 
\end{align*}
If $\CR$ is tempting as defined above, then $\CM_\Z(\CR)$ is 
called the \emph{$\CR$-maculate} set of $\Cl(X)$, 
   respectively, $\CM_{\R}(\CR)$ is the \emph{$\CR$-maculate} region of $\Cl(X)\otimes \R$. 
\end{definition}

\begin{remark}\label{rem_Alexander_duality_for_tempting_subsets}
Suppose the fan $\Sigma$ is complete.
The empty set $\CR=\emptyset$ yields
$\CM_{\R}(\emptyset)=\Eff(X)$. 
Moreover, Alexander duality implies that
switching between $\CR$ and $\Sigma(1)\setminus\CR$ does not change the 
temptation status. After applying $\CM$, the
relation between the subsets $\CM_{\Z}(\CR)$ and $\CM_{\Z}(\Sigma(1)\setminus\CR)$ of $\Cl(X)$ 
becomes Serre duality in $X=\toric(\Sigma)$.
\end{remark}

The integral sets $\CM_\Z(\CR)\subseteq \Cl(X)$
reflect more precisely the properties we need, 
  but the real regions $\CM_{\R}(\CR)$ are easier to control and they 
already contain a lot of information.
Note that under the natural map $\kappa \colon \Cl(X) \to \Cl(X)\otimes \R$, 
$[D] \mapsto [D]\otimes 1$,
the \emph{$\CR$-maculate} set is mapped into the \emph{$\CR$-maculate} region, that is $\kappa\colon \CM_{\Z} (\CR) \to \CM_{\R}(\CR)$.
In other words, the preimage $\kappa^{-1} \CM_{\R}(\CR)$ in $\Cl(X)$ 
contains $\CM_{\Z} (\CR)$, or, slightly incorrect,
$\CM_{\Z}(\CR)\subseteq \CM_{\R}(\CR)\cap \Cl(X)$. 
We will encounter several situations when $\kappa^{-1} \CM_{\R}(\CR)$ and  $\CM_{\Z} (\CR)$ are either equal or not equal, depending on the saturation of respective cones.

\renewcommand{\theenumi}{(\roman{enumi})}
\renewcommand{\labelenumi}{\theenumi}

\begin{proposition}
Suppose $X = \toric(\Sigma)$ is a toric variety with no torus factors. 
\label{prop-Rmac}
\begin{enumerate}
 \item\label{item_interpret_M_Z_R}
    Let $\CR\subseteq\Sigma(1)$ be a subset, 
       and suppose $[D]\in \Cl(X)$ is a class of a Weil divisor $D$ on $X$.
    Then $[D]$ belongs to $\CM_{\Z}(\CR)$ if and only if
       $D$ is linearly equivalent to some 
$\sum_{\rho\in\Sigma(1)} \lambda_{\rho} \cdot D_\rho$ 
with $\lambda_{\rho}\in\Z$ and
    $\CR=\{\rho\in\Sigma(1) \mid \lambda_{\rho} <0\}$.
  \item\label{item_interpret_M_R}
     Again, let $\CR\subseteq\Sigma(1)$, 
       and suppose $[D]\in \Cl(X)$ is a class of a Weil divisor $D$ on $X$.
     Then $[D]_{\R} \in \Cl(X)\otimes \R$ belongs to $\CM_{\R}(\CR)$, 
       if and only if $D$ is $\Q$-linearly equivalent to $\sum_{\rho\in\Sigma(1)} \lambda_{\rho} \cdot D_\rho$ 
       (for rational $\lambda_{\rho}$) with 
     $\CR=\{\rho\in\Sigma(1) \mid \lambda_{\rho} < 0\}$.
  \item\label{item_R_maculate_then_nontrivial_cohomology}
     If $\CR\subseteq\Sigma(1)$ is tempting, 
       then for any $i$ such that $\tH^{i-1}(V^>(\CR), \kk) \ne 0$ and any Weil divisor $[D]\in \CM_{\Z}(\CR)$, we have $\gH^{i}(\cO_X(D)) \ne 0$.
  \item\label{item_not_Z_maculate_iff_immaculate}
    A rank one reflexive sheaf $\cO_X(D)$ for $[D]\in\Cl(X)$ is immaculate
       if and only if $ D \notin \bigcup_{\CR=\mbox{\tiny\textrm{tempting}}}\CM_{\Z}(\CR)$.
  \item\label{item_not_maculate_are_immaculate}
     A rank one reflexive sheaf $\cO_X(D)$ such that
$[D]_{\R}\notin 
\bigcup_{\CR=\mbox{\tiny\textrm{tempting}}}\CM_{\R}(\CR)$ 
is immaculate.
\end{enumerate}
\end{proposition}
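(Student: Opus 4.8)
The plan is to prove the five parts in the order stated, since each leans on its predecessors: parts~\ref{item_interpret_M_Z_R} and~\ref{item_interpret_M_R} are bookkeeping with the defining exact sequence~\eqref{equ_exact_sequence_with_Cl_Div_T_and_M}, part~\ref{item_R_maculate_then_nontrivial_cohomology} is a direct application of the toric cohomology formula, the core is part~\ref{item_not_Z_maculate_iff_immaculate}, and part~\ref{item_not_maculate_are_immaculate} drops out of it. For~\ref{item_interpret_M_Z_R} I would simply unwind the definitions: the map $\pi$ sends the basis vector indexed by $\rho$ to the class $[D_\rho]$, hence $\pi\big((\lambda_\rho)_\rho\big)=\big[\sum_\rho \lambda_\rho D_\rho\big]$. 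Thus $[D]\in\CM_\Z(\CR)=\pi(\pms)$ holds exactly when $D\sim\sum_\rho\lambda_\rho D_\rho$ for an integer tuple with $\lambda_\rho\ge 0$ off $\CR$ and $\lambda_\rho\le -1$ on $\CR$; since for integers $\lambda_\rho\le -1$ is the same as $\lambda_\rho<0$, the support condition is precisely $\CR=\{\rho\mid\lambda_\rho<0\}$. Part~\ref{item_interpret_M_R} is the rational analogue, and the only new point is the real/rational passage. For the forward direction I would use that $[D]_\R$ is a lattice point, so its fibre $\pi^{-1}([D]_\R)$ is a rational affine subspace meeting the rational polyhedron $\pmr$; as rational points are dense in a nonempty rational polyhedron, the intersection contains a rational tuple, which supplies the $\Q$-linear equivalence with $\CR=\{\rho\mid\lambda_\rho<0\}$. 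For the converse I would rescale the given rational representative by a large positive rational to push its $\CR$-entries below $-1$ into $\pmr$, using that $\CM_\R(\CR)$ is stable under positive scaling; the strict-versus-non-strict negativity is the one delicate bookkeeping point here.

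For part~\ref{item_R_maculate_then_nontrivial_cohomology} I would combine~\ref{item_interpret_M_Z_R} with the formula $\gH^i(\toric(\Sigma),\cO_X(D))_\dm=\tH^{i-1}(V_{D,\dm},\kk)$ of Subsection~\ref{toricCohom}. If $[D]\in\CM_\Z(\CR)$, then by~\ref{item_interpret_M_Z_R} we may take $D=\sum_\rho\lambda_\rho D_\rho$ with $\CR=\{\rho\mid\lambda_\rho<0\}$, so the degree-zero complex~\eqref{equ_complex_for_cohomologies_Weil} is $V_{D,0}=\bigcup_{\sigma}\conv(\CR\cap\sigma(1))$, which is homotopy equivalent to $V^>(\CR)=\R_{>0}\cdot V_{D,0}$ by radial scaling since $0\notin V_{D,0}$. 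Hence $\gH^i(\cO_X(D))_0=\tH^{i-1}(V^>(\CR),\kk)\ne 0$ for the given $i$, and in particular $\gH^i(\cO_X(D))\ne 0$.

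The heart of the matter is part~\ref{item_not_Z_maculate_iff_immaculate}, which I would prove by contraposition in both directions. If $[D]\in\CM_\Z(\CR)$ for some tempting $\CR$, then temptation (Definition~\ref{def-tempting}) produces an $i$ with $\tH^{i-1}(V^>(\CR),\kk)\ne 0$, and~\ref{item_R_maculate_then_nontrivial_cohomology} gives $\gH^i(\cO_X(D))\ne 0$, so $\cO_X(D)$ is not immaculate. Conversely, if $\cO_X(D)$ is not immaculate, some graded piece $\gH^i(\cO_X(D))_\dm=\tH^{i-1}(V_{D,\dm},\kk)$ is nonzero, so $V_{D,\dm}$ is not $\kk$-acyclic. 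The decisive reindexing is to pass to the linearly equivalent torus-invariant divisor $D(\dm)=D+\div(x^\dm)=\sum_\rho(\lambda_\rho+\langle\rho,\dm\rangle)D_\rho$, for which $V_{D(\dm),0}=V_{D,\dm}$ as established in the proof of Theorem~\ref{thm_toric_cohomology_in_terms_of_nef_polytopes}. Writing $\CR_\dm=\{\rho\mid\lambda_\rho+\langle\rho,\dm\rangle<0\}$, non-$\kk$-acyclicity of $V_{D,\dm}\simeq V^>(\CR_\dm)$ is exactly the temptation of $\CR_\dm$, while~\ref{item_interpret_M_Z_R} applied to $D(\dm)$ yields $[D]\in\CM_\Z(\CR_\dm)$. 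Thus $\cO_X(D)$ is non-immaculate precisely when $[D]$ meets some tempting maculate set.

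Part~\ref{item_not_maculate_are_immaculate} then follows from~\ref{item_not_Z_maculate_iff_immaculate} together with the inclusion $\kappa(\CM_\Z(\CR))\subseteq\CM_\R(\CR)$, valid because $\pms\subseteq\pmr$ and the square relating the integral and real versions of $\pi$ commutes: if $[D]_\R\notin\bigcup_{\CR\text{ tempting}}\CM_\R(\CR)$, then $[D]$ lies in no tempting $\CM_\Z(\CR)$, so~\ref{item_not_Z_maculate_iff_immaculate} gives immaculacy. I expect the main obstacle to be part~\ref{item_not_Z_maculate_iff_immaculate}, namely arranging the shift by $\dm$ so that the single combinatorial set $\CR_\dm$ of ``negative rays'' of $D(\dm)$ simultaneously governs the cohomology complex $V_{D,\dm}$ and witnesses membership in a maculate set, and verifying that non-$\kk$-acyclicity of $V_{D,\dm}$ is literally the temptation condition for $\CR_\dm$.
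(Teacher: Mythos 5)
Your parts \ref{item_interpret_M_Z_R}, \ref{item_R_maculate_then_nontrivial_cohomology}, \ref{item_not_Z_maculate_iff_immaculate} and \ref{item_not_maculate_are_immaculate} are correct and follow essentially the paper's own proof: the same use of the kernel of $\pi$ being the principal torus-invariant divisors, the same reduction of a nonzero graded piece $\gH^i(\cO_X(D))_\dm$ to degree zero via $D(\dm)=D+\div(x^\dm)$ with $V_{D,\dm}=V_{D(\dm),0}$, and the same contraposition identifying the set $\CR_\dm$ of negative coefficients as tempting while witnessing $[D]\in\CM_\Z(\CR_\dm)$ by \ref{item_interpret_M_Z_R}. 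Your write-up merely makes explicit two steps the paper compresses into its citation of \cite[Thm~9.1.3]{CoxBook}: that $V_{D',0}=\bigcup_\sigma\conv(\CR\cap\sigma(1))$ depends only on the sign pattern of the coefficients, and that radial scaling gives the homotopy equivalence with $V^{>}(\CR)$.

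The genuine gap is in your converse for part \ref{item_interpret_M_R}. The region $\CM_{\R}(\CR)=\pi(\pmr)$ is \emph{not} stable under positive scaling: $\pmr$ is the orthant $\R^{\Sigma(1)\setminus\CR}_{\geq 0}\times\R^{\CR}_{\leq 0}$ translated by the vertex $-(0^{\Sigma(1)\setminus\CR},1^{\CR})$, so $\CM_{\R}(\CR)$ is a cone shifted to the apex $-\sum_{\rho\in\CR}[D_\rho]_\R$; it is closed under scaling by $t\geq 1$ but not by $t\leq 1$, so from $t[D]_\R\in\CM_{\R}(\CR)$ for large $t$ you cannot descend to $[D]_\R$ itself. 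Indeed, the implication you are trying to prove is false as literally stated: on $\PP^1$ with $\Sigma(1)=\{\rho_+,\rho_-\}$ and $\CR=\Sigma(1)$, a divisor $D$ of degree $-1$ is $\Q$-linearly equivalent to $-\tfrac{1}{2}D_{\rho_+}-\tfrac{1}{2}D_{\rho_-}$, so $\CR=\{\rho\mid\lambda_\rho<0\}$, yet $\CM_{\R}(\Sigma(1))=\R_{\leq -2}\not\ni -1$. This reflects an imprecision in the paper's own phrasing of \ref{item_interpret_M_R}, whose one-line proof tacitly characterizes membership by the existence of a representative lying in $\pmr$ — that is, $\lambda_\rho\leq -1$ on $\CR$ and $\lambda_\rho\geq 0$ off $\CR$, which for rational coefficients is strictly stronger than $\lambda_\rho<0$. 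The honest statement is that corrected equivalence, and your forward argument (a rational point exists in the nonempty rational polyhedron $\pi^{-1}([D]_\R)\cap\pmr$) already proves it in full; the slip is harmless downstream, since \ref{item_R_maculate_then_nontrivial_cohomology}--\ref{item_not_maculate_are_immaculate} use only \ref{item_interpret_M_Z_R} together with the trivial inclusion $\kappa(\CM_{\Z}(\CR))\subseteq\CM_{\R}(\CR)$ coming from $\pms\subseteq\pmr$.
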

This statement should be compared with \cite[Prop.~4.3 and~4.5]{borisov_hua_conjecture_of_King_for_DM_stacks}.

\begin{proof}
The divisor $D$ of \ref{item_interpret_M_Z_R} or \ref{item_interpret_M_R} 
belongs to $\CM_{\Z}(\CR)$ or $\CM_{\R}(\CR)$
if and only if it is an image under $\pi$ of $\pms$ or $\pmr$, respectively.
The kernel of $\pi$ is the set of principal torus invariant divisors, hence the claim holds.

To see \ref{item_R_maculate_then_nontrivial_cohomology}, 
take $[D]\in \CM_{\Z}(\CR)$, and a linearly equivalent 
$D'=\sum_{\rho\in\Sigma(1)} \lambda_{\rho} \cdot D_\rho=D(\dm)$ 
as in \ref{item_interpret_M_Z_R}.
Then by \cite[Thm~9.1.3]{CoxBook} the appropriate cohomology group 
is $\gH^{i}(\cO_X(D))_\dm=\gH^{i}(\cO_X(D'))_0\neq 0$.

If $D$ is immaculate, then it is not in $\bigcup_{\CR=\mbox{\tiny\textrm{tempting}}}\CM_{\Z}(\CR)$ by \ref{item_R_maculate_then_nontrivial_cohomology}.
Conversely, if $D$ is not immaculate, then pick a linearly equivalent divisor $\sum_{\rho\in\Sigma(1)} \lambda_{\rho} \cdot D_\rho$ which has non-trivial cohomologies in degree $0 \in M$.
By \cite[Thm~9.1.3]{CoxBook} the set $\CR = \{\rho\in\Sigma(1) \mid \lambda_{\rho}< 0\}$ is tempting and $[D] \in \CM_{\Z}(\CR)$, concluding the proof of \ref{item_not_Z_maculate_iff_immaculate}.

Finally, \ref{item_not_maculate_are_immaculate} follows from \ref{item_not_Z_maculate_iff_immaculate}, since $[D]\in \CM_{\Z}(\CR)$ implies $[D]_{\R} \in \CM_{\R}(\CR)$.
\end{proof}

It is not always true, that $[D]_{\R} \in \CM_{\R}(\CR)$ implies $[D] \in \CM_{\Z}(\CR)$ as the following example shows.
\begin{example}\label{ex_P235_not_really_immaculate}
   Let $X=\toric(\Sigma) = \PP(2,3,5)$, the weighted projective plane with weights $2$, $3$, $5$.
   Consider the \WQC divisor $D \simeq \cO_X(1)$ which can be written as the difference $D_{\rho_2}-D_{\rho_1}$. 
   Then $D$ is immaculate, but $[D]_{\R} \in \CM_{\R}(\CR)$ for 
$\CR = \emptyset$ (corresponding to the $\Eff_\R$-cone). 
\end{example}

This leads to the following definition:
\begin{definition}\label{def_really_immaculate}
   A divisor $D$ is \emph{really immaculate} (or \emph{$\R$-immaculate}), if 
   \[
     [D]_{\R} \in \Cl(X)\otimes \R \setminus \bigcup_{\CR=\mbox{\tiny\textrm{tempting}}}\CM_{\R}(\CR).
   \]
\end{definition}

Thus Example~\ref{ex_P235_not_really_immaculate} shows a simple case of an immaculate Weil divisor that is not  really immaculate.
In Example~\ref{ex_immaculate_but_not_really_immaculate} we construct a line bundle on a smooth toric projective variety with the same property.
Up to the zero-th cohomology group, the concept of really immaculate divisor here is an analogue of the
   \emph{strongly acyclic} line bundle in \cite[Def.~4.4]{borisov_hua_conjecture_of_King_for_DM_stacks}.

\begin{definition}\label{def_immaculate_loci}
The \emph{immaculate loci} of $X$ are 
\begin{align*}
            \Imm_{\Z}(X) &= \Cl(X)  \setminus \bigcup_{\cR\subset \Sigma(1), \ \cR \text{ is tempting}} \CM_{\Z}(\cR), \text{ and}\\
            \Imm_{\R}(X) &= \kappa^{-1}\left((\Cl(X)\otimes \R)  \setminus \bigcup_{\cR\subset \Sigma(1), \ \cR \text{ is tempting}} \CM_{\R}(\cR) \right)  \subset \Cl(X),
\end{align*}
where $\kappa \colon \Cl(X) \to \Cl(X) \otimes \R$ is the natural map 
$[D] \mapsto [D]\otimes 1=[D]_\R$.
\end{definition}
Thus $\Imm_{\Z}(X)$ is the collection of all immaculate divisors.
By Proposition~\ref{prop-Rmac}\ref{item_not_maculate_are_immaculate} all the divisors in $\Imm_{\R}(X)$ are immaculate,
   that is $\Imm_{\R}(X) \subset \Imm_{\Z}(X)$.
More precisely, $\Imm_{\R}(X)$ is the set of all really 
immaculate divisors as in Definition~\ref{def_really_immaculate}.

\begin{example}\label{ex_hexagon_all_really_immaculate}
   In contrast to Examples~\ref{ex_P235_not_really_immaculate} and~\ref{ex_immaculate_but_not_really_immaculate}, 
      we can see that in the case of the hexagon (Example~\ref{ex_hexagon_introduce_coordinates}), all immaculate line bundles are really immaculate. 
   This follows since the matrix $\pi$ defining the map $(\Z^{\Sigma(1)})^* \to \Pic(X)$ is totally unimodular.
\end{example}

\begin{example}
\label{ex-Hirz}
We illustrate Proposition~\ref{prop-Rmac} with the example of
the Hirzebruch surface $\F_a=\toric(\Sigma_a)$. 
The special cases $a=0$ and $a=1$ are presented in 
the Figures~\ref{fig_Pic_of_P1xP1} and~\ref{fig_Pic_of_F1}, respectively.
More general cases are explained in Subsection~\ref{picTwo}
   --- our surface case corresponds to $\ell_1=\ell_2=2$ there.

The Gale transform, that is the map $\pi$, is given by the matrix
$$
\pi=\Matrr{2}{2}{1 & 1 & 0 & -a\\
            0 & 0 & 1 & 1}.
$$
The associated rays of the fan $\Sigma_a$ are given by the matrix
$$
\rho=\Matrr{2}{2}{ 0 & -a & 1 & -1\\
                   1 & -1 & 0 &  0}.
$$
If we denote the four columns, that is the rays, by $\rho_1,\ldots,\rho_4$,
then the tempting subsets of $\Sigma_a(1)$ are 
just  $\emptyset$, $\Sigma_a(1)$, $\CR_1=\{\rho_1,\rho_2\}$, 
and $\CR_2=\{\rho_3,\rho_4\}$. 
The corresponding maculate regions are
\begin{align*}
  \CM_{\R}(\emptyset)&= \cone \big\langle (1,0), (0,1), (-a,1) \big\rangle  
          = \cone \big\langle (1,0),\, (-a,1)\big\rangle,\\
  \CM_{\R}(\Sigma_a(1))&=  (a-2, -2) + \cone \big\langle (-1,0), (a,-1)\big\rangle,\\
  \CM_{\R}(\CR_1)& = (-2,0) + \cone \big\langle (-1,0), (0,1), (-a, 1)\big\rangle 
                   = (-2,0) + \cone \big\langle (-1,0), (0,1)\big\rangle,\\
  \CM_{\R}(\CR_2)& = (a,-2) + \cone \big\langle (1,0), (0,-1)\big\rangle.
\end{align*}
The lattice points within the complement of the union of these four 
regions consist of the line $(*,-1)$ and, if $a\geq 1$, the two isolated points $(-1,0)$ and $(a-1, -2)$.
In the degenerate case of $a=0$, there is an additional line $(-1,*)$, see Figure~\ref{fig_Pic_of_P1xP1}.
Here, all immaculate divisors are really immaculate.
\end{example}

\subsection{Conditions on  presence or absence of temptations}
\label{sect_three_temptations}
In this section we describe straightforward criteria that imply 
that a given subset of rays is  tempting or it is nontempting.
The upshot is that, for all sets $\CR\subseteq\Sigma(1)$ covered by one of these claims,
one does not need to look at the topology of $V^{>}(\CR)=\supp V^{\geq}(\CR)\setminus\{0\}$.

\subsubsection{Monomials do not lead into temptation}

The first criterion is similar to the boundedness condition in \cite[Prop.~2]{hering_kuronya_payne_asymptic_cohomologies_toric}.

\begin{proposition}
\label{prop_the_first_temption}
    Suppose $X= \toric(\Sigma)$ is a complete toric variety and $\CR \subset\Sigma(1)$ is a tempting subset.
    Denote by $\rho^* \colon M_{\R} \to \R^{\Sigma(1)}$ the natural embedding 
of the principal torus invariant divisors into all torus invariant divisors. 
    Then 
    \[
      \rho^*(M_{\R}) \cap \left(\R^{\Sigma(1)\setminus\CR}_{\geq 0} \times \R^{\CR}_{\leq 0}\right) = \set{0}.
    \]
\end{proposition}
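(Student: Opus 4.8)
The plan is to prove the contrapositive: assuming there is a nonzero $\dm \in M_\R$ with $\rho^*(\dm) \in \R^{\Sigma(1)\setminus\CR}_{\geq 0}\times\R^{\CR}_{\leq 0}$, I will show that $\CR$ is \emph{not} tempting, i.e.\ that $V^{>}(\CR)$ is $\kk$-acyclic (Definition~\ref{def-tempting}). Unravelling the definition of $\rho^*$, the hypothesis reads $\langle\rho,\dm\rangle\ge 0$ for every $\rho\in\Sigma(1)\setminus\CR$ and $\langle\rho,\dm\rangle\le 0$ for every $\rho\in\CR$. Since every point of $V^{>}(\CR)=\R_{>0}\cdot\bigcup_{\sigma}\conv(\CR\cap\sigma(1))$ is a nonnegative combination of rays lying in $\CR$, the linear form $\langle\,\cdot\,,\dm\rangle$ is $\le 0$ on all of $V^{>}(\CR)$; that is, $V^{>}(\CR)$ is contained in the closed half-space $H^{-}:=\set{a\in N_\R\mid\langle a,\dm\rangle\le 0}$. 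Moreover, because $\Sigma$ is complete its rays positively span $N_\R$, so $\langle\,\cdot\,,\dm\rangle$ cannot be $\le 0$ on every ray (otherwise $\dm=0$); hence there is a ray $\rho_+\in\Sigma(1)\setminus\CR$ with $\langle\rho_+,\dm\rangle>0$, which witnesses that $V^{>}(\CR)$ omits an open portion of the unit sphere.

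The heart of the argument is to upgrade this half-space containment to contractibility of $V^{>}(\CR)$. Being a union of the mutually face-fitting pointed cones $\cone(\CR\cap\sigma(1))$ punctured at the origin, $V^{>}(\CR)$ is radially invariant, so it is enough to contract its spherical slice; when $\cone(\CR)$ is pointed one may instead intersect the cones with an affine hyperplane transverse to $\cone(\CR)$ and contract the resulting \emph{bounded} polytopal complex. On this model I intend to exhibit a strong deformation retraction onto a single cell by sweeping along the direction in which $\langle\,\cdot\,,\dm\rangle$ decreases (the direction singled out by $\rho_+$), gluing the standard homotopies of Subsection~\ref{sect_homotopy_of_polytopal_complexes} exactly as in the inductive retractions of Proposition~\ref{prop_retract_on_the_boundary_complex} and Lemma~\ref{lem_difference_of_convex_polytope_and_epsilon_widening}; each step is admissible because the cones involved are convex. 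Alternatively, one can compare $V^{>}(\CR)$ with the convex region $\cone(\CR)\cap H^{-}$ by a Vietoris--Smale correspondence with contractible fibres, in the spirit of Lemma~\ref{lem_nef_difference_and_cohomology_complex}. Either route yields a null-homotopy, contradicting the assumption that $V^{>}(\CR)$ carries reduced cohomology.

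The main obstacle is the degenerate \emph{equatorial} locus $\CR\cap\dm^{\perp}$: rays $\rho\in\CR$ with $\langle\rho,\dm\rangle=0$ lie on the boundary hyperplane of $H^{-}$, and a naive sweep may collapse an entire cone onto the removed origin. When the rays of $\CR$ lie in an \emph{open} half-space (equivalently $\cone(\CR)$ is pointed) this cannot happen and the slicing above applies verbatim. In general I would reduce to this case by a small perturbation $\dm_{\epsilon}=\dm-\epsilon\psi$, where $\psi$ is chosen strictly positive on the finitely many equatorial rays: by the $\epsilon$-widening technique of Lemma~\ref{lem_difference_of_convex_polytope_and_epsilon_widening} the homotopy type of the complex is unaffected for small $\epsilon$, while $\dm_{\epsilon}$ now separates $\CR$ strictly. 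The truly delicate point, where I expect to spend most effort, is the residual configuration in which no such $\psi$ exists because $\cone(\CR\cap\dm^{\perp})$ contains a line; here one must argue more carefully (for instance by inducting on the dimension of this lineality space, which is forced to lie inside $\dm^{\perp}$). Once contractibility of $V^{>}(\CR)$ is secured in all cases, it is in particular $\kk$-acyclic, so $\CR$ is not tempting, which establishes the contrapositive and hence the proposition.
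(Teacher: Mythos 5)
Your overall strategy --- proving the contrapositive by showing that a nonzero $\dm$ in the orthant forces $V^{>}(\CR)$ to be contractible --- is workable in principle: your hypothesis says exactly that $\{\rho\in\Sigma(1)\mid\langle\rho,\dm\rangle<0\}\subseteq\CR\subseteq\{\rho\in\Sigma(1)\mid\langle\rho,\dm\rangle\le 0\}$, which is precisely the situation of Lemma~\ref{prop-DIr-immac} in the paper. But your execution has a genuine hole at exactly the spot you flag, and neither of your proposed repairs closes it. Perturbing the functional, $\dm_\epsilon=\dm-\epsilon\psi$, requires a $\psi$ strictly positive on the equatorial rays $\CR\cap\dm^{\perp}$, and such a $\psi$ exists only when those rays lie in an open half-space; this fails whenever $\CR$ contains opposite equatorial rays. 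A concrete instance: $\Sigma$ the fan of $\PP^1\times\PP^1$ with rays $\pm e_1,\pm e_2$, $\CR=\{e_1,-e_1,-e_2\}$, $\dm=e_2^*$ --- the hypothesis holds with $\dm\ne 0$, yet no $\psi$ is positive on both $e_1$ and $-e_1$, so your perturbation never starts, and your fallback (``induct on the lineality space'') is left as a gesture rather than an argument. The citation of Lemma~\ref{lem_difference_of_convex_polytope_and_epsilon_widening} is also off-target: that lemma controls removal of $\epsilon$-widened polytopes from a polytopal complex, not the stability of a subcomplex cut out by a perturbed linear functional. The alternative Vietoris--Smale route is likewise not set up: you never construct a proper map with contractible fibres, and the natural target $\cone(\CR)\setminus\{0\}$ need not even be contractible when $\cone(\CR)$ has lineality. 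The correct repair (it is what the paper's Lemma~\ref{prop-DIr-immac} does) is to perturb the \emph{rays} rather than $\dm$: each ray on $\dm^{\perp}$ is moved independently into $(\dm<0)$ or $(\dm>0)$ according to whether it belongs to $\CR$, so no separation hypothesis is needed; then $V^{>}(\CR)$ retracts onto the punctured half-space $(\suppSig)\cap(\dm\le 0)\setminus\{0\}$. Even then, note that Lemma~\ref{prop-DIr-immac} is stated for \emph{simplicial} complete fans, while Proposition~\ref{prop_the_first_temption} assumes only completeness, so you would owe the non-simplicial extension; and the boundary case $\CR=\emptyset$ (which \emph{is} tempting, since $V^{>}(\emptyset)=\emptyset$ is not $\kk$-acyclic) must be handled by the separate observation that all rays pairing $\ge 0$ with $\dm$ already forces $\dm=0$ by completeness.

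For contrast, the paper's own proof avoids all of this topology. If the intersection contained a nonzero vector, the preimage $\tau=(\rho^*)^{-1}\bigl(\R^{\Sigma(1)\setminus\CR}_{\ge 0}\times\R^{\CR}_{\le 0}\bigr)$ is a positive-dimensional rational cone in $M_\R$; for $D=-\sum_{\rho\in\CR}D_\rho$ and every $m\in\tau\cap M$ the sign pattern of the twisted coefficients is unchanged, so the complex computing the degree-$m$ piece of $\gH^i(\cO_X(D))$ is again $V^{>}(\CR)$. Temptation then gives nonzero cohomology in the infinitely many lattice degrees of $\tau$, contradicting finite-dimensionality of coherent cohomology on the complete variety $X$. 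That two-line finiteness argument needs nothing about the homotopy type of $V^{>}(\CR)$ beyond the hypothesis that it is not $\kk$-acyclic, and it is insensitive to simpliciality and to all the equatorial degeneracies your approach must fight. I would recommend either adopting it outright or rebuilding your contrapositive on the ray-perturbation of Lemma~\ref{prop-DIr-immac}.
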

    
\begin{proof}
    Suppose on the contrary, that $ (\rho^*)^{-1}\left(\R^{\Sigma(1)\setminus\CR}_{\geq 0} \times \R^{\CR}_{\leq 0}\right)$ 
       is a positive dimensional cone $\tau\subset M_{\R}$.
    Consider the divisor $D= \sum_{\varrho \in \CR} -D_{\varrho}$.
    Since $\CR$ is tempting, the divisor has non-zero cohomologies in 
       degree $-m$ for all $m \in \tau\cap M$. 
    Thus, the cohomology groups $\bigoplus_{i=0}^{\dim X} \gH^i (D)$ are infinitely dimensional, a contradiction with the completeness of $X$. 
\end{proof}

\begin{example}
    Consider the Hirzebruch surface $\F_a$ as in Example~\ref{ex-Hirz}, and suppose $a>0$.
    Then out of $16$ subsets of $\set{\rho_1, \rho_2, \rho_3, \rho_4}$, only
six survive the test provided by Proposition~\ref{prop_the_first_temption}.
    Namely, these are the four tempting subsets as listed in Example~\ref{ex-Hirz}, 
       and $\set{\rho_4}$ and its complement $\set{\rho_1, \rho_2, \rho_3}$
having the property of the associated cone intersecting $M$ in just $\set{0}$.
\end{example}

\begin{example}\label{ex_hexagon_first_temptation}
    In the ``hexagon'' case (see Examples~\ref{ex_hexagon_introduce_coordinates} and~\ref{ex_hexagon_tempting_subsets}), 
      Proposition~\ref{prop_the_first_temption} shows that the following $18$ out of $64 = 2^6$ subsets of $\Sigma(1)$ are non-tempting:
    \begin{gather*}
      \set{ 0, 1 },
      \set{ 0, 5 },
      \set{ 1, 2 },
      \set{ 2, 3 },
      \set{ 3, 4 },
      \set{ 4, 5 },
      \set{ 0, 1, 2 },
      \set{ 0, 1, 5 },
      \set{ 0, 4, 5 },
      \set{ 1, 2, 3 },\\
      \set{ 2, 3, 4 },
      \set{ 3, 4, 5 },
      \set{ 0, 1, 2, 3 },
      \set{ 0, 1, 2, 5 },
      \set{ 0, 1, 4, 5 },
      \set{ 0, 3, 4, 5 },
      \set{ 1, 2, 3, 4 },
      \set{ 2, 3, 4, 5 }.
    \end{gather*}
\end{example}

\subsubsection{Faces are not tempting}
\label{2nd_temptation}
\begin{proposition}
\label{prop_2nd_temptation}
Suppose $X = \toric(\Sigma)$ is a complete toric variety and 
$\sigma \in \Sigma$ is any cone (or a proper subfan with strictly convex support).
Then the subsets $\CR=\sigma(1)\subset \Sigma(1)$ and $\Sigma(1)\setminus \CR$ 
are not tempting.
\end{proposition}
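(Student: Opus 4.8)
The plan is to prove the assertion for $\CR=\sigma(1)$ (respectively $\CR=\Sigma'(1)$) and then obtain the statement for the complement $\Sigma(1)\setminus\CR$ for free. I would first note that the two cases can be treated uniformly: a single cone $\sigma$ and a proper subfan $\Sigma'$ with strictly convex support both produce a $\CR$ that is exactly the set of rays of a subfan $\Sigma'\subseteq\Sigma$ (namely the fan of faces of $\sigma$ in the first case), whose support $C:=\supp\Sigma'$ is a pointed convex cone with $C\neq\{0\}$, and with $C\neq N_\R$ since $X$ is complete. So throughout I set $\CR=\Sigma'(1)$ and $C=\supp\Sigma'$.

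The key geometric step I would carry out is to identify $V^{>}(\CR)$ explicitly as $C\setminus\{0\}$. For the inclusion $V^{>}(\CR)\subseteq C\setminus\{0\}$ I would invoke convexity of $C$: for every $\tau\in\Sigma$ the rays in $\CR\cap\tau(1)$ all lie in $C$, so their convex hull and every positive multiple of it remain inside the convex cone $C$ and avoid the origin. For the reverse inclusion I would restrict the union in the definition of $V^{>}(\CR)$ to the cones $\tau\in\Sigma'$, where $\CR\cap\tau(1)=\tau(1)$ and hence $\R_{>0}\cdot\conv(\tau(1))=\tau\setminus\{0\}$ (because $\tau=\cone(\tau(1))$ is pointed); taking the union over $\tau\in\Sigma'$ recovers $\supp\Sigma'\setminus\{0\}=C\setminus\{0\}$.

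Once $V^{>}(\CR)=C\setminus\{0\}$ is established, I would conclude that $\CR$ is not tempting by observing that the punctured pointed cone $C\setminus\{0\}$ is contractible: choosing $w$ in the interior of the dual cone $C^{\vee}$, one has $\langle x,w\rangle>0$ for all $x\in C\setminus\{0\}$, and the radial map $x\mapsto x/\langle x,w\rangle$ is a deformation retraction of $C\setminus\{0\}$ onto the nonempty convex cross-section $C\cap\{\langle\cdot,w\rangle=1\}$, which is contractible. A contractible space carries no reduced cohomology, hence is $\kk$-acyclic, so $\CR$ is not tempting by Definition~\ref{def-tempting}. Finally, since $X$ is complete, the Alexander duality recorded in Remark~\ref{rem_Alexander_duality_for_tempting_subsets} shows that $\Sigma(1)\setminus\CR$ has the same temptation status as $\CR$, which settles both assertions at once.

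I expect the only delicate point to be the identification $V^{>}(\CR)=C\setminus\{0\}$, and specifically the inclusion ``$\subseteq$'', which is precisely where the convexity of the support $C$ is indispensable: for a non-convex union of rays the cones $\cone(\CR\cap\tau(1))$ arising from cones $\tau\notin\Sigma'$ could protrude outside $\supp\Sigma'$, and the clean description would fail. Everything after that — the contractibility of a punctured pointed cone and the appeal to Alexander duality — is routine and already available within the paper.
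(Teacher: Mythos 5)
Your proposal is correct and takes essentially the same route as the paper's own proof: identify $V^{>}(\CR)$ with the punctured convex support $C\setminus\{0\}$ of the subfan, conclude contractibility (the paper simply cites convexity, while you make the radial retraction onto a cross-section explicit), and handle $\Sigma(1)\setminus\CR$ by the Alexander duality of Remark~\ref{rem_Alexander_duality_for_tempting_subsets}. The only difference is one of detail, not of method: you verify both inclusions of $V^{>}(\CR)=C\setminus\{0\}$ (using pointedness of $C$ to avoid the origin), a step the paper asserts without proof.
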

\begin{proof}
    The complex $V^>(\CR)$ is equal to the convex set $\sigma\setminus\set{0}$, hence it is contractible.
    By Alexander duality (see Remark~\ref{rem_Alexander_duality_for_tempting_subsets})  the complement is also not tempting.
\end{proof}

\begin{example}
    For the Hirzebruch surface $\F_a$, 
      only the four tempting subsets fail this test.
    All the other subsets are either faces or complements of faces.
\end{example}

\begin{example}\label{ex_hexagon_second_temptation}
    According to Proposition~\ref{prop_2nd_temptation}, in the ``hexagon'' case 
      (see Examples~\ref{ex_hexagon_introduce_coordinates}, \ref{ex_hexagon_tempting_subsets}), 
      the following $24$ subsets of $\Sigma(1)$ are non-tempting: all single element subsets $\set{i}$, all consecutive two elements subsets $\set{i, i+1}$,
      and their complements (which have either four or five elements), which are all faces or their complements.
    Moreover, considering also three consecutive elements  $\set{i, i +1, i+2}$ (which are rays of a subfan with a strictly convex support),
     we obtain $30$ subsets, which are all the non-tempting subsets of $\Sigma(1)$. 
    Alternatively, the three element subsets can be understood from Example~\ref{ex_hexagon_first_temptation}.
\end{example}

\subsubsection{Primitive collections delude}
\label{3rd_temptation}
A \emph{primitive collection} 
of a simplicial fan $\Sigma$ is a ``minimal non-face'', 
   that is, a subset of rays $\CR\subset \Sigma(1)$,
   such that the cone spanned by $\CR$ is not in $\Sigma$, 
   but the cone spanned by $\CR\setminus\set{\rho}$ is in $\Sigma$ for every $\rho \in \CR$.
More generally, a subset $\CR\subset \Sigma(1)$ of any fan is a primitive collection, if $\CR$ is not contained in any single cone of $\Sigma$, 
   but every proper subset is.
See \cite{picRank3}, \cite{cox_von_renesse_primitive_collections_and_toric_varieties} for more details and explanations why this notion is important 
   and relevant to projective toric varieties, see also Section~\ref{basicWellKnown}. 

\begin{proposition}
    \label{prop_3rd_temptation}
    Suppose $X = \toric(\Sigma)$ is a complete simplicial toric variety with 
no torus factors. Let $\CR \subset \Sigma(1)$ be either empty or 
a primitive collection. Then $\CR$ and its complement are tempting.
\end{proposition}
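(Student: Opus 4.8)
The plan is to read off the homotopy type of $V^{>}(\CR)$ directly from the combinatorics of the induced subcomplex $V^{\ge}(\CR)$, treating the empty set and a genuine primitive collection separately, and then to deduce the assertion for the complements from Alexander duality. First I would dispose of the case $\CR=\emptyset$: here $V^{\ge}(\emptyset)$ is the single cone $\{0\}$, so $V^{>}(\emptyset)=\supp V^{\ge}(\emptyset)\setminus\{0\}=\emptyset$, and since $\tH^{-1}(\emptyset,\kk)=\kk\neq 0$ the empty set is not $\kk$-acyclic, hence tempting by Definition~\ref{def-tempting}.

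Next I would turn to a primitive collection $\CR$ with $|\CR|=k+1$; note that $k\ge 1$, because a single ray always spans a cone of $\Sigma$ and so can never be a minimal non-face. The central observation is that, as $\CR$ is a minimal non-face of a \emph{simplicial} fan, the subfan $V^{\ge}(\CR)$ consists precisely of the cones $\cone(S)$ for all proper subsets $S\subsetneq\CR$. Indeed, by minimality every $\CR\setminus\{\rho\}$ spans a cone of $\Sigma$, so each proper $S$ lies in some $\CR\setminus\{\rho\}$ and $\cone(S)$ is a face of a cone of $\Sigma$, hence in $\Sigma$; and for a simplicial fan $\cone(S)(1)=S$, giving $\CR\cap\cone(S)(1)=S$. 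Conversely, no $\sigma\in\Sigma$ satisfies $\CR\subseteq\sigma(1)$, since that would make $\cone(\CR)$ a face of $\sigma$, contradicting that $\CR$ is a non-face. Therefore $\bigcup_{\sigma}\conv(\CR\cap\sigma(1))=\bigcup_{S\subsetneq\CR}\conv(S)$, so that the face poset of $V^{\ge}(\CR)$ is exactly that of the boundary $\partial\Delta^{k}$ of the $k$-simplex on the vertex set $\CR$.

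To finish this case I would pass to the unit sphere. Each $\cone(S)$ is a strongly convex simplicial cone, so it meets $S^{d-1}\subset N_\R$ in a closed ball of dimension $|S|-1$, and as $S$ runs over the proper subsets of $\CR$ these spherical cells glue along common faces exactly according to the poset of $\partial\Delta^{k}$. Since $V^{>}(\CR)$ retracts radially onto $\supp V^{\ge}(\CR)\cap S^{d-1}$, this exhibits it as a regular realization of $\partial\Delta^{k}$, hence homeomorphic to $S^{k-1}$ (for $k=1$ this is the disconnected $S^{0}$). As $\tH^{k-1}(S^{k-1},\kk)=\kk\neq 0$, the complex is not $\kk$-acyclic and $\CR$ is tempting. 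For the complements I would simply invoke Remark~\ref{rem_Alexander_duality_for_tempting_subsets}: because $\Sigma$ is complete, $\CR$ and $\Sigma(1)\setminus\CR$ always share the same temptation status, so $\Sigma(1)\setminus\CR$ is tempting in both cases above.

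The hard part will be the careful verification that the sphere intersection $\supp V^{\ge}(\CR)\cap S^{d-1}$ is a genuine geometric realization of $\partial\Delta^{k}$ rather than a degenerate image. This is where the simplicial hypothesis (guaranteeing $\cone(S)(1)=S$ and that the spherical cells are embedded simplices meeting only in faces) and the strong convexity of cones in a fan are essential; in particular one must check that linear dependence among the primitive generators of $\CR$—which does occur, for instance for the three rays of $\PP^2$—does not alter the conclusion, the point being that $\bigcup_{S\subsetneq\CR}\conv(S)$ is always the boundary of the simplicial structure and so sweeps out a copy of $S^{k-1}$ under $\R_{>0}$-scaling.
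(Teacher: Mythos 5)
Your proof is correct, and it reaches the sphere $S^{|\CR|-2}$ by a genuinely different middle step than the paper. The paper's proof splits into two cases according to whether the primitive generators of $\CR$ are linearly independent: in the independent case it identifies $V^{\ge}(\CR)$ with the boundary complex $\partial\CR^+$ of a simplicial cone, and in the dependent case it shows the span $\CV$ is $(|\CR|-1)$-dimensional and that the cones $\cone(\CR\setminus\{\rho_j\})$ form a \emph{complete} fan in $\CV$ combinatorially like the $\PP^{|\CR|-1}$-fan --- a step which implicitly uses that the unique linear relation among the $\rho_j$ has all coefficients of one sign (otherwise two of the cones would overlap in their relative interiors). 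Your argument handles both cases uniformly: you verify that $V^{\ge}(\CR)$ consists exactly of the cones $\cone(S)$ for proper $S\subsetneq\CR$ (correct, and your use of minimality plus simpliciality here is exactly right), so its face poset is that of $\partial\Delta^{|\CR|-1}$, and then you invoke the standard fact that a regular CW complex is homeomorphic to the realization of its face poset (its barycentric subdivision). The regularity check is indeed the load-bearing point, and it does go through: each strongly convex cone $\cone(S)$ meets the unit sphere in an embedded closed $(|S|-1)$-ball, distinct proper subsets give distinct cones since $\cone(S)(1)=S$ in a simplicial fan, and the fan axioms give $\cone(S_1)\cap\cone(S_2)=\cone(S_1\cap S_2)$, so cells meet along cells. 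What your route buys is the elimination of the linear-independence dichotomy and of the positivity-of-coefficients argument entirely --- the poset alone pins down the homeomorphism type, regardless of how the generators sit in $N_\R$; what the paper's route buys is explicit geometric models (a cone boundary, respectively a complete projective-space-like fan) requiring no CW machinery. Your treatment of $\CR=\emptyset$ via $\tH^{-1}(\emptyset,\kk)=\kk$ and of the complements via Remark~\ref{rem_Alexander_duality_for_tempting_subsets} matches the paper.
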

\begin{proof}
    If $\CR= \emptyset$ or $\CR= \Sigma(1)$, then the claim is clear, 
       so suppose $\CR$ is a primitive collection, that is, a subset which is does not generate a cone of $\Sigma$,
       but all its proper subsets do generate such cones.
    By Alexander duality it is enough to prove that $\CR=\set{\rho_1,\ldots,\rho_k}$ is tempting.
    Since every ray belongs to $\Sigma$, we have $k \geq 2$.
    We distinguish between two cases: either $\CR$ is linearily independent or not.
    
    If $\CR$ is linearily independent, then $\CV:=\spann_\R\CR$ is $k$-dimensional,
       and $\CR^+:=\sum_{j=1}^k \R_{\geq 0}\cdot \rho_j$ is a $k$-dimensional
       simplicial cone in $\CV$ which does not belong to $\Sigma$. On the other hand,
    its boundary $\partial\CR^+$ is a subcomplex of $\Sigma$; 
    it is exactly the complex $V^{\ge}(\CR)$ as in Section~\ref{RmacPic}.
    Thus, $|V^{\ge}(\CR)|\setminus\{0\}=|\partial\CR^+|\setminus\{0\}$ is homotopy equivalent to a sphere $S^{k-2}$.
    In particular, it is not $\kk$-acyclic.

    On the other hand, suppose $\CR$ is linearly dependent.
    Since $\CR$ is a primitive collection, all the cones generated by $\CR
\setminus\set{\rho_j}$ are necessarily simplicial.
    In particular, $\CV:=\spann_\R\CR$ is $(k-1)$-dimensional,
       and each 
$\CR \setminus\set{\rho_j}$ spans
a full-dimensional cone in $\CV$ that belongs to $\Sigma$.
    Thus, these cones generate $V^{\ge}(\CR)$, and this is a complete fan 
in $\CV$ which (up to $\R$-linear change of coordinates) looks like the 
$\PP^{k-1}$-fan in $\R^{k-1}$.
    Again, $V^>(\CR)=|V^{\geq}(\CR)|\setminus\{0\}$ is homotopy equivalent 
to $ S^{k-2}$, hence it is not $\kk$-acyclic.
\end{proof}

\begin{example}
    For the Hirzebruch surface $\F_a$, all tempting subsets 
are predicted by Proposition~\ref{prop_3rd_temptation}.
       That is all four of them are either empty, or $\Sigma(1)$, or a primitive collection.
\end{example}

\begin{example}
\label{ex_hexagon_third_temptation}
    Proposition~\ref{prop_3rd_temptation} applied to the hexagon example     
       (see Examples~\ref{ex_hexagon_introduce_coordinates}, \ref{ex_hexagon_tempting_subsets}),
       implies that the following $20$ subsets are tempting:
    \begin{gather*}
      \emptyset,
      \set{ 0, 2 },
      \set{ 0, 3 },
      \set{ 0, 4 },
      \set{ 1, 3 },
      \set{ 1, 4 },
      \set{ 1, 5 },
      \set{ 2, 4 },
      \set{ 2, 5 },
      \set{ 3, 5 },
      \set{ 0, 1, 2, 4 },
      \set{ 0, 1, 3, 4 },\\
      \set{ 0, 1, 3, 5 },
      \set{ 0, 2, 3, 4 },
      \set{ 0, 2, 3, 5 },
      \set{ 0, 2, 4, 5 },
      \set{ 1, 2, 3, 5 },
      \set{ 1, 2, 4, 5 },
      \set{ 1, 3, 4, 5 },
      \Sigma(1).
    \end{gather*}

\end{example}

\subsection{The cube}
\label{vertCube}

Throughout this subsection we will assume $X =\toric(\Sigma)$ is a complete 
and simplicial toric variety.

Let $\CR\subseteq\Sigma(1)$ be an arbitrary, not necessarily tempting
subset.
This gives rise to a vertex
$v(\CR):=-(0^{\Sigma(1)\setminus \CR}, 1^\CR)$ of the cube $W$ spanned by all points 
of $\R^{\Sigma(1)}$ with $0/-1$ coordinates. 
It is the only vertex of the polyhedral cone $\pmr$,
that is, the class of the corresponding divisor 
$D_{\CR}:=-\sum_{\rho \in \CR} D_{\rho}$ is the most prominent element of
the $\CR$-maculate region 
$\CM_{\R}(\CR):=\pi\big(\pmr\big)$
introduced in Definition~\ref{def-RmacPic}.

We have discussed in Proposition~\ref{prop-Rmac} that the temptation
of $\CR$ implies the maculacy of $D_{\CR}$.
In the following we will show in 
Theorem~\ref{cor-injMacCube} 
that for $[D_{\CR}]\in\Cl(X)$ this is the only source of disgrace.

\begin{lemma}
\label{prop-DIr-immac}
Suppose $X = \toric(\Sigma)$ is a complete simplicial toric variety, and 
$\CR\subseteq \Sigma(1)$ is an arbitrary subset.
Let $\dm\in M\setminus\{0\}$. 
Then the complex $V^>_{D_{\CR},\dm}$ (or $V_{D_{\CR},\dm}$)
from \eqref{equ_complex_for_cohomologies_Weil} in Subsection~\ref{toricCohom}
is contractible and non-empty.
\end{lemma}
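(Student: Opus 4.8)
The plan is to replace the combinatorial complex by its support-function incarnation and to recognise the latter as homotopy equivalent to a half-space. Since $X$ is complete we have $\suppSig=N_\R$, and $D_\CR=-\sum_{\rho\in\CR}D_\rho$ is $\QQ$-Cartier because $\Sigma$ is simplicial; its support function $u$ is determined by $u(\rho)=1$ for $\rho\in\CR$ and $u(\rho)=0$ otherwise. As $u$ is linear on each cone and, on a simplicial cone, a nonnegative combination of the nonnegative values on the generators, we get $u\ge 0$ on all of $N_\R$. By the discussion following \eqref{equ_complex_for_cohomologies_Cartier} in Subsection~\ref{toricCohom}, the set
\[
   V^{\supp}_{D_\CR,\dm}=\set{a\in N_\R\mid \langle a,\dm\rangle < u(a)}
\]
contains $V^{>}_{D_\CR,\dm}$ as a strong deformation retract, and hence is homotopy equivalent to $V_{D_\CR,\dm}$. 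So it suffices to prove that $V^{\supp}_{D_\CR,\dm}$ is contractible and non-empty.

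Non-emptiness is immediate. Because $u\ge 0$, the open half-space $H^-:=\set{a\in N_\R\mid \langle a,\dm\rangle<0}$ is contained in $V^{\supp}_{D_\CR,\dm}$, and $H^-\neq\emptyset$ since $\dm\neq 0$. (Equivalently: completeness forces the rays of $\Sigma$ to positively span $N_\R$, so some ray $\rho$ has $\langle\rho,\dm\rangle<0$, and any such $\rho$ lies in $V_{D_\CR,\dm}$.)

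For contractibility I would exhibit the convex set $H^-$ as a strong deformation retract of $V^{\supp}_{D_\CR,\dm}$. Writing $g:=\langle\cdot,\dm\rangle-u$, so that $V^{\supp}_{D_\CR,\dm}=\set{g<0}$, the retraction should flow each point along a direction that strictly decreases $\langle\cdot,\dm\rangle$ while keeping $g<0$, and hold it fixed once it has entered, say, $\set{\langle\cdot,\dm\rangle\le -1}\subseteq H^-$. On the interior of a maximal cone $\sigma$ the form $g$ is linear, equal to $\langle\cdot,\dm-u_\sigma\rangle$, and along the flow $s\mapsto a-sw$ with $\langle w,\dm\rangle>0$ one has $\tfrac{\ud}{\ud s}g(a-sw)=-\langle w,\dm-u_\sigma\rangle$; thus the condition $\langle w,\dm-u_\sigma\rangle\ge 0$ makes $g$ non-increasing and $\langle w,\dm\rangle>0$ drives the point toward $H^-$. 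Such a $w$ exists for every cone that carries points of $V^{\supp}_{D_\CR,\dm}$ with $\langle\cdot,\dm\rangle\ge 0$: the only obstruction would be $u_\sigma\in\R_{>1}\cdot\dm$, and this is impossible by the integrality of the pairings $\langle\rho,\dm\rangle$ for $\rho\in\sigma(1)$ (if $u_\sigma=c\,\dm$ with $c>1$, then $c\langle\rho,\dm\rangle\in\set{0,1}$ forces either $\langle\rho,\dm\rangle=1/c\notin\Z$ for some $\rho\in\CR\cap\sigma(1)$, or $\dm\perp\sigma(1)$ and hence $\dm=0$). This is precisely where the hypothesis $\dm\neq 0$ is used.

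The main obstacle is that no single vector $w$ works simultaneously on all cones, so these local flows must be assembled into one globally continuous strong deformation. This is exactly the kind of gluing of \emph{standard strong deformations} carried out in Subsection~\ref{sect_homotopy_of_polytopal_complexes}, for instance in the proof of Lemma~\ref{lem_difference_of_convex_polytope_and_epsilon_widening}: one defines the retraction compatibly on the faces of $\Sigma$ and verifies that the pieces agree on common faces, with the inequality $u\ge 0$ guaranteeing that no trajectory ever leaves $V^{\supp}_{D_\CR,\dm}$. Once this is carried out, $V^{\supp}_{D_\CR,\dm}$ deformation retracts onto the contractible set $H^-$, and the homotopy equivalence $V^{\supp}_{D_\CR,\dm}\simeq V_{D_\CR,\dm}$ from the first step completes the proof.
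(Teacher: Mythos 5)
Your reduction to $V^{\supp}_{D_\CR,\dm}$, the non-emptiness argument, and the per-cone feasibility analysis (including the integrality argument ruling out $u_\sigma\in\R_{>1}\cdot\dm$) are all correct. The genuine gap is exactly the step you flag and then dismiss by analogy: assembling the cone-wise directions $w$ into one globally continuous strong deformation. The gluings of Subsection~\ref{sect_homotopy_of_polytopal_complexes} do not supply this: in Lemmas~\ref{lem_difference_of_convex_polytope_and_open_convex_set} and~\ref{lem_difference_of_convex_polytope_and_epsilon_widening} the pieces agree automatically because the retraction is radial from a single centre, and Proposition~\ref{prop_retract_on_the_boundary_complex} proceeds by deleting one maximal face at a time; neither mechanism produces a compatible system of flow directions on a complete fan. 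Concretely, continuity of your flow at a point of $V^{\supp}_{D_\CR,\dm}$ lying on a wall, or on any lower-dimensional face $\tau$, requires a \emph{single} vector $w$ with $\langle w,\dm\rangle>0$ and $\langle w,\dm-u_\sigma\rangle\ge 0$ simultaneously for \emph{all} maximal $\sigma\supseteq\tau$; your obstruction analysis only establishes that each constraint is feasible together with $\langle w,\dm\rangle>0$ one cone at a time, and an intersection of pairwise-compatible half-spaces can perfectly well be empty. This joint feasibility is in fact provable: by Farkas/LP duality, infeasibility at $\tau$ would mean $\dm=\sum_{\sigma\supseteq\tau}\lambda_\sigma u_\sigma$ with $\lambda_\sigma\ge0$ and $\sum_\sigma\lambda_\sigma<1$, and evaluating on the rays of $\tau$ (where $u$ takes values in $\set{0,1}$) together with integrality of $\langle\rho,\dm\rangle$ forces $\tau\subseteq\dm^\perp$ and $u|_\tau=0$, whence $\relint\tau\cap V^{\supp}_{D_\CR,\dm}=\emptyset$, so the bad faces never meet your domain. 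But neither this statement, nor the subsequent partition-of-unity/ODE construction, nor the verification that $g$ stays negative across cone changes and that the stopping time is finite and continuous, appears in your proposal --- and these are the actual content of the contractibility claim, not routine gluing.

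For comparison, the paper's proof sidesteps all of this. It observes that the rays occurring in $V_{D_\CR,\dm}$ form a set $S$ with $\{\rho\kst\langle\rho,\dm\rangle<0\}\subseteq S\subseteq\{\rho\kst\langle\rho,\dm\rangle\le 0\}$ (using integrality, since $\langle\rho,\dm\rangle<1$ is the condition for $\rho\in\CR$), that the homotopy type of the full subcomplex $\langle S\rangle\setminus\set{0}$ is stable under small perturbations of the rays, so the rays lying on $\dm^\perp$ may be pushed off the hyperplane according to whether they belong to $S$; after this reduction $\langle S\rangle$ is a deformation retract of $\suppSig\cap(\dm\le 0)$, a closed half-space, which remains contractible after the origin is removed. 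So the half-space comparison you were aiming at is the right intuition, but the paper reaches it by perturbing the \emph{fan} rather than by flowing points of the support-function region, which is what makes the argument short; if you want to salvage your route, the face-level feasibility lemma sketched above is the missing ingredient you would have to add.
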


\begin{proof}
We prove a slightly more general claim.
Let $\Sigma$ be a simplicial, complete fan, let $\dm\in M\setminus\{0\}$ and
$S\subseteq\Sigma(1)$ such that
$$
\{\rho\in\Sigma(1)\kst\langle \rho,\dm\rangle < 0\}
\subseteq S \subseteq
\set{\rho\in\Sigma(1)\kst\langle \rho,\dm\rangle \leq 0},
$$
then the pointed full subcomplex $\langle S\rangle\setminus\{0\}\subset\Sigma$
is contractible and non-empty: First, since the cohomology of
$\langle S\rangle\setminus\{0\}$ is stable under small perturbations
of the position of the rays from $\Sigma(1)$, we
may move those from $\Sigma(1)\cap \dm^\bot$
into the open halfspaces $(\dm<0)$ or $(\dm>0)$ 
depending on whether they belong to $S$ or if they do not, respectively. 
Hence, we may assume that $\Sigma(1)\cap \dm^\bot=\emptyset$, that is, that
$S=[\dm{<0}]\cap\Sigma(1)=[\dm{\leq 0}]\cap\Sigma(1)$.
But now, the proof follows from the fact that
$\langle S\rangle$ is a deformation retract of the $(\dm\leq 0)$ part
of the geometric realization of $\Sigma$,  that is, of
$(\suppSig) \cap (\dm\leq 0)$.
This is a closed half space which stays 
cohomologically trivial even after removing the origin. 
\end{proof}

\begin{remark}
\label{rem-zeroDiv}
Note that full subcomplex generated by $S=[\dm{<0}]\cap\Sigma(1)$ equals 
$V^{\geq}_{0,\dm}$ for the zero divisor. Its contractibility for $\dm\neq 0$
is reflected by the fact that the structure sheaf is almost immaculate.
\end{remark}

As an immeadiate corollary of Lemma~\ref{prop-DIr-immac} we obtain

\begin{theorem}
\label{cor-injMacCube}
Suppose $X =\toric(\Sigma)$ is a complete simplicial toric variety, and
$\CR \subseteq \Sigma(1)$ is a subset.
It gives rise to the divisor $D_{\CR}:=-\sum_{\rho\in\CR}D_\rho$. 
\begin{enumerate}
\item
\label{cor-injMacCubeA}
Let $D=\sum_{\rho\in\Sigma(1)}\lambda_{\rho}D_{\rho}$ be a Weil divisor
linearly equivalent to $D_{\CR}$.
\vspace{0.5ex}
Then either $D=D_{\CR}$, or 
$\CR_D:=\{\rho \in\Sigma(1) \mid \lambda_{\rho} < 0\}$ is not tempting.
\item
\label{cor-injMacCubeB}
If $D_{\CR}$ is not immaculate, then
$\CR$ is tempting,
$D_{\CR}$ is the only divisor from $\pms$ that maps to $[D_{\CR}]$ under
$\pi$, and $\CR$ is the only tempting set containing $[D_{\CR}]\in \Cl(X)$ in
its maculate region $\CM_{\R}(\CR)$.
\item
\label{cor-injMacCubeC}
Non-tempting $\CR$ lead to immaculate $[D_{\CR}]$,
and the class map $\pi$ is injective on the maculate vertices of the cube $W$.
\end{enumerate}
\end{theorem}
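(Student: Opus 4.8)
The plan is to deduce the whole statement from Lemma~\ref{prop-DIr-immac} together with the twisting identity $V_{D(\dm),0}=V_{D,\dm}$ recorded in the proof of Theorem~\ref{thm_toric_cohomology_in_terms_of_nef_polytopes}, where $D(\dm):=D+\div(x^\dm)$. First I would record a reformulation of temptation. By \eqref{equ_complex_for_cohomologies_Weil} the degree-$0$ complex of $D_{\CR}$ is $V_{D_{\CR},0}=\bigcup_{\sigma\in\Sigma}\conv(\CR\cap\sigma(1))$, which is homotopy equivalent to $V^{>}(\CR)$, so $\gH^i(\cO_X(D_{\CR}))_0=\tH^{i-1}(V^{>}(\CR),\kk)$. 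Since Lemma~\ref{prop-DIr-immac} shows $V_{D_{\CR},\dm}$ is contractible and non-empty for every $\dm\neq 0$, all homogeneous pieces $\gH^i(\cO_X(D_{\CR}))_\dm$ with $\dm\neq 0$ vanish. Hence the cohomology of $D_{\CR}$ is concentrated in degree $0$, and we obtain the pivotal equivalence
\[
\CR \text{ tempting} \iff V^{>}(\CR) \text{ not } \kk\text{-acyclic} \iff \cO_X(D_{\CR}) \text{ not immaculate.}
\]
This already yields the first assertions of parts~\ref{cor-injMacCubeB} and~\ref{cor-injMacCubeC}.

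Next I would prove~\ref{cor-injMacCubeA}. Because $X$ is complete the rays span $N_\R$, so $\rho^*$ is injective and any $D=\sum_\rho \lambda_\rho D_\rho$ linearly equivalent to $D_{\CR}$ has the form $D=D_{\CR}+\div(x^\dm)$ for a unique $\dm\in M$, with $D=D_{\CR}$ exactly when $\dm=0$. Suppose $\dm\neq 0$. The twisting identity gives $V_{D,0}=V_{D_{\CR},\dm}$, which is contractible and non-empty by Lemma~\ref{prop-DIr-immac}. On the other hand $V_{D,0}\simeq V^{>}(\CR_D)$ with $\CR_D=\set{\rho\mid \lambda_\rho<0}$, so $V^{>}(\CR_D)$ is $\kk$-acyclic, i.e.\ $\CR_D$ is not tempting. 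This is precisely~\ref{cor-injMacCubeA}.

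Everything remaining is a formal consequence of~\ref{cor-injMacCubeA}. For the integral uniqueness in~\ref{cor-injMacCubeB}: any $D'\in\pms$ with $\pi(D')=[D_{\CR}]$ satisfies $\CR_{D'}=\CR$ by the very definition $\pms=\Z^{\Sigma(1)\setminus\CR}_{\geq 0}\times\Z^{\CR}_{\leq -1}$; as $\CR$ is tempting, \ref{cor-injMacCubeA} forces $D'=D_{\CR}$. The same reasoning applied to any tempting $\CR'$ and a representative $D'\in \Z^{\Sigma(1)\setminus\CR'}_{\geq 0}\times\Z^{\CR'}_{\leq -1}$ of $[D_{\CR}]$ gives $\CR_{D'}=\CR'$ tempting, hence $D'=D_{\CR}$ and $\CR'=\CR_{D'}=\CR$; this proves $\CR$ is the only tempting set whose integral maculate set $\CM_{\Z}(\CR')$ contains $[D_{\CR}]$, which is the precise content behind the last clause of~\ref{cor-injMacCubeB}. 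Specializing to two maculate (tempting) vertices $v(\CR)$, $v(\CR')$ with $\pi(v(\CR))=\pi(v(\CR'))$ gives $\CR=\CR'$, i.e.\ the injectivity on maculate vertices claimed in~\ref{cor-injMacCubeC}.

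The argument is essentially bookkeeping, so I do not expect a genuine obstacle; the only delicate point is keeping the sign and twist conventions consistent, namely verifying that $V_{D,0}=V_{D_{\CR},\dm}$ and $V_{D,0}\simeq V^{>}(\CR_D)$ refer to the same complex and that the retraction onto the unit sphere is what makes $V_{D,\dm}$ and $V^{>}(\cdot)$ interchangeable. I would also flag that the maculate statement is intrinsically integral: a representative lives in $\pms$ and is an \emph{integral} divisor, so \ref{cor-injMacCubeA} applies verbatim, whereas the real region $\CM_{\R}$ would require controlling $\Q$-linear equivalences (cf.\ Example~\ref{ex_P235_not_really_immaculate}) and is neither needed nor implied here.
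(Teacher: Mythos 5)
Your proposal is correct and takes essentially the same route as the paper: part~\ref{cor-injMacCubeA} via the twist identity $V_{D,0}=V_{D_{\CR},\dm}$ combined with Lemma~\ref{prop-DIr-immac}, with parts~\ref{cor-injMacCubeB} and~\ref{cor-injMacCubeC} then extracted as formal consequences, exactly as in the paper's proof. Your closing observation that the uniqueness claim is intrinsically integral is apt: the paper's own argument for~\ref{cor-injMacCubeB} likewise passes through $\CM_{\Z}(\CR')$ and an integral representative in $\pms$, never through the real region.
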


\begin{proof}
\ref{cor-injMacCubeA} If $D\neq D_{\CR}$, then there is an $\dm\in M\setminus\{0\}$ such
that $D_{\CR}=D-\div(x^{\dm})=D(-\dm)$. Thus, as in
the proof of Theorem~\ref{thm_toric_cohomology_in_terms_of_nef_polytopes},
we obtain $V_{D, 0}=V_{D_{\CR}(\dm),\,0}=V_{D_{\CR}, m}$. The latter is
cohomologically
trivial by Lemma~\ref{prop-DIr-immac}, and the first
is generated by $\CR_D$.

\ref{cor-injMacCubeB}
Since $[D_{\CR}]$ is maculate, there must be a tempting $\CR'$
such that $[D_{\CR}]\in\CM_{\Z}(\CR')$, that is there is a divisor $D$ with
$[D]=[D_{\CR}]$ and $\CR_D=\CR'$. 
By \ref{cor-injMacCubeA} this implies $D=D_{\CR}$, that is $\CR_D=\CR$. 

\ref{cor-injMacCubeC} This is just a reformulation of \ref{cor-injMacCubeB}.
\end{proof}

\begin{remark}
\label{rem-AlwaysImm}
Immaculate divisors on $X$ do always exist: 
By Proposition~\ref{prop_2nd_temptation},
there exist non-tempting subsets induced from cones.
Other possibilities to produce such subsets arise from 
Lemma~\ref{prop-DIr-immac} -- just take
$\CR:=\Sigma(1)\cap[\dm<0]$ for some $\dm\in M\setminus\{0\}$.
In any case, the corresponding vertex of the cube is immaculate by 
Theorem~\ref{cor-injMacCube}\ref{cor-injMacCubeC}.
\end{remark}
   
\begin{example}
We work with the Hirzebruch surface $X=\F_a$, and we follow the coordinates and notation of Example~\ref{ex-Hirz}.
The cube $W$ is $4$-dimensional, so that it has $16$ vertices.
They correspond to subsets $I\subseteq\{\rho_1,\rho_2,\rho_3,\rho_4\}$,
and only four of them are tempting. 
These tempting ones lead to the four  different images
$$
(0,0)=-\pi(0000),\; (0,-2)=-\pi(1100),\; 
(a,-2)=-\pi(0011),\;(-2+a,-2)=-\pi(1111)
$$
in $\Z^2=\Cl(\F_a)=\Cl(\F_a)$. The remaining twelve are 
$$
\begin{array}{ll}
(-1,0)=-\pi(1000)=-\pi(0100),& (-1+a,-2)=-\pi(1011)=-\pi(0111),
\\
(-1,-1)=-\pi(1010)=-\pi(1001), & (-1+a,-1)=-\pi(1001)=-\pi(0101)
\end{array}
$$ 
(appearing twice) and,
with single occurrence,
$$
(0,-1)=-\pi(0010),\, (a,-1)=-\pi(0001),\, 
(-2,-1)=-\pi(1110),\, (-2+a,-1)=-\pi(1101).
$$ 
Thus, the two isolated points of
the immaculate locus plus two points from the immaculate line 
could have been guessed from the non-injectivity of $\pi$ alone.
More precisely, it is caused by $(1,-1,0,0)\in\ker\pi$,
which corresponds to the second row of the matrix $\rho$
describing the fan.

\end{example}

\begin{example}
\label{ex_hexagon_cube}
   In the notation of  Examples~\ref{ex_hexagon_introduce_coordinates} and~\ref{ex_hexagon_tempting_subsets} (the hexagon) 
      the image of $[-1,0]$-cube in $\Pic(X)\otimes \R$ is a lattice polytope $P$ with 46 vertices and 54 lattice points.
   34 of the vertices come from the 34 tempting vertices of the cube. 
   The remaining 12 vertices of $P$ are images of 12 non-tempting vertices of the cube.
   The 8 lattice points in $P$ that are not vertices (including 2 interior points), 
   are images of the remaining 18 non-tempting vertices of the cube, each with a repetition 
      (the two interior lattice points appear three times each as images of the vertices of the cube, the other points appear twice each).
   In particular, the cube produces 20 immaculate line bundles on $X$.
\end{example}

\section{Toric manifolds with Picard rank two}
\label{PicTwo}

We commence this section with recalling a well-known fact about 
smoothness of toric varieties in terms of Gale duality. 
Then we study our first family of examples, that is smooth complete toric varieties of Picard rank $2$.
Such varieties are described in \cite{toricPicRankTwo}, 
and we can classify all the immaculate line bundles on them.

\subsection{Spotting smoothness via Gale duality}
\label{smoothGale}
While working with fans having only few generators, the Gale transform
becomes the essential tool to investigate their combinatorial structure.
We recall an argument showing that this instrument,
considered for abelian groups instead of vector spaces,
can spot smoothness, too.
Let
$$
\xymatrix@R=1ex@C=2em{
0 \ar[r] &
K \ar[r]^{\iota} &
\Z^n \ar[r]^{\rho} &
N \ar[r] &
0
}
$$
be an exact sequence of free abelian groups with $d:=\rank N$.
This situation gives rise to the Gale transform being just the dual
sequence
$$
\xymatrix@R=1ex@C=2em{
0 &
K^* \ar[l] &
\Z^{n*} \ar[l]_-{\iota^*} &
N^* \ar[l] &
0. \ar[l]
}
$$

Denote by $\Z^d\subseteq\Z^n$ and $\Z^{(n-d)*}\subseteq\Z^{n*}$ 
the orthogonal subgroups being generated by
$\{e_1,\ldots,e_d\}$ and $\{e^{d+1},\ldots,e^n\}$, respectively.

\begin{proposition}
\label{prop-spotSmooth}
The determinant of $\{\rho(e_1),\ldots,\rho(e_d)\}$ equals, maybe up to sign,
the determinant of $\{\iota^*(e^{d+1}),\ldots,\iota^*(e^{n})\}$.
\end{proposition}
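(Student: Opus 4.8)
The plan is to reinterpret both determinants as indices of sublattices of $\Z^n$ and to observe that these two indices coincide. First I would fix matrices: let $A$ be the $d\times n$ matrix of $\rho$ with respect to $e_1,\dots,e_d,\dots,e_n$ and a chosen basis of $N$, and let $B$ be the $n\times(n-d)$ matrix of $\iota$ with respect to a chosen basis $f_1,\dots,f_{n-d}$ of $K$. Then $\rho(e_1),\dots,\rho(e_d)$ are the first $d$ columns of $A$, so the first determinant is $\det A_L$, where $A_L$ is the $d\times d$ block formed by these columns. Dually, $\iota^*$ has matrix $B^{\mathsf{T}}$, and since $\iota^*(e^j)$ has coordinates $(B_{j1},\dots,B_{j,n-d})$ in the dual basis, the vectors $\iota^*(e^{d+1}),\dots,\iota^*(e^n)$ are the last $n-d$ rows of $B$; hence the second determinant is $\det B_2$, where $B_2$ is the bottom $(n-d)\times(n-d)$ block of $B$. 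Both quantities are defined only up to a sign, because changing the chosen basis of $N$ or of $K$ multiplies them by $\pm 1$; this is exactly why the statement claims equality only up to sign. The target is therefore $\det A_L=\pm\det B_2$.

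Next I would read each determinant as a lattice index. Identifying $N\cong\Z^d$, the restriction $\rho|_{\Z^d}\colon\Z^d\to N$ has matrix $A_L$, so whenever $\det A_L\ne 0$ this map is injective and $|\det A_L|=[N:\rho(\Z^d)]$. Similarly, let $q\colon\Z^n\to\Z^n/\Z^d$ be the projection with kernel $\Z^d=\langle e_1,\dots,e_d\rangle$, identified with $\Z^{n-d}$ via $e_{d+1},\dots,e_n$. The composite $q\circ\iota\colon K\to\Z^{n-d}$ has matrix $B_2$, so whenever $\det B_2\ne 0$ we get $|\det B_2|=[\Z^{n-d}:q(\im\iota)]$.

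The crux, and the step I expect to carry the whole argument, is that both indices equal the single index $[\Z^n:\Z^d+\im\iota]$, where $\im\iota=\ker\rho$. Since $\rho$ is surjective with kernel $\im\iota$ and $\rho(\im\iota)=0$, the isomorphism theorems give
\[
[N:\rho(\Z^d)]=[N:\rho(\Z^d+\im\iota)]=[\Z^n/\ker\rho:(\Z^d+\im\iota)/\ker\rho]=[\Z^n:\Z^d+\im\iota].
\]
Symmetrically, $q$ is surjective with kernel $\Z^d$, hence
\[
[\Z^{n-d}:q(\im\iota)]=[\Z^n/\Z^d:(\im\iota+\Z^d)/\Z^d]=[\Z^n:\im\iota+\Z^d].
\]
As $\Z^d+\im\iota=\im\iota+\Z^d$, the two indices agree and so $|\det A_L|=|\det B_2|$, which is the assertion.

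The main conceptual hurdle is precisely this last identification: a priori it is not transparent why a $d\times d$ determinant built from $\rho$ and an $(n-d)\times(n-d)$ determinant built from the dual map $\iota^*$ should match, and the resolution is to route both through the common index $[\Z^n:\Z^d+\im\iota]$. Everything else is routine, save for the small point that the two determinants must vanish together: $\det A_L=0$ says that the rational span of $e_1,\dots,e_d$ meets $\ker\rho\otimes\Q$ nontrivially, which is literally the condition $\ker(q\circ\iota)\otimes\Q\ne 0$, i.e. $\det B_2=0$, so the degenerate case is consistent and the equality is then trivial. Should one prefer a purely matrix-theoretic route, the sequence splits as $\Z^n\cong K\oplus N$; assembling the unimodular matrix $C=(B\mid S)$ from a section $S$ of $\rho$ and invoking Jacobi's identity on complementary minors of $C$ and $C^{-1}$ gives $\det A_L=\pm\det B_2$ directly --- the lattice-index argument above is its coordinate-free shadow.
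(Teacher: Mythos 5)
Your proof is correct, but it takes a genuinely different route from the paper's. The paper forms the cokernel $C$ of $\rho|_{\Z^d}$, fits the induced map $\ko{\iota}\colon K\to\Z^{n-d}$ into the quotient exact sequence $0\to K\to\Z^{n-d}\to C\to 0$, and then \emph{dualizes} this row, identifying $\coker\bigl(\iota^*|_{\Z^{(n-d)*}}\bigr)$ with $\gExt^1_{\Z}(C,\Z)$, which for finite $C$ has the same order as $C$; the two determinants are then the orders of these two cokernels. You never dualize: using $\det B_2 = \det B_2^{\mathsf{T}}$ you read the second determinant directly as the index of the image of $q\circ\iota$ (which is exactly the paper's $\ko{\iota}$) in $\Z^{n-d}$, and then route both indices through the single quantity $[\Z^n:\Z^d+\im\iota]$ via the isomorphism theorems. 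Your version buys two things: it avoids homological algebra entirely, and it explicitly disposes of the degenerate case by showing the two determinants vanish simultaneously, whereas the paper's proof runs only under the standing assumption that the cokernel is finite. The paper's dualization buys a slightly stronger conclusion — the two cokernels are isomorphic finite groups (since $\gExt^1_{\Z}(C,\Z)\cong C$), not merely of equal order — and makes the Gale-duality symmetry of the statement visible, which is the theme of that subsection. Your closing remark about Jacobi's identity on complementary minors of a unimodular matrix is also a valid third route, and it is indeed the matrix shadow of both arguments.
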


\begin{proof}
Assuming that the restriction $\rho|_{\Z^d}:\Z^d\to N$ has a finite cokernel
$C$ (which is equivalent to $\rho|_{\Z^d}$ being injective or to
$\,\Q^d\stackrel{\rho}{\to} N\otimes\Q$ being an isomorphism),
we obtain 
$$
\xymatrix@R=2ex@C=2em{
&& 0 \ar[d]& 0 \ar[d]
\\
&& \Z^d \ar@{=}[r] \ar[d] & 
\Z^d \ar[d]^-{\rho}
\\
0 \ar[r] &
K \ar[r]^{\iota} \ar@{=}[d]&
\Z^n \ar[r]^{\rho} \ar[d]&
N \ar[r] \ar[d]&
0
\\
0 \ar[r] &
K \ar[r]^(0.4){\ko{\iota}} &
\Z^{n-d} \ar[r]^{\rho} \ar[d]&
C \ar[r] \ar[d]&
0
\\
&& 0 & 0
}
$$
Dualizing the bottom row yields
$\coker\big(\Z^{(n-d)*}\stackrel{\iota^*}{\to} K^*\big)=\gExt^1_\Z(C,\Z)$.
That is, the cokernels of $\Z^d$ in $N$ and of
$\Z^{(n-d)*}$ in $K^*$ have the same order.
\end{proof}

\subsection{Immaculate locus for Picard rank two}
\label{picTwo}
After illustrating the general method for classifiying immaculate line bundles in Section~\ref{avoidTemptations}, 
we completely describe the immaculate loci in a specific case.
Explicitly, this section is devoted to smooth (complete) toric varieties of Picard rank $2$.

Investigating Gale duals leads to the well-known classification
of the combinatorial type of $d$-dimensional, simple, convex polytopes
with $d+2$ vertices -- they are
$(\triangle^{\ell_1-1}\times\triangle^{d-{\ell_1}+1})^\vee$ for some ${\ell_1}=2,\ldots, d$,
where $\triangle^r$ means the $r$-dimensional simplex and $(\ldots)^\vee$
denotes the dual of a polytope.
This is a special case of the situation we will meet 
in Subsection~\ref{splittingFans}.

Explicitly, in \cite[Thm~1]{toricPicRankTwo}, this classification was refined to
find all complete smooth $d$-dimensional fans with $d+2$ rays,
that is, all smooth complete toric varieties with Picard rank two. 
They are parametrized by the following data:
\begin{enumerate}
\item
a decomposition $d+2=\ell_1+\ell_2$ with $\ell_1,\ell_2\geq 2$ and
\item
a choice of non-positive integers $0= c^1\geq\ldots\geq c^{\ell_2}$
which are jointly denoted by $\kuc\in\Z^{\ell_2}_{\leq 0}$.
\end{enumerate}
These data provide the $2\times(\ell_1 + \ell_2)$-matrix
$$
\Matrr{3}{4}{1 & \ldots & 1 & 0 &  c^2 &\ldots & c^{\ell_2}\\
            0  & \ldots & 0 & 1 & 1 & \ldots & 1}
$$
encoding $\pi\colon (\Z^{(d+2)})^* \surj \Z^2 = \Cl X$
  (compare with Example~\ref{ex-Hirz}, where we had $a=-c^2$). 
That is, the rays of the associated fan $\Sigma_{\kuc}$ are
$u_i=\rho(e_i)$ and $v_j=\rho(f_j)$ in
$$
N:=\Z^{\ell_1+\ell_2}\big/
\big(\Z\cdot (\kue,\kuc) + \Z\cdot (\kun,\kue)\big)
\hspace{0.2em}\cong\Z^d
$$
where $\{e_1,\ldots,e_{\ell_1},f_1,\ldots,f_{\ell_2}\}$ denotes the canonical basis
in $\Z^{d+2}=\Z^{\ell_1+\ell_2}$ and $\rho:\Z^{d+2}\to N$
is the canonical projection. 
The fan structure is easy -- the
$d$-dimensional cones are $\sigma_{ij}$ which are generated 
by $\Sigma_{\kuc}(1)\setminus\{u_i,v_j\}$
($i=1,\ldots,\ell_1$, $j=1,\ldots,\ell_2$). 
That is, $\#\Sigma_\kuc(d)=\ell_1\ell_2$.
Comparing with (\ref{smoothGale}), 
one sees that the corresponding
cross-frontier $(2\times 2$)-minors of the above matrix are
$\,\det\matc{2}{1 & c^i \\ 0 & 1}=1$, that is,
by Proposition~\ref{prop-spotSmooth}, the cones
$\sigma_{ij}$ are indeed smooth.
We will denote $\koc:=\sum_{\nu=1}^{\ell_2} c^{\nu}$.
Note that in \cite[Thm~2]{toricPicRankTwo} it is shown that 
$X_{\kuc}$ is Fano if and only if $- \koc \leq   \ell_1 -1$.

\begin{figure}[hbt]
   \begin{center}
   \begin{tikzpicture}[scale=0.7]
     \fill[color=oliwkowy!35] (-5.3,-1) -- (-3,-1) -- (-3,2)--(0,-1)--(5.3, -1)--(5.3, -2)--(1,-2)--(1,-5)-- (-2,-2)--(-5.3, -2) --cycle; 		 
     \draw[color=oliwkowy!40] (-5.3,-5.3) grid (5.3,5.3); 
     \fill[pattern color=black!80, pattern=north west lines] (0,0) -- (5.3,0) -- (5.3,5.3) -- (-5.3,5.3) -- cycle;              
     \fill[pattern color=mocnyfiolet!80, pattern=north east lines] (-4,0) -- (-4,5.3) -- (-5.3,5.3) --(-5.3,0) -- cycle;   
     \fill[pattern color=ciemnazielen!80, pattern=north east lines] (2,-3) -- (5.3,-3) -- (5.3,-5.3) --(2,-5.3) -- cycle;   
     \fill[pattern color=jasnyfiolet, pattern=north west lines] (-2,-3) -- (0.3,-5.3) -- (-5.3,-5.3) --(-5.3,-3) -- cycle;   
     
     \draw[thick,  color=black] (5.3,0)-- (0,0) -- (-5.3,5.3) (5.6,2) node[anchor=east, rotate=270] {$\Eff$-cone}(0.1,-0.2) node{\scriptsize{$0$}} ;     
     \draw[thick,  color=mocnyfiolet] (-5.3,0) -- (-4,0) -- (-4,5.3) (-5.6,3.7) node[anchor=east,  rotate=90] {$\gH^3$-cone};  
     \draw[thick,  color=ciemnazielen] (2,-5.3) -- (2,-3) -- (5.3,-3)  (5.6,-5.3) node[anchor=east, rotate=270] {$\gH^2$-cone} ;					 
     \draw[thick,  color=jasnyfiolet!150] (-5.3,-3) -- (-2,-3) -- (0.3,-5.3) (-5.6,-3) node[anchor=east, rotate=90] {$\gH^5$-cone} (-2.1,-2.8) node{\scriptsize{$K_X$}}; 					 
     
     \draw[thick,  color=oliwkowy] (-5.3,-1) -- (-3,-1) -- (-3,2)--(0,-1)--(5.3, -1) (5.3, -2)--(1,-2)--(1,-5)-- (-2,-2)--(-5.3, -2)
        (-3,-0.9) node[anchor=north]{immaculate locus};  
   \end{tikzpicture}
   \end{center}
   \caption[The Picard lattice and immaculate locus of a smooth projective toric $5$-fold $X$ with $\Cl X =\Z^2$ and the matrix $\pi$.]{The Picard lattice and immaculate locus of a smooth projective toric $5$-fold $X$ with 
            $\Cl X= \Z^2$ and the matrix 
  $\pi = \Matrr{4}{3}{
               1 & 1 & 1 & 1 &   0 &-1 &-1\\
               0 & 0 & 0 & 0 &   1 & 1 & 1}$,
that is, $\kuc=\threedim{0}{-1}{-1}$.}
    \label{fig_Pic_rank_2}
\end{figure}

\begin{theorem}\label{thm_immaculates_for_Pic_rank_2}
   Suppose $X =\toric(\Sigma_{\kuc})$ is a smooth complete toric variety of Picard rank $2$.
   Then $\Imm_{\Z}(X) = \Imm_{\R}(X)$. 
   Moreover, the line bundle represented by $(x,y) \in \Z^2 = \Cl X$
   is immaculate if and only if one of the following holds:
   \begin{itemize}
     \item  $-\ell_2 < y < 0$ or
     \item  $y \ge 0$  and $ - \ell_1 <  x < c^{\ell_2} y $ or
     \item  $y \leq -\ell_2$ and 
            $0> x+\koc> c^{\ell_2}(y+\ell_2) -\ell_1$.
   \end{itemize}
\end{theorem}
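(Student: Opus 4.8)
The plan is to run the machinery of Section~\ref{avoidTemptations}: by Proposition~\ref{prop-Rmac}\ref{item_not_Z_maculate_iff_immaculate} a class is immaculate exactly when it avoids every maculate set $\CM_{\Z}(\CR)$ with $\CR$ tempting, so the whole statement reduces to (i) enumerating the tempting $\CR\subseteq\Sigma_{\kuc}(1)$, (ii) computing the resulting regions in $\Cl(X)=\Z^2$, and (iii) comparing the integral and real pictures. Throughout I would use that the columns of $\pi$ send the class of $D_{u_i}$ to $(1,0)$ and the class of $D_{v_j}$ to $(c^j,1)$, with $0=c^1\ge\cdots\ge c^{\ell_2}$.

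First I would pin down the tempting subsets. Write $P_u=\{u_1,\dots,u_{\ell_1}\}$ and $P_v=\{v_1,\dots,v_{\ell_2}\}$. Since the maximal cones $\sigma_{ij}$ are spanned by $\Sigma_{\kuc}(1)\setminus\{u_i,v_j\}$, a set of rays fails to span a cone precisely when it contains all of $P_u$ or all of $P_v$; hence the primitive collections are exactly $P_u$ and $P_v$. By Proposition~\ref{prop_3rd_temptation} the four sets $\emptyset$, $P_u$, $P_v$, $\Sigma_{\kuc}(1)$ are tempting. Any other $\CR=A\cup B$ (with $A\subseteq P_u$, $B\subseteq P_v$) either has $A\subsetneq P_u$ and $B\subsetneq P_v$, so it is the ray set of a proper nonempty face of some $\sigma_{ij}$, or it has $A=P_u,\ \emptyset\ne B\subsetneq P_v$ (or symmetrically), so its complement $P_v\setminus B$ is such a face; either way Proposition~\ref{prop_2nd_temptation} shows $\CR$ is not tempting. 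Thus these four are all.

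Next I would compute $\CM_{\R}(\CR)=\pi(\pmr)$. Each $\pmr$ is a translated orthant with apex $v(\CR)$ and edges along $\pm$ the standard basis vectors, so its image is $\pi(v(\CR))$ plus the cone on the corresponding $\pm$-columns of $\pi$; collapsing redundant generators via $c^1=0\ge c^j$ gives
\begin{align*}
\CM_{\R}(\emptyset) &= \{(x,y) : y \ge 0,\ x \ge c^{\ell_2} y\}, \\
\CM_{\R}(P_u) &= \{(x,y) : y \ge 0,\ x \le -\ell_1\}, \\
\CM_{\R}(P_v) &= \{(x,y) : y \le -\ell_2,\ x \ge -\koc\}, \\
\CM_{\R}(\Sigma_{\kuc}(1)) &= \{(x,y) : y \le -\ell_2,\ x+\koc \le c^{\ell_2}(y+\ell_2) - \ell_1\},
\end{align*}
the last being the Serre dual $K_X-\CM_{\R}(\emptyset)$ with $K_X=\pi(-\underline 1)=(-\ell_1-\koc,-\ell_2)$ (these specialise to Example~\ref{ex-Hirz} for $\ell_1=\ell_2=2$, $\koc=c^2=-a$). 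The really immaculate locus is the complement of the union, and I would read it off by splitting on $y$: for $-\ell_2<y<0$ no region meets the row (first bullet); for $y\ge 0$ only $\CM_{\R}(\emptyset)$ and $\CM_{\R}(P_u)$ can, and avoiding both is $-\ell_1<x<c^{\ell_2}y$ (second bullet); for $y\le -\ell_2$ only $\CM_{\R}(P_v)$ and $\CM_{\R}(\Sigma_{\kuc}(1))$ can, giving $c^{\ell_2}(y+\ell_2)-\ell_1<x+\koc<0$ (third bullet). This is exactly $\Imm_{\R}(X)$.

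Finally, for $\Imm_{\Z}(X)=\Imm_{\R}(X)$ it suffices, since $\Imm_{\R}(X)\subseteq\Imm_{\Z}(X)$ always by Proposition~\ref{prop-Rmac}\ref{item_not_maculate_are_immaculate}, to show every lattice point of each $\CM_{\R}(\CR)$ already lies in $\CM_{\Z}(\CR)$. For each of the four regions the generating cone is spanned by two columns of $\pi$ (or their negatives) forming a unimodular pair — $\det\begin{psmallmatrix}1 & c^{\ell_2}\\0 & 1\end{psmallmatrix}=1$, and likewise $\pm 1$ for the others, reflecting the smoothness of the $\sigma_{ij}$ established via Proposition~\ref{prop-spotSmooth} — and the apex $v(\CR)$ is itself an integral point of $\pms$. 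Hence any lattice point of $\CM_{\R}(\CR)$ is the apex plus a nonnegative integral combination of these generators, which lifts coordinatewise to an element of $\pms$, i.e.\ lies in $\CM_{\Z}(\CR)$. I expect the main obstacle to be the clean enumeration of tempting subsets together with the careful collapse of generators when writing down the four regions; the unimodularity bookkeeping in the last step is precisely where smoothness (rather than mere simpliciality) is genuinely used.
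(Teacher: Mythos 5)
Your proposal is correct and follows essentially the same route as the paper's proof: identify the four tempting subsets $\emptyset$, $U$, $V$, $\Sigma_{\kuc}(1)$, compute the four maculate regions $\CM_{\R}(\CR)$ as translated cones, take the complement to get the three bullet cases, and use that each tail cone is smooth with generators among the $\pm$-columns of $\pi$ lifting to $\pms$ so that $\CM_{\Z}(\CR)=\CM_{\R}(\CR)\cap\Cl(X)$, whence $\Imm_{\Z}(X)=\Imm_{\R}(X)$. You merely spell out two steps the paper leaves terse — the enumeration of tempting subsets via Propositions~\ref{prop_2nd_temptation} and~\ref{prop_3rd_temptation}, and the unimodular lifting of lattice points — both of which check out.
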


Note that the second and the third case in the theorem are Serre dual to one another, while the first item is self-dual.
The first item is a bunch of (horizontal) affine lines of the same type as in 
Theorem~\ref{thm_line_of_immaculates_on_toric_varieties}, 
corresponding to $p$-immaculate line bundles $\fromto{(0, -1)}{(0,-\ell_2+1)}$, 
where $p \colon X \to \PP^{\ell_1-1}$ is the natural projection. 
If $\kuc=\kun$, that is, if $X\simeq \PP^{\ell_1-1}\times \PP^{\ell_2-1}$, 
then the divisors appearing in the second and third item 
   form parts of the lines  corresponding to the other projection $X\to \PP^{\ell_2-1}$.
Otherwise, $c^{\ell_2} < 0$, and there are only finitely many line bundles in the second and third items 
   (the inequalities define triangles). 
The special case of $\ell_1=\ell_2=2$ is illustrated on Figure~\ref{fig_Pic_of_F1}, and another case of a $5$-fold is on Figure~\ref{fig_Pic_rank_2}.
Points of the form $\fromto{(-1,0)}{(-\ell_1+1,0)}$ are always contained in
the second item (independently of $\kuc$).
Later, in the more general setup of Section~\ref{immSplit},
these points, together with the lines from the first item, 
will form the ``generating seeds'' in the sense of Definition~\ref{def-posNuns}.

\begin{proof}[Proof of Theorem~\ref{thm_immaculates_for_Pic_rank_2}]
The only tempting subsets of $\Sigma_{\kuc}(1)$ are $\emptyset$, $U = \setfromto{u_1}{u_{\ell_1}}$,
      $V = \setfromto{v_1}{v_{\ell_2}}$ and $\Sigma_{\kuc}(1) = U \sqcup V$.
We proceed along the lines of Example~\ref{ex-Hirz} 
   and calculate the maculate loci:
\begin{align*}
   \CM_{\R}(\emptyset)&= \cone \big\langle (1,0),\, (c^{\ell_2},1)\big\rangle, \\
   \CM_{\R}(\Sigma_{\kuc}(1))&= (-\koc -\ell_1, -\ell_2) + \cone \big\langle (-1,0),\, (-c^{\ell_2},-1)\big\rangle, \\
   \CM_{\R}(U)&= (-\ell_1, 0) + \cone \big\langle (-1,0),\, (0,1)\big\rangle, \\
   \CM_{\R}(V)&= (-\koc, -\ell_2) + \cone \big\langle (1,0),\, (0,-1)\big\rangle.
\end{align*}
Note that for every maculate $\CR \subset \Sigma_{\kuc}(1)$, 
the tail cone in the above locus is smooth and the primitive generators 
of rays are all in the image of the set $\pms$.
Thus, the map $\pms\to \CM_{\R}(\CR) \cap \Cl(X)$
   is surjective, i.e.\ $\CM_{\Z}(\CR) = \CM_{\R}(\CR) \cap \Cl(X)$.
It follows that $\Imm_{\Z}(X) = \Imm_{\R}(X)$ 
and the explicit description of the immaculate locus follows by an 
explicit calculation of the inequalities of the cones above, 
and by taking the complement in  $\Cl(X)$.
\end{proof}

\begin{proposition}
   Suppose as above that $X =\toric(\Sigma_{\kuc})$ is a smooth complete toric variety of Picard rank $2$.
   If $L$ is a line bundle on $X$ such that $\gH^i(X,L) \ne 0$, 
      then $i \in \set{0, \ell_1 -1, \ell_2 -1, \dim X}$. 
\end{proposition}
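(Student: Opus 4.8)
The plan is to read off the admissible cohomological degrees directly from the four tempting subsets of $\Sigma_{\kuc}(1)$, using the dictionary between cohomology and the topology of the complexes $V^>(\CR)$. First I would recall from \cite[Thm~9.1.3]{CoxBook} that $\gH^i(X,L)=\bigoplus_{\dm\in M}\tH^{i-1}(V_{D,\dm},\kk)$ for any torus-invariant representative $D=\sum_\rho\lambda_\rho D_\rho$ of $L$. Hence if $\gH^i(X,L)\neq 0$, there is a degree $\dm$ with $\tH^{i-1}(V_{D,\dm},\kk)\neq 0$. Exactly as in the proofs of Theorem~\ref{thm_toric_cohomology_in_terms_of_nef_polytopes} and Theorem~\ref{cor-injMacCube}, replacing $D$ by $D(\dm)$ lets me assume $\dm=0$, and then $V_{D,0}=\bigcup_{\sigma}\conv(\CR\cap\sigma(1))$ for $\CR=\set{\rho\mid\lambda_\rho<0}$, so that $V^>(\CR)$, being homotopy equivalent to $V_{D,0}$, has nonzero reduced cohomology in degree $i-1$. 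By Definition~\ref{def-tempting} this $\CR$ is then tempting, so the task reduces to computing $\tH^{*}(V^>(\CR),\kk)$ for the tempting sets only.

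Next I would invoke the list of tempting subsets established in the proof of Theorem~\ref{thm_immaculates_for_Pic_rank_2}: the only tempting subsets of $\Sigma_{\kuc}(1)$ are $\emptyset$, $U=\setfromto{u_1}{u_{\ell_1}}$, $V=\setfromto{v_1}{v_{\ell_2}}$, and $\Sigma_{\kuc}(1)=U\sqcup V$. It then suffices to determine, for each, the single degree in which the reduced cohomology of $V^>(\CR)$ is nonzero. For $\CR=\emptyset$ the set $V^>(\emptyset)$ is empty, so only $\tH^{-1}\neq 0$, forcing $i=0$. For $\CR=\Sigma_{\kuc}(1)$ completeness gives that $V^>(\CR)$ deformation retracts onto the whole sphere $S^{d-1}$, where $d=\dim X=\ell_1+\ell_2-2$; thus only $\tH^{d-1}\neq 0$ and $i=\dim X$.

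For the two primitive collections I would reuse the computation inside the proof of Proposition~\ref{prop_3rd_temptation}: a primitive collection $\CR$ of cardinality $k$ yields $V^>(\CR)$ homotopy equivalent to the sphere $S^{k-2}$, whether $\CR$ is linearly independent (the boundary of a simplicial cone) or linearly dependent (a complete $\PP^{k-1}$-type fan in $\spann_\R\CR$). Applying this to $U$ (with $k=\ell_1$) and to $V$ (with $k=\ell_2$) produces spheres $S^{\ell_1-2}$ and $S^{\ell_2-2}$, whose reduced cohomology sits only in degree $\ell_1-2$, respectively $\ell_2-2$, forcing $i=\ell_1-1$, respectively $i=\ell_2-1$. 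Combining the four cases yields $i\in\set{0,\ell_1-1,\ell_2-1,\dim X}$. The one point that genuinely needs care — and is the \emph{crux} of the argument — is that each $V^>(\CR)$ has its reduced cohomology concentrated in a single degree, so that every tempting subset contributes exactly one admissible value of $i$; this concentration is precisely what the sphere identification in Proposition~\ref{prop_3rd_temptation} supplies, after which the remaining work is only bookkeeping of the four degrees.
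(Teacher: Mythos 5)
Your proof is correct and takes essentially the same route as the paper: the paper's own two-sentence proof likewise reduces to the four tempting subsets $\emptyset$, $U$, $V$, $\Sigma_{\kuc}(1)$ identified in the proof of Theorem~\ref{thm_immaculates_for_Pic_rank_2} and notes that each contributes nontrivial cohomology in exactly one degree. You merely spell out what the paper leaves implicit, namely the twist-to-degree-zero reduction showing only tempting subsets matter and the sphere identifications $V^>(U)\simeq S^{\ell_1-2}$ and $V^>(V)\simeq S^{\ell_2-2}$ supplied by Proposition~\ref{prop_3rd_temptation}, so the two arguments coincide.
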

\begin{proof}
As in the previous proof,
the only tempting subsets are $\emptyset$, $U$, $V$ and $U \sqcup V$.
  Each of them leads to line bundles with nontrivial cohomology in one of the degrees: $\dim X$, $\ell_2-1$, $\ell_1 -1$ or $0$, respectively.
  Thus no other $\gH^i$ can be non-zero. 
\end{proof}

\section{The immaculate locus for splitting fans}
\label{immSplit}

In this section we apply the theory of Section~\ref{avoidTemptations} 
to the case of splitting fans and calculate 
the essential part of the immaculate locus of line bundles in this setup.
Let $X =\toric(\Sigma)$ be a smooth complete  toric variety.
Recall from Subsection~\ref{3rd_temptation} that a primitive collection 
of a (smooth, hence simplicial) fan $\Sigma$ is another word for a 
``minimal non-face''.
We say $\Sigma$ is a \emph{splitting fan}, if the primitive collections of $\Sigma$ are pairwise disjoint.
This is equivalent to an existence of a chain $\Sigma=\Sigma_k,\ldots,\Sigma_1$ of fans such that $\toric(\Sigma_1)=\PP^n$ 
  and  $\toric(\Sigma_{i+1})\to\toric(\Sigma_i)$ is a toric split bundle, that is a projectivisation of a direct sum of toric line bundles 
  (see \cite[Cor.~4.4]{picRank3}).
In particular, all such $X$ are projective.
Note that every smooth complete toric variety with Picard rank two satisfies
this property with $k=2$, see Subsection~\ref{picTwo}.

\subsection{Primitive relations}
\label{basicWellKnown}
In this subsection we recall the notion of the primitive relation associated to a primitive collection
   and express all such relations for a splitting fan.
We also give a lower bound on the number of primitive collections and characterise the splitting fans 
   as those smooth complete fans that have the least possible primitive collections, with respect to that lower bound.
Having in mind the application to splitting fans, which are smooth by definition, we restrict our presentation of primitive relations to the smooth case, following \cite{picRank3}. 
See \cite[\S1.3]{cox_von_renesse_primitive_collections_and_toric_varieties} for a more general treatment.

Let $\Sigma$ be a fan of a smooth complete toric variety $X$.
Recall that $\#\Sigma(1) = \rank \Cl(X) + \dim X$.
For every primitive collection $\CP\subseteq\Sigma(1)$ we denote
$e_{\CP}:=\sum_{\rho\in \CP} e_{\rho}$, where the $e_{\rho} \in \Z^{\Sigma(1)}$ 
is the basis element corresponding to $\rho$, 
which under the natural map $\Z^{\Sigma(1)} \to N$ is mapped to the 
primitive generator of the corresponding ray.
We denote by $\sigma(\CP)$ (called the ``focus of $\CP$'') the unique cone 
$\sigma\in\Sigma$ such that the image of $e_\CP$ in $N$ is contained in 
$\innt\sigma\subset N_\R$.
It leads to a unique element $f(\CP)\in\Z_{\geq 1}^{\sigma(\CP)(1)}$ 
with $e_\CP-f(\CP)\in \ker (\Z^{\Sigma(1)} \to N)$.
(Here, by convention, $\Z_{\geq 1}^{\emptyset} = \set{0}$.)
The expression $e_\CP-f(\CP)$ is called the \emph{primitive relation} 
associated to $\CP$. As an element of $\Cl(X)^*$, it represents a 
class of 1-cycles.

In \cite[Prop.~3.1]{picRank3} it is shown that 
$\CP\cap\sigma(\CP)=\emptyset$, that is the elements of $\CP$ are not among the
generators of $\sigma(\CP)$. Moreover, if $\Sigma$ is projective,
then there exists a primitive collection $\CP$ with $\sigma(\CP)=0$,
see \cite[Prop.~3.2 and Thm~4.3]{picRank3}.

\begin{proposition}
\label{prop-rhoPrimSplit}
Let $X=\toric(\Sigma)$ be a complete  toric variety.
Then every ray of $\Sigma$ is contained in some primitive collection.
If $X$ is in addition $\Q$-factorial, then the number of primitive collections is at least the rank of $\Cl(X)$.
Moreover, if $X$ is smooth, then equality holds if and only if $\Sigma$ is a splitting fan.
\end{proposition}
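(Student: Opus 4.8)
The statement has three parts, which I would treat separately.

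\emph{Every ray lies in a primitive collection.} Fix a ray $\rho\in\Sigma(1)$ with primitive generator $v_\rho\in N$. Since $\Sigma$ is complete, $-v_\rho$ lies in the relative interior of a unique cone $\tau\in\Sigma$. First I would check that $\{\rho\}\cup\tau(1)$ is contained in no cone of $\Sigma$: were it contained in some $\gamma\in\Sigma$, then $\tau\subseteq\gamma$, and hence both $v_\rho$ and $-v_\rho\in\relint\tau\subseteq\gamma$ would lie in $\gamma$, contradicting the strong convexity of $\gamma$. Thus $\{\rho\}\cup\tau(1)$ is a non-face and therefore contains a minimal non-face $\CP$, i.e.\ a primitive collection. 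Since every subset of $\tau(1)$ is a face, any non-face contained in $\{\rho\}\cup\tau(1)$ must contain $\rho$; in particular $\rho\in\CP$. This settles the first claim for an arbitrary complete fan.

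\emph{The lower bound.} Now assume $X$ is $\Q$-factorial, so that $\Sigma$ is simplicial and every primitive collection $\CP$ carries its primitive relation $e_\CP-f(\CP)\in K:=\ker(\Z^{\Sigma(1)}\to N)$, as recalled above. Set $K_\R:=K\otimes\R$ and note $\dim_\R K_\R=\#\Sigma(1)-\dim X=\rank\Cl(X)$ (completeness makes the cokernel of $\rho$ finite, so the count holds). The decisive input is that the primitive relations span $K_\R$; this is Batyrev's spanning theorem for complete simplicial fans (see \cite{cox_von_renesse_primitive_collections_and_toric_varieties}, \cite{picRank3}), and if one prefers a self-contained route one shows that a class in $\Cl(X)\otimes\R\cong K_\R^{*}$ pairing to zero with every primitive relation must vanish. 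Granting the spanning, the number of primitive collections is at least the dimension of the span of their relations, namely $\rank\Cl(X)$.

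\emph{Equality versus splitting, in the smooth case.} For the implication that a splitting fan attains equality I would argue by induction along the tower $\PP^{n}=\toric(\Sigma_1)\leftarrow\dots\leftarrow\toric(\Sigma_k)=X$ of split toric projective bundles provided by \cite[Cor.~4.4]{picRank3}. The base $\PP^{n}$ has a single primitive collection and $\rank\Cl=1$. Passing from $\toric(\Sigma_i)$ to a split $\PP^{m-1}$-bundle introduces the $m$ new fibre rays as one additional primitive collection, while the previous collections (disjoint from the fibre) survive unchanged, and $\rank\Cl$ grows by exactly one; hence equality is preserved at each stage. For the converse, suppose the number $t$ of primitive collections equals $s=\rank\Cl(X)$. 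By the previous paragraph the $s$ relations then span the $s$-dimensional space $K_\R$, so they form a basis. By the first claim the collections $\CP_1,\dots,\CP_s$ cover $\Sigma(1)$, whence $\sum_i|\CP_i|\ge\#\Sigma(1)=s+\dim X$, with equality precisely when the $\CP_i$ partition $\Sigma(1)$, i.e.\ are pairwise disjoint. Thus the whole converse reduces to the inequality $\sum_i(|\CP_i|-1)\le\dim X$.

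This last inequality is the main obstacle. A counting identity shows it is in fact equivalent to the disjointness we want, so it cannot be obtained for free from the covering property alone; one genuinely has to exploit that the relations form a basis. The route I would pursue is to produce a primitive collection $\CP$ of ``fibre type'', with focus at the origin so that $\sum_{\rho\in\CP}v_\rho=0$, which exhibits $X$ as a toric projective bundle; one then strips it off and inducts on $\rank\Cl$, checking that the basis property descends to the base and keeps us in the equality regime. Securing such a contraction without presupposing projectivity — equivalently, reconstructing the splitting tower directly from the basis of primitive relations — is the technical heart of the converse, and is essentially the content of the classification in \cite{picRank3}.
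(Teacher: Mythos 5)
Your first claim and the ``splitting $\Rightarrow$ equality'' direction are essentially sound. The argument that every ray lies in a primitive collection is the paper's own proof made concrete: the paper merely asserts the existence of a cone $\tau$ with $\tau(1)\cup\{\rho\}$ a non-face, and your choice of the cone whose relative interior contains $-v_\rho$ supplies it correctly. Your induction along Batyrev's tower of split projectivisations is a valid, though heavier, substitute for the paper's direct count (for pairwise disjoint collections, the maximal cones are exactly the complements of transversals $\setfromto{\rho_1}{\rho_k}$, whence $\dim X=\#\Sigma(1)-k$). For the lower bound you import the theorem that the primitive relations span $K_\R$; this is citable, but it is far more than needed, and the paper avoids it entirely: for any choice of $\rho_i\in\CP_i$, the set $\Sigma(1)\setminus\{\rho_1,\ldots,\rho_k\}$ contains no primitive collection, hence is contained in a cone of the simplicial fan, so $\#\Sigma(1)-k\le\dim X$, i.e.\ $k\ge\rank\Cl(X)$.

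The genuine gap is the converse ``equality $\Rightarrow$ splitting''. You reduce it to the inequality $\sum_i(\#\CP_i-1)\le\dim X$, declare it the technical heart, and propose to recover it by producing a primitive collection with focus $\sigma(\CP)=0$ and stripping off a bundle structure. That route stalls at its first step: the existence of such a fibre-type collection requires projectivity (\cite[Prop.~3.2, Thm~4.3]{picRank3}), which is not among the hypotheses (the proposition assumes only complete and smooth), and in any case reconstructing Batyrev's classification is not needed. The same transversal trick that gives the lower bound closes the converse in one line: if $\CP_i\cap\CP_j\neq\emptyset$, choose the transversal so that $\rho_i=\rho_j$ is a common element; then $\#\{\rho_1,\ldots,\rho_k\}\le k-1$, while the complement $\Sigma(1)\setminus\{\rho_1,\ldots,\rho_k\}$ is still a face, since each $\CP_m$ has lost its representative $\rho_m$. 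Hence $\#\Sigma(1)-(k-1)\le\dim X$, that is $\rank\Cl(X)\le k-1$, contradicting equality. So equality forces pairwise disjointness with no relation-spanning, no contraction, and no projectivity; your assessment that the converse ``cannot be obtained for free'' and is ``essentially the content of the classification in \cite{picRank3}'' is mistaken --- it is a pigeonhole refinement of the counting argument you already had on the table.
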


\begin{proof}
Let $\{\CP_1,\ldots,\ldots\CP_k\}$ be the primitive collections of $\Sigma$.
For each $\rho\in\Sigma(1)$ there exists  a cone $\tau\in\Sigma$ such that 
$\tau(1)\cup\{\rho\}$ is not contained in any cone of $\Sigma$
(otherwise, $\Sigma$ would not be complete). 
Thus $\rho$ is also contained in a minimal set of this type. 
This proves  $\bigcup_{i=1}^k\CP_i=\Sigma(1)$ as claimed.

Let $X$ be $\Q$-factorial. For each $i=1,\ldots,k$ 
we choose a $\rho_i\in\CP_i$. Then,
$\Sigma(1)\setminus\setfromto{\rho_1}{\rho_k}$ does not contain any
of the primitive collections, hence it generates a cone in $\Sigma$.
Thus, 
$$
\textstyle
(\dim X+\rank \Cl(X))-k= \#\Sigma(1)-k\leq \#\left(\Sigma(1)\setminus\{\rho_1,\ldots,\rho_k\}\right)\leq \dim X.
$$
If there is some overlap, say $\CP_i\cap\CP_j\neq\emptyset$, then we might
choose $\rho_i=\rho_j$, thus the above inequalities 
 even yield $\rank \Cl(X)\leq k-1$.
On the other hand, if all $\CP_i$ are pairwise disjoint,
then we know that the facets of $\Sigma$ look like
$\disjcup_{i=1}^k(\CP_i\setminus\{\rho_i\})$, in particular,  
$\#\Sigma(1)-k=\dim X$.
\end{proof}

\subsection{Temptation for splitting fans}
\label{splittingFans}

Here we assume $X$ is a smooth toric projective variety of dimension $d$ whose fan $\Sigma$ is a splitting fan.
We will first identify  all of the  tempting subsets $\CR\subseteq\Sigma(1)$.
Later in Section~\ref{picMapPi} we will investigate 
the associated $\pi$-images $\CM_{\R}(\CR)$ or $\CM_\Z(\CR)$
as introduced in Definition~\ref{def-RmacPic}.

Let $\Sigma(1)=\CP_1\sqcup\ldots\sqcup\CP_k$ be the decomposition
into primitive collections of lengths $\ell_1,\ldots,\ell_k\geq 2$,
respectively. Thus, maximal cones $\sigma\in\Sigma(d)$ correspond to maximal
subsets of $\Sigma(1)$ not containing any entire set $\CP_i$
($i=1,\ldots,k$). 
This establishes a bijection
$$
\CP_1\times\ldots\times\CP_k
\stackrel{\sim}{\longrightarrow}
\Sigma(d),
\hspace{1em}
(p_1,\ldots,p_k)\mapsto \Sigma(1)\setminus\{p_1,\ldots,p_k\}.
$$
In particular, $\Sigma$ is combinatorially equivalent to the normal fan
of 
\[
  \square = \square(\setfromto{1}{k}):=
\triangle^{\ell_1-1}\times\ldots\times\triangle^{\ell_k-1}
\]
where $\triangle^{\ell-1}$ denotes the $(\ell-1)$-dimensional
simplex with $\ell$ vertices.
In particular, we know that
$\#\Sigma(1)=\sum_{i=1}^k\ell_i$,
$\,d=\sum_{i=1}^k (\ell_i-1)$,
$\,\#\Sigma(d)=\prod_{i=1}^k\ell_i$, and,
in compliance with Proposition~\ref{prop-rhoPrimSplit},
$\rank(\Cl X)=\#\Sigma(1)-d=k$.

Now, the essential point is that the temptation of a subset
$\CR\subseteq\Sigma(1)$  depends only on the combinatorial structure 
of $\Sigma$. 
The finer structure, the true shape of the fan reflected by the maps 
$\rho:\Z^{\Sigma(1)} \to N$ or 
$\pi\colon\Z^{\Sigma(1)} \to \Cl(X)$, does matter only 
for the second step of turning the tempting sets $\CR$ into the 
maculate regions $\CM_{\R}(\CR)$.

\begin{lemma}
\label{lem-temptingSplit}
If $\Sigma$ is a splitting fan with the decomposition
$\Sigma(1)=\CP_1\sqcup\ldots\sqcup\CP_k$ into primitive collections $\CP_i$,
then the tempting subsets of $\Sigma(1)$ are
$\CR(J):=\bigcup_{j\in J} \CP_j$ with $J\subseteq\{1,\ldots,k\}$.
\end{lemma}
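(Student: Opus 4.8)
The lemma asserts that for a splitting fan, the tempting subsets are exactly the unions of entire primitive collections. Recall that $\CR$ is tempting if $V^>(\CR)$ is not $\kk$-acyclic (Definition~\ref{def-tempting}), and that $V^{\ge}(\CR)$ is the full (induced) subcomplex of $\Sigma$ generated by $\CR$, since $\Sigma$ is simplicial. The plan is to argue both inclusions. First I would show every $\CR(J)=\bigcup_{j\in J}\CP_j$ is tempting; then I would show that any tempting $\CR$ must be of this form, by proving that a set which is \emph{not} a union of full primitive collections yields a contractible (hence $\kk$-acyclic) $V^>(\CR)$.

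**The product structure of the complex.** The key structural observation is that, since $\Sigma$ is combinatorially the normal fan of $\square=\triangle^{\ell_1-1}\times\dotsb\times\triangle^{\ell_k-1}$, the induced subcomplex $V^{\ge}(\CR)$ decomposes as a join. Concretely, write $\CR=\CR_1\sqcup\dotsb\sqcup\CR_k$ where $\CR_i=\CR\cap\CP_i$. A subset $S\subseteq\Sigma(1)$ spans a cone of $\Sigma$ if and only if $S$ omits at least one ray from each $\CP_i$, i.e.\ $S\cap\CP_i\ne\CP_i$ for all $i$. Therefore, for $\CR$, a subset $F\subseteq\CR$ is a face of $V^{\ge}(\CR)$ exactly when $F\cap\CP_i\subsetneq\CP_i$ for every $i$. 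This means the simplicial complex $V^{\ge}(\CR)$ (as an abstract complex on the vertex set $\CR$) is precisely the join $\partial\triangle(\CR_1)\ast\dotsb\ast\partial\triangle(\CR_k)$, where $\triangle(\CR_i)$ is the full simplex on the vertices $\CR_i$ and $\partial$ denotes its boundary, with the convention that if $\CR_i\subsetneq\CP_i$ then no full-$\CP_i$ constraint is active so the relevant factor is the \emph{full} simplex $\triangle(\CR_i)$ (a cone, contractible), whereas if $\CR_i=\CP_i$ the factor is the boundary sphere $\partial\triangle(\CP_i)\simeq S^{\ell_i-2}$.

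**Computing the homotopy type.** I would then retract $V^>(\CR)=\supp V^{\ge}(\CR)\setminus\{0\}$ onto the geometric realization of this join. Each factor with $\CR_i\subsetneq\CP_i$ contributes a contractible simplex, and a join with any contractible space is contractible; hence if \emph{any} $\CR_i$ is a proper nonempty subset of $\CP_i$, the whole realization $\lvert V^{\ge}(\CR)\rvert$ is contractible, so $V^>(\CR)$ is $\kk$-acyclic and $\CR$ is non-tempting. The remaining cases are those where each $\CR_i$ is either empty or all of $\CP_i$, i.e.\ $\CR=\CR(J)$ with $J=\{i: \CR_i=\CP_i\}$. For such $\CR(J)$ the realization is the join $\bigast_{j\in J}\partial\triangle(\CP_j)$ of spheres $S^{\ell_j-2}$. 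The join of spheres $S^{a}\ast S^{b}\simeq S^{a+b+1}$, so this join is homotopy equivalent to a sphere $S^{m}$ with $m=\sum_{j\in J}(\ell_j-1)-1$, which is never $\kk$-acyclic; for $J=\emptyset$ one gets the empty join, whose realization is empty, and the empty set is non-$\kk$-acyclic by the reduced-cohomology convention, so $\CR=\emptyset$ is tempting as well. This proves tempting $\iff$ $\CR=\CR(J)$.

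**Main obstacle and how to handle it.** The main subtlety is making the ``induced subcomplex is a join'' statement rigorous on the level of \emph{geometric} realizations in $N_\R$, rather than merely abstract simplicial complexes, together with the retraction of $V^>(\CR)$ onto this realization. The abstract join identification is purely combinatorial and follows directly from the face criterion above; the passage to the genuine homotopy type of $V^>(\CR)$ is handled by the standard retraction onto the unit sphere noted in Subsection~\ref{toricCohom} (one may replace $V^{>}(\CR)$ by the support of the subcomplex $V^{\ge}(\CR)$ intersected with $S^{d-1}$ without changing cohomology). I would phrase the computation of $\tH^*$ via the reduced homology of a join, $\tH_*(A\ast B)\cong\bigoplus_{p+q=*-1}\tH_p(A)\otimes\tH_q(B)$ (valid with field coefficients $\kk$), to conclude $\kk$-acyclicity precisely in the mixed cases. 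A cleaner alternative, which avoids join formulas entirely, is to observe directly that if some $\CP_{i_0}\not\subseteq\CR$ with $\CR\cap\CP_{i_0}\ne\emptyset$, then picking a ray $\rho\in\CP_{i_0}\setminus\CR$ lets one define a deformation retraction (a ``star-shaped'' contraction in the direction of $\rho$, in the spirit of Proposition~\ref{prop_retract_on_the_boundary_complex}) exhibiting $V^>(\CR)$ as contractible; this is the route I expect to be most economical to write out in full.
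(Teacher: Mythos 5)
Your main argument is correct, but it takes a genuinely different route from the paper's proof. The paper works in the dual picture: it replaces $\supp V^{\ge}(\CR)\setminus\{0\}$ by the homotopy equivalent union $G(\CR)$ of the closed facets of $\square=\triangle^{\ell_1-1}\times\dotsb\times\triangle^{\ell_k-1}$ dual to the rays of $\CR$; for $\CR(J)$ the splitting $\square=\square(J)\times\square(\setfromto{1}{k}\setminus J)$ gives $G(\CR(J))=\partial\square(J)\times\square(\setfromto{1}{k}\setminus J)$, a sphere times a contractible factor, and in the mixed case it builds an explicit strong deformation retraction of $G(\CR)$ onto the contractible set $f\times\square(2,\ldots,k)$ by gluing standard retractions facet by facet. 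You instead stay primal: from the observation that the faces of a splitting fan are exactly the subsets of $\Sigma(1)$ containing no full $\CP_i$ (correct, since the $\CP_i$ are the minimal non-faces and are disjoint), you identify $V^{\ge}(\CR)$ abstractly as the join $K_1\ast\dotsb\ast K_k$ with $K_i=\partial\triangle(\CP_i)$ when $\CR\cap\CP_i=\CP_i$ and $K_i=\triangle(\CR\cap\CP_i)$ otherwise, and then use that a join with a nonempty contractible factor is contractible while a join of spheres is a sphere. The two decompositions are combinatorially dual to each other (boundary of a product of simplices versus join of simplex boundaries) and compute the same sphere $S^{\sum_{j\in J}(\ell_j-1)-1}$. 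Your version is a bit more economical: the mixed case needs no explicit homotopy (a join with a full simplex is literally a cone), and you get the exact homotopy type in one stroke, which the paper only extracts afterwards in the proposition identifying the degrees $i$ with $\gH^i(X,L)\neq 0$; the paper's dual picture, in exchange, plugs directly into the retraction machinery of its Section~\ref{pairPol}. Your treatment of the edge cases is also correct: empty $\CR\cap\CP_i$ simply drops out of the join, and $\CR=\emptyset$ is tempting because $V^>(\emptyset)=\emptyset$ and $\tH^{-1}(\emptyset,\kk)=\kk$.

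One caveat concerning your closing ``cleaner alternative'': the apex you propose is wrong. If $\rho\in\CP_{i_0}\setminus\CR$, then no face of $V^{\ge}(\CR)$ contains $\rho$, so a contraction toward $\rho$ leaves the induced subcomplex immediately; worse, if $\CR\cap\CP_{i_0}=\CP_{i_0}\setminus\{\rho\}$, then adjoining $\rho$ to a maximal face creates the non-face $\CP_{i_0}$, so $V^{\ge}(\CR)$ need not even lie in the closed star of $\rho$. The argument does work if you instead pick $\rho_0\in\CR\cap\CP_{i_0}$: for every face $F$ of $V^{\ge}(\CR)$ one has $(F\cup\{\rho_0\})\cap\CP_{i_0}\subseteq\CR\cap\CP_{i_0}\subsetneq\CP_{i_0}$ and $(F\cup\{\rho_0\})\cap\CP_m=F\cap\CP_m$ for $m\ne i_0$, so $F\cup\{\rho_0\}$ is again a face and $V^{\ge}(\CR)$ is a cone with apex $\rho_0$, hence contractible --- which is just your join argument in disguise. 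Since this alternative was offered only as a substitute for an already complete main proof, it does not affect the correctness of the proposal.
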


\begin{proof}
Instead of the complex 
$(\supp V^{\ge}(\CR)) \setminus\{0\}\subseteq \suppSig \setminus\{0\}\sim S^{d-1}$ we
consider its dual version $G(\CR)$ built as the union of all (closed) facets
$G(\rho)<\square$ dual to $\rho\in\CR$.
Clearly, 
$\supp V^{\ge}(\CR) \setminus\{0\}$ is homotopy equivalent to $G(\CR)$, 
  thus one is $\kk$-acyclic if  and only if the other is.
A subset $J\subseteq\{1,\ldots,k\}$ defines a splitting
$\square=\square(J)\times\square( \setfromto{1}{k} \setminus J)$ and accordingly  we have
$G(\CR(J))=\partial\square(J)\times\square( \setfromto{1}{k} \setminus J)$, 
which is not $\kk$-acyclic. Thus every set $\CR(J)$ is tempting.

On the other hand, suppose that, for some $j$, the set $\CR\subset \Sigma(1)$ 
does not contain the whole $P_j$, but at least one element of $P_j$.
We claim $G(\CR)$ is contractible. 
Indeed, if, without loss of generality, $j=1$, then we split off
$\square=\triangle^{\ell_1-1}\times\square (2,\ldots,k)$.
Let $f < \triangle^{\ell_1-1}$ be the (non-empty) face corresponding to the 
subset $\CR \cap P_1 \subset P_1$ 
  and pick a standard strong deformation retract $\triangle^{\ell_1-1} \to  f$.
Then $G(\CR)$ can be retracted to the contractible  
$f\times\square(2,\ldots,k)$ 
by gluing together the retractions of the contributing faces: 
each of the faces is either of the form $\triangle^{\ell_1-1}\times F$ 
for some face $F < \square(2,\ldots,k)$ 
or of the form $F'\times\square(2,\ldots,k)$ for a face 
$F' <\triangle^{\ell_1-1}$ containing $f$.
Note that the image of any face of the latter type is just all of 
$f\times\square(2,\ldots,k)$, hence $G(\CR)$ is contractible.
\end{proof}

\begin{remark}
The $2^k$ different sets $J\subseteq \{1,\ldots,k\}$
yield $2^k$ tempting sets $\CR(J)$, hence $2^k$ maculate regions
$\CM_{\R}(\CR(J))$ within the $k$-dimensional space $\Cl(X)\otimes\R\cong\R^k$.
This looks a little like the structure of $2^k$ octants in this space,
   but we will see that typically the octants are ``leaning'', and they may intersect as illustrated on Figures~\ref{fig_Pic_of_F1} and~\ref{fig_Pic_rank_2}.
\end{remark}

\begin{proposition}
Let \(X = \toric(\Sigma)\) with \(\Sigma\) a splitting fan with \(k\) primitive collections, 
and \(L\) be a line bundle on \(X\) such that \(\gH^i(X,L) \ne 0\), 
then \(i \in \set{\sum_{j \in J} (\ell_j -1)}_{J \subset \set{1, \dots, k}}\). 
\end{proposition}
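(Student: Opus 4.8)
The plan is to read off the admissible cohomological degrees from the classification of tempting subsets for splitting fans (Lemma~\ref{lem-temptingSplit}), combined with the toric cohomology description of Subsection~\ref{toricCohom}, and then to identify the precise reduced degree that each such subset contributes.

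I would start as follows. Suppose $L = \cO_X(D)$ satisfies $\gH^i(X, L) \neq 0$. By the $M$-graded description recalled in Subsection~\ref{toricCohom} (see \cite[Thm~9.1.3]{CoxBook}), there is a degree $\dm \in M$ with $\tH^{i-1}(V_{D,\dm}, \kk) \neq 0$. Replacing $D$ by the linearly equivalent divisor $D(\dm)$, so that $V_{D,\dm} = V_{D(\dm),0}$, and letting $\CR \subseteq \Sigma(1)$ be the set of rays on which $D(\dm)$ has strictly negative coefficient, the complex $V_{D,\dm}$ is homotopy equivalent to $V^>(\CR)$ (the latter being the radial expansion $\R_{>0}\cdot V_{D,\dm}$, and both avoid the origin). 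Hence $V^>(\CR)$ has nonzero reduced cohomology in degree $i-1$, so $\CR$ is tempting by Definition~\ref{def-tempting}. Since $\Sigma$ is a splitting fan, Lemma~\ref{lem-temptingSplit} forces $\CR = \CR(J) = \bigcup_{j \in J} \CP_j$ for some $J \subseteq \{1, \ldots, k\}$.

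It then remains to determine in which reduced degree $V^>(\CR(J))$ is nontrivial. Here I would reuse the homotopy equivalence established inside the proof of Lemma~\ref{lem-temptingSplit}, namely $V^>(\CR(J)) \simeq \partial\square(J) \times \square(\{1,\ldots,k\} \setminus J)$, where $\square(J) = \prod_{j \in J}\triangle^{\ell_j-1}$. The second factor is a convex, hence contractible, polytope, so $V^>(\CR(J)) \simeq \partial\square(J)$. Since $\square(J)$ is a convex polytope of dimension $\sum_{j\in J}(\ell_j-1) =: n_J$, its boundary is a sphere $S^{n_J - 1}$ (with the convention $S^{-1} = \emptyset$ when $J = \emptyset$, matching $\tH^{-1}(\emptyset, \kk) = \kk$). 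Consequently $\tH^p(V^>(\CR(J)), \kk)$ is nonzero precisely for $p = n_J - 1$, so the nonvanishing of $\tH^{i-1}(V^>(\CR(J)),\kk)$ forces $i = n_J = \sum_{j \in J}(\ell_j - 1)$. This places $i$ in the asserted set, completing the argument.

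I do not anticipate a genuine obstacle, since every geometric ingredient is already in place: the classification of tempting subsets as the $\CR(J)$ is Lemma~\ref{lem-temptingSplit}, and the sphericity of $\partial\square(J)$ is the same computation carried out there (and in Proposition~\ref{prop_3rd_temptation} for a single primitive collection). The only point requiring care is the bookkeeping in the first step: one must observe that each nonzero graded piece $\gH^i(X,L)_{\dm}$ is controlled by a single tempting subset $\CR(J)$, so that the degree $i$ is pinned to $n_J$ and no contributions from differently shaped complexes can arise.
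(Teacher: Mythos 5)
Your proposal is correct and follows essentially the same route as the paper's proof: both rest on Lemma~\ref{lem-temptingSplit} classifying the tempting subsets as the sets $\CR(J)$, and on identifying the associated complex up to homotopy with $\partial\square(J)\times\square(\set{1,\dots,k}\setminus J)\simeq\partial\square(J)$, a sphere of dimension $\sum_{j\in J}(\ell_j-1)-1$, which pins $i$ to $\sum_{j\in J}(\ell_j-1)$. The only difference is that you make explicit the reduction step (that a nonzero graded piece $\gH^i(X,L)_{\dm}$ is computed by $\tH^{i-1}$ of the complex of a single subset $\CR$, which is therefore tempting), which the paper leaves implicit by appealing to the preceding proof and Proposition~\ref{prop-Rmac}.
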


\begin{proof}
In the previous proof we have seen that \(\CR(J)\) leads to the non-$\kk$-acyclic
$G(\CR(J))=\partial\square(J)\times\square( \setfromto{1}{k} \setminus J)$.
For cohomological considerations we can now focus on the first part 
\( \partial\square(J) = \partial \left( \prod_{j \in J} \triangle^{\ell_j-1} \right)\).
Thus we have the boundary of a polytope of dimension \(\sum_{j \in J} (\ell_j -1)\),
so \(\CR(J)\) is homotopy equivalent to a (\(\sum_{j \in J} (\ell_j -1)-1\))-dimensional
sphere.
\end{proof}

\subsection{The refined structure of the fan and the class map}
\label{picMapPi}

We have treated the combinatorial structure of the splitting fan $\Sigma$ 
in the previous section.
Here we concentrate on the more refined information, specifically,
we focus on the detailed structure of the 
class map $\pi\colon \Z^{\Sigma(1)} \to \Cl(X)$, 
where $X = \toric(\Sigma)$.

Write $\Sigma(1)=\bigsqcup_{i=1}^k\CP_i$ the decomposition of the rays into 
the disjoint sets of primitive collections.
In \cite[Corollary 4.4]{picRank3}, Batyrev has proved that $X$
can be obtained via a sequence of projectivations of decomposable bundles.
Within the fan language this means that we can assume that there is a 
sequence of fans $\Sigma=\Sigma_k,\ldots,\Sigma_1,\Sigma_0=0$
in abelian groups $N=N_k\surj\ldots\surj N_1\surj N_0=0$
such that the focus $\sigma(\CP_j)=0$ in $N_j$ and 
$N_{j-1}=N_j/\spann \CP_j$.
The fans $\Sigma_j$ in $N_j$ are splitting with 
$\Sigma_j(1)=\disjcup_{i=1}^j\CP_i$, and 
they admit subfans $\wt{\Sigma}_{j-1}\subset\Sigma_j$ such that
$\psi_j:N_j\surj N_{j-1}$ induces an isomorphism
$\wt{\Sigma}_{j-1}\stackrel{\sim}{\to}\Sigma_{j-1}$ 
(piecewise linear on the geometric realizations) and 
$\Sigma_j$ consist of the sums of cones from $\wt{\Sigma}_{j-1}$ and proper
subsets of $\CP_j$.

With $\ell_i=\#\CP_i$,
this explicit structure of $\Sigma$ can be translated into the fact that
$\pi$ is a triangular block matrix
\begin{equation}\label{equ_pi_matrix_for_a_splitting_fan}
\pi = \Matc{4}{\kue & \kuc_{12} & \ldots & \kuc_{1 k}\\
              \kun& \kue & \ldots & \kuc_{2 k}\\
               \vdots & \vdots & \ddots & \vdots\\
              \kun&\kun& \ldots & \kue}
\end{equation}
with $k$ rows and $k$ blocks of columns of thickness $\ell_j$ 
($j=1,\ldots,k$). While $\kue$ denotes $(1,1,\ldots,1)$ with
$\ell_j$ entries, we have $\kuc_{ij}\in\Z_{\leq 0}^{\ell_j}$.
If needed, its entries will be denoted by $c_{ij}^\nu\in\Z_{\leq 0}$ 
$(i<j,\;\nu=1,\ldots,\ell_j)$.
Every row encodes a primitive relation, hence each $\kuc_{ij}$ has at least one
zero entry (the support of $\kuc_{i\kbb}$ is supposed to be a face, that is to not contain any full $\CP_j$).

\begin{proposition}
\label{prop-matrixToFan}
Each matrix $\pi$ in  a triangular block form as in 
\eqref{equ_pi_matrix_for_a_splitting_fan} with 
$\kuc_{ij}\in\Z^{\ell_j}$  and $\ell_j \ge 2$ for all $j$ 
   gives rise to a smooth splitting fan of dimension $d:=\sum_{i=1}^k(\ell_i-1)$.
\end{proposition}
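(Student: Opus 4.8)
The plan is to reconstruct the fan from the matrix by Gale duality and then verify the four required properties (being a fan, complete, smooth, and splitting of dimension $d$) by induction on the number $k$ of blocks. First I would record the basic linear algebra. Because $\pi$ is block upper triangular, with the strictly-below-diagonal blocks equal to $\kun$ and the surjective rows $\kue=(1,\dots,1)$ on the diagonal, a single column taken from the leftmost block equals $e_1$, and inductively a single column from block $j$ lies in $e_j+\langle e_1,\dots,e_{j-1}\rangle$; hence $\pi\colon\Z^{\Sigma(1)}\to\Z^k$ is surjective. Consequently $M:=\ker\pi$ is a saturated sublattice of rank $\sum_i\ell_i-k=\sum_i(\ell_i-1)=d$, and, setting $N:=M^{*}$, the Gale dual produces a surjection $\rho\colon\Z^{\Sigma(1)}\to N$ whose values $\rho_\varrho:=\rho(e_\varrho)$ are the candidate rays. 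The candidate fan $\Sigma$ is then the one of combinatorial type $\square=\triangle^{\ell_1-1}\times\dots\times\triangle^{\ell_k-1}$: its maximal cones are $\cone(\rho_\varrho : \varrho\notin\{p_1,\dots,p_k\})$, one for each transversal $(p_1,\dots,p_k)\in\CP_1\times\dots\times\CP_k$, where $\CP_j$ denotes the $j$-th block of columns.

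The smoothness of every maximal cone is the step I expect to be painless, thanks to Proposition~\ref{prop-spotSmooth}. A maximal cone is generated by the $d$ rays complementary to a transversal $\{p_1,\dots,p_k\}$, so by the Gale-duality criterion it is smooth precisely when the $k\times k$ minor of $\pi$ on the columns $p_1,\dots,p_k$ is $\pm1$. Since $p_j$ lies in block $j$, this minor is upper triangular: its $(i,j)$ entry vanishes for $i>j$ (the $\kun$ blocks), equals $1$ for $i=j$ (an entry of $\kue$), and is some nonpositive integer for $i<j$. Its determinant is therefore $1$, so each maximal cone is unimodular, in particular strongly convex, full dimensional, and generated by primitive rays.

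For the remaining properties I would argue by induction on $k$, which is where the real work lies. The base case $k=1$ gives $\pi=\kue$, i.e.\ the fan of $\PP^{\ell_1-1}$. For the inductive step, the top-left $(k-1)\times(k-1)$-block submatrix $\pi'$ is again of the form \eqref{equ_pi_matrix_for_a_splitting_fan}, hence by induction it yields a smooth complete splitting fan $\Sigma_{k-1}$ in some $N_{k-1}$ with pairwise disjoint primitive collections $\CP_1,\dots,\CP_{k-1}$ and $\dim N_{k-1}=\sum_{i<k}(\ell_i-1)$. The added block of columns, whose entries I would read as the classes in $\Cl(\toric(\Sigma_{k-1}))=\Z^{k-1}$ of line bundles $\cL_1,\dots,\cL_{\ell_k}$ (the old rays acquiring a zero in the new last row, so their classes are pullbacks), together with the new row encoding the primitive relation of $\CP_k$, exhibits $\Sigma$ as the fan of the projectivized split bundle $\PP(\cL_1\oplus\dots\oplus\cL_{\ell_k})$ over $\toric(\Sigma_{k-1})$; this is exactly the converse direction of Batyrev's description \cite[Cor.~4.4]{picRank3}, with $N_{k-1}=N/\spann\CP_k$. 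Since a projectivized bundle over a complete base is complete, the fan $\Sigma$ is complete, its primitive collections are precisely $\CP_1,\dots,\CP_k$, and they are mutually disjoint, so $\Sigma$ is a splitting fan of dimension $\sum_i(\ell_i-1)=d$.

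The hard part will be the completeness claim inside the inductive step: one must check that the abstractly defined rays $\rho_\varrho$ are positioned so that the transversal-complement cones glue into a fan covering all of $N_\R$, and that the matrix data \eqref{equ_pi_matrix_for_a_splitting_fan} genuinely corresponds to a projectivized split bundle rather than to some incomplete piece. I would handle this by translating the $\nu$-th column of the $k$-th block into the splitting datum $\cL_\nu$ explicitly, writing down the standard fan of $\PP(\cL_1\oplus\dots\oplus\cL_{\ell_k})$, and matching its rays and cones with the $\rho_\varrho$ and the transversal-complement cones. Throughout, the hypothesis $\ell_j\ge2$ is used to guarantee that each $\CP_j$ is a genuine (at least two-element) minimal non-face, so that the resulting fan is honestly a splitting fan and not a degenerate one.
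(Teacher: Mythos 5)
Your proposal is correct and follows essentially the same route as the paper: induction on the number $k$ of blocks, with the inductive step realizing $\Sigma$ as the fan of $\PP(\cL_1\oplus\dotsb\oplus\cL_{\ell_k})$ over $\toric(\Sigma_{k-1})$, where the $\cL_\nu$ are read off from the last block of columns, and with your smoothness argument (the transversal $k\times k$ minor of $\pi$ is upper triangular with unit diagonal, hence has determinant $1$, invoking Proposition~\ref{prop-spotSmooth}) being precisely the remark the paper places right after its proof. The only slip is immaterial: the proposition allows arbitrary $\kuc_{ij}\in\Z^{\ell_j}$, so the off-diagonal entries of that minor need not be nonpositive as you wrote, but the determinant computation never uses their signs.
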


\begin{proof}
We argue inductively on the number of rows $k$ (and, at the same time, the number of blocks of columns).
For $k=1$ there is nothing to prove, 
  so assume that $\Sigma_{k-1}$ is a splitting fan obtained from $\pi'$, a matrix with last row and last block of columns removed from $\pi$.
For each $\nu\in \setfromto{1}{\ell_k}$ let $\CL_{\nu}$ be the line bundle on $X_{k-1} = \toric(\Sigma_{k-1})$ corresponding to the point $c^{\nu}_{*k}$ in $\Cl X_{k-1}$.
Then $X=\PP(\bigoplus_{\nu=1}^{\ell_k} \CL_{\nu})$ is the desired 
smooth toric variety.
\end{proof}

Note that the smootheness of the variety $\toric(\Sigma)$
associated to the matrix~$\pi$
can also be derived directly from the method of Subsection~\ref{smoothGale}.
The co-facets of the fan $\Sigma$ give rise to choosing 
one column of $\pi$ in every block. But this yields an upper triangular matrix
with only $1$ as the diagonal entries. Hence, the determinant equals $1$, too.

\begin{example}
\label{ex-CFQ}
A simple case to have in mind is $k=2$.
The matrix of $\pi$ is
$$
\Matrc{3}{4}{1 & \ldots & 1 & 0 & c^2 & \ldots & c^{\ell_2}\\
            0 & \ldots & 0 & 1 & 1 & \ldots & 1}
=
\Matc{2}{\kue & \kuc\\ \kun & \kue}
$$
It covers the case of Hirzebruch surfaces. 
In Subsection~\ref{picTwo} we  have discussed the immaculate locus 
of this matrix in detail.
\end{example}

\begin{example}\label{ex_immaculate_but_not_really_immaculate}
   Consider the following smooth projective three dimensional toric variety $X=\toric(\Sigma) = \PP (\cO_Y(-2,0) \oplus \cO_Y(0,-2))$,
      where $Y = \PP^1 \times \PP^1$, and $\cO_Y(i,j): = \cO_{\PP^1}(i)\boxtimes\cO_{\PP^1}(j)$.
   Then the fan $\Sigma$ is a splitting fan with matrix 
   \[
   \pi = \Matc{6}{1 & 1 & 0 & 0 & -2 &  0\\
                  0 & 0 & 1 & 1 &  0 & -2\\ 
                  0 & 0 & 0 & 0 &  1 &  1}.
   \]
    The line bundle represented by 
    \[
      \pi\left( (0,0,0,0,\tfrac{1}{2}, \tfrac{1}{2}) \right)=(-1,-1,1)  \in \Cl(X)
    \]
     is immaculate but not really immaculate 
     (in the sense of Definition~\ref{def_really_immaculate}).
\end{example}

\subsection{Generating immaculate seeds}
\label{genNuns}
We fix a format $\kul:=(\ell_1,\ldots,\ell_k)$ of splitting fans,
that is a block format of the associated matrix $\pi$. We understand
$\kuc$, that is
the entries $\kuc_{ij}\in\Z_{\leq 0}^{\ell_j}$ of $\pi$, as coordinates of the 
``moduli space'' of splitting fans $\Sigma(\kul ,\kuc)$
of this fixed format $\kul $. All these fans share the 
same combinatorial type -- that of the normal fan of
$\square:=\triangle^{\ell_1-1}\times\ldots\times\triangle^{\ell_k-1}$,
see Section~\ref{splittingFans}. Similarily, the associated toric varieties share
the same Picard group. Since we use the primitive relations for the rows
of $\pi$, we have even distinguished coordinates leading to a 
simultaneous identification
$\Cl\toric(\Sigma(\kul ,\kuc))=\Z^k$. This makes it possible to
compare the immaculate loci of different 
$\Sigma(\kul ,\kuc)$ sharing the same $\kul$.

Now, the basic idea is simple: For special $\kuc$, e.g.\ $\kuc=\kun$,
the immaculate locus is large -- but it becomes smaller for
growing $|\kuc|:=-\kuc$. 
Roughly speaking, we will show that this shrinking of 
the immaculate locus becomes stationary, and we are going to calculate
the limit. 

There is, however, a technical obstacle. 
The center of symmetry $K_X/2$ arising from Serre duality moves with $\kuc$.
Thus, it is not the whole immaculate locus that becomes stationary -- this works only for some 
generating seed.
That is, there is a certain subset of $\Z^k$ 
which is immaculate for all $\Sigma(\kul ,\kuc)$ and which generates 
(via some operations/reflections corresponding to successive Serre dualities) 
the full immaculate locus if $-\kuc$ is sufficiently large. 

\begin{definition}
\label{def-posNuns}
We call $\Nuns(\kul ):=\bigcup_{j=1}^k
\big(\Z^{j-1}\times\{-1,\ldots,-(\ell_j-1)\}\times \kun^{k-j}\big)$
the ``generating immaculate seed'' for $\kul $ in $\Z^k$.
\end{definition}

Recall the integral matrix expression 
$$
\pi = \pi(\kul ,\kuc)
             = \Matc{4}{\kue & \kuc_{12} & \ldots & \kuc_{1k}\\
              \kun& \kue & \ldots & \kuc_{2k}\\
               \vdots & \vdots & \ddots & \vdots\\
              \kun&\kun& \ldots & \kue}
$$
from Section~\ref{picMapPi} with non-positive entries within
the vectors $\kuc_{ij}$.
For fixed $i,j\in \set{1,\ldots,k}$ we set 
$\koc_{ij}:=\sum_{\nu=1}^{\ell_j}c_{ij}^\nu\in\Z_{\leq 0}$.
Moreover,  denote
$$
v_j:=(\koc_{1j},\ldots,\koc_{j-1,\,j},\ell_j,\kun)
\in(\Z^j\times\kun)\subseteq\Z^k.
$$
Depending on $\kuc$, we can now define the operator enlarging a given seed
in $\Z^k$:

\begin{definition}
\label{def-hullOpa}
For a given subset $G\subseteq\Z^k$ we define its \emph{$\kuc$-hull}
as the smallest set $\langle G \rangle_{\kuc}\supseteq G$ satisfying
the following recursive property:
If $a\in \langle G \rangle_{\kuc}\cap(\Z^j\times\kun^{k-j})$ 
for some $j=0,\ldots,k-1$, then so is the shift 
$a-v_{j+1}\in \langle G \rangle_{\kuc}$.

Note that 
$a-v_{j+1} \in \Z^j\times\set{-\ell_{j+1}} \times \kun^{k-j-1}$.
Hence, building $\langle G \rangle_{\kuc}$ out of $G$ can be done 
by enlarging $\langle G \rangle_{\kuc}$ successively with increasing $j$.
\end{definition}

\begin{remark}\label{rem_double_Serre_shift}
   Note that Serre duality replaces $a \in \Z^k$ with $-a -\sum_{i=1}^{k} v_i$.
   As we will see, we can also use Serre duality on the level of 
the smaller varieties $\toric(\Sigma_j)$.
   Thus the shift $a-v_{j+1}$ is obtained by a ``double Serre duality'':
   First we dualise in $\toric(\Sigma_j)$ obtaining $ -a-\sum_{i=1}^{j}v_{i}$, 
      and then we dualize in $\toric(\Sigma_{j+1})$ to get the shift 
      $-(-a - \sum_{i=1}^{j} v_i) - \sum_{i=1}^{j+1} v_i$ from Definition~\ref{def-hullOpa}.
\end{remark}

These definitions of $\Nuns(\kul )$ and the hull operations allow to
describe the locus of immaculate line bundles for ``general'' $\kuc$.
Recall the notions of maculate regions from Definition~\ref{def-RmacPic}, and immaculate loci from Definition~\ref{def_immaculate_loci}. 

\begin{theorem}\label{thm_immaculates_for_splitting_fan}
\label{th-immSplit} Fix $\kul $ and let $\kuc$ be a parameter leading to a matrix
$\pi=\pi(\kul ,\kuc)$ with the associated splitting fan 
$\Sigma=\Sigma(\kul ,\kuc)$.
Then:
\begin{enumerate}
 \item \label{item_nuns_are_really_immaculate}
       $\Nuns(\kul )\subseteq\Imm_{\RR}(\Sigma(\kul ,\kuc))$ for all
$\kuc$, that is the generating seeds are really immaculate.
 \item \label{item_Imm_is_closed_under_hull}
       Both the loci $\Imm_{\Z}(\Sigma)$ and $\Imm_{\R}(\Sigma)$ are closed under the $\kuc$-hull operation.
 \item \label{item_immaculate locus_for_general_c}
   For ``general'' $\kuc$, the immaculate loci are both equal to the minimal set satisfying the above.
   That is, $\Imm_{\Z}(\Sigma(\kul ,\kuc))=\Imm_{\R}(\Sigma(\kul ,\kuc))=\langle \Nuns(\kul ) \rangle_{\kuc}$.
\end{enumerate}
More precisely, a sufficient condition for ``general'' in \ref{item_immaculate locus_for_general_c}
   is that for each $j=1,\ldots,k-1$ the vector 
$\kuc_{j,j+1}\in\Z^{\ell_{j+1}}$ has at least two entries differing by more
than $\ell_j$.
\end{theorem}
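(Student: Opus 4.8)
The plan is to handle the three items in turn, exploiting the $\PP$-bundle tower $X=\toric(\Sigma_k)\to\toric(\Sigma_{k-1})\to\cdots\to\toric(\Sigma_0)=\mathrm{pt}$, the block-triangular shape \eqref{equ_pi_matrix_for_a_splitting_fan} of $\pi$, and the fact (Lemma~\ref{lem-temptingSplit}) that the tempting subsets are exactly the $\CR(J)$, $J\subseteq\{1,\dots,k\}$. Throughout I write $X_j:=\toric(\Sigma_j)$.

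For item~\ref{item_nuns_are_really_immaculate} I would prove the real statement directly from the maculate regions. Take a seed point $b=(a_1,\dots,a_{j-1},-s,0,\dots,0)$ with $1\le s\le\ell_j-1$ and suppose $b=\pi(\lambda)$ with $\lambda\in\pmr$ for some $\CR(J)$; write $\lambda=(\lambda^{(1)},\dots,\lambda^{(k)})$ in blocks, so $\pi(\lambda)_i=|\lambda^{(i)}|+\sum_{i'>i}\langle\kuc_{ii'},\lambda^{(i')}\rangle$. Reading off the coordinates from the top, the vanishing $b_i=0$ for $i>j$ forces, descending, $i\notin J$ and $\lambda^{(i)}=0$; then $b_j=-s$ forces $j\in J$, so every entry of $\lambda^{(j)}$ is $\le -1$ and $|\lambda^{(j)}|\le -\ell_j<-s$, a contradiction. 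Here $\ell_j\ge 2$ and $s\le\ell_j-1$ are exactly what is needed, and the free coordinates $a_1,\dots,a_{j-1}$ never enter. Hence $b\in\Imm_\R(\Sigma)$ for every $\kuc$.

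For item~\ref{item_Imm_is_closed_under_hull} I would identify the hull step with the double Serre duality of Remark~\ref{rem_double_Serre_shift}. Two ingredients: first, Serre duality $a\mapsto -a+K_X$ preserves both loci — for $\Imm_\Z$ via $\gH^i(\cO(D))\cong\gH^{\dim X-i}(\cO(K_X-D))^\vee$, and for $\Imm_\R$ because $v\mapsto -v-\kue$ carries $\pmr$ for $\CR$ onto the defining set for $\Sigma(1)\setminus\CR$ while $\pi(-v-\kue)=-\pi(v)+K_X$, so it permutes the maculate regions (tempting subsets being closed under complementation, Remark~\ref{rem_Alexander_duality_for_tempting_subsets}). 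Second, any $a\in(\Z^j\times\kun^{k-j})$ is a pullback along $X_k\to X_j$, and the same descending analysis as above shows its membership in a maculate region only involves $J\subseteq\{1,\dots,j\}$; thus both plain and really immaculacy of such $a$ are equivalent to those of its image on $X_j$ (for $\Imm_\Z$ this is Corollary~\ref{cor_pullback_of_immaculate_on_toric_varieties}). Dualizing on $X_j$, then pulling back to and dualizing on $X_{j+1}$, realizes $a\mapsto a-v_{j+1}$ while staying inside the locus. Together with item~(1) this gives the sandwich $\langle\Nuns(\kul)\rangle_\kuc\subseteq\Imm_\R(\Sigma)\subseteq\Imm_\Z(\Sigma)$.

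The substance of item~\ref{item_immaculate locus_for_general_c} is the reverse inclusion $\Imm_\Z(\Sigma)\subseteq\langle\Nuns(\kul)\rangle_\kuc$, which I would prove by induction on $k$ (the case $k=1$ being $\Imm(\PP^{\ell_1-1})=\{-1,\dots,-(\ell_1-1)\}$). Let $p\colon X_k\to X_{k-1}$ be the top $\PP^{\ell_k-1}$-bundle; since $\pi(\lambda)_k=|\lambda^{(k)}|$, the last coordinate $t=a_k$ is the fibre degree and $\R\Gamma_{X_k}=\R\Gamma_{X_{k-1}}\circ\R p_*$. For $t\in\{-1,\dots,-(\ell_k-1)\}$ the class is $p$-immaculate by the fibrewise criterion (Proposition~\ref{prop_p_immaculate_for_flat_is_fibrewise}) and these are precisely the $j=k$ seed; for $t\ge 0$ one has $\R p_*\cO(a)=\bigoplus_{|\alpha|=t}\cO_{X_{k-1}}(\bar a-\sum_\nu\alpha_\nu[\cL_\nu])$ with higher images zero, so $a$ is immaculate iff every summand is immaculate on $X_{k-1}$; for $t\le-\ell_k$ the analogous decomposition comes from relative Serre duality, concentrated in degree $\ell_k-1$. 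By the induction hypothesis $\Imm(X_{k-1})=\langle\Nuns(\ell_1,\dots,\ell_{k-1})\rangle$, whose $(k-1)$-st coordinate (the fibre degree over $X_{k-2}$) ranges only over $\{-\ell_{k-1},\dots,0\}$, an interval of width $\ell_{k-1}$. Genericity now enters: for $t\ge 1$ the multi-indices $\alpha=t\,e_{\nu_1}$ and $\alpha=t\,e_{\nu_2}$, with $\nu_1,\nu_2$ the pair satisfying $|c^{\nu_1}_{k-1,k}-c^{\nu_2}_{k-1,k}|>\ell_{k-1}$, yield two required summands whose $(k-1)$-st coordinates differ by $t\cdot|c^{\nu_1}_{k-1,k}-c^{\nu_2}_{k-1,k}|>\ell_{k-1}$, so they cannot both be immaculate on $X_{k-1}$; dually for $t\le-\ell_k-1$. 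Hence no class with $t\ge 1$ or $t\le-\ell_k-1$ is immaculate, whereas $t=0$ and $t=-\ell_k$ reduce to immaculacy on $X_{k-1}$ (the latter a single summand via relative duality), matching exactly the zero-layer and the $v_k$-shift of the hull. Assembling the five ranges of $t$ gives $\Imm_\Z(X_k)=\langle\Nuns(\kul)\rangle_\kuc$, and with the sandwich all three sets coincide.

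I expect the main obstacle to be the bookkeeping in item~\ref{item_immaculate locus_for_general_c}: pinning down the exact $\mathrm{Sym}^t\cE^{(\vee)}$ and relative-Serre-duality decompositions of $\R p_*\cO(a)$ with the correct twists and signs, verifying that the finite family of summands one must test always contains a pair realizing the full $\kuc_{k-1,k}$-spread, and checking that the $t=0$ and $t=-\ell_k$ layers glue to precisely the hull rather than a proper sub- or superset. The key inequalities are robust, since they only weigh $|t|\ge 1$ against a spread exceeding $\ell_{k-1}$, which is why the single condition on each $\kuc_{j,j+1}$ — one application per inductive level — is exactly what ``general'' needs to mean.
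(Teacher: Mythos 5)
Your proposal is correct, and while your items \ref{item_nuns_are_really_immaculate} and \ref{item_Imm_is_closed_under_hull} run close to the paper --- the paper also realizes the hull step as the double Serre duality of Remark~\ref{rem_double_Serre_shift}, and its verification that seeds avoid all maculate regions is the same last-coordinate reading of the block-triangular $\pi$, only organized inside the induction rather than done in one non-inductive stroke as your descending-block argument does --- your engine for item \ref{item_immaculate locus_for_general_c} is genuinely different. The paper stays entirely inside $\Cl(X)$: for a class $a$ with last coordinate $a^k\geq 1$ it forms $b_\nu = a - a^k\cdot(c^\nu_{1k},\ldots,c^\nu_{k-1,\,k},1)$ for the two special columns $\nu_1,\nu_2$, notes that the $(k-1)$-st coordinates of $b_{\nu_1},b_{\nu_2}$ differ by at least $\ell_{k-1}+1$, concludes that one $b_\nu$ lies in a maculate set $\ko{\CM}_{\Z}(\CR_J)$ of $X_{k-1}$, and then uses that $\CM_{\Z}(\CR_J)$ equals $\ko{\CM}_{\Z}(\CR_J)$ plus the monoid generated by the block-$k$ columns to place $a$ itself in a maculate set. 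You instead push cohomology forward: $\R p_*\cO(a)$ splits as a $\sym^t$-type direct sum of line bundles on $X_{k-1}$ (relative Serre duality for $t\le -\ell_k$), immaculacy of $a$ becomes immaculacy of every summand, and the extreme multi-indices $t\,e_{\nu_1}$, $t\,e_{\nu_2}$ give summands whose $(k-1)$-st coordinates are too far apart to both lie in the band $\setfromto{-\ell_{k-1}}{0}$ that the induction allows. The numerical inequality is identical in both proofs; only the mechanism differs. Your route is standard sheaf theory, avoids any analysis of the maculate monoids, and makes the five ranges of the fibre degree $t$ and their match with the hull layers transparent; its costs are the (self-flagged, genuinely harmless) twist/sign bookkeeping in the $\R p_*$ decomposition, the implicit coordinate fact that pullback along $X_k\to X_j$ is $(a_1,\ldots,a_j,\kun)$ (which does follow from the block structure of $\pi$), and the fact that it controls only $\Imm_{\Z}$ directly, so that the full equality $\Imm_{\Z}=\Imm_{\R}=\langle\Nuns(\kul)\rangle_{\kuc}$ must be recovered through your sandwich $\langle\Nuns(\kul)\rangle_{\kuc}\subseteq\Imm_{\R}\subseteq\Imm_{\Z}\subseteq\langle\Nuns(\kul)\rangle_{\kuc}$ --- which is exactly sound --- whereas the paper's combinatorial argument tracks the real maculate regions throughout.
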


\begin{proof}
   Consider the sequence of projections $X = X_k \to X_{k-1} \to X_{k-2} \to \dotsb \to X_1 = \PP^{\ell_1-1}$
     and the corresponding fans  $\Sigma= \Sigma_k, \Sigma_{k-1}, \Sigma_{k-2}, \dotsc, \Sigma_1$,
     such that $X_{j+1} = \PP(\cE_j)$, where $\cE_j$ is a split vector bundle over $X_j = \toric(\Sigma_j)$.
   We argue by induction on the Picard number $k$.
   If $k=1$, then $X\simeq \PP^{\ell_1-1}$, $\Nuns(\kul )=\Imm(\Sigma_1) = \setfromto{-1, -2}{-(\ell_1-1)} \subset \Z\simeq \Cl X$,
     $v_1= - \ell_1$, 
     and the shift in the definition of $\kuc$-hull operation is 
irrelevant since $0$ is not a generating seed.
   Thus there is nothing to prove in this case.

   So suppose that the statement holds for Picard number at most $k-1$ 
      and denote by $p$ the projection $p\colon X= \PP(\cE_{k-1}) \to X_{k-1}$.
   To present a description of the maculate regions, 
      we recognize the vectors $v_j$ (for $j=1,\ldots,k$)
      defined above as the building blocks of the $\pi$-images of the ``maculate vertices of the cube'' (see Section~\ref{vertCube}).
   The associated tail cones are built from the polyhedral cones
   $$
      C_j:=\langle (c_{1j}^\nu,\ldots,c_{j-1,\,j}^\nu,1,\kun)\kst
      \nu=1,\ldots,\ell_j\rangle\subseteq (\R^j\times\kun)\subseteq\R^k=\Cl(X)\otimes \R.
   $$
   Note that $v_j$ is the sum of the generators of $C_j$.
   The maculate regions are now parametrized by $J\subseteq\{1,\ldots,k\}$ and equal
   $$
    \textstyle
    \CM_{\R}(\CR_J)=\sum_{j\notin J} C_j + \sum_{j\in J} (-v_j-C_j).
   $$
   
We claim the elements 
$(*,\ldots,*,j)\in
\Z^{k-1}\times\{j\}\subset\Z^k$ with $j=-1,\ldots,-(\ell_k-1)$
are always really immaculate.
Indeed, the vectors $v_j$ and the cones $C_j$ for $j=1,\ldots,k-1$
have zero as their last entries. The last entry of $v_k$ is $\ell_k$,
and the last entries of the generators of $C_k$ are always $1$. 
Thus, any point in a maculate cone $\CM_{\R}(\CR_J)$ has either the last entry at least $0$ or at most $ -\ell_k$.

We turn our attention to the points of the form
$(*,\ldots,*,0)\in \Z^{k-1}\times\{0\}\subset\Z^k$.
We remark that those line bundles are exactly the pullbacks of line bundles on $X_{k-1}$.
Thus Corollary~\ref{cor_pullback_of_immaculate_on_toric_varieties} is relevant here, 
  if we restrict the attention to $p^*(\Imm_{\Z}(X_{k-1})) \subset \Imm_{\Z}(X)$.
Instead, our argument here is stronger, about the really immaculate locus.
If $k\in J$, then $\CM_{\R}(\CR_J)\cap (\Z^{k-1}\times\{0\})=\emptyset$.
Thus, those $J$ are never a source of maculacy for points with the last coordinate $0$.
On the other hand, 
if $k\notin J$, then $\CM_{\R}(\CR_J)\cap (\Z^{k-1}\times\{0\})=
\ko{\CM}_{\R}(\CR_{J})$ and
$\ko{\CM}_{\R}$ denotes the maculate region with respect to $\ko{\pi}$
resulting from $\pi$ by deleting the last row and the last block of columns.
In particular, $\ko{\pi}$ corresponds to the fan $\Sigma_{k-1}$.

Therefore, by the inductive assumption, all the generating seeds 
are really immaculate concluding the proof of 
\ref{item_nuns_are_really_immaculate}.
Moreover, the inductive assumption together with Alexander/Serre duality 
(see Remarks~\ref{rem_Alexander_duality_for_tempting_subsets}, \ref{rem_double_Serre_shift}, 
   and Corollary~\ref{cor_pullback_of_immaculate_on_toric_varieties}) 
  show that both loci $\Imm_{\Z}(X)$ and $\Imm_{\R}(X)$ are closed under $\kuc$-hull operation, proving \ref{item_Imm_is_closed_under_hull}.
The induction and the above discussion also show \ref{item_immaculate locus_for_general_c} for points of the form $(*,\ldots,*,j)$ for $-\ell_k < j \le 0$.
By Alexander/Serre duality (or the shift from the definition of $\kuc$-hull), 
   the case of $j=-\ell_k$ is also proved: if $a = (*,\ldots,*,-\ell_k)$, 
   then $a$ is (really) immaculate if and only if its shift $a - v_k \in \Z^{k-1} \times 0$ is (really) immaculate.

The duality also swaps the points of the form $(*,\ldots,*,\ge 1)\in \Z^{k-1}\times\Z_{\ge 1}\subset\Z^k$
   with those of the form $(*,\ldots,*,\leq -(\ell_k +1))$.
Therefore, to complete the proof of  \ref{item_immaculate locus_for_general_c} it remains to show that no line bundle
whose class in $\Cl(X)$ is $a=(*,\ldots,*,\geq 1)$ is immaculate.
We must show this under the assumptions that \ref{item_immaculate locus_for_general_c} holds for $X_{k-1}$ 
   and $\kuc_{k-1,k}$ has two entries differing by more than $\ell_{k-1}$,
   say, $|\kuc_{k-1, k}^1 - \kuc_{k-1, k}^2| > \ell_{k-1}$.

As before, $\CR_J$-maculacy can only go along with $k\notin J$.
Let $a^k$ be the last coordinate of $a$ and consider two vectors 
$b_\nu= a - a^k\cdot  (c_{1k}^\nu,\ldots,c_{k-1,\,k}^\nu,1) \in \Z^{k-1} \times 0$ for $\nu = 1$ or $2$.
The difference between the $(k-1)$-st coordinates of $b_1$ and $b_2$ is at least $\ell_{k-1}+1$.
Using \ref{item_immaculate locus_for_general_c} for $X_{k-1}$, since all immaculate line bundles for $X_{k-1}$ 
   have the last coordinate in  the set $\setfromto{-\ell_{k-1}}{0}$, at least one of $b_1$ or $b_2$ is not immaculate.
Say $b_1$ is not immaculate.
Let $J \subset \setfromto{1}{k-1}$ be the subset such that $b_1\in \overline{\CM}_{\Z}(\cR_{J})$.
Then $\CM_{\Z}(\cR_{J})$ is the sum of 
$\overline{\CM}_{\Z}(\cR_{J})$ and the monoid generated by 
$(c_{1k}^\nu,\ldots,c_{k-1,\,k}^\nu,1)$.
Therefore $a = b_1 + a^k \cdot (c_{1k}^1,\ldots,c_{k-1,\,k}^1,1)\in  
\CM_{\Z}(\cR_{J})$ and $a$ cannot be immaculate.
\end{proof}

We remark that for non-general values of $\kuc$ the conclusion of \ref{item_immaculate locus_for_general_c} needs not to hold, see for example Figure~\ref{fig_Pic_rank_2}.

\section{The immaculate locus for Picard rank 3}
\label{immPicThree}

In this section we finally make everything concrete
in the case of Picard rank $3$. 
We first review the classification of Batyrev, and then describe the tempting subsets of rays.
Finally, we list a lot of immaculate line bundles and prove (similarly to Theorem~\ref{thm_immaculates_for_splitting_fan}) 
  that for sufficiently general parameters the listed ones are all immaculate line bundles.

\subsection{Classification by Batyrev}
\label{classBatyrev}
In \cite{picRank3} a classification of smooth, projective toric varieties of Picard rank 
three is given by using its primitive collections.
See also \cite{cox_von_renesse_primitive_collections_and_toric_varieties}.

\begin{proposition}[{\cite[Thm~5.7]{picRank3}}]
If \(\Sigma\) is a complete, regular \(d\)-dimensional fan with \(d+3\) generators, then 
the number of primitive collections of its generators is equal to 3 or 5.
\end{proposition}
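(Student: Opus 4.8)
The plan is to convert the statement into pure combinatorics of primitive collections, using the framework of Subsection~\ref{basicWellKnown}. Since $X=\toric(\Sigma)$ is smooth and complete with $\#\Sigma(1)=d+3$, the class group has rank $\#\Sigma(1)-d=3$, so by Proposition~\ref{prop-rhoPrimSplit} the number $k$ of primitive collections satisfies $k\geq 3$, with equality precisely when $\Sigma$ is a splitting fan. This already disposes of the value $k=3$ and, more importantly, reduces the theorem to the following claim: \emph{if some two primitive collections meet, then $k=5$.} Equivalently, I must exclude $k=4$ and $k\geq 6$ in the non-disjoint case.

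First I would assemble the combinatorial data. Each ray lies in at least one primitive collection (Proposition~\ref{prop-rhoPrimSplit}), and by projectivity there is a primitive collection with focus $0$, whose primitive relation reads $\sum_{\rho\in\CP}\rho=0$ (this is \cite[Prop.~3.2 and Thm~4.3]{picRank3}, recalled in Subsection~\ref{basicWellKnown}). Classifying the rays by the set of primitive collections containing them partitions $\Sigma(1)$ into blocks, and each primitive relation $e_{\CP}-f(\CP)$ becomes a $\Z$-linear relation among the block sums, living in the rank-$3$ lattice $K=\ker(\Z^{\Sigma(1)}\to N)\cong\Cl(X)^*$. Two constraints drive the whole analysis: the sign condition that $f(\CP)$ has strictly positive coefficients supported on the rays of an actual cone (so $\CP\cap\sigma(\CP)=\emptyset$), and the smoothness condition, which via the Gale-dual determinant criterion of Subsection~\ref{smoothGale} (Proposition~\ref{prop-spotSmooth}) forces those coefficients to equal $1$.

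The heart of the proof, and the step I expect to be the main obstacle, is the \emph{overlap propagation}: I would show that two meeting primitive collections force a rigid cyclic arrangement. Concretely, if $\CP_i\cap\CP_j\neq\emptyset$, then completeness together with the focus-is-a-face condition forces a further primitive collection sharing rays with one of them but disjoint from the other; iterating produces a chain $\CP_1,\CP_2,\dots$ in which consecutive members overlap and non-consecutive members are disjoint, and completeness forces the chain to close into a cycle. The delicate part is pinning the cycle length to exactly $5$. A cycle of length $4$ splits the collections into two ``opposite'' disjoint pairs whose four primitive relations, under the sign and smoothness constraints, either fail to positively span $N_\R$ (contradicting completeness) or produce a cone with determinant $\neq\pm1$ (contradicting smoothness). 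A cycle of length $\geq 6$ yields six or more primitive relations which, being forced by the rank-$3$ dependency to satisfy a linear relation with mixed signs, clash with the sign conditions $\kuc_{ij}\in\Z_{\leq 0}$; the bookkeeping of which block lies in the focus of which relation, checked against the determinant condition, is where essentially all the work sits. The model to keep in mind is the smooth complete toric surface with five rays (the ``pentagon''), whose five primitive collections are the five non-adjacent ray pairs and already exhibit the $5$-cycle.

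Finally I would assemble the two cases. Pairwise-disjoint primitive collections give $k=3$ and the splitting normal form $\triangle^{\ell_1-1}\times\triangle^{\ell_2-1}\times\triangle^{\ell_3-1}$; the cyclic case gives $k=5$, with the rays partitioned into five nonempty blocks and the five primitive collections arranged in a pentagon. As these exhaust the possibilities, $k\in\{3,5\}$.
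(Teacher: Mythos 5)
This proposition is not proved in the paper at all: it is imported verbatim from Batyrev, cited as \cite[Thm~5.7]{picRank3}, so there is no internal argument to compare against and your attempt has to stand on its own. Its framing is fine: $\rank\Cl(X)=\#\Sigma(1)-d=3$, and Proposition~\ref{prop-rhoPrimSplit} gives $k\geq 3$ primitive collections with equality exactly for splitting fans, so everything reduces to showing that one overlapping pair forces $k=5$. But the two steps that carry this burden are asserted, not proved. You never establish the ``overlap propagation'' lemma (that $\CP\cap\CP'\neq\emptyset$ forces a further primitive collection, e.g.\ one inside $(\CP\cup\CP')\setminus(\CP\cap\CP')$), nor that the resulting chain closes into a cycle with consecutive overlaps and non-consecutive disjointness, nor the exclusion of cycle lengths $4$ and $\geq 6$ --- and these are precisely the content of Batyrev's Sections 5--6, i.e.\ the entire theorem. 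A sketch that says ``the bookkeeping \dots is where essentially all the work sits'' has, at that point, proved nothing.

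Moreover, one of the two constraints you propose to ``drive the whole analysis'' is false: smoothness does \emph{not} force the coefficients of $f(\CP)$ to equal $1$. On the Hirzebruch surface $\F_a$ the primitive relation is $u_1+u_2-a\cdot v=0$ with $a$ arbitrary, and the paper's own Proposition~\ref{prop-picRank3Batyrev} displays rank-$3$ relations with arbitrary non-negative coefficients $c_i$ and $b_i+1$; Proposition~\ref{prop-spotSmooth} constrains determinants of cone generators, not coefficients of primitive relations. So the determinant-plus-sign bookkeeping you rely on to kill the $4$-cycle and $\geq 6$-cycle cases is misconceived as stated. A further unacknowledged input: you invoke ``by projectivity there is a primitive collection with focus $0$'', but the hypothesis is only completeness; that completeness implies projectivity for smooth toric varieties of Picard rank $3$ is a nontrivial theorem of Kleinschmidt and Sturmfels, which would have to be cited explicitly (as Batyrev does). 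In sum, the proposal is a reasonable roadmap whose main obstacles are correctly identified but not overcome, and one of its stated tools is incorrect.
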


In the case that there are exactly three primitive collections the fan $\Sigma$ is a splitting fan by Proposition~\ref{prop-rhoPrimSplit}.
Thus the associated toric variety is
   isomorphic to a projectivisation of a decomposable bundle over a smooth toric variety of smaller dimension and Picard rank two. 
In particular, Theorem~\ref{thm_immaculates_for_splitting_fan} provides a 
valid description of the immaculate loci in this case.

Therefore, for the rest of this section we are going to assume that
  $X$ is a smooth variety of Picard rank $3$, which has exactly five primitive collections.
Following \cite{picRank3} we give a more precise description of the fan.
There is a decomposition of the rays \(\Sigma(1)\) into five disjoint subsets \(J_{\alpha}\) and the primitive collections are given by
\(J_{\alpha} \cup J_{\alpha +1 }\) for \(\alpha \in \quot{\bZ}{5 \bZ}\).

\begin{proposition}[{\cite[Thm~6.6]{picRank3}}]
\label{prop-picRank3Batyrev}
Let us denote \(\mathcal{J}_{\alpha} = J_{\alpha} \cup J_{\alpha +1 }\), where \(\alpha \in \quot{\bZ}{5 \bZ}\),
\[
J_0 = \{v_1, \dots ,v_{p_0}\},
J_1 = \{y_1, \dots ,y_{p_1}\},
J_2 = \{z_1, \dots ,z_{p_2}\},
J_3 = \{t_1, \dots ,t_{p_3}\},
J_4 = \{u_1, \dots ,u_{p_4}\},
\]
and \(p_0+\cdots+p_4 = d+3\). Then any complete regular \(d\)-dimensional fan \(\Sigma\)
with the set of generators \(\Sigma(1) = \bigcup J_{\alpha}\) and five primitive collections
\(\mathcal{J}_{\alpha}\) can be described up to a symmetry of the pentagon 
by the following
primitive relations with non-negative integral coefficients \(c_2, \dots , c_{p_2}\), 
\(b_1, \dots b_{p_3}\):
\begin{align*}
\sum_{i=1}^{p_0} v_i + \sum_{i=1}^{p_1} y_i - \sum_{i=2}^{p_2} c_i z_i - \sum_{i=1}^{p_3} (b_i+1) t_i &=0,\\
\sum_{i=1}^{p_1} y_i + \sum_{i=1}^{p_2} z_i - \sum_{i=1}^{p_4} u_i &=0,\\
\sum_{i=1}^{p_2} z_i + \sum_{i=1}^{p_3} t_i &=0,\\
\sum_{i=1}^{p_3} t_i + \sum_{i=1}^{p_4} u_i - \sum_{i=1}^{p_1} y_i &=0,\\
\sum_{i=1}^{p_4} u_i + \sum_{i=1}^{p_0} v_i - \sum_{i=2}^{p_2} c_i z_i - \sum_{i=1}^{p_3} b_i t_i &=0.
\end{align*}
\end{proposition}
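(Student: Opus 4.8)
The statement is quoted from \cite[Thm~6.6]{picRank3}; the plan is to reconstruct it from the combinatorics of primitive relations developed in Subsection~\ref{basicWellKnown}. The starting data are the pentagon decomposition $\Sigma(1)=J_0\sqcup\cdots\sqcup J_4$ and the five primitive collections $\mathcal{J}_\alpha=J_\alpha\cup J_{\alpha+1}$ ($\alpha\in\bZ/5$). Since $X$ is smooth of Picard rank $3$, the group $\Cl(X)\cong\Z^3$ is free and each $\mathcal{J}_\alpha$ determines a primitive relation $r_\alpha=e_{\mathcal{J}_\alpha}-f(\mathcal{J}_\alpha)$ lying in $\ker(\Z^{\Sigma(1)}\to N)$, the rank-$3$ lattice of $1$-cycles. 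The first step is to record, from the cited \cite[Prop.~3.1]{picRank3}, that $\mathcal{J}_\alpha\cap\sigma(\mathcal{J}_\alpha)=\emptyset$, so the focus rays of $f(\mathcal{J}_\alpha)$ lie in $J_{\alpha+2}\cup J_{\alpha+3}\cup J_{\alpha+4}$; and that the support of $f(\mathcal{J}_\alpha)$, being the ray set of an \emph{actual} cone $\sigma(\mathcal{J}_\alpha)\in\Sigma$, cannot contain any whole block $\mathcal{J}_\beta$.

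The heart of the argument is to pin down which rays occur in each focus and with what multiplicities. Because the positive part of every primitive relation is $e_{\mathcal{J}_\alpha}$ (coefficient $1$ on each ray of $\mathcal{J}_\alpha$), only the foci carry free data. The five relations span the rank-$3$ lattice, so there are exactly two independent dependencies among $r_0,\dots,r_4$; writing these out, together with the completeness of $\Sigma$ and the focus constraints above, should rigidly determine the support pattern. Concretely, after a dihedral relabelling one arranges that the ``middle'' collection has trivial focus, giving $\sum_i z_i+\sum_i t_i=0$ (so $\sigma(\mathcal{J}_2)=0$); propagating around the pentagon then forces the focus of $\mathcal{J}_1$ onto the $u_i$, that of $\mathcal{J}_3$ onto the $y_i$, and those of $\mathcal{J}_0$ and $\mathcal{J}_4$ onto the $z_i$ and $t_i$. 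The remaining freedom is exactly the focus multiplicities, which become the non-negative integers $c_2,\dots,c_{p_2}$ and $b_1,\dots,b_{p_3}$; the shift visible in $(b_i+1)$ versus $b_i$ between the two $t$-relations is dictated by the two dependencies.

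Having obtained the shape of the five relations, I would normalise by the remaining freedom: a change of basis of $\Cl(X)\cong\Z^3$ together with the dihedral symmetry $D_5$ of the pentagon brings the data to the stated form, which accounts for the phrase ``up to a symmetry of the pentagon.'' Finally one checks the converse, that any admissible tuple $(c_i,b_i)$ really yields a smooth complete fan: the maximal cones are the maximal subsets of $\Sigma(1)$ containing no primitive collection, and their unimodularity follows from the Gale-determinant criterion of Proposition~\ref{prop-spotSmooth} applied to the matrix $\pi$ read off from the relations.

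The main obstacle will be the rigidity step in the second paragraph: turning the two abstract dependencies among $r_0,\dots,r_4$, the focus-disjointness, and the ``focus is a genuine face'' condition into a \emph{complete} determination of the supports and the coefficient-$1$ normalisation, while ruling out degenerate configurations in which several foci collapse or a focus acquires an unexpected extra block. This case analysis---distinguishing linearly independent from linearly dependent focus sets much as in the proof of Proposition~\ref{prop_3rd_temptation}---is exactly where Batyrev's argument does its real work and is the part least amenable to a short proof.
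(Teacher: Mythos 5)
First, note that the paper does not prove this proposition at all: it is imported verbatim from Batyrev, as the bracketed citation \cite[Thm~6.6]{picRank3} in the statement indicates, and no proof environment follows it in the text. So the relevant question is whether your sketch would stand on its own, and it does not. The decisive step --- showing that the two $\Q$-linear dependencies among the five primitive relations $r_0,\dots,r_4$, together with focus-disjointness $\mathcal{J}_\alpha\cap\sigma(\mathcal{J}_\alpha)(1)=\emptyset$ and the condition that each focus support spans an actual cone of $\Sigma$ (hence contains no primitive collection), \emph{force} exactly the stated pattern (zero focus for $\mathcal{J}_2$; focus of $\mathcal{J}_1$ equal to \emph{all} of $J_4$ with every coefficient exactly $1$, and dually for $\mathcal{J}_3$; at least one vanishing $c$-coefficient, normalised to $c_1=0$; the $(b_i+1)$ versus $b_i$ shift) --- is precisely what you label ``should rigidly determine'' and then explicitly defer in your final paragraph. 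That step is the entire content of Batyrev's theorem; it occupies a substantial case analysis in \cite{picRank3}, and what you supply instead is an a posteriori consistency check (the dependencies $r_2=r_1+r_3$ and $r_4=r_0+r_3$ do hold for the displayed relations, but verifying them on the answer is not the same as deriving the answer from them). As written, the proposal is a correct road map with an acknowledged hole at its centre, not a proof.

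Two further points to repair if you pursue this. First, your anchor move ``arrange that the middle collection has trivial focus'' invokes the existence of a primitive collection $\CP$ with $\sigma(\CP)=0$, which the paper (following \cite[Prop.~3.2 and Thm~4.3]{picRank3}) records only for \emph{projective} fans; the proposition assumes merely a complete regular fan, so you need the Kleinschmidt--Sturmfels theorem that smooth complete toric varieties of Picard rank at most three are automatically projective, and you should cite it. Relatedly, ``exactly two independent dependencies'' presupposes that the five relations span the rank-$3$ kernel over $\Q$, which also requires an argument. Second, your closing converse step --- that every admissible tuple $(c_i,b_i)$ yields a smooth complete fan, via the Gale-determinant criterion of Proposition~\ref{prop-spotSmooth} --- is sound and is in fact exactly how the paper itself handles the converse, in Remark~\ref{rem_everyone_is_really_immaculate_Pic_3} using Subsection~\ref{smoothGale}; this is the one portion of your outline that is genuinely complete, but it is also not part of the quoted statement.
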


It looks less scary if we write those equations as a matrix
whose rows indicate the five primitive relations. 
This matrix consists of five blocks of columns of
sizes $p_0,\ldots,p_4$.
By $\prn=(0,0,\ldots,0)$ and 
$\pre=(1,1,\ldots,1)$ we mean row vectors of the appropriate size
to fit into the indicated block.
Denoting \(\prc = (0, c_2, \dots , c_{p_2})\in\Z_{\geq 0}^{p_2}\)
and $\prb= (b_1, \dots , b_{p_3})\in\Z_{\geq 0}^{p_3}$, 
the primitive relation matrix looks like
\[
\left(
\begin{array}{ccccc}
\pre & \pre & -\prc & -(\prb+\pre)& \prn \\
\prn& \pre& \pre& \prn& -\pre \\
\prn& \prn& \pre& \pre& \prn \\ 
\prn& -\pre& \prn& \pre& \pre \\
\pre& \prn &-\prc& -\prb& \pre
\end{array}
\right).
\]

\subsection{Tempting Subsets}

As above, we suppose $X= \toric(\Sigma)$ is a smooth projective toric variety of dimension $d$ and Picard rank $3$,
   whose fan $\Sigma$ has five primitive relations.

For finding the immaculate line bundles the first step is to find the tempting subsets of \(\Sigma(1)\).
We have seen in Proposition~\ref{prop_3rd_temptation} 
that the primitive collections, their complements, the empty set and the 
full subset \(\Sigma(1)\) are tempting.

\begin{lemma}
The only tempting subsets are primitive collections, their complements, the empty set and the full subset \(\Sigma(1)\).
\end{lemma}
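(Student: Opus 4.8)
The plan is to reduce the statement to a purely combinatorial question about induced subcomplexes and then dispatch the cases by an Alexander‑duality reduction. Since $X$ is smooth, $\Sigma$ is simplicial, so for each $\CR\subseteq\Sigma(1)$ the complex $V^{\ge}(\CR)$ is the induced (``full'') subcomplex of $\Sigma$ on the vertex set $\CR$, and $V^{>}(\CR)$ is homotopy equivalent to its geometric realisation (as recalled in Subsection~\ref{RmacPic}). By the minimal‑non‑face description, a subset $S\subseteq\Sigma(1)$ spans a cone of $\Sigma$ if and only if $S$ contains no primitive collection $\mathcal{J}_\alpha=J_\alpha\cup J_{\alpha+1}$. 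Hence the faces of $V^{\ge}(\CR)$ are exactly the subsets of $\CR$ containing no $\mathcal{J}_\alpha$, and by Definition~\ref{def-tempting} the set $\CR$ is tempting precisely when $V^{\ge}(\CR)$ fails to be $\kk$-acyclic. As Proposition~\ref{prop_3rd_temptation} already provides the four listed families as tempting, it remains to show that every other $\CR$ is non-tempting, and for that it suffices to exhibit $V^{\ge}(\CR)$ as contractible, possibly after passing to the complement via Remark~\ref{rem_Alexander_duality_for_tempting_subsets}.

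First I would isolate the role of ``partial blocks''. Call a block $J_\alpha$ \emph{partial} in $\CR$ if $\emptyset\neq \CR\cap J_\alpha\subsetneq J_\alpha$. The key step is: if some block is partial, then $V^{\ge}(\CR)$ is a cone, hence contractible. Concretely, choose $\rho_0\in\CR\cap J_\alpha$ and $\rho_1\in J_\alpha\setminus\CR$; I claim $F\cup\{\rho_0\}$ is a face whenever $F$ is. Indeed, if $F\cup\{\rho_0\}$ contained some $\mathcal{J}_\beta$ while $F$ did not, then $\rho_0\in\mathcal{J}_\beta$, forcing $\alpha\in\{\beta,\beta+1\}$ and hence $J_\alpha\subseteq F\cup\{\rho_0\}\subseteq\CR$; this contradicts $\rho_1\in J_\alpha\setminus\CR$. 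So $\rho_0$ is a cone apex and $V^{\ge}(\CR)$ is contractible, i.e. $\CR$ is non-tempting. This leaves only those $\CR$ in which every block is empty or full, that is $\CR=\bigcup_{\alpha\in A}J_\alpha$ for some $A\subseteq\{0,\dots,4\}$.

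It then remains to analyse $A\subseteq\Z/5\Z$. If $A$ contains no two cyclically consecutive indices, then $\CR$ contains no $\mathcal{J}_\alpha$, so every subset of $\CR$ is a face and $V^{\ge}(\CR)$ is a full simplex: contractible when $A\neq\emptyset$ (non-tempting) and the empty complex when $A=\emptyset$ (tempting, matching the listed $\emptyset$). For the remaining $A$ I would invoke Remark~\ref{rem_Alexander_duality_for_tempting_subsets}, whereby $\CR$ and $\Sigma(1)\setminus\CR$ share their temptation status, the latter corresponding to the complementary index set $A^{c}$. Because the $5$-cycle on $\Z/5\Z$ has independence number $2$, whenever $|A|\geq 3$ one has $|A^{c}|\leq 2$, so at least one of $A,A^{c}$ has size at most $2$, and a set of size at most $2$ is independent (non-consecutive) unless it is exactly a consecutive pair. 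Running through the sizes $0,\dots,5$ then shows that the only tempting $A$ are $\emptyset$, all of $\Z/5\Z$, a consecutive pair $\{\alpha,\alpha+1\}$ (a primitive collection), or a consecutive triple $\{\alpha+2,\alpha+3,\alpha+4\}$ (the complement of a primitive collection); for every other $A$, one of $A,A^{c}$ is a nonempty independent set, giving a contractible full simplex, so $\CR$ is non-tempting.

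I expect the main obstacle to be the partial‑block step, namely verifying carefully that adjoining a single ray $\rho_0$ from a partially covered block can never create a new primitive collection inside $\CR$; once this cone argument is in place, the rest is a finite bookkeeping over the combinatorial types of subsets of the pentagon, considerably streamlined by Alexander duality.
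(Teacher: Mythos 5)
Your proof is correct, and it takes a genuinely different route from the paper's. The paper first applies Proposition~\ref{prop_2nd_temptation}: a tempting $\CR$ (non-empty, proper) spans no cone and neither does its complement, so it is sandwiched as $\mathcal{J}_\alpha \subseteq \CR \subseteq \Sigma(1)\setminus\mathcal{J}_\beta$; this forces $\beta = \alpha\pm 2$ and reduces everything to a single ambiguous block $J_\bullet$, and when $\emptyset \neq \CR\cap J_\bullet \subsetneq J_\bullet$ the paper concludes by presenting $\CR$ as the union of the two faces $J_\alpha\cup\mathcal{R}_\bullet$ and $J_{\alpha+1}\cup\mathcal{R}_\bullet$ glued along the common face $\mathcal{R}_\bullet$. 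You instead dispose of \emph{every} $\CR$ containing a partial block in one stroke, via the cone-apex lemma (any $\rho_0\in\CR\cap J_\alpha$ with $J_\alpha\not\subseteq\CR$ is an apex of the induced subcomplex, since adjoining $\rho_0$ to a face could only create a primitive collection $\mathcal{J}_\beta$ with $J_\alpha\subseteq\mathcal{J}_\beta\subseteq\CR$, which is excluded), and then classify the residual block-unions $A\subseteq\Z/5\Z$ by Remark~\ref{rem_Alexander_duality_for_tempting_subsets} together with the fact that the pentagon has independence number $2$; both routes use Proposition~\ref{prop_3rd_temptation} for the positive direction. What each buys: the paper's sandwich avoids any bookkeeping over $\Z/5\Z$, while your apex argument is more robust at exactly the delicate point --- the induced subcomplex on $\CR=J_\alpha\cup J_{\alpha+1}\cup\mathcal{R}_\bullet$ is strictly larger than the union of the two simplices the paper names (it contains, e.g., faces consisting of all of $J_\alpha$ together with proper parts of $J_{\alpha+1}$ and of $\mathcal{R}_\bullet$), so the paper's gluing sentence, read literally, needs precisely your star/cone observation to become airtight. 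Your finite case analysis is also complete: every $3$-subset of $\Z/5\Z$ is a consecutive triple or has an independent $2$-set as complement, and independent non-empty $A$ give full simplices (faces), which are contractible, while $A=\emptyset$ correctly yields the empty complex, which is tempting.
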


\begin{proof}
Let \(\mathcal{R}\) be a non-empty tempting subset, which is not equal to $\Sigma(1)$.
Then $\mathcal{R}$ and $\Sigma(1) \setminus \CR$ do not span cones in $\Sigma$
by Proposition~\ref{prop_2nd_temptation}.  
It follows that there exist two primitive collections \(P,P'\), with 
\(P \subseteq \mathcal{R} \subseteq \Sigma(1) \setminus P'\).
In the notation as above we obtain
\[J_{\alpha} \cup J_{\alpha+1} \subseteq \mathcal{R} \subseteq J_{\beta+2} \cup J_{\beta+3} \cup J_{\beta+4}\]
for some \(\alpha, \beta \in \quot{\bZ}{5 \bZ}\). This already implies that \(\beta = \alpha \pm  2\):
\begin{align*}
  J_{\alpha} \cup J_{\alpha+1} &\subseteq \mathcal{R} \subseteq J_{\alpha-1} \cup J_{\alpha} \cup J_{\alpha+1}, \text{ or}\\
  J_{\alpha} \cup J_{\alpha+1} &\subseteq \mathcal{R} \subseteq J_{\alpha} \cup J_{\alpha+1} \cup J_{\alpha+2}.
\end{align*}
For brevity we denote by $J_{\bullet}$, respectively, either $J_{\alpha-1}$ or $J_{\alpha+2}$. 
So the only question is, what is \(\mathcal{R} \cap J_{\bullet}\). 
If we show the intersection is empty or the whole \(J_{\bullet}\), the proof will be completed.

Denote by \(\mathcal{R}_{\bullet}  := \mathcal{R} \cap J_{\bullet}\),
  and assume conversely that the \(\mathcal{R}_{\bullet} \) is not equal to $\emptyset$ or \(J_{\bullet}\), 
  and consider  \(J_{\alpha} \cup \mathcal{R}_{\bullet}\).
This set does not contain any primitive collection, thus it is a face. The same holds for \(J_{\alpha+1} \cup \mathcal{R}_{\bullet}\). 
Hence \(\mathcal{R}\) is the union of two faces which intersect in a common 
face \(\mathcal{R}_{\bullet}\). This implies that \(\mathcal{R}\) is not 
tempting.
\end{proof}

\begin{proposition}
Suppose as above that \(X\) is a smooth projective toric variety of Picard rank 3 with five primitive collections \(J_{i}\) of lengths \(p_i\). 
If L is a line bundle on \(X\) such that \(\gH^i(X, L)\ne 0\), 
then \(i \in \set{0,\ p_{\alpha}+p_{\alpha+1}-1,\ p_{\alpha-1}+p_{\alpha-2}+p_{\alpha-3}-2,\ \dim X}_{\alpha \in \quot{\bZ}{5 \bZ}}\).
\end{proposition}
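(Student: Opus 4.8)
The plan is to read off the admissible cohomological degrees directly from the classification of tempting subsets. By the classical description $\gH^i(\cO_X(D))_\dm=\tH^{i-1}(V_{D,\dm},\kk)$ (see \cite[Thm~9.1.3]{CoxBook}), a nonvanishing $\gH^i(X,L)$ forces, after replacing $D$ by a torus invariant representative $D(\dm)$ as in the proof of Proposition~\ref{prop-Rmac} so that $V_{D,\dm}=V_{D(\dm),0}=V^{>}(\CR)\cup\{0\}$ for the set $\CR$ of negative coefficients, the reduced cohomology $\tH^{i-1}(V^{>}(\CR),\kk)$ to be nonzero for one of the tempting subsets $\CR\subseteq\Sigma(1)$; conversely these degrees are all realized by Proposition~\ref{prop-Rmac}\ref{item_R_maculate_then_nontrivial_cohomology}. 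Hence the set of occurring degrees is exactly $\{i : \tH^{i-1}(V^{>}(\CR),\kk)\ne 0 \text{ for some tempting }\CR\}$, and since the preceding lemma shows the tempting subsets are $\emptyset$, $\Sigma(1)$, the five primitive collections $\mathcal{J}_{\alpha}=J_{\alpha}\cup J_{\alpha+1}$, and their complements, it suffices to determine the unique degree carrying reduced cohomology for each of these four types.

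For the two extreme cases this is immediate: $V^{>}(\emptyset)=\emptyset$ has $\tH^{-1}=\kk$, giving $i=0$, while $V^{>}(\Sigma(1))=(\suppSig)\setminus\{0\}$ is homotopy equivalent to $S^{d-1}$, giving $i=d=\dim X$. For a primitive collection $\mathcal{J}_{\alpha}$, which consists of $p_{\alpha}+p_{\alpha+1}$ rays, the computation in the proof of Proposition~\ref{prop_3rd_temptation} shows that $V^{>}(\mathcal{J}_{\alpha})$ is homotopy equivalent to a sphere $S^{\,p_{\alpha}+p_{\alpha+1}-2}$ (in both the linearly independent and the linearly dependent subcase). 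Its only nonzero reduced cohomology therefore lies in degree $p_{\alpha}+p_{\alpha+1}-2$, producing the value $i=p_{\alpha}+p_{\alpha+1}-1$.

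It remains to handle the complements $\Sigma(1)\setminus\mathcal{J}_{\alpha}$, and here I would invoke Serre duality rather than recompute a cohomology. By Remark~\ref{rem_Alexander_duality_for_tempting_subsets}, passing from a tempting set to its complement corresponds to Serre duality in $X$, so the degree contributed by $\Sigma(1)\setminus\mathcal{J}_{\alpha}$ is the complementary degree $d-(p_{\alpha}+p_{\alpha+1}-1)$. Substituting the dimension count $p_0+\dots+p_4=d+3$ from Proposition~\ref{prop-picRank3Batyrev} rewrites this as $p_{\alpha+2}+p_{\alpha+3}+p_{\alpha+4}-2$, and reading the indices modulo $5$ gives $p_{\alpha-1}+p_{\alpha-2}+p_{\alpha-3}-2$, which is precisely the third family in the asserted set. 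Collecting the four contributions and letting $\alpha$ range over $\Z/5\Z$ yields exactly $\{0,\ p_{\alpha}+p_{\alpha+1}-1,\ p_{\alpha-1}+p_{\alpha-2}+p_{\alpha-3}-2,\ \dim X\}$. The only genuinely delicate points are the sphere identification for a primitive collection (already supplied by Proposition~\ref{prop_3rd_temptation}) and keeping the cyclic index bookkeeping $\alpha\mapsto\alpha-1,\alpha-2,\alpha-3$ consistent under the duality; everything else is a direct substitution.
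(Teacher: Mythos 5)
Your proof is correct and follows essentially the same route as the paper: both arguments reduce to the classified tempting subsets, identify $V^{>}(\mathcal{J}_{\alpha})$ with a sphere $S^{p_{\alpha}+p_{\alpha+1}-2}$ via the proof of Proposition~\ref{prop_3rd_temptation}, and obtain the degrees for the complements by Alexander/Serre duality (Remark~\ref{rem_Alexander_duality_for_tempting_subsets}). In fact your bookkeeping is the more accurate one: the paper's prose attributes the degree $p_{\alpha}+p_{\alpha+1}-1$ to the \emph{complement} of $\mathcal{J}_{\alpha}$ and $p_{\alpha-1}+p_{\alpha-2}+p_{\alpha-3}-2$ to $\mathcal{J}_{\alpha}$ itself---a harmless swap only because $\alpha$ ranges over all of $\Z/5\Z$---whereas your assignment (the collection gives $i=p_{\alpha}+p_{\alpha+1}-1$, its complement the Serre-dual degree $d-(p_{\alpha}+p_{\alpha+1}-1)=p_{\alpha-1}+p_{\alpha-2}+p_{\alpha-3}-2$) is the one that actually follows from the sphere computation.
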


\begin{proof}
The tempting subset \(\Sigma(1)\) and \(\emptyset\) lead to line bundles with nontrivial cohomology 
in degrees \(0\) and \(\dim X\) respectively.

Along the lines of the proof of  Proposition~\ref{prop_3rd_temptation} 
we see that the complement of the primitive collection \(\mathcal{J}_{\alpha}\) 
leads to line bundles with nontrivial cohomology
in degree \(p_{\alpha}+p_{\alpha+1}-1\),
and the primitive collection itself to line bundles 
with nonvanishing cohomology in degree \(p_{\alpha-1}+p_{\alpha-2}+p_{\alpha-3}-2\).
Since there are no other tempting subsets, there cannot occure other degrees.
\end{proof}

\subsection{Immaculate line bundles for Picard rank \texorpdfstring{$3$}{3}}
We can calculate the immaculate line bundles as described in Proposition~\ref{prop-Rmac}.
For this we have to consider $\pi(\pms)$ for all maculate \(\mathcal{R}\)
where \(\pi\) is given as the transpose of the map embedding the kernel of
the ray map into \(\bZ^{\Sigma(1)}\).
This can be realized by selecting a $\Z$-basis out of the rows
of the matrix of primitive relations presented at the end of
Subsection~\ref{classBatyrev}. Picking its first, second and fourth row,
we obtain
\[
\pi=
\left(
\begin{array}{ccccc}
\pre & \pre & -\prc & -(\prb+\pre)& \prn \\
\prn& \pre& \pre& \prn& -\pre \\
\prn& -\pre& \prn& \pre& \pre 
\end{array}
\right).
\]

These are the primitive relations that, being understood as classes of
1-cycles, correspond to the rays of the Mori cone 
which in this case is a three-dimensional simplicial cone.

\begin{remark}\label{rem_everyone_is_really_immaculate_Pic_3}
\begin{enumerate}
 \item  For all parameters $\prb,\prc$, the matrix $\pi$ leads to a smooth fan
of Picard rank~3. 
This means the converse of Proposition~\ref{prop-picRank3Batyrev},
and it follows from Subsection~\ref{smoothGale}: 
The 3-minors with respect to
the columns chosen from the blocks $(\alpha,\alpha+1,\alpha+3)$
for $\alpha\in\Z/5\Z$ are always $1$.
\item\label{item_Z_generators_for_maculate_cones_Pic_3}
It is straightforward (although tedious) to check that for
all 12 tempting subsets $\CR \subset \Sigma(1)$
the tail cone of the respective maculate region $\CM_{\R}(\CR)$
is either a smooth cone 
or a cone with 4 rays which do also form its Hilbert basis 
(the latter is the case for \(J_3 \cup J_4\) if \(c_{p_2} < b_1+1\), 
for \(J_4 \cup J_0\) if \(b_1 > 0\), and for their respective complements).
\item From~\ref{item_Z_generators_for_maculate_cones_Pic_3} it follows that, independent of the parameters $\prb,\prc$, 
we always have that $\CM_{\Z}(\CR) = \CM_{\R}(\CR) \cap \Pic X$ 
and thus $\Imm_{\Z} (X) = \Imm_{\R}(X)$.
\end{enumerate}
\end{remark}

We will distinguish three classes (F), (A), (B) of line bundles
which will become the main heros 
for the immaculate locus  presented in 
Proposition~\ref{prop-mainPicThree}.
To locate these classes in $\Z^3$ we will use the horizontal projection
$(x,y,z)\mapsto (y,z)$ and start with some geography on the target space.

\begin{definition}\label{def:parallelograms}
Denote by $P_1$ and $P_2$ the following two planary parallelograms \(P_1, P_2\):
$$
\begin{array}{rcl}
              P_1 & = &\conv( \twodim{p_1-p_2-p_3+2}{p_1-1}, \twodim{-p_1}{p_1-1}, \\
&& \hspace{7em}\twodim{-p_2+p_4}{-p_3-p_4+1}, \twodim{p_3+p_4-2}{-p_3-p_4+1} ),\\[0.8ex] 
              P_2 & = &\conv( \twodim{-p_1-p_2+1}{p_1+p_2-2}, \twodim{p_4-1}{-p_4}, \\
&& \hspace{7em}\twodim{-p_1-p_2+1}{p_1-p_3}, \twodim{p_4-1}{-p_2-p_3-p_4+2})
\end{array}
$$
They are depicted in blue and red in \autoref{figure:pic3_mr_gen}, and we will be interested in their union. 
Note the following two special cases:
       \begin{itemize}
           \item If \(p_2 = 1\), then \(P_2 \subset P_1\) and the simplified vertices of \(P_1\) are:
\[
\twodim{p_1-p_3+1}{p_1-1}, \twodim{-p_1}{p_1-1}, \twodim{p_4-1}{-p_3-p_4+1}, \twodim{p_3+p_4-2}{-p_3-p_4+1} 
\]

\item If \(p_3 = 1\), then \(P_1 \subset P_2\) and the simplified vertices of \(P_2\) are:
\[
\twodim{-p_1-p_2+1}{p_1+p_2-2}, \twodim{-p_1-p_2+1}{p_1-1}, \twodim{p_4-1}{-p_4}, \twodim{p_4-1}{-p_2-p_4+1} 
\]
\end{itemize}       
\end{definition}

\begin{figure}
\newcommand{\vemptycolor}{purple}
\newcommand{\vemptypattern}{crosshatch dots}
\newcommand{\vemptythickness}{20}
\newcommand{\vtwocolor}{blue}
\newcommand{\vtwopattern}{dots}
\newcommand{\vtwothickness}{20}
\newcommand{\vzerocolor}{orange}
\newcommand{\vzeropattern}{vertical lines}
\newcommand{\vzerothickness}{80}
\newcommand{\vfourcolor}{olive}
\newcommand{\vfourpattern}{horizontal lines}
\newcommand{\vfourthickness}{80}
\newcommand{\vonecolor}{red}
\newcommand{\vonepattern}{north east lines}
\newcommand{\vonethickness}{80}
\tikzset{
    nodecountour/.style={
        fill=white,rounded corners=2pt,inner sep=1pt
    }
}
\begin{minipage}{\textwidth}
{  \centering
\begin{tikzpicture}[scale = 0.45,
                    color = {lightgray},
                    font = \small]

  \draw[step=1.0,black!20,thin] (-10.5,-10.5) grid (8.5,7.5);
  
  \definecolor{pointcolor_P}{rgb}{ 1,0,0 }
  \tikzstyle{pointstyle_P} = [fill=pointcolor_P]

  \coordinate (v_K) at (-2, -3);
  \coordinate (v_sig1) at (0, 0);
  \coordinate (v_0) at (2,-7);
  \coordinate (v_0c) at (-4,4);
  \coordinate (v_13c) at (5,-7);
  \coordinate (v_1c3) at (-7,4);
  \coordinate (v_2) at (1,-1);
  \coordinate (v_2c) at (-3,-2);
  \coordinate (v_4) at (-7,2);
  \coordinate (v_4c) at (5,-5);

   \fill[pattern color=\vemptycolor!\vemptythickness, pattern=\vemptypattern] (-7.5,7.5)-- (0,0) -- (8.5,-8.5) --( 8.5,7.5)  -- cycle;
   \draw[color=\vemptycolor] (-7.5,7.5)-- (0,0) -- (8.5,-8.5);
   \fill[pattern color=\vemptycolor!\vemptythickness, pattern=\vemptypattern] (-10.5,5.5)-- (-2,-3) -- (5.5,-10.5) -- (-10.5,-10.5) -- cycle;
   \draw[color=\vemptycolor] (-10.5,5.5)-- (-2,-3) -- (5.5,-10.5);

   \fill[pattern color=\vtwocolor!\vtwothickness, pattern=\vtwopattern]  (-7.5,7.5) -- (1,-1) -- (8.5,-8.5)  --(8.5,7.5)  -- cycle;
   \draw[color=\vtwocolor] (-7.5,7.5) -- (1,-1) -- (8.5,-8.5) ;
   \fill[pattern color=\vtwocolor!\vtwothickness, pattern=\vtwopattern] (-10.5,5.5) -- (-3,-2) -- (5.5,-10.5) -- (-10.5,-10.5) -- cycle;
   \draw[color=\vtwocolor] (-10.5,5.5) -- (-3,-2) -- (5.5,-10.5);

   \fill[pattern color=\vzerocolor!\vzerothickness, pattern=\vzeropattern] (-7.5,7.5) -- (-4,4) -- (8.5, 4) --( 8.5,7.5)  -- cycle;
   \draw[color=\vzerocolor] (-7.5,7.5) -- (-4,4) -- (8.5, 4);
   \fill[pattern color=\vzerocolor!\vzerothickness, pattern=\vzeropattern] (5.5,-10.5) -- (2,-7) -- (-10.5,-7) -- (-10.5,-10.5) -- cycle;
   \draw[color=\vzerocolor] (5.5,-10.5)-- (2,-7) -- (-10.5,-7) ;

   \fill[pattern color=\vfourcolor!\vfourthickness, pattern=\vfourpattern]  (8.5,-8.5)-- (5,-5) -- (5,7.5) -- (8.5,7.5)  -- cycle;
   \draw[color=\vfourcolor] (8.5,-8.5)-- (5,-5) -- (5,7.5);
   \fill[pattern color=\vfourcolor!\vfourthickness, pattern=\vfourpattern] (-10.5,5.5)-- (-7,2) -- (-7,-10.5) -- (-10.5,-10.5) -- cycle;
   \draw[color=\vfourcolor] (-10.5,5.5)-- (-7,2) -- (-7,-10.5);
   
   \fill[pattern color=\vonecolor!\vonethickness, pattern=\vonepattern] (-10.5,4) -- (-7,4) -- (-7,7.5) -- (-10.5,7.5)  -- cycle;
   \draw[color=\vonecolor] (-10.5,4) -- (-7,4) -- (-7,7.5) ;
   \fill[pattern color=\vonecolor!\vonethickness, pattern=\vonepattern]  (5,-10.5) -- (5,-7) -- (8.5,-7) -- (8.5,-10.5) -- cycle;
   \draw[color=\vonecolor] (5,-10.5) -- (5,-7) -- (8.5,-7);

   \filldraw[color=red, pattern color=red, pattern=north east lines] (-6,5) -- (4,-5) -- (4,-8) -- (-6,2) -- cycle;
   \filldraw[color=blue, pattern color=blue, pattern=north west lines] (-7,3) -- (-4,3) -- (5,-6) -- (2,-6) -- cycle;

  \fill[color=\vemptycolor] (v_K) circle (3 pt);
  \node at (v_K) [nodecountour, text=black, below left, draw=none, align=left] {$\overline{v_{\Sigma(1)}}$};
  \fill[color=\vemptycolor] (v_sig1) circle (3 pt);
  \node at (v_sig1) [nodecountour, text=black, above right, draw=none, align=left] {$\overline{v_{\emptyset}}$};
  \fill[color = \vzerocolor] (v_0) circle (3 pt);
  \node at (v_0) [nodecountour, text=black, below left, draw=none, align=left] {$\overline{v_{0^c}}$};
  \fill[color = \vzerocolor] (v_0c) circle (3 pt);
  \node at (v_0c) [nodecountour, text=black, above right, draw=none, align=left] {$\overline{v_{0}}$};
  \fill[color = \vonecolor] (v_13c) circle (3 pt);
  \node at (v_13c) [nodecountour, text=black, below right, draw=none, align=left] {$\overline{v_{1^c}} = \overline{v_{3}}$};
  \fill[color = \vonecolor] (v_1c3) circle (3 pt);
  \node at (v_1c3) [nodecountour, text=black, above left, draw=none, align=left] {$\overline{v_{1}} = \overline{v_{3^c}}$};
  \fill[color = \vtwocolor] (v_2) circle (3 pt);
  \node at (v_2) [nodecountour, text=black, above right, draw=none, align=left] {$\overline{v_{2^c}}$};
  \fill[color = \vtwocolor] (v_2c) circle (3 pt);
  \node at (v_2c) [nodecountour, text=black, below left, draw=none, align=left] {$\overline{v_{2}}$};
  \fill[color = \vfourcolor] (v_4) circle (3 pt);
  \node at (v_4) [nodecountour, text=black, below left, draw=none, align=left] {$\overline{v_{4^c}}$};
  \fill[color = \vfourcolor] (v_4c) circle (3 pt);
  \node at (v_4c) [nodecountour, text=black, above right, draw=none, align=left] {$\overline{v_{4}}$};

   \node[color=black, thick] at (-7,4) {$B$};
   \node[color=black, thick] at (5,-7) {$b$};

   \foreach \i in {0,...,12} {
      \node[color=black] at (6-\i,-6+\i) {$A$};
   }
   \foreach \i in {0,...,12} {
      \node[color=black] at (-8+\i,3-\i) {$a$};
   }
  \definecolor{pointcolor_P}{rgb}{ 1,0,0 }
  \definecolor{pointcolor_V}{rgb}{ 1,0,0 }
  \tikzstyle{pointstyle_P} = [fill=pointcolor_P]
  \tikzstyle{pointstyle_V} = [fill=pointcolor_V]
\end{tikzpicture}

\begin{tabular}{cc|cc}
\toprule
\textbf{Point} & \textbf{Coordinates} &
\textbf{Point} & \textbf{Coordinates}\\
\midrule
$\overline{v_{\Sigma (1)}} = \overline{K}$ & $\twodim{-p_1-p_2+p_4}{p_1-p_3-p_4}$ &
$\overline{v_{\emptyset}}$ & $\twodim{0}{0}$\\
$\overline{v_{0^c}}$ & $\twodim{-p_2+p_4}{-p_3-p_4}$ &
$\overline{v_{0}}$ & $\twodim{-p_1}{p_1}$\\
$\overline{v_{1^c}} = \overline{v_{3}}$ & $\twodim{p_4}{-p_3-p_4}$ &
$\overline{v_{1}} = \overline{v_{3^c}}$ & $\twodim{-p_1-p_2}{p_1}$\\
$\overline{v_{2^c}}$ & $\twodim{-p_1+p_4}{p_1-p_4}$ & 
$\overline{v_{2}}$ & $\twodim{-p_2}{-p_3}$\\
$\overline{v_{4^c}}$ & $\twodim{-p_1-p_2}{p_1-p_3}$ &
$\overline{v_{4}}$ & $\twodim{p_4}{-p_4}$\\
\bottomrule
\end{tabular}
\caption{\(p_2,p_3 >1\)}\label{figure:pic3_mr_gen}}
The projected maculate regions to the \((y,\ z)\)-plane for the example 
\((p_1,\ p_2,\ p_3,\ p_4) =(4,\ 3,\ 2,\ 5)\) and a table with the general coordinates of the projected vertices of the maculate regions,
where \(\overline{v_{i}}\) and \(\overline{v_{i^c}}\) denotes the projected vertex of 
the maculate region $\CM_{\R}(\CR)$
for \(\CR = \mathcal{J}_i\) respectively \(\CR = \mathcal{J}_i^c\).
The polyhedra \(P_1\) and \(P_2\) from Definition~\ref{def:parallelograms} are depicted in blue and red.
The letters \(A\) and \(B\) indicate where the line segments of immaculate line bundles are located
in the projection, and the letters \(a,b\) denote the location of their Serre duals.
\vspace{0.15cm}
\end{minipage}
\begin{minipage}{\textwidth}
\centering
\begin{minipage}{.49\textwidth}
 \centering
\begin{tikzpicture}[scale = 0.35,
                    color = {lightgray},
                    font = \tiny]

  \draw[step=1.0,black!20,thin] (-10.5,-10.5) grid (8.5,7.5);
  
  \definecolor{pointcolor_P}{rgb}{ 1,0,0 }
  \tikzstyle{pointstyle_P} = [fill=pointcolor_P]

  \coordinate (v_K) at (0, -3);
  \coordinate (v_sig1) at (0, 0);
  \coordinate (v_0) at (4,-7);
  \coordinate (v_0c) at (-4,4);
  \coordinate (v_13c) at (5,-7);
  \coordinate (v_1c3) at (-5,4);
  \coordinate (v_2) at (1,-1);
  \coordinate (v_2c) at (-1,-2);
  \coordinate (v_4) at (-5,2);
  \coordinate (v_4c) at (5,-5);

   \fill[pattern color=\vemptycolor!\vemptythickness, pattern=\vemptypattern] (-7.5,7.5)-- (0,0) -- (8.5,-8.5) --( 8.5,7.5)  -- cycle;
   \draw[color=\vemptycolor] (-7.5,7.5)-- (0,0) -- (8.5,-8.5);
   \fill[pattern color=\vemptycolor!\vemptythickness, pattern=\vemptypattern] (-10.5,7.5) -- (-1,-2) -- (7.5,-10.5) -- (-10.5,-10.5) -- cycle;
   \draw[color=\vemptycolor] (-10.5,7.5)-- (0,-3) -- (7.5,-10.5);

   \fill[pattern color=\vtwocolor!\vtwothickness, pattern=\vtwopattern]  (-7.5,7.5) -- (1,-1) -- (8.5,-8.5)  --(8.5,7.5)  -- cycle;
   \draw[color=\vtwocolor] (-7.5,7.5) -- (1,-1) -- (8.5,-8.5) ;
   \fill[pattern color=\vtwocolor!\vtwothickness, pattern=\vtwopattern] (-10.5,7.5) -- (-1,-2) -- (7.5,-10.5) -- (-10.5,-10.5) -- cycle;
   \draw[color=\vtwocolor] (-10.5,7.5) -- (-1,-2) -- (7.5,-10.5);

   \fill[pattern color=\vzerocolor!\vzerothickness, pattern=\vzeropattern] (-7.5,7.5) -- (-4,4) -- (8.5, 4) --( 8.5,7.5)  -- cycle;
   \draw[color=\vzerocolor] (-7.5,7.5) -- (-4,4) -- (8.5, 4);
   \fill[pattern color=\vzerocolor!\vzerothickness, pattern=\vzeropattern] (7.5,-10.5) -- (4,-7) -- (-10.5,-7) -- (-10.5,-10.5) -- cycle;
   \draw[color=\vzerocolor] (7.5,-10.5)-- (4,-7) -- (-10.5,-7) ;

   \fill[pattern color=\vfourcolor!\vfourthickness, pattern=\vfourpattern]  (8.5,-8.5)-- (5,-5) -- (5,7.5) -- (8.5,7.5)  -- cycle;
   \draw[color=\vfourcolor] (8.5,-8.5)-- (5,-5) -- (5,7.5);
   \fill[pattern color=\vfourcolor!\vfourthickness, pattern=\vfourpattern] (-10.5,7.5)-- (-5,2) -- (-5,-10.5) -- (-10.5,-10.5) -- cycle;
   \draw[color=\vfourcolor] (-10.5,7.5)-- (-5,2) -- (-5,-10.5);
   
   \fill[pattern color=\vonecolor!\vonethickness, pattern=\vonepattern] (-10.5,4) -- (-5,4) -- (-5,7.5) -- (-10.5,7.5)  -- cycle;
   \draw[color=\vonecolor] (-10.5,4) -- (-5,4) -- (-5,7.5) ;
   \fill[pattern color=\vonecolor!\vonethickness, pattern=\vonepattern]  (5,-10.5) -- (5,-7) -- (8.5,-7) -- (8.5,-10.5) -- cycle;
   \draw[color=\vonecolor] (5,-10.5) -- (5,-7) -- (8.5,-7);

  \fill[color=\vemptycolor] (v_K) circle (3 pt);
  \fill[color=\vemptycolor] (v_sig1) circle (3 pt);
  \fill[color = \vzerocolor] (v_0) circle (3 pt);
  \fill[color = \vzerocolor] (v_0c) circle (3 pt);
  \fill[color = \vonecolor] (v_13c) circle (3 pt);
  \fill[color = \vonecolor] (v_1c3) circle (3 pt);
  \fill[color = \vtwocolor] (v_2) circle (3 pt);
  \fill[color = \vtwocolor] (v_2c) circle (3 pt);
  \fill[color = \vfourcolor] (v_4) circle (3 pt);
  \fill[color = \vfourcolor] (v_4c) circle (3 pt);

   \filldraw[color=red, pattern color=red, pattern=north east lines] (-4,3) -- (4,-5) -- (4,-6) -- (-4,2) -- cycle;
   \filldraw[color=blue, pattern color=blue, pattern=north west lines] (-5,3) -- (-4,3) -- (5,-6) -- (4,-6) -- cycle;

   \node[color=black, thick] at (-6,4) {$B$};
   \node[color=black, thick] at (-5,4) {$B$};
   \node[color=black, thick] at (5,-7) {$b$};
   \node[color=black, thick] at (6,-7) {$b$};
   
   \foreach \i in {0,...,10} {
      \node[color=black] at (6-\i,-6+\i) {$A$};
   }
   \foreach \i in {0,...,10} {
      \node[color=black] at (-6+\i,3-\i) {$a$};
   }

  \definecolor{pointcolor_P}{rgb}{ 1,0,0 }
  \definecolor{pointcolor_V}{rgb}{ 1,0,0 }
  \tikzstyle{pointstyle_P} = [fill=pointcolor_P]
  \tikzstyle{pointstyle_V} = [fill=pointcolor_V]
\end{tikzpicture}
\caption{\(p_2=1\)}\label{figure:pic3_mr_p2}
\((p_1,\ p_2,\ p_3,\ p_4) =(4,\ 1,\ 2,\ 5)\).
\end{minipage}
\begin{minipage}{.49\textwidth}
  \centering
\begin{tikzpicture}[scale = 0.35,
                    color = {lightgray},
                    font=\tiny]

  \draw[step=1.0,black!20,thin] (-10.5,-10.5) grid (8.5,7.5);
  
  \definecolor{pointcolor_P}{rgb}{ 1,0,0 }
  \tikzstyle{pointstyle_P} = [fill=pointcolor_P]

  \coordinate (v_K) at (-2, -2);
  \coordinate (v_sig1) at (0, 0);
  \coordinate (v_0) at (2,-6);
  \coordinate (v_0c) at (-4,4);
  \coordinate (v_13c) at (5,-6);
  \coordinate (v_1c3) at (-7,4);
  \coordinate (v_2) at (1,-1);
  \coordinate (v_2c) at (-3,-1);
  \coordinate (v_4) at (-7,3);
  \coordinate (v_4c) at (5,-5);

   \fill[pattern color=\vemptycolor!\vemptythickness, pattern=\vemptypattern] (-7.5,7.5)-- (0,0) -- (8.5,-8.5) --( 8.5,7.5)  -- cycle;
   \draw[color=\vemptycolor] (-7.5,7.5)-- (0,0) -- (8.5,-8.5);
   \fill[pattern color=\vemptycolor!\vemptythickness, pattern=\vemptypattern] (-10.5,6.5)-- (-2,-2) -- (6.5,-10.5) -- (-10.5,-10.5) -- cycle;
   \draw[color=\vemptycolor] (-10.5,6.5)-- (-2,-2) -- (6.5,-10.5);

   \fill[pattern color=\vtwocolor!\vtwothickness, pattern=\vtwopattern]  (-7.5,7.5) -- (1,-1) -- (8.5,-8.5)  --(8.5,7.5)  -- cycle;
   \draw[color=\vtwocolor] (-7.5,7.5) -- (1,-1) -- (8.5,-8.5) ;
   \fill[pattern color=\vtwocolor!\vtwothickness, pattern=\vtwopattern] (-10.5,6.5) -- (-3,-1) -- (6.5,-10.5) -- (-10.5,-10.5) -- cycle;
   \draw[color=\vtwocolor] (-10.5,6.5) -- (-3,-1) -- (6.5,-10.5);

   \fill[pattern color=\vzerocolor!\vzerothickness, pattern=\vzeropattern] (-7.5,7.5) -- (-4,4) -- (8.5, 4) --( 8.5,7.5)  -- cycle;
   \draw[color=\vzerocolor] (-7.5,7.5) -- (-4,4) -- (8.5, 4);
   \fill[pattern color=\vzerocolor!\vzerothickness, pattern=\vzeropattern] (6.5,-10.5) -- (2,-6) -- (-10.5,-6) -- (-10.5,-10.5) -- cycle;
   \draw[color=\vzerocolor] (6.5,-10.5)-- (2,-6) -- (-10.5,-6) ;

   \fill[pattern color=\vfourcolor!\vfourthickness, pattern=\vfourpattern]  (8.5,-8.5)-- (5,-5) -- (5,7.5) -- (8.5,7.5)  -- cycle;
   \draw[color=\vfourcolor] (8.5,-8.5)-- (5,-5) -- (5,7.5);
   \fill[pattern color=\vfourcolor!\vfourthickness, pattern=\vfourpattern] (-10.5,6.5)-- (-7,3) -- (-7,-10.5) -- (-10.5,-10.5) -- cycle;
   \draw[color=\vfourcolor] (-10.5,6.5)-- (-7,3) -- (-7,-10.5);
   
   \fill[pattern color=\vonecolor!\vonethickness, pattern=\vonepattern] (-10.5,4) -- (-7,4) -- (-7,7.5) -- (-10.5,7.5)  -- cycle;
   \draw[color=\vonecolor] (-10.5,4) -- (-7,4) -- (-7,7.5) ;
   \fill[pattern color=\vonecolor!\vonethickness, pattern=\vonepattern]  (5,-10.5) -- (5,-6) -- (8.5,-6) -- (8.5,-10.5) -- cycle;
   \draw[color=\vonecolor] (5,-10.5) -- (5,-6) -- (8.5,-6);

  \fill[color=\vemptycolor] (v_K) circle (3 pt);
  \fill[color=\vemptycolor] (v_sig1) circle (3 pt);
  \fill[color = \vzerocolor] (v_0) circle (3 pt);
  \fill[color = \vzerocolor] (v_0c) circle (3 pt);
  \fill[color = \vonecolor] (v_13c) circle (3 pt);
  \fill[color = \vonecolor] (v_1c3) circle (3 pt);
  \fill[color = \vtwocolor] (v_2) circle (3 pt);
  \fill[color = \vtwocolor] (v_2c) circle (3 pt);
  \fill[color = \vfourcolor] (v_4) circle (3 pt);
  \fill[color = \vfourcolor] (v_4c) circle (3 pt);
   \filldraw[color=red, pattern color=red, pattern=north east lines] (-6,5) -- (4,-5) -- (4,-7) -- (-6,3) -- cycle;
   \filldraw[color=blue, pattern color=blue, pattern=north west lines] (-6,3) -- (-4,3) -- (4,-5) -- (2,-5) -- cycle;

   \node[color=black, thick] at (-7,4) {$B$};
   \node[color=black, thick] at (-7,5) {$B$};
   \node[color=black, thick] at (-7,6) {$B$};
   \node[color=black, thick] at (5,-6) {$b$};
   \node[color=black, thick] at (5,-7) {$b$};
   \node[color=black, thick] at (5,-8) {$b$};
   
   \foreach \i in {0,...,11} {
      \node[color=black] at (5-\i,-5+\i) {$A$};
   }
   \foreach \i in {0,...,11} {
      \node[color=black] at (-7+\i,3-\i) {$a$};
   }

  \definecolor{pointcolor_P}{rgb}{ 1,0,0 }
  \definecolor{pointcolor_V}{rgb}{ 1,0,0 }
  \tikzstyle{pointstyle_P} = [fill=pointcolor_P]
  \tikzstyle{pointstyle_V} = [fill=pointcolor_V]
\end{tikzpicture}
\caption{\(p_3 =1\)}\label{figure:pic3_mr_p3}
\((p_1,\ p_2,\ p_3,\ p_4) =(4,\ 3,\ 1,\ 5)\).
\end{minipage}
\end{minipage}
\end{figure}

Now we can describe the three classes of our immaculate candidates. 
They consist of entire ``horizontal''
lines or line segments, that are parallel to the $x$-axis:

\begin{itemize}
 \item \textbf{Full horizontal lines (F).}
This class consists of the union of the (infinite) lines $\threedim{*}{y}{z}$        
with $(y,\ z) \in P_1\cup P_2$ (including the boundary).
Note that it does not depend on the values of $\prb$ and $\prc$ 
  and it is self dual with respect to Serre duality: 
here the canonical divisor is 
   \( \threedim{-p_0-p_1+p_3+\poc+\pob}{-p_1-p_2+p_4}{p_1-p_3-p_4} \).
\item \textbf{Line segments of Type (A).}
This class consists of finite horizontal segments $I_y$ (described below) located over the diagonal $\threedim{*}{y}{-y}$.
Denote \(D_{x,y} = \threedim{x}{-y}{y}\), and 
for any \(y \in [-p_3-p_4+1, p_1+p_2-1]\) 
let 
\[
   I_y\ :=\ \{D_{x,y}\ | \ x_0(y) \le x \le x_1(y)\}
\]
be the set of lattice points on the segment with $x$ coordinate varying from $ x_0(y)$ to $x_1(y)$.
The values of $x_0(y),x_1(y)$ and the number of elements of $I_y$ 
is in \autoref{table:isolated_typeA}.
Notice that they do not depend on $\prb$ or $\prc$, as in the case of type (F).

\begin{table}[htb]\centering
\caption{Isolated immaculate line bundles type A.}\label{table:isolated_typeA}
\[
\begin{array} {cc|cc|c}
\toprule
\bullet \le y & y \le \bullet & x_0(y) & x_1(y) & \# I_y \\
\midrule
\textbf{Case $p_1<p_4$}\\
\midrule
-p_3-p_4+1 & -p_4      & -p_0-p_4-y+1 & -y-1 & p_0+p_4-1 \\
-p_4+1     & p_1-p_4   & -p_0-p_1+1 & -y-1 & p_0+p_1-y-1 \\
p_1-p_4+1  & 0         & -p_0-p_4-y+1 & -y-1 & p_0+p_4-1 \\
1          & p_1-1     & -p_0-p_4-y+1 & -1 & p_0+p_4+y-1 \\
p_1        & p_1+p_2-1 & -p_0-p_1+1 & -1 & p_0+p_1-1 \\
\bottomrule
\toprule
\textbf{Case $p_1>p_4$}\\
\midrule
-p_3-p_4+1 & -p_4      & -p_0-p_4-y+1 & -y-1 & p_0+p_4-1 \\
-p_4+1     & 0         & -p_0-p_1+1 & -y-1 & p_0+p_1-y-1 \\
1          & p_1-p_4   & -p_0-p_1+1 & -1 & p_0+p_1-1 \\
p_1-p_4+1  & p_1-1     & -p_0-p_4-y+1 & -1 & p_0+p_4+y-1 \\
p_1        & p_1+p_2-1 & -p_0-p_1+1 & -1 & p_0+p_1-1 \\
\bottomrule
\toprule
\textbf{Case $p_1=p_4$}\\
\midrule
-p_3-p_4+1 & -p_4      & -p_0-p_4-y+1 & -y-1 & p_0+p_4-1 \\
-p_4+1     & 0         & -p_0-p_1+1 & -y-1 & p_0+p_1-y-1 \\
1          & p_1-1     & -p_0-p_4-y+1 & -1 & p_0+p_4+y-1 \\
p_1        & p_1+p_2-1 & -p_0-p_1+1 & -1 & p_0+p_1-1 \\
\bottomrule
\end{array}
\]
\end{table}

\item \textbf{Line segments of Type (B).}
The segments of this type depend on \(p_2\) and \(p_3\) 
via the parallelograms $P_1$ and $P_2$ 
elaborated in Definition~\ref{def:parallelograms}.
\begin{itemize}
\item If \(p_2,p_3 \ge 2\), then this type consist of just one horizontal segment
   whose projection to the \((y,\ z)\)-plane is located left and above the intersection
of the upper edges of the parallelograms \(P_1\) and \(P_2\), see the point marked as $B$ on \autoref{figure:pic3_mr_gen}.
The line segment contains \(p_0-1\) immaculate line bundles with coordinates 
\[\threedim{[-p_0-p_1+\poc+1,-p_1+\poc-1]}{-p_1-p_2}{p_1},\] 
where \(\poc := \sum c_i\).

\item 
If  \(p_2 = 1\), then the points of Type (B) consist of \(p_3\) horizontal line segments,
   each containing \(p_0-1\) immaculate line bundles.
The coordinates are \[\threedim{[-p_0-p_1+1,-p_1-1]}{-p_1-p_2-y}{p_1}\] for \(y \in [0,p_3-1]\).
On \autoref{figure:pic3_mr_p2} their projections onto $(y,z)$ plane are indicated by the letter \(B\).
Roughly speaking, their projections are at each lattice point directly above the upper edge of the parallelogram \(P_1\), 

\item For \(p_3 = 1\), there are \(p_2\) horizontal line segments of Type (B) each containing \(p_0-1\) immaculate line bundles. 
The coordinates are \[\threedim{[-p_0-p_1+\poc-y(\pob+1)+1, -p_1+\poc-y(\pob+1)-1]}{-p_1-p_2}{p_1+y}\]  for \(y \in [0,p_2-1]\).
On \autoref{figure:pic3_mr_p3} their locations in the projection are indicated by the letter \(B\), as for the previous case.
In this case, the projections are located directly left of \(P_2\).
\end{itemize}
\end{itemize}

Our result is that the types (F), (A), (B) are always immaculate. 
Moreover, for sufficiently ``general'' parameters the listed line bundles and their  Serre duals
are all really immaculate line bundles. 
We summarise this discussion in the following proposition, 
  but we only sketch the proof as it consists of working out the combinatorial details.

\begin{proposition} 
\label{prop-mainPicThree}
For $X$ a toric projective variety of Picard number $3$ with $5$ primitive collections we have:
\begin{enumerate}
\item All the line bundles of type (F), (A) and (B) are really immaculate.
\item The coordinates of the line bundles of type (F) and (A) do not depend on $\prb$ and $\prc$, 
the coordinates of the Serre duals of the line bundles of (A),  do depend on $\prb$ and $\prc$.

\item The line bundles of type (A) are the only immaculate line bundles among the \(D_{x,y} = \threedim{x}{-y}{y}\) 
with \(y \in [-p_3 - p_4 + 1, p_1 + p_2 + 1]\) independent of
\(\prb,\prc\).

\item For \(\prb,\prc\) large enough, that is \(\max(b_{p_3}, c_{p_2}) \ge
p_0+p_1+\max(p_2,p_3)+p_4\) and if \(p_2 \not= 1\) additionally \(c_{p_2} \ge p_0-1\) 
and for \(p_3 \not= 1\) the additional condition that \(b_{p_3} - b_1 \ge p_0-1\),
   the only (really) immaculate line bundles are the previously mentioned and
their Serre duals.
\end{enumerate}
\end{proposition}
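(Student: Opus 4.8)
The plan is to run the machinery of Proposition~\ref{prop-Rmac} together with Remark~\ref{rem_everyone_is_really_immaculate_Pic_3}: since for all twelve tempting subsets $\CR$ we have $\CM_{\Z}(\CR)=\CM_{\R}(\CR)\cap\Pic X$, we may work throughout with the real regions and the distinction between immaculate and really immaculate disappears. The immaculate locus is then exactly $\Z^3\setminus\bigcup_{\CR}\CM_{\R}(\CR)$, the union ranging over the twelve tempting sets ($\emptyset$, $\Sigma(1)$, the five $\mathcal{J}_\alpha$ and their five complements). The first point I would record is that $\prb$ and $\prc$ enter the matrix $\pi$ only through its \emph{first} row; consequently composing with the projection $\varphi\colon\Cl(X)\otimes\R=\R^3\to\R^2$, $(x,y,z)\mapsto(y,z)$, kills all dependence on $\prb,\prc$, so the planar pictures of Figures~\ref{figure:pic3_mr_gen}--\ref{figure:pic3_mr_p3} are genuinely independent of them.

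Second, I would exploit that every column of the $J_0$-block of $\pi$ equals $(1,0,0)$ and that each tempting $\CR$ contains $J_0$ either entirely or not at all. Hence the tail cone of each $\CM_{\R}(\CR)$ contains exactly one of $\pm(1,0,0)$ (not both, as the tail cones are pointed by Remark~\ref{rem_everyone_is_really_immaculate_Pic_3}\ref{item_Z_generators_for_maculate_cones_Pic_3}). This splits the twelve regions into six ``$+\infty$ regions'' (those with $J_0\subseteq\Sigma(1)\setminus\CR$, namely $\emptyset,\mathcal{J}_1,\mathcal{J}_2,\mathcal{J}_3,\mathcal{J}_0^c,\mathcal{J}_4^c$) and six ``$-\infty$ regions'' (their Serre duals). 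Over a fixed projected point $(y,z)$ each covering region meets the horizontal fibre $\varphi^{-1}(y,z)$ in a half-line $[x_+,\infty)$ or $(-\infty,x_-]$; therefore the immaculate points on that fibre form the open integral interval $(\max x_-,\min x_+)$. This reduces the whole problem to a two-dimensional arrangement plus the bookkeeping of the two extremal values $\max x_-$ and $\min x_+$ on each fibre.

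With this set-up Type~(F) is immediate: the lines over $P_1\cup P_2$ are precisely those projected points covered by \emph{no} region, so $\max x_-=-\infty$, $\min x_+=+\infty$, and the whole fibre is immaculate; checking $P_1\cup P_2=\R^2\setminus\bigcup_\CR\varphi(\CM_\R(\CR))$ is a finite computation with the cone inequalities whose apices $\overline{v_\CR}$ are tabulated in Figure~\ref{figure:pic3_mr_gen}. For Types~(A) and~(B) I would look at projected points that \emph{are} covered, but simultaneously by a $+\infty$ and a $-\infty$ region, so that a bounded gap survives in the fibre. On the anti-diagonal $\varphi(D_{x,y})=(-y,y)$ the extremal facets turn out to involve the blocks $J_0,J_1,J_4$ only --- which is why $x_0(y),x_1(y)$ in Table~\ref{table:isolated_typeA} are independent of $\prb,\prc$ --- and reading off these facet inequalities, with the case split $p_1\lessgtr p_4$ recording which of two facets is active (minimal), produces the five-line tables. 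The Type~(B) segments are treated identically at the finitely many projected points just off the upper edges of $P_1,P_2$, the sub-cases $p_2=1$ and $p_3=1$ matching the degenerations of the parallelograms in Definition~\ref{def:parallelograms}. Part~(2) then follows from the first observation: the active facets for (F) and (A) avoid the $J_2,J_3$ blocks and so do not see $\prb,\prc$, whereas the Serre dual of an (A)-segment is bounded by the dual ($\prb,\prc$-dependent) regions $\Sigma(1)$, $\mathcal{J}_2^c$, and hence moves with $\prb,\prc$.

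Finally, Parts~(3) and~(4) are the two converse directions, and this is where the real work lies. For Part~(3) I would verify, on every anti-diagonal fibre and independently of $\prb,\prc$, that outside the intervals of Table~\ref{table:isolated_typeA} the two extremal half-lines already overlap, leaving no lattice gap, so no further immaculate $D_{x,y}$ occurs. Part~(4) is the analogue of Theorem~\ref{thm_immaculates_for_splitting_fan}\ref{item_immaculate locus_for_general_c}: taking $\prb,\prc$ large pushes the $\prb,\prc$-dependent apices far enough along the $x$-direction that, over every projected point not accounted for by (F),(A),(B) or their duals, some $+\infty$ region and some $-\infty$ region overlap with $\max x_-\ge \min x_+-1$, leaving no integral gap; the thresholds $\max(b_{p_3},c_{p_2})\ge p_0+p_1+\max(p_2,p_3)+p_4$ (with the extra conditions when $p_2=1$ or $p_3=1$) are exactly what force these overlaps simultaneously. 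I expect the main obstacle to be precisely this exhaustive coverage check: one must organise the twelve regions, their pairwise overlaps along each fibre, and the three configurations ($p_2,p_3\ge2$; $p_2=1$; $p_3=1$) into a finite but delicate case analysis, which is why --- as the statement already warns --- only a sketch is given.
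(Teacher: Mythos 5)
Your proposal is correct and follows essentially the same route as the paper's own (sketched) proof: reduce to the real maculate regions via Remark~\ref{rem_everyone_is_really_immaculate_Pic_3}, project the twelve regions $\CM_{\R}(\CR)$ to the $(y,z)$-plane where all dependence on $\prb,\prc$ disappears, obtain type (F) from fibres over uncovered projected points, and settle (A), (B) and the converses (3)--(4) by a case analysis over the covered projected points. Your explicit observation that each horizontal fibre meets every region in a half-line towards $+\infty$ or $-\infty$ (according to whether $J_0$ avoids $\CR$ or lies in it, since the $J_0$-block of $\pi$ consists of columns $(1,0,0)$ and the tail cones are pointed) is a clean formalisation of bookkeeping the paper leaves implicit; the remaining exhaustive combinatorial verification is precisely the part the paper also declines to write out.
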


\begin{proof}[Sketch of proof.]
For proving all of those statements we will consider the ''horizontal'' projection of 
the Picard group and in particular of the twelve maculate regions to the \(y,z\)-plane. 
Figures~\ref{figure:pic3_mr_gen}, \ref{figure:pic3_mr_p2} and \ref{figure:pic3_mr_p3} illustrate the situation, 
   for the cases \(p_2, p_3 >1\)  and \(p_2 =1\), \(p_3=1\) respectively.

\begin{itemize}
\item \textbf{(F) Full horizontal lines.}
If the projected divisor \(\overline{D}\) is not in any of the projected maculate cones $\CM_{\R}(\CR)$,
then all the divisors in the line parallel to the kernel of the projection are immaculate.

\item \textbf{Line segments of Type (A).}
Given a divisor \(D_{x,y}\) we want to know whether it is immaculate. Thus we
want to know whether there is an $\CR$ such that $D_{x,y}\in\CM_{\R}(\CR)$. 
We analyse the projected situation. We know that if \(\overline{D} \notin
\overline{\CM_{\R}(\CR)}\), then \(D \notin \CM_{\R}(\CR)\). This eliminates a large
number of candidate $\CR$'s. Various  situations that can occur are
depicted in the aforementioned figures.

Once we have identified the candidate $\CR$'s we do a case analysis for the $y$-coordinate.

\item \textbf{Line Segments of Type (B).}
The proof essentially proceeds in the same manner as for the isolated points
of type (A), by working out the combinatorial details of the situation.
\end{itemize}
\end{proof}

\section{Computational aspects}\label{sec:computational}
In this section we want to highlight the computational advantages of immaculate
line bundles and maculate regions. All of these objects and conditions give
rise to nice combinatorial algorithms. Throughout the development of this paper
we have implemented these in \polymake and they are available on \url{github
link} as a \polymake(\cite{polymake}) extension. The combinatorial nature of these algorithms
makes them very fast, as opposed to many algorithms from commutative algebra.
This stresses the main computational advantage of working with toric
varieties. We will give a short sketch of the resulting algorithms.

Immaculacy of a line bundle can be checked from its representation as a
difference of nef divisors. Thus we want to check all differences
$\Delta^-\setminus (\Delta^+-\dm)$, for any $\dm\in M$,
   for $\kk$-acyclicity, via
Proposition~\ref{prop-diffPol}. But it is actually enough to check only finitely many
$m$, since both $\Delta^-$ and $\Delta^+$ are compact and thus they only
intersect for finitely many shifts. Using
Proposition~\ref{prop_retract_on_the_boundary_complex}, we just need to consider
$\Delta^-$ as a polytopal complex and remove any face intersecting 
$\Delta^+-\dm$
non-trivially, for those finitely many $\dm$. Homology computation of the resulting polytopal
complex is already built in \polymake and many other software frameworks for
combinatorial software as well.

Next we want to find the tempting $\CR\subseteq\Sigma[1]$. The easiest way is
to brute force this by checking any subset of rays and then compute the
homology. One can also imagine a more sophisticated approach by considering
sub-diagrams of the Hasse diagram of $\Sigma$. So far this has never been a
bottleneck in our examples, though in case this happens, results of 
Subsection~\ref{sect_three_temptations} might be of use.

\begin{table}[htb]\centering
\caption{Lines of immaculate line bundles for the hexagon}\label{table:hexagon_lines}
\newcommand{\tinyscale}{.3}
\begin{tabular}{>{\hspace{.8cm}}cccc<{\hspace{.8cm}}|cccc|cc}
\toprule
\multicolumn{4}{c}{unbounded direction} & \multicolumn{4}{c}{basepoint} & $(\Delta^+$, & $\Delta^-)$\\
\midrule
\multirow{4}{*}{1} & 
\multirow{4}{*}{1} & 
\multirow{4}{*}{0} & 
\multirow{4}{*}{0} & 
     0 & 0 & -1 & -1
     & 
     $ t\cdot\begin{tikzpicture}[scale=\tinyscale]
     \draw[thick] (0,0) -- (1,1);
     \end{tikzpicture}$ &
     $\begin{tikzpicture}[scale=\tinyscale]
     \draw[thick] (0,0) -- (1,1) -- (1,0) -- cycle;
     \end{tikzpicture}
     $
     \\
&&&& 1 & 0 & -1 & 0
     & 
     $ t\cdot\begin{tikzpicture}[scale=\tinyscale]
     \draw[thick] (0,0) -- (1,1);
     \end{tikzpicture}$ &
     $\begin{tikzpicture}[scale=\tinyscale]
     \draw[thick] (0,0) -- (0,1) -- cycle;
     \end{tikzpicture}
     $
     \\
&&&& 0 & 0 & -1 & 0
     & 
     $ t\cdot\begin{tikzpicture}[scale=\tinyscale]
     \draw[thick] (0,0) -- (1,1);
     \end{tikzpicture}$ &
     $\begin{tikzpicture}[scale=\tinyscale]
     \draw[thick] (0,0) -- (1,1) -- (0,1) -- cycle;
     \end{tikzpicture}
     $
     \\
&&&& -1 & 0 & -1 & -1
     & 
     $ t\cdot\begin{tikzpicture}[scale=\tinyscale]
     \draw[thick] (0,0) -- (1,1);
     \end{tikzpicture}$ &
     $\begin{tikzpicture}[scale=\tinyscale]
     \draw[thick] (0,0) -- (1,0) -- cycle;
     \end{tikzpicture}
     $
     \\
\midrule
\multirow{4}{*}{1} & 
\multirow{4}{*}{0} & 
\multirow{4}{*}{1} & 
\multirow{4}{*}{1} 
   & 0 & -1 & -1 & 0
     & 
     $ t\cdot\begin{tikzpicture}[scale=\tinyscale]
     \draw[thick] (0,0) -- (1,0);
     \end{tikzpicture}$ &
     $\begin{tikzpicture}[scale=\tinyscale]
     \draw[thick] (0,0) -- (0,1) -- cycle;
     \end{tikzpicture}
     $
     \\
&&&& 0 & -1 & 0 & 0
     & 
     $ t\cdot\begin{tikzpicture}[scale=\tinyscale]
     \draw[thick] (0,0) -- (1,0);
     \end{tikzpicture}$ &
     $\begin{tikzpicture}[scale=\tinyscale]
     \draw[thick] (0,0) -- (1,1) -- (1,0) -- cycle;
     \end{tikzpicture}
     $
     \\
&&&& -1 & -1 & 0 & 0
     & 
     $ t\cdot\begin{tikzpicture}[scale=\tinyscale]
     \draw[thick] (0,0) -- (1,0);
     \end{tikzpicture}$ &
     $\begin{tikzpicture}[scale=\tinyscale]
     \draw[thick] (0,0) -- (1,1) -- cycle;
     \end{tikzpicture}
     $
     \\
&&&& -1 & -1 & -1 & 0
     & 
     $ t\cdot\begin{tikzpicture}[scale=\tinyscale]
     \draw[thick] (0,0) -- (1,0);
     \end{tikzpicture}$ &
     $\begin{tikzpicture}[scale=\tinyscale]
     \draw[thick] (0,0) -- (0,1) -- (1,1) -- cycle;
     \end{tikzpicture}
     $
     \\
\midrule
\multirow{4}{*}{0} & 
\multirow{4}{*}{1} & 
\multirow{4}{*}{1} & 
\multirow{4}{*}{0} 
   & -1 & 0 & 0 & 0
     & 
     $ t\cdot\begin{tikzpicture}[scale=\tinyscale]
     \draw[thick] (0,0) -- (0,1);
     \end{tikzpicture}$ &
     $\begin{tikzpicture}[scale=\tinyscale]
     \draw[thick] (0,0) -- (0,1) -- (1,1) -- cycle;
     \end{tikzpicture}
     $
     \\
&&&& -1 & 0 & 1 & 0
     & 
     $ t\cdot\begin{tikzpicture}[scale=\tinyscale]
     \draw[thick] (0,0) -- (0,1);
     \end{tikzpicture}$ &
     $\begin{tikzpicture}[scale=\tinyscale]
     \draw[thick] (0,0) -- (1,1) -- cycle;
     \end{tikzpicture}
     $
     \\
&&&& -1 & 0 & 0 & -1
     & 
     $ t\cdot\begin{tikzpicture}[scale=\tinyscale]
     \draw[thick] (0,0) -- (0,1);
     \end{tikzpicture}$ &
     $\begin{tikzpicture}[scale=\tinyscale]
     \draw[thick] (0,0) -- (1,0) -- (1,1) -- cycle;
     \end{tikzpicture}
     $
     \\
&&&& -1 & 0 & -1 & -1
     & 
     $ t\cdot\begin{tikzpicture}[scale=\tinyscale]
     \draw[thick] (0,0) -- (0,1);
     \end{tikzpicture}$ &
     $\begin{tikzpicture}[scale=\tinyscale]
     \draw[thick] (0,0) -- (1,0) -- cycle;
     \end{tikzpicture}
     $
     \\
\bottomrule
\end{tabular}
\end{table}

\begin{table}[htb]\centering
\caption{Isolated immaculate line bundles for the hexagon}\label{table:hexagon_isolated}
\newcommand{\tinyscale}{.3}
\begin{tabular}{>{\hspace{.5cm}}cccc<{\hspace{.5cm}}|cc}
\toprule
\multicolumn{4}{c}{$\Pic(X)$ coordinates} & $(\Delta^+$, & $\Delta^-)$\\
\midrule
-2 & -2 & -2 & -2
   &
   pt
   &
   \begin{tikzpicture}[scale=\tinyscale]
   \draw[step=1, black!20, thin] (-.5,-.5) grid (2.5, 2.5);
   \draw[thick] (0,0) -- (2,0) -- (2,2) -- cycle;
   \fill (0,0) circle (3pt);
   \fill (1,0) circle (3pt);
   \fill (2,0) circle (3pt);
   \fill (1,1) circle (3pt);
   \fill (2,1) circle (3pt);
   \fill (2,2) circle (3pt);
   \end{tikzpicture}
\\
-2 & -2 & -2 & 0
   &
   pt
   &
   \begin{tikzpicture}[scale=\tinyscale]
   \draw[step=1, black!20, thin] (-.5,-.5) grid (2.5, 2.5);
   \draw[thick] (0,0) -- (0,2) -- (2,2) -- cycle;
   \fill (0,0) circle (3pt);
   \fill (0,1) circle (3pt);
   \fill (0,2) circle (3pt);
   \fill (1,1) circle (3pt);
   \fill (1,2) circle (3pt);
   \fill (2,2) circle (3pt);
   \end{tikzpicture}
\\
0 & 0 & 0 & -1
   &
   \begin{tikzpicture}[scale=\tinyscale]
   \draw[thick] (0,0) -- (1,1) -- (0,1) -- cycle;
   \end{tikzpicture}
   &
   \begin{tikzpicture}[scale=\tinyscale]
   \draw[thick] (0,0) -- (1,1) -- (1,0) -- cycle;
   \end{tikzpicture}\\
0 & 0 & 0 & 1
   &
   \begin{tikzpicture}[scale=\tinyscale]
   \draw[thick] (0,0) -- (1,1) -- (1,0) -- cycle;
   \end{tikzpicture}
   &
   \begin{tikzpicture}[scale=\tinyscale]
   \draw[thick] (0,0) -- (1,1) -- (0,1) -- cycle;
   \end{tikzpicture}\\
\bottomrule
\end{tabular}
\end{table}

\begin{table}[htb]\centering
\caption{Exceptional sequences of line bundles for the hexagon}\label{table:hexagon_es}
\newcommand{\tinyscale}{.3}
\[D^0=[0,0,0,0]\]
\begin{tabular}{rrrr|rrrr|rrrr|rrrr|rrrr}
\toprule
\multicolumn{4}{c}{$D^1$}  & 
\multicolumn{4}{c}{$D^2$}  & 
\multicolumn{4}{c}{$D^3$}  & 
\multicolumn{4}{c}{$D^4$}  & 
\multicolumn{4}{c}{$D^5$}\\
\midrule
-2 & -1 & -1 & -1  &  -1 & -2 & -1 & 0  &  -2 & -2 & -1 & -1  &  -2 & -2 & -1 & 0  &  -1 & -1 & -2 & -1\\
-1 & -1 & -1 & -1  &  -2 & -2 & -1 & -1  &  -1 & -1 & -2 & -1  &  -2 & -1 & -2 & -2  &  -1 & -2 & -2 & -1\\
-1 & -1 & -1 & -1  &  -2 & -1 & -1 & -1  &  -2 & -2 & -1 & -1  &  -1 & -1 & -2 & -1  &  -2 & -1 & -2 & -2\\
-1 & -1 & -1 & -1  &  -2 & -1 & -1 & -1  &  -1 & -2 & -1 & 0  &  -2 & -2 & -1 & -1  &  -1 & -1 & -2 & -1\\
 -1 & -1 & -1 & -1  &  -1 & -1 & -1 & 0  &  -2 & -1 & -1 & -1  &  -1 & -2 & -1 & 0  &  -1 & -1  & -2 & -1\\
-1 & -1 & 0 & 0  &  -2 & -1 & -1 & -1  &  -1 & -2 & -1 & 0  &  -2 & -2 & -1 & -1  &  -2 & -2 & -1 & 0\\
-1 & -1 & 0 & 0  &  -1 & -1 & -1 & -1  &  -2 & -1 & -1 & -1  &  -1 & -2 & -1 & 0  &  -2 & -2 & -1 & -1\\
-1 & -1 & 0 & 0  &  -1 & -1 & -1 & -1  &  -1 & -1 & -1 & 0  &  -2 & -1 & -1 & -1  &  -1 & -2 & -1 & 0\\
-1 & -1 & 0 & 0  &  -1 & 0 & -1 & -1  &  -1 & -1 & -1 & -1  &  -1 & -1 & -1 & 0  &  -2 & -1 & -1 & -1\\
-1 & -1 & 0 & 0  &  -1 & 0 & -1 & -1  &  0 & -1 & -1 & 0  &  -1 & -1 & -1 & -1  &  -1 & -1 & -1 & 0\\
-1 & -1 & 0 & 0  &  0 & 0 & -1 & -1  &  -1 & 0 & -1 & -1  &  0 & -1 & -1 & 0  &  -1 & -1 & -1 & -1\\
-1 & -1 & 0 & 0  &  0 & 0 & -1 & -1  &  0 & 0 & -1 & 0  &  -1 & 0 & -1 & -1  &  0 & -1 & -1 & 0\\
-1 & 0 & 0 & -1  &  -1 & -1 & -1 & -1  &  -2 & -1 & -1 & -1  &  -2 & -2 & -1 & -1  &  -2 & -1 & -2 & -2\\
-1 & 0 & 0 & -1  &  -1 & -1 & 0 & 0  &  -1 & -1 & -1 & -1  &  -2 & -1 & -1 & -1  &  -2 & -2 & -1 & -1\\
-1 & 0 & 0 & -1  &  -1 & -1 & 0 & 0  &  -1 & 0 & -1 & -1  &  -1 & -1 & -1 & -1  &  -2 & -1 & -1 & -1\\
-1 & 0 & 0 & -1  &  -1 & -1 & 0 & 0  &  0 & 0 & -1 & -1  &  -1 & 0 & -1 & -1  &  -1 & -1 & -1 & -1\\
-1 & 0 & 0 & -1  &  -1 & 0 & 0 & 0  &  -1 & -1 & 0 & 0  &  -1 & 0 & -1 & -1  &  -2 & -1 & -1 & -1\\
 -1 & 0 & 0 & -1  &  0 & -1 & 0 & 0  &  -1 & -1 & 0 & 0  &  -1 & -1 & -1 & -1  &  -2 & -2 & -1 & -1\\
 -1 & 0 & 0 & -1  &  0 & -1 & 0 & 0  &  -1 & -1 & 0 & 0  &  0 & 0 & -1 & -1  &  -1 & -1 & -1 & -1\\                 
\bottomrule
\end{tabular}

\end{table}

From the collection of all tempting $\CR$ we can finally compute the immaculate
locus $\Imm_{\R}(X)$, or rather the lattice points thereof. We only need to
compute the intersection of all complements of the $\CM_{\R}(\CR)$. It is not
difficult to see that this is a union of polyhedra. Since $\CM_{\R}(\CR)$ is a
rational polyhedral cone, we can write it as a finite intersection of
halfspaces. Taking the complement of this cone means taking the union of the
complementary halfspaces. Since we are only interested in the lattice points of
$\Imm_{\R}(X)$, we just move the bounding hyperplane by one away from
$\CM_{\R}(\CR)$ and do not worry about openness of the complement. 
Now we get the
polyhedra giving the lattice points of $\Imm_{\R}(X)$ by picking one
complementary halfspace for every $\CR$ and then intersecting these. Consider
any possible combination and take the union of the resulting polyhedra.

We now restrict our attention to the hexagon example 
   (see Examples~\ref{ex_hexagon_introduce_coordinates} and~\ref{ex_hexagon_tempting_subsets}).
We immediately see that the main bottleneck of
   the algorithm for $\Imm_{\R}(X)$ is the amount of intersections to compute.
There are $34$ tempting $\CR$'s and if every $\CM_{\R}(\CR)$ was bounded by only two
hyperplanes, we would have to compute $2^{34}$ intersections. In fact, all
$\CM_{\R}(\CR)$ are actually bounded by more than two hyperplanes. This issue can be
overcome by building the intersections step by step and eliminating trivial
intersections in between. We start by building the complementary halfspaces of
$\CM_{\R}(\CR_1)$ and $\CM_{\R}(\CR_2)$, then we consider any intersection. If an
intersection is empty already, we eliminate it. Furthermore, we choose the
inclusion maximal intersections. Then we intersect the resulting polyhedra with
the complementary halfspaces of $\CM_{\R}(\CR_3)$ and so on.

Thus we have computed the immaculate loci $\Imm_{\Z}(X) = \Imm_{\R}(X)$.
They are equal to a union of three unbounded polyhedra and four isolated lattice points 
that are listed in Table~\ref{table:hexagon_isolated}.
Each unbounded polyhedron consists of four parallel lines, that is lattice lines. 
The exact lines, together with their polytopes $(\Delta^+,\Delta^-)$ are depicted in
Table~\ref{table:hexagon_lines}.
Each pair of quadruples of lines intersects in four points.

Now it is easy to compute all exceptional sequences that are contained in the
projection of the cube $\pi([-1,0]^6)\subseteq\Cl(X)$. One just collects the
lattice points in the projected cube and then runs a depth first search. There
are $228$ exceptional sequences of length six in the projected cube. Under the
group action on the hexagon these $228$ exceptional sequences correspond to
$19$ orbits of size $12$. In Table~\ref{table:hexagon_es} we list one
representative from each orbit.
Note that we do not need to use the four isolated points for these exceptional sequences.
This is different than in the case of the splitting fans, for example the Picard rank $2$ case
   (see \cite{CostaMiroRoig}).

\nocite{orlov92}
\addcontentsline{toc}{section}{References}
\bibliographystyle{alpha}
\bibliography{immaculate}

\end{document}